\numberwithin{equation}{section}
\newcommand{\bC}{{\mathbb C}}
\newcommand{\bL}{{\mathbb L}}
\newcommand{\bP}{{\mathbb P}}
\newcommand{\bQ}{{\mathbb Q}}
\newcommand{\bR}{{\mathbb R}}
\newcommand{\bZ}{{\mathbb Z}}
\newcommand{\cA}{{\mathcal A}}
\newcommand{\cC}{{\mathcal C}}
\newcommand{\cE}{{\mathcal E}}
\newcommand{\cF}{{\mathcal F}}
\newcommand{\cG}{{\mathcal G}}
\newcommand{\cI}{{\mathcal I}}
\newcommand{\cL}{{\mathcal L}}
\newcommand{\cP}{{\mathcal P}}
\newcommand{\cO}{{\mathcal O}}
\newcommand{\cS}{{\mathcal S}}
\newcommand{\cT}{{\mathcal T}}
\newcommand{\cY}{{\mathcal Y}}
\newcommand{\cU}{{\mathcal U}}
\newcommand{\cX}{{\mathcal X}}
\newcommand{\sA}{{\mathscr A}}
\newcommand{\sB}{{\mathscr B}}
\newcommand{\sC}{{\mathscr C}}
\newcommand{\sF}{{\mathscr F}}
\newcommand{\scrA}{{\mathscr A}}
\newcommand{\scrB}{{\mathscr B}}
\newcommand{\scrC}{{\mathscr C}}
\newcommand{\scrD}{{\mathscr D}}
\newcommand{\scrF}{{\mathscr F}}
\newcommand{\scrQ}{{\mathscr Q}}
\newcommand{\scrX}{{\mathscr X}}
\newcommand{\scrY}{{\mathscr Y}}
\newcommand{\scrM}{{\mathscr M}}
\newcommand{\scrT}{{\mathscr T}}
\newcommand{\bfK}{\mathbf{K}}
\newcommand{\bfL}{\mathbf{L}}
\newcommand{\Pic}{\operatorname{Pic}}
\newcommand{\Ob}{\operatorname{Ob}}
\newcommand{\End}{\operatorname{End}}
\newcommand{\Hom}{\operatorname{Hom}}
\newcommand{\id}{\operatorname{id}}
\newcommand{\rk}{\operatorname{rk}}
\newcommand{\Coh}{\operatorname{Coh}}
\newcommand{\Ext}{\operatorname{Ext}}
\newcommand{\Tw}{\operatorname{Tw}}
\newcommand{\Symp}{\operatorname{Symp}}
\newcommand{\Ham}{\operatorname{Ham}}
\newcommand{\pt}{\operatorname{pt}}
\def\co{\colon\thinspace}
\newtheorem{thm}{Theorem}[section]
\newtheorem{cor}[thm]{Corollary}
\newtheorem{lem}[thm]{Lemma}
\newtheorem{prop}[thm]{Proposition}
\newtheorem{defin}[thm]{Definition}
\newtheorem*{axi}{Axiom of Quilted Floer Theory}
\newtheorem{Example}[thm]{Example}
\theoremstyle{remark}
\newtheorem{rem}[thm]{Remark}
\newtheorem{rems}[thm]{Remarks}
\newcommand{\noproof}{
\begin{flushright}
\qedsymbol
\end{flushright}}
\title[Mirror symmetry for the four-torus]{Homological mirror symmetry for the four-torus}
\author[M.~Abouzaid, I.~Smith]{Mohammed Abouzaid, \ Ivan Smith} \date{\today}
\thanks{This work was conducted during the time the first author was supported by a Clay Research Fellowship.  The second author is partially supported by grant ERC-2007-StG-205349 from the European Research Council. }
\begin{document}

\begin{abstract}
We use the quilt formalism of Mau-Wehrheim-Woodward to give a sufficient condition for a finite collection of Lagrangian submanifolds to split-generate the Fukaya category, and deduce homological mirror symmetry for the standard 4-torus.  As an application, we study Lagrangian genus two surfaces $\Sigma_2 \subset T^4$ of Maslov class zero, deriving numerical restrictions on the intersections of $\Sigma_2$ with linear Lagrangian 2-tori in $T^4$.
\end{abstract}

\maketitle

\maketitle
\setcounter{tocdepth}{1}
\tableofcontents

\section{Introduction}

Despite being the focus of a great deal of attention, Kontsevich's homological mirror symmetry conjecture \cite{Kontsevich} has been fully proved in only a handful of cases.   In the original Calabi-Yau setting, the elliptic curve was treated by Polishchuk and Zaslow \cite{PZ,P2}, whilst Seidel proved the conjecture for the quartic K3 surface \cite{Seidel:HMSquartic}. 
Substantial but partial results are known for a wide class of abelian varieties, from work of Fukaya \cite{Fukaya} and Kontsevich-Soibelman \cite{KS}.  In each of the last two studies, explicit embeddings of (subcategories of) the derived category into the Fukaya category were constructed, but it was not obvious that these embeddings actually induced equivalences.   An equivalence of superconformal field theories was separately established for flat tori by Kapustin and Orlov \cite{KO1}.

The step required to complete these arguments to a proof of mirror symmetry is therefore to show that the image subcategories generate the entire Fukaya category.  Nadler's paper \cite{nadler}, which concerns a certain Fukaya category of Lagrangian submanifolds of the cotangent bundle,  has highlighted the importance of the notion of \emph{resolutions of the diagonal} in proofs of such a result.  The idea is that the diagonal (as a Lagrangian in the square $M\times M$ of the given symplectic manifold $M$) represents the identity functor of the entire Fukaya category, so it suffices to show that the diagonal can be decomposed, in an appropriate sense, using products of Lagrangians that one understands.  Beyond the technical problem of formalising the connection between Floer theory on a symplectic manifold and on its square, this approach reduces the study of the Fukaya category of a symplectic manifold $M$ -- which might be teeming with countless unseen Lagrangians -- to the study of an explicit, often finite collection of Lagrangians in $M\times M$.  In algebraic geometry the corresponding circle of ideas is well-known, going back to Beilinson \cite{Beilinson}, and underlies Seidel's split-generation result for Fukaya categories of Lefschetz fibrations by vanishing cycles \cite[Remark 18.28]{Seidel:FCPLT}.

The main technical observation of this paper is that the  theory of pseudo-holomorphic quilts, currently under development by  Mau, Wehrheim and Woodward, not only establishes a functor relating the Fukaya category of a product to those of its factors, but moreover, using additional input from homological algebra, reduces the necessary computations on the product manifold to ones which can be performed on the original space.  The resulting generative criterion Theorem \ref{thm:generation}, applied to the standard four-torus, completes the proof of homological mirror symmetry in this case.  Amusingly,  the argument relies only on the proof of Homological Mirror Symmetry for the elliptic curve; we need not appeal to the deeper work of Fukaya \cite{Fukaya} or of Kontsevich-Soibelman \cite{KS}, nor perform any (serious) Floer theoretic computation in $T^4$.

To give a precise statement of the result we prove, let $\Lambda_{\bR}$ denote the Novikov field
\begin{equation} \label{eqn:novikov}
\Lambda_{\bR} = \left\{ \sum_{i\in \bZ} a_i q^{t_i} \ \big|  \  a_i\in\bC, a_i = 0 \ \textrm{for} \  i\ll 0, t_i\in\bR, t_i \to \infty \right\}.
\end{equation}
This is an algebraically closed field of characteristic zero.  Let $E$ denote the Tate elliptic curve, namely the projective algebraic variety  over $\Lambda_{\bR}$ with ring of functions
\[ \Lambda_{\bR}[x,y] \big/ \{y^2 + xy = x^3 + a_4(q) x + a_6(q)\}, \] for series $a_4, a_6 \in \bZ[[q]]$ defined by:
\[
s_k(q) = \sum_{m\geq 1} \frac{m^kq^m}{1-q^m}; \ a_4(q) =  -5s_3(q);  \ a_6(q) = (-5s_3(q)-7s_5(q))/12.
\]
For a projective variety $X$, we write $D^b(X)$ for its bounded derived category of coherent sheaves.
On the symplectic side, let $(T^4,\omega_{std})$ denote the four-torus $\bR^4/\bZ^4$ with its standard symplectic structure $\sum_j dx_j \wedge dy_j$, and write $D^{\pi}\scrF(T^4)$ for its split-closed derived Fukaya category (background on $A_{\infty}$-categories is given in the following section).

\begin{thm} \label{thm:hms}
There is an equivalence of triangulated categories (defined over the Novikov field) $D^{\pi}\sF(T^4,\omega_{std}) \simeq D^b(E\times E)$.
\end{thm}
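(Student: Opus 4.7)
My plan is to reduce the theorem to Polishchuk-Zaslow's mirror equivalence for the elliptic curve $E$ via the product decomposition $T^4 = T^2 \times T^2$. Concretely I would fix a finite split-generating collection $\cE_1,\ldots,\cE_n$ of $D^b(E)$---for instance the structure sheaf together with a skyscraper sheaf at a non-torsion point---and, via Polishchuk-Zaslow, choose corresponding Lagrangian branes $L_1,\ldots,L_n\subset T^2$ whose $A_\infty$-endomorphism algebra is quasi-isomorphic to the Ext-algebra of the $\cE_i$. On the algebraic side the exterior products $\cE_i\boxtimes\cE_j$ are standardly known to split-generate $D^b(E\times E)$, so the natural candidate split-generators for $\sF(T^4)$ are the product Lagrangians $L_i\times L_j\subset T^4$.

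The crux is to invoke Theorem \ref{thm:generation} to show that these product Lagrangians indeed split-generate $\sF(T^4)$. The criterion---as foreshadowed in the introduction---presumably demands that the diagonal $\Delta_{T^4}\subset T^4\times T^4$ can be realised as a twisted complex of exterior products of the chosen Lagrangians. Since $\Delta_{T^4}=\Delta_{T^2}\times\Delta_{T^2}$ up to reordering of factors, it suffices to express $\Delta_{T^2}$ as a twisted complex of products $L_i\times L_j$ in $\sF(T^2\times T^2)$. Such a decomposition I would obtain by transporting the Beilinson/Koszul-type resolution of the diagonal of $E$ in $D^b(E\times E)$ through the Polishchuk-Zaslow equivalence: algebraically $\Delta_E$ is resolved in terms of the $\cE_i\boxtimes\cE_j$, and under mirror symmetry this becomes a twisted complex of products $L_i\times L_j$ quasi-isomorphic to $\Delta_{T^2}$.

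Once split-generation is secured, the remaining task is to identify the $A_\infty$-endomorphism algebra of $\{L_i\times L_j\}$ inside $\sF(T^4)$ with that of $\{\cE_i\boxtimes\cE_j\}$ in $D^b(E\times E)$. Here I would use the Künneth-type quasi-isomorphism that the Mau-Wehrheim-Woodward quilt formalism supplies: the Floer cohomology and higher $A_\infty$-structure maps of product Lagrangians factorise, up to coherent quasi-isomorphism, through the tensor product of the corresponding structures on each factor. Combining this with Polishchuk-Zaslow applied twice yields agreement of endomorphism $A_\infty$-algebras, whereupon standard $A_\infty$ homological algebra upgrades the resulting $A_\infty$-functor to the required triangulated equivalence $D^{\pi}\sF(T^4,\omega_{std})\simeq D^b(E\times E)$ over $\Lambda_{\bR}$.

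The principal obstacle I foresee is the application of the generation criterion together with the Künneth identification, both of which rest on making precise and transverse the quilted moduli spaces governing exterior products and their cohomological-level factorisation. Once those foundations are in place, the specific use in $T^4$ is relatively mechanical, as the Floer-theoretic content is pushed down to $T^2$ where Polishchuk-Zaslow is available; the genuinely new step is assembling the quilt data coherently at the $A_\infty$-level so as to conclude both split-generation and the Künneth factorisation---which Theorem \ref{thm:generation} and the surrounding apparatus are tailored to supply.
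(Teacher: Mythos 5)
Your overall strategy --- products of generating Lagrangians, resolution of the diagonal, K\"unneth factorisation --- is morally the right one, but two genuine gaps remain. First, you never address the Hochschild-cohomology hypothesis of Theorem \ref{thm:generation}: the criterion requires not only that the projection functors $\cI(K_\pm)$ split-generate the identity of $\Tw(\cA)$, but also that the open-closed map $QH^*(T^4)\to HH^*(\cA)$ be an isomorphism. This is not a formality: it is what allows Lemma \ref{lem:isowithdiagonal} to extend full faithfulness of the quilt functor to the subcategory of $\sF(T^4\times T^4)$ containing the diagonal, and that step is what ultimately carries split-generation from the endofunctor category back to $\sF(T^4)$. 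The paper verifies it in Corollary \ref{cor:natural_map_iso_4torus}, via Corollary \ref{cor:rankHH} and Lemma \ref{lem:HHrestrict}; without this input the criterion cannot be applied.

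Second, the ``Beilinson/Koszul-type resolution of the diagonal of $E$'' you propose to transport does not exist: Beilinson's construction is special to $\bP^n$, and on an abelian variety the diagonal is only a \emph{retract} of a twisted complex of exterior products, not a twisted complex itself. This is precisely why the statement concerns the split-closed categories $D^{\pi}$ and why the paper invokes Orlov's split-generation theorem \ref{thm:orlov} rather than any explicit resolution. Moreover, the criterion applied to $M=T^4$ concerns $\Delta_{T^4}\subset T^4\times T^4$, mirror to $\Delta_{E\times E}\subset E^4$; so the relevant algebraic fact is split-generation of $D^b_{\infty}(E^4)$ by ample powers (Corollary \ref{cor:hyp1}, Remark \ref{rem:go_to_4th_product}), not a resolution of $\Delta_E\subset E\times E$. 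Your further plan to resolve $\Delta_{T^2}$ directly inside $\sF(T^2\times T^2)$ and then take products would require a K\"unneth theorem for Fukaya categories that the paper deliberately avoids, working instead through the quilt functor $\Phi$ and To\"en's equivalence (Theorem \ref{thm:toen}, Lemma \ref{lem:image_generates_on_product}) so that the nontrivial generation statements are all established on the algebro-geometric side.
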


\begin{rem}
As noted above, the proof of the split-generation criterion Theorem \ref{thm:generation} uses the Mau-Wehrheim-Woodward formalism which constructs $A_{\infty}$-functors between Fukaya categories from counts of quilted Riemann surfaces.   That theory is still under construction \cite{MWW,W:talk}, but -- by design -- is easily formalised and axiomatised; indeed that is one of the theory's most important and satisfactory features.  We give a condensed overview of quilt theory in situations -- like that relevant to Theorem \ref{thm:hms} -- in which bubbling is not an issue; the properties that we require are subsumed in the \emph{Axiom of Quilted Floer Theory} of Section \ref{Subsec:BackgroundQuilts}. \emph{These expected formal properties are assumed throughout this paper}.  
\end{rem}

\begin{rem}
During the course of the proof, we will see that $D^{\pi}\sF(T^2)$ is split-generated by a meridional and a longitudinal circle on the two-torus, whilst $D^{\pi}\sF(T^4)$ is split-generated by the four two-tori obtained by taking pairwise products of these; this is essentially a ``K\"unneth  theorem" for the Fukaya category.
\end{rem}

The strategy of proof of Theorem \ref{thm:hms} is to relate the Fukaya category $\sF(X\times Y)$ of the product to the category of functors $\Hom(\sF(X),\sF(Y))$.  Modulo (serious) technical restrictions, the argument should imply homological mirror symmetry for $X\times Y$ whenever it is known for $X$ and $Y$ themselves.  The technical restrictions are largely foundational in nature.  By definition, the objects of the Fukaya category are Lagrangian submanifolds of Maslov class zero (decorated with some additional ``brane" structure, cf. Section \ref{Subsec:BackgroundFukaya}).  In dimension four, any such submanifold is unobstructed: it bounds no holomorphic disc for generic almost complex structures, just for index reasons\footnote{This idea was used by Seidel in setting up and proving mirror symmetry for the quartic surface in \cite{Seidel:HMSquartic}.}.  In higher dimensions, the construction of the Fukaya category relies, in general, on the delicate obstruction theory of Fukaya-Oh-Ohta-Ono \cite{FO3}, see also Joyce's \cite{Joyce}. Whilst that theory is now coming into final shape, the issues that obstruction chains might raise in the quilted setting have not yet been addressed.  That is the reason Theorem \ref{thm:hms} is restricted to four-dimensions.    For an aspherical symplectic manifold $X$,  denote by $\sF_{so}(X)$ the ``strictly unobstructed" Fukaya category whose objects are (decorated) pairs $(L,J)$ comprising a Maslov zero Lagrangian submanifold and an almost complex structure $J$ for which $L$ bounds no $J$-holomorphic discs. The technology of the current paper carries over rather straightforwardly to prove:

\begin{cor} \label{cor:highdimension}
For any integer $k$, there is an equivalence of categories (defined over the Novikov field) $D^{\pi}\sF_{so}(T^{2k}, \omega_{std}) \simeq D^b\Coh(E^k)$.
\end{cor}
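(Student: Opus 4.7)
The plan is to induct on $k$, following the template established in the proof of Theorem \ref{thm:hms}. The base case $k=1$ is the Polishchuk-Zaslow mirror theorem $D^{\pi}\sF(T^2) \simeq D^b(E)$; in dimension two, no non-constant holomorphic discs arise with boundary on a Maslov-zero circle (by positivity of area combined with the vanishing of $\pi_2(T^2)$ and Schwarz reflection), so $\sF_{so}(T^2)$ coincides with $\sF(T^2)$ and the base case is immediate. The case $k=2$ is Theorem \ref{thm:hms}.

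For the inductive step, decompose $T^{2k} \cong T^2 \times T^{2(k-1)}$ and run the Künneth-type argument underlying Theorem \ref{thm:hms}. The Mau-Wehrheim-Woodward quilt formalism constructs an $A_\infty$-functor from $\sF_{so}(T^2) \otimes \sF_{so}(T^{2(k-1)})$ into $\sF_{so}(T^{2k})$ sending $L_1 \boxtimes L_2$ to the product Lagrangian $L_1 \times L_2$. By the inductive hypothesis, $\sF_{so}(T^{2(k-1)})$ is split-generated by the $2^{k-1}$ products of meridional and longitudinal circles in the factor $T^2$'s, and $\sF_{so}(T^2)$ is split-generated by a meridian and a longitude. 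Applying the split-generation criterion Theorem \ref{thm:generation} on $T^{2k}$, the $2^k$ linear Lagrangian tori obtained as pairwise products of these generators then split-generate $\sF_{so}(T^{2k})$.

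On the algebraic side, $D^b\Coh(E^k) \simeq D^b\Coh(E) \boxtimes D^b\Coh(E^{k-1})$ in the sense of the derived Künneth theorem for smooth projective varieties, and is split-generated by external products of a chosen pair of split-generators of $D^b\Coh(E)$ with generators of $D^b\Coh(E^{k-1})$. Polishchuk-Zaslow identifies the factor-wise generators with the circles in $T^2$ used above; combined with the quilted Künneth functor and Morita theory for $A_\infty$-categories, this upgrades the explicit generator-level identification to the desired equivalence of split-closed derived categories.

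The principal obstacle, and the reason for the restriction to $\sF_{so}$, is to certify that in dimension greater than four all of the quilted moduli spaces appearing in the proof of Theorem \ref{thm:generation} and in the construction of the Künneth functor can be compactified without disc bubbling along Lagrangian seams. For the linear tori considered above, with the standard product complex structure, unique continuation together with $\pi_2(T^{2k})=0$ forces any holomorphic disc with boundary on such a torus to extend, by Schwarz reflection, to a holomorphic sphere and hence to be constant; so the generating Lagrangians themselves are strictly unobstructed. The subtle point is that the quilt equations on seams between products $T^{2a} \times T^{2b}$ involve almost complex structures that interpolate between products, and one must check that the generating collection of Lagrangians remains simultaneously strictly unobstructed for admissible families of such $J$'s, so that the formal properties of the Axiom of Quilted Floer Theory continue to hold in the $\sF_{so}$-setting.
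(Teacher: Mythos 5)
Your overall strategy matches the one the paper has in mind, but there is a gap at exactly the place the paper itself flags. Your inductive decomposition $T^{2k} \cong T^2 \times T^{2(k-1)}$ is \emph{asymmetric} once $k\geq 3$, and essentially all of the machinery that drives Theorems \ref{thm:generation} and \ref{thm:hms} is set up for the \emph{symmetric} case $M^- \times M$: the \emph{Axiom of Quilted Floer Theory} (part (3)) only posits a functor $\sF(M^-\times M) \to \End(\sF^{\#}(M))$; the homological algebra of Lemmas \ref{lem:yoneda_for_projection}, \ref{lem:isomorphic_functors}, \ref{lem:mwwisomorphism} concerns \emph{endo}functors; and To\"en's theorem is cited in the form $D^b_\infty(X\times X) \simeq \End(D^b_\infty(X))$. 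To run your Künneth step for, say, $T^6 = T^2\times T^4$, you would need the quilt functor in the asymmetric form $\sF(M^-\times N) \to \mathrm{Fun}(\sF^{\#}(M),\sF^{\#}(N))$ with $M\neq N$, together with the corresponding extensions of the Yoneda-type arguments to categories of functors between \emph{different} $A_\infty$-categories. The paper says precisely this right after the statement of the Corollary: for $k$ a power of $2$ the symmetric decomposition $T^{2^{m+1}} = T^{2^m}\times T^{2^m}$ suffices and the paper's machinery applies inductively as-is; for other $k$ ``the relevant homological algebra is slightly more involved, requiring the use of categories of functors rather than simply of endofunctors.'' Your proposal does not engage with that asymmetry.

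Two smaller points. First, you write ``an $A_\infty$-functor from $\sF_{so}(T^2) \otimes \sF_{so}(T^{2(k-1)})$ into $\sF_{so}(T^{2k})$'' -- but the paper deliberately does not construct a tensor product of two general $A_\infty$-categories (it only defines the tensor of an $A_\infty$-category with a $dg$-category, and explicitly declines to invoke the full $(\infty,2)$-categorical tensor structure). The Künneth-type comparison in the paper is instead packaged through the $\cA^{\oplus}$ device inside $\sF^{\#}$ and the projection functors $\cI(K_\pm)$, and you should use that language. Second, your Schwarz-reflection remark for linear tori is fine, and your worry about simultaneous strict unobstructedness for families of $J$ along quilted seams is a legitimate concern, but it is subsidiary to the functor-category issue above -- it is the homological-algebra side, not the geometry, where the paper says more work is needed. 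In summary: for $k$ a power of $2$ the symmetric induction works and your proposal is essentially correct modulo the tensor-product slip; for general $k$ you need the asymmetric functor-category machinery, which you have not supplied and the paper explicitly declines to supply.
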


When $k$ is a power of $2$, there is nothing beyond what we prove in this paper together with induction.  Otherwise, the relevant homological algebra is slightly more involved,  requiring the use of categories of functors rather than simply of endofunctors.  More generally, if one were willing to flesh out the arguments presented by Kontsevich and Soibelman in \cite{KS}, one could conclude a similar result for a wide range of abelian varieties.  However, the category $\sF_{so}(X)$ is of at best marginal interest from the point of view of symplectic topology, so  we will only discuss four-manifolds in the body of this paper, deferring some further speculative remarks to the  Appendix.

In fact, proving mirror symmetry was in some sense a subsidiary of our main intention, which was to explore its consequences for symplectic topology.  In this vein, our principal result is:

\begin{thm} \label{thm:classify}
Let $\Sigma_2\subset (T^4,\omega_{std})$ be a Lagrangian genus two surface of Maslov class zero.  Then $\Sigma_2$ is Floer cohomologically indistinguishable from the Lagrange surgery of some pair of linear Lagrangian tori meeting transversely once.
\end{thm}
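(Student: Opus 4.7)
The plan is to transport $\Sigma_2$ through the equivalence of Theorem \ref{thm:hms} to a mirror object $\mathcal{E} \in D^b(E \times E)$, read off its numerical invariants from the topology of $\Sigma_2$, and match these against those of an explicit Lagrange surgery. By the split-generation statement in the remark following Theorem \ref{thm:hms}, Floer-cohomological indistinguishability reduces to checking agreement of Hom-complexes against the four linear $2$-tori $T_{ij} \subset T^4$, and ultimately to identifying the two mirror objects in $D^b(E \times E)$ up to an equivalence that preserves their $\Hom$-profiles against those generators.

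First I would compute purely topological invariants of $\Sigma_2$. The normal-bundle identity for a Lagrangian surface in a symplectic $4$-manifold gives $[\Sigma_2]\cdot[\Sigma_2] = 2g(\Sigma_2)-2 = 2$, so $[\Sigma_2] \in H_2(T^4;\bZ) \cong \Lambda^2 \bZ^4$ is primitive with Pfaffian $1$. The Smith normal form for alternating integral forms on $\bZ^4$ then produces a basis $v_1,\ldots,v_4$ of $H_1(T^4;\bZ)$ with $[\Sigma_2] = v_1 \wedge v_2 + v_3 \wedge v_4$. The pair of linear tori $T_a, T_b \subset T^4$ spanned by $\{v_1, v_2\}$ and $\{v_3, v_4\}$ meets transversely in a single point, and the Lagrange surgery $\Sigma_{a,b}$ is a Maslov-zero genus-$2$ Lagrangian in the homology class $[\Sigma_2]$.

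Next I would use Theorem \ref{thm:hms} to encode this topological match as equality of mirror numerical invariants. Write $\mathcal{E}_{a,b} \in D^b(E \times E)$ for the mirror of $\Sigma_{a,b}$. Euler characteristics of $\Hom^*(\mathcal{E}, -)$ against the four generators appearing in the remark following Theorem \ref{thm:hms} are, on the symplectic side, algebraic intersection numbers of $[\Sigma_2]$ with the corresponding classes, hence depend only on $[\Sigma_2]$. Thus $v(\mathcal{E}) = v(\mathcal{E}_{a,b})$, both of shape $(0,\ell,0)$ with $\ell^2 = 2$, the minimal nontrivial Mukai discriminant on the abelian surface $E \times E$. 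A semistable sheaf with this Mukai vector is necessarily the pushforward of a degree-$1$ line bundle from a (possibly nodal) genus-$2$ curve in the linear system $|\ell|$, and the Maslov-zero hypothesis together with asphericity of $T^4$ forces $\mathcal{E}$ to be concentrated in a single cohomological degree. The comparison of $\Ext$-ranks against the four generating sheaves then depends only on the Mukai vector, giving $\dim \Hom^*(\mathcal{E},T_{ij}) = \dim \Hom^*(\mathcal{E}_{a,b},T_{ij})$, and hence Floer-cohomological indistinguishability by split-generation.

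The hard part will be the last step of this mirror analysis: equality of Mukai vectors does not automatically imply equality of graded $\Ext$-ranks, as differentials in a spectral sequence or unexpected higher extensions could in principle enlarge the totals. The necessary algebro-geometric input is rigidity for low-discriminant semistable sheaves on $E\times E$; the minimality of $\ell^2 = 2$ conveniently restricts the moduli enough that the $\Hom$-profile against the four generating sheaves is constant on components. A subsidiary technical point is matching the brane data on $\Sigma_2$ (spin structure and $U(1)$-connection) with the choice of line-bundle twist on the supporting curve on the mirror side, so that the comparison respects all the decoration and not merely the isomorphism class of the curve.
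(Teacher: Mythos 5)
There is a real gap at the heart of the proposal: you aim to conclude Floer-cohomological indistinguishability from agreement of $\dim \Hom^*(\cE,T_{ij}) = \dim\Hom^*(\cE_{a,b},T_{ij})$ against the four split-generators, but matching ranks of $\Ext$-groups against a generating collection does not imply two objects of a (split-generated) triangulated category are isomorphic. The theorem is used later (Corollary \ref{cor:rankFloer}) precisely as a quasi-isomorphism in $D^{\pi}\sF(T^4)$ after a linear symplectomorphism, so you must actually pin down the isomorphism type of $\cE_\Sigma$, not merely its numerical $\Ext$-profile. Relatedly, the claim that ``the Maslov-zero hypothesis together with asphericity of $T^4$ forces $\cE$ to be concentrated in a single cohomological degree'' is not true: these hypotheses give $\Ext^*(\cE_\Sigma,\cE_\Sigma) \cong H^*(\Sigma_2;\Lambda_{\bR})$ as a graded algebra, and one then needs a genuine argument (in the paper, the spectral sequence \eqref{eqn:specseq} together with the estimate $\rk\Ext^1 \geq 2$ of Lemma \ref{lem:rkatleast2}) to conclude that $\cE_\Sigma$ has at most two cohomology sheaves; and indeed the target answer --- the mirror of the Lagrange surgery, a cone on the unique degree-one morphism $L_f \to L_s$ --- is a length-two complex, not a sheaf. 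Your semistability/rigidity step is also unsubstantiated: you never establish that $\cE_\Sigma$ (or its cohomology sheaves) is semistable, and the classification ``any semistable sheaf with this Mukai vector is a degree-$1$ line bundle on a genus-$2$ curve in $|\ell|$'' would at minimum need an argument, whereas the statement you actually need --- that the two mirror objects are related by an autoequivalence of $D^b(E\times E)$ realizable as a linear symplectomorphism --- requires precise control of the derived autoequivalence group.

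The paper's proof does use the mirror strategy you outline, but the algebro-geometric input is quite different and considerably heavier than numerical Mukai-vector arguments. It uses the full graded algebra isomorphism $\Ext^*(\cE_\Sigma,\cE_\Sigma)\cong H^*(\Sigma_2)$ to drive a spectral-sequence analysis constraining the cohomology sheaves; proves simplicity of those cohomology sheaves via Harder-Narasimhan and Jordan-H\"older filtrations and an Euler-characteristic count (Lemma \ref{lem:force-simple}); then invokes Mukai's structure theorem for simple semi-homogeneous sheaves (Theorem \ref{thm:Mukai}), Orlov's description of $\mathrm{Auteq}(D^b(A))$ for abelian varieties, the Birkenhake--Lange classification of abelian subvarieties of $E^4$, and Polishchuk's identification of the Hodge isometry group with $Sp_{2k}(\bZ)$, so that the resulting autoequivalence can be geometrically realized by a linear symplectomorphism of $T^4$. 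None of this machinery can be bypassed by a Mukai-vector computation; in particular your appeal to ``rigidity for low-discriminant semistable sheaves'' is both undefined and insufficient, since even when the moduli space is nice the desired conclusion is an isomorphism of objects, not a constancy of Hom-rank functions.
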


Although this is a rather specialized result, we should point out that it seems currently inaccessible without mirror symmetry: Lagrangian isotopy or uniqueness theorems in four dimensions typically rely on constructions of holomorphic foliations which do not exist, generically, on the torus, and our proof relies crucially on  results of Mukai and Orlov on sheaves on abelian varieties.
Despite its rather abstract formulation, Theorem \ref{thm:classify} has direct implications for intersection properties of Lagrangian submanifolds of $T^4$; it imposes numerical restrictions on such intersections reminiscent of those implied by the Arnol'd conjecture.

\begin{cor} \label{cor:intersect}
Let $\Sigma_2 \subset (T^4,\omega_{std})$ be a Lagrangian genus 2 surface of Maslov class zero. There are at least two Lagrangian tori $L_0,L_1\subset T^4$ (in rationally independent homology classes) with the property that any surface Hamiltonian isotopic to $\Sigma_2$ and meeting $L_{i}$ transversely does so in at least 3 points. 
\end{cor}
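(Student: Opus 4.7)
The plan is to take for $L_0, L_1$ the pair of linear Lagrangian tori supplied by Theorem \ref{thm:classify}. I would first invoke that theorem to fix linear Lagrangian 2-tori $T_\alpha, T_\beta \subset (T^4,\omega_{std})$ meeting transversely in a single point $p$, with $\Sigma_2$ Floer cohomologically indistinguishable from the Lagrange surgery $T_\alpha \#_p T_\beta$. Set $L_0 = T_\alpha$ and $L_1 = T_\beta$. Since the (geometric and algebraic) intersection pairing satisfies $[T_\alpha] \cdot [T_\beta] = \pm 1 \neq 0$, the classes $[T_\alpha], [T_\beta]$ are rationally independent in $H_2(T^4;\bQ)$.

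The main step is the Lagrange surgery exact triangle in the Fukaya category (Fukaya--Oh--Ohta--Ono; see also the discussion in \cite{Seidel:FCPLT}): the intersection point $p$ provides a morphism whose mapping cone realises (the brane associated to) $T_\alpha \#_p T_\beta$, giving a distinguished triangle
\[
T_\alpha \to T_\beta \to T_\alpha \#_p T_\beta \to T_\alpha[1]
\]
in $D^\pi \sF(T^4)$. I would then apply the cohomological functor $HF(-, T_\alpha)$, having perturbed $T_\alpha$ by a small Hamiltonian isotopy to ensure transversality. Since $T_\alpha$ is a linear 2-torus in an aspherical symplectic manifold, $HF^*(T_\alpha, T_\alpha) \cong H^*(T^2;\Lambda_\bR)$ has total rank $4$; transversality at a single point forces $\dim HF^*(T_\beta, T_\alpha) = 1$. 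Subadditivity of ranks in the resulting long exact sequence yields
\[
\dim_{\Lambda_\bR} HF^*(T_\alpha \#_p T_\beta,\, T_\alpha) \;\geq\; |4 - 1| \;=\; 3.
\]

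By the Floer indistinguishability supplied by Theorem \ref{thm:classify}, this gives $\dim HF^*(\Sigma_2, T_\alpha) \geq 3$, and symmetrically $\dim HF^*(\Sigma_2, T_\beta) \geq 3$. Since the total rank of Floer cohomology is invariant under Hamiltonian isotopy and bounds from below the number of transverse intersection points between any pair of Hamiltonian representatives, any surface Hamiltonian isotopic to $\Sigma_2$ and transverse to $L_i$ meets it in at least $3$ points, as required. The only delicate ingredient is the invocation of the Lagrange surgery exact triangle itself; in this four-dimensional Maslov-zero setting the relevant Floer theory is unobstructed for index reasons and the triangle is by now standard, so no genuine obstacle arises.
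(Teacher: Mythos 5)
Your overall strategy matches the paper's: realise $\Sigma_2$ as a cone over the unique morphism between the two linear tori furnished by Theorem \ref{thm:classify}, read off the rank bound $\geq 3$ from the resulting exact triangle against each torus (this is Corollary \ref{cor:rankFloer} in the paper), and finish by Hamiltonian invariance. The decomposition, the rank count, and the intersection bound all go as you describe.

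The genuine gap is in your justification of the surgery exact triangle. You assert that the Lagrange surgery exact triangle is ``by now standard'' in this setting and cite Fukaya--Oh--Ohta--Ono and Seidel's book. But Remark \ref{Rem:LagrangeSurgery} of the paper says the opposite: the statement that the Lagrange surgery of two Lagrangians meeting at a single point is quasi-represented by the mapping cone of the corresponding Floer cocycle is only \emph{conjectured} in general (outside the case where one of the factors is a sphere, via Dehn twists). The paper does not appeal to any such general principle; instead it \emph{proves} the required special case -- for a pair of transverse linear Lagrangian tori in $T^4$ -- in Proposition \ref{Prop:SurgeryIsCone}, and that proof is itself an application of mirror symmetry (Theorem \ref{thm:hms}) together with the structural analysis carried out in the proof of Theorem \ref{thm:classify}. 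So your step ``the intersection point $p$ provides a morphism whose mapping cone realises $T_\alpha \#_p T_\beta$'' needs to cite Proposition \ref{Prop:SurgeryIsCone} rather than FOOO; as written, it rests on a result that is not available.

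A second, smaller issue: your final sentence asserts that the rank of Floer cohomology bounds below the number of transverse intersection points for \emph{any} Hamiltonian representative of $\Sigma_2$, and that rank is Hamiltonian-isotopy invariant. That is exactly the content that has to be justified for a general (non-monotone, non-exact) Lagrangian: in a one-parameter family one could in principle encounter disc bubbling. The paper handles this by Lemmas \ref{lem:Ham-independent} and \ref{lem:pi2diesinH2} (the map $\pi_2(T^4,\Sigma_2)\to H_2(T^4,\Sigma_2)$ vanishes, so no discs can appear and the Fukaya isomorphism type depends only on the Hamiltonian isotopy class). You should cite these rather than taking the invariance for granted.
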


\begin{rem}
For  $\Sigma_2$ arising as a Lagrange surgery of a pair of transverse linear Lagrangian tori, one can isotope $\Sigma_2$ through Lagrangian non-Hamiltonian isotopies to meet any linear Lagrangian torus at most once.
\end{rem}

In the setting of Corollary \ref{cor:highdimension}, a Maslov zero Lagrangian submanifold $L \subset T^{2k}$ must have non-vanishing Floer cohomology with at least one $(F,\xi)$ comprising a torus fibre $F$ of a Lagrangian fibration $T^{2k} \rightarrow T^k$ equipped with a local system $\xi\rightarrow F$, or $L$ must bound a $J$-holomorphic disc for every compatible almost complex structure $J$.  This is just the mirror of the fact that no complex of sheaves on an algebraic variety can have vanishing $\Ext$'s with the structure sheaves of all closed points; but it seems far from obvious by direct symplectic arguments.  In another direction, Kapustin and Orlov \cite{KO2} argued that the Fukaya category of a high-dimensional torus did not seem ``large enough" (from the point of view of $K$-theory) to be mirror to the derived category of sheaves of an abelian variety, and suggested that one should augment the Fukaya category with certain co-isotropic branes; but Corollary \ref{cor:highdimension} indicates that, after passing to split-closures, these are in fact not necessary.



\subsection*{Acknowledgments}  Thanks to Daniel Huybrechts, Dima Orlov, Paul Seidel and Nick Shepherd-Barron for helpful suggestions and correspondence, as well as to Sikimeti Mau, Katrin Wehrheim and Chris Woodward for sharing with us a preliminary draft of their paper \cite{MWW}.   Detailed comments on our own preliminary draft from many of the same people have saved us from numerous (additional) errors: we are especially indebted to Dima Orlov.   Extensive and constructive comments from the anonymous referees have also greatly improved the exposition.


\section{$A_{\infty}$-algebra\label{Subsec:BackgroundAlgebra}}

We begin by collecting some basic facts about $A_{\infty}$-categories. These results were proved by various authors, starting with Kadeishvili \cite{Kadeishvili}. Any discussion of $A_{\infty}$-categories involves fixing sign conventions,  of which there are several; for consistency, we have 
chosen to cite all results from Seidel's book \cite{Seidel:FCPLT}, whose 
conventions 
we borrow.  The reader entirely unfamiliar with this material will 
probably find our treatment too cursory, and is invited to consult Part I of Seidel's book for a more leisurely development, for proofs,  and for the appropriate references to the original papers where the results were proved.

\subsection{$A_{\infty}$-categories and functors}

Fix an arbitrary field $k$.  A non-unital $A_{\infty}$-category $\scrA$ over $k$ comprises: a set of objects Ob$\,\scrA$; for each $X_0, X_1 \in$ Ob$\,\scrA$ a graded $k$-vector space $hom_{\scrA}(X_0, X_1)$; and $k$-linear composition maps, for $d \geq 1$,
\[
\mu_{\scrA}^d: hom_{\scrA}(X_{d-1},X_d) \otimes \cdots \otimes hom_{\scrA}(X_0,X_1) \longrightarrow hom_{\scrA}(X_0,X_d)[2-d]
\]
of degree $2-d$ (the notation $[l]$ refers to \emph{downward} shift by $l \in \bZ$).  The maps $\{\mu^d\}$ satisfy a hierarchy of quadratic equations
\[
\sum_{m,n} (-1)^{\maltese_n} \mu_{\scrA}^{d-m+1} (a_d,\ldots,a_{n+m+1}, \mu_{\scrA}(a_{n+m},\ldots,a_{n+1}),a_n \ldots, a_1) = 0
\]
with $\maltese_n = \sum_{j=1}^n |a_j|-n$ and where the sum runs over all possible compositions: $1 \leq m \leq d$, $0\leq n \leq d-m$.  The equations imply in particular that $hom_{\scrA}(X_0,X_1)$ is a cochain complex with differential $\mu^1_{\scrA}$; the cohomological category $H(\scrA)$ has the same objects as $\scrA$ but morphism groups are the cohomologies of these cochain complexes.  This has an associative composition 
\[
[a_2]\cdot[a_1] = (-1)^{|a_1|}[\mu_{\scrA}^2(a_2,a_1)]
\]
but the chain-level composition $\mu_{\scrA}^2$ on $\scrA$ itself is only associative up to homotopy.  (Thus $\scrA$  is not strictly a category.)  Moreover, the higher-order compositions $\mu_{\scrA}^d$ are \emph{not} chain maps, and do not descend to cohomology.    With appropriate sign conventions, a $dg$-category is just the special case in which $\mu_{\scrA}^d = 0$ for all $d>2$.

A non-unital $A_{\infty}$-functor $\cF: \scrA \rightarrow \scrB$ between non-unital $A_{\infty}$-categories $\scrA$ and $\scrB$ comprises a map $\cF$: Ob$\,\scrA \rightarrow$ Ob$\,\scrB$, and multilinear maps for $d\geq 1$
\[
\cF^d: hom_{\scrA}(X_{d-1}, X_d) \otimes \cdots \otimes hom_{\scrA}(X_0,X_1) \rightarrow hom_{\scrB}(\cF X_0, \cF X_1)[1-d]
\]
now satisfying the polynomial equations
\begin{multline*}
\sum_{r} \sum_{s_1+\cdots+s_r=d} \mu_{\sB}^r(\cF^{s_r}(a_d,\ldots,a_{d-s_r+1}),\ldots, \cF^{s_1}(a_{s_1},\ldots, a_{1})) \\
\quad  = \ \sum_{m,n} (-1)^{\maltese_n} \cF^{d-m+1}(a_d,\ldots, a_{n+m+1}, \mu_{\sA}^m (a_{n+m},\ldots, a_{n+1}), a_{n},\ldots a_1)
\end{multline*}
Any such defines a functor $H(\cF):H(\scrA) \rightarrow H(\scrB)$ which takes $[a] \mapsto [\cF^1(a)]$; if $H(\cF)$ is an isomorphism, respectively full and faithful, we say $\cF$ is a quasi-isomorphism, respectively cohomologically full and faithful.  The collection of $A_{\infty}$-functors from $\scrA$ to $\scrB$ themselves form the objects of a non-unital $A_{\infty}$-category $\scrQ=nu$-$fun(\scrA,\scrB)$, whose morphism groups are groups of $A_{\infty}$-natural transformations.   Concretely, given two functors $\cF$ and $\cG$, one defines a \emph{pre-natural transformation} $T$ to be a sequence $(T^0, T^1, \ldots)$, with $T^d$ a collection of maps
\begin{equation*} hom_{\scrA}(X_{d-1}, X_d) \otimes \cdots \otimes hom_{\scrA}(X_0,X_1) \to hom_{\scrB}(\cF X_0, \cG X_d)[2-d]  \end{equation*}
for all sequences $(X_0, \ldots, X_d)$ of objects in $\scrA$.  The vector space generated by all pre-natural transformations is by definition the space of morphisms between $\cF$ and $\cG$ in $ \scrQ$.  The formula for the differential on $ hom_{\scrQ}(\cF, \cG)$ can be found, for example, as Equation (1.8) in \cite{Seidel:FCPLT}.

The vector space  $ hom_{\scrQ}(\cF, \cG)$ admits a decreasing length filtration with  $ hom_{\scrQ}^{r}(\cF, \cG)$ consisting of all pre-natural transformations for which $T^0 = T^1 = \cdots = T^{r-1} = 0$.  The associated spectral sequence has as its first page
\begin{multline} \label{eq:spectral_sequence_length}  E^{r,s}_1 = \prod_{X_0, \ldots, X_r} \Hom^{s}\big( hom_{H(\scrA)}(X_{r-1},X_r) \otimes \cdots \otimes  hom_{H(\scrA)}(X_{0},X_1), \\
hom_{H(\scrB)}(\cF(X_0), \cG(X_r)) \big) \end{multline}

We should emphasise that there are in general more $A_{\infty}$-functors between $dg$-categories than $dg$-functors even at the level of homology, so when the higher order operations $(\mu_{\scrA}^d)_{d\geq 2}$ vanish, there is content to regarding $\scrA$ as an $A_{\infty}$-category.    In the other direction, Kadeishvili \cite{Kadeishvili} showed:

\begin{lem}[Homological Perturbation Lemma]\label{lem:HPL}
Any $A_{\infty}$-category is quasi-isomorphic to an $A_{\infty}$-structure on its cohomology 
\[
 (\scrA, \{\mu^d_{\scrA}\}_{d\geq 1}) \quad \simeq \quad 
(H(\scrA), \{\mu^d_{H(\scrA)}\}_{d\geq 2})\]
 with $\mu^1_{H(\scrA)}=0$.
\end{lem}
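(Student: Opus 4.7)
The plan is to construct the $A_\infty$-structure on $H(\scrA)$ together with a quasi-isomorphism $\cF : H(\scrA) \to \scrA$ simultaneously, by an explicit tree sum formula, and then verify the $A_\infty$-relations by organizing the combinatorics so that everything reduces to the $A_\infty$-relations already satisfied in $\scrA$.

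First I would produce the auxiliary chain-level data. Since we work over a field $k$, for each pair of objects $X_0, X_1$ the cochain complex $(hom_{\scrA}(X_0,X_1),\mu^1_{\scrA})$ splits as a direct sum of a complex with zero differential (a choice of cocycle representatives for cohomology) and an acyclic complex. This produces, object by object, degree-preserving $k$-linear maps
\[
i \co H(\scrA)(X_0,X_1) \to hom_{\scrA}(X_0,X_1), \qquad p \co hom_{\scrA}(X_0,X_1) \to H(\scrA)(X_0,X_1)
\]
with $p\circ i = \id$ and $\id - i\circ p = \mu^1_{\scrA} \circ h + h \circ \mu^1_{\scrA}$ for some homotopy $h$ of degree $-1$; the maps $i$ are chain maps inducing the identity on cohomology.

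Next I would define $\mu^1_{H(\scrA)} = 0$, $\cF^1 = i$, and inductively, for $d \geq 2$, define $\mu^d_{H(\scrA)}$ and $\cF^d$ as sums over isomorphism classes of planar rooted trees $T$ with $d$ leaves in which every internal vertex has valence $\geq 3$. The rule is: decorate the $d$ incoming leaves by the $d$ input morphisms, each internal vertex of arity $r$ by the operation $\mu^r_{\scrA}$, each internal edge by $h$, and the outgoing root edge by $p$ (giving $\mu^d_{H(\scrA)}$) or by $h$ (giving $\cF^d$). Signs are the standard Koszul signs inherited from $\scrA$. This is the Kadeishvili formula; by construction $\cF^1 = i$ is a cohomology isomorphism, so provided $\cF$ is an $A_\infty$-functor it is a quasi-isomorphism, and by construction $\mu^2_{H(\scrA)}$ descends to the associative composition on $H(\scrA)$.

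The main obstacle is verifying that $\{\mu^d_{H(\scrA)}\}$ satisfies the $A_\infty$-relations and that $\cF$ satisfies its functor equations; these are proved together by induction on $d$. The idea is to apply the homotopy relation $\id = ip + \mu^1_{\scrA} h + h \mu^1_{\scrA}$ along each internal edge of a tree, and then use the $A_\infty$-relations $\sum\pm\mu_{\scrA}\circ\mu_{\scrA} = 0$ at each internal vertex to pair up contributions from trees that differ by contracting or splitting a single internal edge. All such contributions cancel in pairs, except for the boundary terms: the leaves (which assemble into $\cF^k$ applied to sub-trees), the root (giving either $\mu_{H(\scrA)}$ or $\cF$ itself), and the $ip$ pieces on internal edges, which by induction collapse to compositions of lower $\cF^j$ and $\mu^r_{H(\scrA)}$. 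Collecting these contributions yields precisely the $A_\infty$ functor relations for $\cF$, and implicitly then the $A_\infty$ relations for $\{\mu^d_{H(\scrA)}\}$. The combinatorial/sign bookkeeping in this cancellation is the heart of the argument and is where essentially all the work lies; the rest is formal.
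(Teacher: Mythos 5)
The paper states this lemma and cites Kadeishvili \cite{Kadeishvili} (and points to Seidel's book for the general $A_{\infty}$-categorical treatment); it gives no proof of its own, so there is no argument here to compare against step-by-step. Your sketch is the standard tree-sum (Kadeishvili--Merkulov--Kontsevich--Soibelman) proof of the homotopy transfer theorem, and the overall strategy is correct.

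Two small expository points are worth tightening. First, in the tree formula the leaves should explicitly carry the inclusion $i$ (not just ``the input morphisms''), since the inputs live in $H(\scrA)$ while the vertex operations $\mu^r_{\scrA}$ act on $hom_{\scrA}$; the convention is: external incoming edges decorated by $i$, internal edges by $h$, root by $p$ for $\mu^d_{H(\scrA)}$ and by $h$ (with the convention $\cF^1 = i$) for $\cF^d$. Second, the cancellation argument you outline goes most cleanly if one imposes the usual side conditions $hi = 0$, $ph = 0$, $h^2 = 0$ on the homotopy; over a field these can always be arranged from the splitting you chose, and without them extra terms appear that one must check still cancel. Neither issue is a genuine gap, but both are where the ``sign and combinatorial bookkeeping'' you acknowledge actually bites, so a complete write-up would need to address them explicitly rather than by fiat.
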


If $\scrA \simeq (H(\scrA), \{\mu^d_{H\scrA}\})$  is quasi-isomorphic to the trivial $A_{\infty}$-structure on its cohomology, namely to the structure with $\mu^d_{H\scrA} \equiv 0$ for $d\geq 3$, we say that $\scrA$ is \emph{formal}.

\subsection{$A_{\infty}$-modules}
An $A_{\infty}$-category $\scrA$ has a category $\mod$-$(\scrA)$ of right $A_{\infty}$-modules, which abstractly is  the category of functors $nu$-$fun(\scrA^{opp},Ch)$ from (the opposite of) $\scrA$ to the $dg$-category of chain complexes of graded $k$-vector spaces.  Concretely, an $A_{\infty}$-module $\scrM$ associates to any $X\in$ Ob$\,\scrA$ a graded vector space $\scrM(X)$, and there are maps for $d\geq 1$
\[
\mu_{\scrM}^d: \scrM(X_{d-1}) \otimes hom_{\scrA}(X_{d-2},X_{d-1}) \otimes \cdots \otimes hom_{\scrA}(X_0,X_1) \rightarrow \scrM(X_0)[2-d].
\]
The $A_{\infty}$-functor equations imply in particular that $\mu_{\scrM}^1$ is the differential on the chain complex  $\scrM(X_0)$.  In the sequel, we shall make particular use of the  $A_{\infty}$-version of the Yoneda Lemma \cite[Lemma 2.12]{Seidel:FCPLT}.  To state this, note that for any $Y \in$ Ob$\,\scrA$, there is an associated module $\scrY$ defined by
\[
\scrY(X) = hom_{\scrA}(X,Y); \quad \mu_{\scrY}^d = \mu_{\scrA}^d. 
\]
The association $Y \mapsto \scrY$ extends to a canonical non-unital $A_{\infty}$-functor $\scrA \rightarrow mod$-$(\scrA)$, the Yoneda embedding.  There is a dual Yoneda embedding into the same category of modules
\[
Y \mapsto \scrY^{\vee}; \qquad \scrY^{\vee}(X) = (hom_{\scrA}(Y,X))^{\vee}
\]
with the module structure given by
\begin{equation*} \mu_{\scrY^{\vee}}^d( \phi, a_d, \ldots, a_1)(b) = \phi( \mu_{\scrA}^d(b, a_d, \ldots, a_1) ). \end{equation*}

The proof of the next result uses the fact that the spectral sequence \eqref{eq:spectral_sequence_length} collapses at the second page to the column $r=0$ because of the acyclicity of the bar resolution:
\begin{lem}[Yoneda Lemma] \label{lem:Yoneda}
The natural map  $\lambda: \scrM(Y) \rightarrow hom_{\scrQ}(\scrY,\scrM)$ taking
\[
\lambda(c)^d(b,a_{d-1},\ldots,a_1) = \mu_{\scrM}^{d+1}(c,b,a_{d-1},\ldots,a_1)
\]
is a quasi-isomorphism.  The association $Y \mapsto \scrY$ defines a cohomologically full and faithful functor $\scrA \rightarrow \scrQ$.
\end{lem}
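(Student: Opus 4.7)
The plan is to prove the quasi-isomorphism statement by running the length filtration spectral sequence \eqref{eq:spectral_sequence_length} on $\hom_{\scrQ}(\scrY, \scrM)$, and then to deduce the cohomologically full and faithful claim by specializing $\scrM$ to another Yoneda module. As a preliminary step I need to check that $\lambda$ is well-defined on the chain level. Given $c \in \scrM(Y)$, the formula $\lambda(c)^d(b, a_{d-1}, \ldots, a_1) = \mu_{\scrM}^{d+1}(c, b, a_{d-1}, \ldots, a_1)$ defines a pre-natural transformation $\scrY \to \scrM$ (i.e.\ its $d=0$ component lies in $\hom_{\scrB}(\scrY(Y), \scrM(Y))$ since $\scrY(Y) = \hom_{\scrA}(Y,Y)$, and so on). A direct manipulation of the $A_{\infty}$-module equations for $\scrM$, combined with the fact that $\mu_{\scrY}^d = \mu_{\scrA}^d$, shows that the image under the natural-transformation differential $\mu^1_{\scrQ}$ is $\lambda(\mu^1_{\scrM}(c))$, so $\lambda$ is a chain map.

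Next I apply the length filtration on $\hom_{\scrQ}(\scrY, \scrM)$. The $E_1$ page is the analogue of \eqref{eq:spectral_sequence_length}, and by the tensor--Hom adjunction its column $r$ identifies with
\begin{equation*}
E_1^{r,s} = \prod_{X_0, \ldots, X_r} \Hom^s\bigl(\hom_{H(\scrA)}(X_0, Y) \otimes \hom_{H(\scrA)}(X_{r-1}, X_r) \otimes \cdots \otimes \hom_{H(\scrA)}(X_0, X_1),\ H(\scrM)(X_r)\bigr),
\end{equation*}
and tracing through the definitions shows that the $d_1$ differential is dual to the bar differential of the right $H(\scrA)$-module associated with $Y$. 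The key input is then the acyclicity of the bar resolution of the Yoneda module in the cohomological category: the contracting homotopy is given by inserting the cohomological unit of $\hom_{H(\scrA)}(Y,Y)$ at the appropriate position. Thus $E_2^{r,s} = 0$ for $r \geq 1$, while the $r=0$ column is precisely $H^*(\scrM(Y))$. The spectral sequence therefore collapses at $E_2$ and converges to $H^*(\hom_{\scrQ}(\scrY, \scrM))$; checking on representatives shows the resulting isomorphism is induced by $H(\lambda)$.

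For the second assertion, specialize $\scrM = \scrY'$ for another object $Y' \in \Ob\,\scrA$. Then $\scrY'(Y) = \hom_{\scrA}(Y, Y')$, and the quasi-isomorphism $\lambda$ gives an isomorphism $\hom_{H(\scrA)}(Y, Y') \xrightarrow{\sim} H^*(\hom_{\scrQ}(\scrY, \scrY'))$. A direct calculation shows that under the cohomological functor $H(\scrA) \to H(\scrQ)$ this map agrees with the map induced by Yoneda, so $\scrA \to \scrQ$ is cohomologically full and faithful. The main obstacle I expect is the careful reconciliation of signs and grading shifts between the natural-transformation differential $\mu^1_{\scrQ}$ and the dualized bar differential on $E_1$; once that is pinned down the acyclicity argument proceeds by a standard contracting homotopy in the (cohomologically unital) category $H(\scrA)$.
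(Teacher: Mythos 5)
Your proposal matches the paper's own (tersely indicated) proof: the paper explicitly states, just before Lemma~\ref{lem:Yoneda}, that the argument runs the length-filtration spectral sequence~\eqref{eq:spectral_sequence_length} and uses acyclicity of the bar resolution to collapse it to the $r=0$ column, which is exactly the route you take, down to the identification of the $d_1$ differential with the (dualized) bar differential and the contracting homotopy via the cohomological unit. The deduction of cohomological full-faithfulness by specializing $\scrM = \scrY'$ is likewise the standard step, so this is the same argument, merely written out at greater length.
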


The category of $A_{\infty}$-modules is naturally a \emph{triangulated} $A_{\infty}$-category:  morphisms have cones.  More precisely, if $c\in hom_{\scrA}(Y_0,Y_1)$ is a degree zero cocycle, $\mu^1_{\scrA}(c)=0$, there is an $A_{\infty}$-module $Cone(c)$ defined by
\begin{equation}\label{eqn:cone}
Cone(c)(X) = hom_{\scrA}(X,Y_0)[1] \oplus hom_{\scrA}(X,Y_1)
\end{equation}
and with operations $\mu^d_{Cone(c)}((b_0,b_1),a_{d-1},\ldots,a_1)$ given by the pair of terms
\[
\left(\mu_{\scrA}^d(b_0,a_{d-1},\ldots,a_1), \mu_{\scrA}^d(b_1,,a_{d-1},\ldots,a_1)+ \mu_{\scrA}^{d+1}(c,b_0,a_{d-1},\ldots,a_1) \right).
\]
One can generalise the mapping cone construction and consider \emph{twisted complexes} in $\scrA$: a twisted complex is a pair $(X,\delta_X)$ where $X$ is a formal direct sum
\begin{equation} \label{eq:twisted_complex_vector_space}
X = \oplus_{i\in I} V^i \otimes X^i
\end{equation}
with $\{X^i\} \in$ Ob$\,\scrA$ and $V^i$ finite-dimensional graded $k$-vector spaces -- i.e. $X$ is an object of the ``additive enlargement" $\Sigma\scrA$ --  and where $\delta_X \in hom^1_{\Sigma\scrA}(X,X)$ is a matrix of differentials
\begin{equation} \label{eq:twisted_complex_differential}
\delta_X = (\delta_X^{ji});\qquad \delta_X^{ji} = \sum_k \phi^{jik} \otimes x^{jik}
\end{equation}
with $\phi^{jik} \in Hom_k(V^i,V^j)$, $x^{jik} \in hom_{\scrA}(X^i,X^j)$ and having total degree $|\phi^{jik}| + |x^{jik}| = 1$.  The differential $\delta_X$ should satisfy the two properties
\begin{itemize}
\item $\delta_X$ is strictly lower-triangular with respect to some filtration of $X$;
\item $\sum_{r=1}^{\infty} \mu^r_{\Sigma\scrA}(\delta_X,\ldots,\delta_X)=0$.
\end{itemize}
Twisted complexes themselves form the objects of a non-unital $A_{\infty}$-category $Tw(\scrA)$, which has the property that all morphisms can be completed with cones to sit in exact triangles.  A basic  example of a twisted complex is that obtained simply by taking the tensor product of an object $X$ by the vector space $k$ placed in some non-zero degree; in particular, the category $Tw(\scrA)$ has a \emph{shift functor}, which practically has the effect of shifting all degrees of all morphism groups downwards by one.

A particularly important class of mapping cones are those arising from \emph{twist functors}.  Given $Y \in$ Ob$\,\scrA$ and an $\scrA$-module $\scrM$, we define the twist $\scrT_Y\scrM$ as the module
\[
\scrT_Y\scrM(X) = \scrM(Y)\otimes hom_{\scrA}(X,Y)[1] \oplus \scrM(X)
\]
(with operations we shall not write out here).    The twist is the cone over the canonical evaluation morphism
\[
\scrM(Y) \otimes \scrY \rightarrow \scrM
\]
where $\scrY$ denotes the Yoneda image of $Y$.  For two objects $Y_0, Y_1 \in$ Ob$\, \scrA$, the essential feature of the twist is that it gives rise to a canonical exact triangle in $H(\scrA)$
\[
\cdots \rightarrow Hom_{H(\scrA)}(Y_0,Y_1) \otimes Y_0 \rightarrow Y_1 \rightarrow T_{Y_0}(Y_1) \stackrel{[1]}{\rightarrow} \cdots
\]
(where $T_{Y_0}(Y_1)$ is any object whose Yoneda image is $\scrT_{Y_0}(\scrY_1)$).  Twist functors have played a critical role in relating algebraic properties of Fukaya categories and geometric properties of the underlying symplectic manifold, as briefly indicated below (Proposition \ref{prop:seidel}).

\subsection{Unitality and projection functors} 
The $A_{\infty}$-categories we shall consider are \emph{cohomologically unital}, meaning the categories $H(\scrA)$ are unital (objects have identity morphisms, so $H(\scrA)$ is a graded linear category in the usual sense).  In fact, if one makes careful choices in constructing the Fukaya category, its objects are equipped with lifts of the cohomological units satisfying
\begin{align*}  \mu^{2}(e, e) & = e  \\
\mu^{d}(e, \ldots, e) & = 0  \textrm{ if }d>2. \end{align*}
From a more formal point of view, we note that the existence of an element satisfying these conditions for an object $Y \in \scrB$ is equivalent to the existence of an $A_{\infty}$-functor
\begin{equation} \label{eq:unit_functor}  \cU_{Y} \co k \to \scrB \end{equation}
with vanishing higher order terms.  Here, we think of $k$ as a category with one object whose endomorphism algebra is the ground field $k$; the functor takes this unique object to $Y$, and its linear term maps $1 \in k$ to the unit of $Y$.

Starting with any finite dimensional co-chain complex $C^*$ and such a linear functor with target $Y$, we may naturally define a twisted complex
\begin{equation} \label{eq:tensor_product_chain_cmplex} C^{*} \otimes Y  \end{equation}
In the notation of Equations \eqref{eq:twisted_complex_vector_space} and \eqref{eq:twisted_complex_differential}, the vector spaces $V^i$ are the graded components of $C^*$, and the differential $\delta^{ji}$ is only non-vanishing if $j=i+1$, in which case it is given by the tensor product of the differential on $C^*$ with the identity on $Y$:
\begin{equation*}  \partial_{C^*} \otimes e. \end{equation*}

The existence of such units also allows us to define certain projection functors which in the general cohomologically unital case only make sense after passing to modules.
\begin{defin} \label{defin:projection-functor}
For each pair of objects $(X_-,Y_+)$ of $\scrA$ and $\scrB$ with $Y_+$ the image of a linear functor with source $k$, we define a \emph{projection} functor
\begin{equation} \cI(X_-,Y_+): \scrA \rightarrow Tw(\scrB) \end{equation}
which acts on objects by
\[ X \longrightarrow hom_{\scrA}(X_-, X) \otimes Y_{+} \] 
and on morphisms by
\begin{align} 
\notag hom_{\scrA}(X_{d-1},X_{d}) \otimes  \cdots \otimes hom_{\scrA}(X_{0}, X_{1}) &  \longrightarrow   \\   & \hspace{-2.4in}
\Hom\big(hom_{\scrA}(X_-, X_0), hom_{\scrA}(X_-, X_d)\big) \otimes hom_{\scrB}(Y_+, Y_+)[1-d] \\ \label{eq:formula_yoneda_functor-general}
a_{d} \otimes \cdots \otimes a_{1} &  \mapsto \mu^{d+1}_{\scrA} (a_{d}, \ldots, a_{1}, \cdot) \otimes e
\end{align}
\end{defin}

One may understand this construction from the abstract point of view by noting that the existence of the twisted complex $C^{*} \otimes Y$ is part of a higher \emph{tensor structure} on the category of $A_{\infty}$-categories.  As we shall not require the full power of such $(\infty,2)$-categorical machinery, and as the axiomatics of such a structure are rather delicate, we focus instead on a special situation which exploits the fact that the tensor product of an $A_{\infty}$-category $\cA$ and a $dg$-category $\cC$ may be easily defined by a formula analogous to \eqref{eq:formula_yoneda_functor-general}.

In particular, given  $A_{\infty}$-categories $\scrA$ and $\scrB$, $dg$-categories $\scrC$ and $\scrD$, and $A_{\infty}$-functors $\cF \co \scrA \to \scrD$, and $\cG \co \scrC \to \scrB$ such that $\cG$ has no higher order terms, one may define the tensor product of $\cF$ and $\cG$
\begin{equation*} \cF \otimes \cG \co \scrA \otimes \scrC \to \scrD \otimes \scrB \end{equation*}
whose higher order terms are obtained by applying the higher order terms of $\cF$.

If $\scrC$ is the category $k$ with one object, we have a canonical isomorphism $\scrA \otimes k \cong \scrA$, while if $\scrD$  is the category of chain complexes over $k$, we have a fully faithful embedding
\begin{equation} \label{eq:functor_tensor_complexes}  Ch \otimes \scrB  \to Tw(\scrB)  \end{equation} 
given by the construction of Equation \eqref{eq:tensor_product_chain_cmplex}.   In this language,  the functor $\cI(X_-,Y_+)$  is isomorphic to the composition of \eqref{eq:functor_tensor_complexes} with the tensor product of the Yoneda functor $\cX_-$ for $X_-$ and the functor $\cU_{Y_+} \co k \to \scrB$ from Equation \eqref{eq:unit_functor}:
\begin{equation*} \xymatrix{  
\scrA \ar[rr]^{\cI(X_-,Y_+)} \ar[dr]^{ \cX_-\otimes  \cU_{Y_+}}  & & Tw(\scrB) \\
& Ch \otimes   \scrB \ar[ur] & }
    \end{equation*} 

Given  a pair  of objects $(X_-,Y_+)$ and $(X_-', Y_+')$ such that $Y_+$ and $Y'_+$ are the images of $k$ under  linear functors, let us write $\cI = \cI(X_-,Y_+) $ and  $\cI' = \cI(X'_-,Y'_+) $. As a result of the previous discussion we expect that   $hom_{\scrQ}(\cI, \cI')$ is quasi-isomorphic to the tensor product of morphisms from $X_-$ to $X'_-$ with morphisms from  $Y_+$ to $Y'_+$.  To prove this, we consider the functors at the level of homological categories 
\begin{equation*} H^*\cI \textrm{ and }   H^*\cI' \co  H^*(\scrA) \to H^*(  Tw(\scrB) ) \end{equation*}
which are honest functors with no higher order terms.  There is a natural map
\begin{align}  \label{eq:a_infty_to_homological_functor}
H^* hom_{\scrQ}(\cI, \cI') & \to  hom_{H^*(\scrQ)}( H^*\cI,  H^*\cI') \\
[(T_0, T_1, \ldots) ] & \mapsto [T_0]
\end{align} 
where $H^*(\scrQ)$ is the category of cohomological functors in which morphisms consist of natural transformations.  The standard categorical Yoneda argument (rather than an $A_{\infty}$ version thereof) allows us to readily compute that
\begin{equation} hom_{H^*(\scrQ)}( H^*\cI,  H^*\cI')  \cong  hom_{H^*(\scrA)}( X_-,  X'_- )) \otimes hom_{H^*(\scrB)}(Y_+, Y'_+) . \end{equation}

\begin{lem}  \label{lem:yoneda_for_projection} 
Every natural transformation between $\cI$ and  $\cI'$ is detected at the level of homology, i.e. the map \eqref{eq:a_infty_to_homological_functor} is an isomorphism.  In particular,
\begin{equation}H^* hom_{\scrQ}( \cI,  \cI')  \cong  hom_{H^*(\scrA)}( X_-,  X'_- )) \otimes hom_{H^*(\scrB)}(Y_+, Y'_+) . \end{equation}
\end{lem}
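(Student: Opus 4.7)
The plan is to apply the length-filtration spectral sequence \eqref{eq:spectral_sequence_length} to the morphism complex $hom_{\scrQ}(\cI,\cI')$ and to show that it degenerates at $E_2$, following essentially the same bar-resolution argument that proves Lemma \ref{lem:Yoneda}.

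First I would exploit the factorisations $\cI=\cX_{-}\otimes\cU_{Y_+}$ and $\cI'=\cX'_{-}\otimes\cU_{Y'_+}$, in which $\cU_{Y_+}$ and $\cU_{Y'_+}$ are strict (linear) functors. Because morphisms between twisted complexes of the shape $V\otimes Y_+$ and $V'\otimes Y'_+$ split as $\Hom_{k}(V,V')\otimes hom_{\scrB}(Y_+,Y'_+)$, the coefficient groups of the spectral sequence decompose as
\[ hom_{H(\Tw\,\scrB)}\bigl(\cI(X_0),\cI'(X_r)\bigr) \;\cong\; \Hom_{k}\!\bigl(hom_{H\scrA}(X_-,X_0),\,hom_{H\scrA}(X'_-,X_r)\bigr)\otimes hom_{H\scrB}(Y_+,Y'_+). \]
Moreover, the formula \eqref{eq:formula_yoneda_functor-general} shows that the only higher-order operations of $\cI$ and $\cI'$ come from $\mu_{\scrA}^{d+1}(a_d,\ldots,a_1,\cdot)$ tensored with the constant $e$; combined with the strict-unit relations $\mu_{\scrB}^{2}(e,e)=e$ and $\mu_{\scrB}^{d}(e,\ldots,e)=0$ for $d>2$, this ensures that the $d_1$-differential preserves the tensor splitting and acts only on the first factor.

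The next step is to apply tensor-hom adjunction on the first factor, so that $E_1^{r,*}$ becomes, up to the constant tensor factor $hom_{H\scrB}(Y_+,Y'_+)$, the bar complex
\[ \prod_{X_0,\ldots,X_r}\Hom\!\Bigl(hom_{H\scrA}(X_{r-1},X_r)\otimes\cdots\otimes hom_{H\scrA}(X_0,X_1)\otimes hom_{H\scrA}(X_-,X_0),\,hom_{H\scrA}(X'_-,X_r)\Bigr), \]
which is exactly the complex shown to be acyclic outside $r=0$ in the proof of Lemma \ref{lem:Yoneda}, and whose $r=0$ column computes $hom_{H^{*}(\scrA)}(X_-,X'_-)$. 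Tensoring back by $hom_{H\scrB}(Y_+,Y'_+)$ yields the desired identification
\[ H^{*}hom_{\scrQ}(\cI,\cI')\;\cong\;hom_{H^{*}(\scrA)}(X_-,X'_-)\otimes hom_{H^{*}(\scrB)}(Y_+,Y'_+); \]
and since $E_2^{0,*}$ is by definition the kernel of $d_1$ on $E_1^{0,*}$, which is precisely the space of cohomological natural transformations, the map \eqref{eq:a_infty_to_homological_functor} coincides with the edge homomorphism of this collapsing spectral sequence and is therefore an isomorphism.

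The main obstacle is justifying rigorously that the $d_1$-differential respects the tensor splitting, since a priori a pre-natural transformation could couple operations on the $\scrA$- and $\scrB$-sides. Strictness of $\cU_{Y_+}$ and $\cU_{Y'_+}$ is what blocks any such coupling; once that point is carefully pinned down, the rest of the argument is a formal transcription of the Yoneda Lemma proof with an inert tensor factor from $\scrB$.
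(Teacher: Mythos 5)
Your proposal matches the paper's own argument: both apply the length-filtration spectral sequence to $hom_{\scrQ}(\cI,\cI')$, split the $E_1$ page as the tensor product of the bar complex computing $hom_{\scrQ}(\scrX_-,\scrX'_-)$ with the inert factor $hom_{H\scrB}(Y_+,Y'_+)$, invoke the Yoneda-Lemma collapse at the $r=0$ column, and identify that column with what survives the projection \eqref{eq:a_infty_to_homological_functor}. Your extra care in pinning down why the $d_1$-differential respects the tensor splitting (via strictness of the unit functors) is a useful elaboration of a step the paper dispatches with ``it is easy to check.''
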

\begin{proof}[Sketch of proof]
The proof is a minor generalisation of that of the Yoneda Lemma.  Namely, we consider the length filtration on $hom_{\scrQ}(\cI, \cI')$, and observe that Equation \eqref{eq:spectral_sequence_length} specialises, in this case to 
\begin{multline}E^{r,s}_1 =  \prod_{X_0, \ldots, X_r} \Hom^{s} \big( hom_{H(\scrA)}(X_{r-1},X_r) \otimes \cdots \otimes  hom_{H(\scrA)}(X_{0},X_1), \\
\Hom( hom_{H(\scrA)}(X_{-},X_0), hom_{H(\scrA)}(X'_-, X_r))  \otimes hom_{H(\scrB)  } (Y_+, Y'_+) \big) \end{multline}

Using adjunction, this can be more conveniently rewritten as
\begin{multline}E^{r,s}_1 =  \prod_{X_0, \ldots, X_r} \Hom^{s} \big( hom_{H(\scrA)}(X_{r-1},X_r) \otimes \cdots \otimes  hom_{H(\scrA)}(X_{0},X_1) \otimes \\
 hom_{H(\scrA)}(X_{-},X_0) , hom_{H(\scrA)}(X'_-, X_r) \big)  \otimes hom_{H(\scrB)  } (Y_+, Y'_+).\end{multline}
Note that this is the tensor product of the $E^1$ page of the spectral sequence computing $hom_{\scrQ}(  \scrX_-,   \scrX'_-  ) $ with the graded vector space $ hom_{H(\scrB)  } (Y_+, Y'_+)$.   As the differential $\partial_1^{r,s}$ involves only the product on homology, it is easy to check that it is given by the tensor product of the differential on the $E^1$ page for  $hom_{\scrQ}(  \scrX_-,   \scrX'_-  ) $ with the identity on  $ hom_{H(\scrB)  } (Y_+, Y'_+)$.  As the spectral sequence for $hom_{\scrQ}(  \scrX_-,   \scrX'_-  ) $ collapses at the second page to the  column $r=0$, we conclude the same result for  $hom_{\scrQ}(\cI(X_-,Y_+), \cI(X'_-,Y'_+))$.  Note that the column $r=0$ precisely consists of pre-natural transformations with non-zero $T^0$ term, i.e. ones which survive the projection to $hom_{H^*(\scrQ)}( H^*\cI,  H^*\cI')$.  
\end{proof}

\subsection{Idempotents and homological invariants}
The split-closed (also called idempotent-closed, or Karoubi-complete) derived category $D^{\pi}(\scrA)$ of $\scrA$ is obtained from $\scrA$ by splitting idempotent endomorphisms.  Instead of giving the details, we just point out \cite{Seidel:FCPLT} that an $A_{\infty}$-category is idempotent-closed if and only if its cohomological category $H(\scrA)$ has the same property, so one can view the passage from $Tw(\scrA)$ to $D^{\pi}(\scrA)$ as formally including objects which represent summands of endomorphism rings associated to cohomological idempotents.  If the smallest split-closed triangulated $A_{\infty}$-category containing a subcategory $\scrA' \subset \scrA$  is $D^{\pi}(\scrA)$, then we will say that $\scrA'$ split-generates $\scrA$.

To conclude the background in algebra, we mention two homological invariants of an $A_{\infty}$-category. The first is the K-theory, or rather the Grothendieck group 
\begin{equation} \label{eqn:K0}
K_0(\scrA) \ = \ \bZ\,\mathrm{Ob}\,Tw(\scrA) \big/ \langle [A]+[B]-[C] \rangle 
\end{equation}
where we impose a relation whenever $C$ is quasi-isomorphic to the mapping cone of a closed degree one morphism $A \rightarrow B$.  From the definition of twisted complexes, the $K_0$-group is actually generated by objects of $\scrA$ (by contrast its behaviour under passing to split-closure is rather wild in general).
Lastly, we also recall the definition of the Hochschild cohomology of an $A_{\infty}$-category $\scrA$. The most concise definition is to view  $HH^*(\scrA) = H(hom_{fun(\scrA,\scrA)}(\id,\id))$ as the morphisms in the $A_{\infty}$-category of endofunctors of $\scrA$ from the identity functor to itself.  More prosaically, $HH^*(\scrA)$ is computed by a chain complex $CC^*(\scrA)$ as follows.  A degree $r$ cochain is a sequence $(h^d)_{d\geq 0}$ of collections of linear maps
\[
h^d_{(X_1,\ldots,X_{d+1})}: \bigotimes_{i=d}^1 hom_{\scrA}(X_i,X_{i+1}) \rightarrow hom_{\scrA}(X_1,X_{d+1})[r-d]
\]
for each $(X_1,\ldots,X_{d+1})\in \Ob(\scrA)^{d+1}$.   The differential is defined by the usual sum over possible concatenations
\begin{equation}
\begin{aligned}
(\partial h)^d & (a_d,\ldots, a_1) = \\
& \sum_{i+j<d+1} (-1)^{(r+1)\maltese_i} \mu_{\scrA}^{d+1-j}(a_d,\ldots,a_{i+j+1},h^j(a_{i+j},\ldots,a_{i+1}),a_{i},\ldots,a_1) \\
+ &  \sum_{i+j\leq d+1} (-1)^{\maltese_{i} +r +1}  h^{d+1-j} (a_d,\ldots,a_{i+j+1},\mu_{\scrA}^j(a_{i+j},\ldots,a_{i+1}),a_{i},\ldots,a_1).
\end{aligned}
\end{equation}

In particular, $h^{0}$ defines an endomorphism of every object of the category.  The above formula for the differential readily implies the next result:
\begin{lem} \label{lem:HHrestriction}
For any object $X$ of $\scrA$, the assignment
\begin{align*}
CC^{*}( \scrA) & \to hom_{\scrA} (X,X) \\
 (h^{d})_{d \geq 0} & \mapsto h^{0}_{X} 
\end{align*}
is a chain map. \noproof
\end{lem}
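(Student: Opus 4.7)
The plan is to evaluate the Hochschild differential formula in its lowest degree $d = 0$ and observe that its output is precisely $\mu^1_{\scrA}$ applied to the $h^0$-component, so that restricting to a single hom-space automatically yields a chain map. Concretely, I would specialize the displayed formula for $(\partial h)^d$ to $d = 0$, where the input sequence $(a_d, \ldots, a_1)$ is empty, and check which terms survive in each of the two sums.

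The first sum has the constraint $i + j < d + 1 = 1$, which forces $i = j = 0$ and yields the single contribution $\mu^1_{\scrA}(h^0)$. The second sum has the constraint $i + j \leq 1$; with no $a_i$'s available to insert, the candidates $(i,j) = (0,1)$ and $(1,0)$ would require a nonexistent $a_1$ as input to $\mu^j_{\scrA}$, while the remaining $(i,j) = (0,0)$ case contributes $h^1(\mu^0_{\scrA})$, which vanishes because the $A_\infty$-categories under consideration are uncurved, i.e.\ $\mu^0_{\scrA} = 0$. Therefore $(\partial h)^0 = \mu^1_{\scrA}(h^0)$ as a collection of endomorphisms indexed by the objects of $\scrA$.

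Evaluating this equality at the chosen object $X$ gives $(\partial h)^0_X = \mu^1_{\scrA}(h^0_X)$. Since $\mu^1_{\scrA}$ is by definition the differential on the cochain complex $hom_{\scrA}(X,X)$, the assignment $(h^d)_{d \geq 0} \mapsto h^0_X$ intertwines $\partial_{CC^*}$ with $\mu^1_{\scrA}$, which is exactly the chain map condition. The only genuine book-keeping step is to verify that the sign $(-1)^{(r+1)\maltese_0}$ appearing in the first sum reduces trivially, but since $\maltese_0 = 0$ by convention, this is automatic. There is no real obstacle here: the lemma is a direct unpacking of the definition of $CC^*(\scrA)$, recording the well-known fact that the projection of the Hochschild complex to the endomorphism complex at any fixed object is a map of cochain complexes.
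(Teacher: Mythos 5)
Your proposal is correct and takes exactly the approach the paper intends: the paper presents this lemma with no proof, immediately after the Hochschild differential formula, precisely because it follows by the direct $d=0$ specialization you carried out. Your careful bookkeeping of the index constraints, the vanishing of the curvature term $\mu^0_{\scrA}$, and the sign $(-1)^{(r+1)\maltese_0}=1$ is the intended "readily implies" computation made explicit.
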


Classically, Hochschild cohomology arises in deformation theory.  Any (formal, i.e. ignoring convergence issues) deformation of the $A_{\infty}$-structure defines a  class in $HH^2(\scrA)$, so for instance if this is one-dimensional the category has a unique such deformation up to quasi-isomorphism.   We should also point out the following basic algebraic fact.

\begin{prop} \label{rem:hh-invariant}
Hochschild cohomology $HH^*(\scrA)$ is invariant under taking twisted complexes and under passing to idempotent completion.
\end{prop}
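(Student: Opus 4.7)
The plan is to produce, for each enlargement, a restriction map on Hochschild cochain complexes and exhibit an explicit inverse up to homotopy. I view $HH^*(\scrA) = H^*(CC^*(\scrA))$ via the cochain complex defined immediately before the statement, and I handle the two assertions of the proposition separately.

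For invariance under $Tw$, the fully faithful embedding $\iota\co \scrA \hookrightarrow Tw(\scrA)$ yields the obvious restriction $\iota^*\co CC^*(Tw(\scrA)) \to CC^*(\scrA)$ by evaluating a cochain only on tuples of objects of $\scrA$ (regarded as trivial twisted complexes with $\delta = 0$), and this is a chain map by inspection. The homotopy inverse is a ``twist by Maurer--Cartan'' construction: given $(h^d)_{d \geq 0} \in CC^*(\scrA)$, twisted complexes $(Y_1,\delta_1), \ldots, (Y_{d+1},\delta_{d+1})$, and morphisms $a_i \in hom_{Tw(\scrA)}(Y_i, Y_{i+1})$, set
$$\tilde h^d(a_d, \ldots, a_1) \;=\; \sum_{r_1, \ldots, r_{d+1} \geq 0} h^{d + \sum r_i}\bigl(\delta_{d+1}^{r_{d+1}}, a_d, \delta_d^{r_d}, \ldots, \delta_2^{r_2}, a_1, \delta_1^{r_1}\bigr),$$
interpreted in the additive enlargement $\Sigma\scrA$. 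The Maurer--Cartan equation $\sum_r \mu^r_{\Sigma\scrA}(\delta,\ldots,\delta) = 0$ combined with the cocycle identity for $h$ ensures that $\tilde h$ is itself a Hochschild cochain and that $h \mapsto \tilde h$ commutes with $\partial$. Since $\iota^*\tilde h = h$ by inspection, $\iota^*$ is split surjective at the chain level; to promote this to a quasi-isomorphism I would use the strict lower-triangularity filtration on $\delta_X$ to set up a spectral sequence whose $E_1$ page computes Hochschild cohomology of direct sums of shifts of $\scrA$-objects, which reduces to $HH^*(\scrA)$.

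For invariance under idempotent completion, the scheme is parallel: represent objects of $\scrA^\pi$ by pairs $(X,p)$ with $p$ a suitable chain-level lift of a cohomological idempotent, describe morphisms in $\scrA^\pi$ as morphisms in $\scrA$ sandwiched between such idempotents, and extend a cochain $h$ on $\scrA$ to $\scrA^\pi$ by inserting $p$'s in all positions, analogous to the Maurer--Cartan twist above. Alternatively, and more cleanly, I would invoke the bimodule description $HH^*(\scrA) \cong \Ext^*_{\scrA^{opp}\otimes \scrA}(\scrA,\scrA)$: both $\scrA \hookrightarrow Tw(\scrA)$ and $\scrA \hookrightarrow \scrA^\pi$ induce $A_{\infty}$-quasi-equivalences of module categories, hence of bimodules, which preserve these $\Ext$-groups.

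The hardest part is the spectral-sequence collapse (or the equivalent direct homotopy) in the twisted-complex step: verifying with the correct signs that $\widetilde{(\cdot)} \circ \iota^*$ is chain homotopic to the identity on $CC^*(Tw(\scrA))$, and that no higher differentials in the length filtration obstruct this. The computation is standard but combinatorially heavy, and in practice I would prefer to bypass it by appealing to Morita invariance of Hochschild cohomology, a well-established structural property of $A_\infty$-categories that immediately implies invariance under both enlargements.
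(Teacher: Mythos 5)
The paper does not actually prove this proposition: it observes that there seems to be no written account in the $A_\infty$-over-a-field setting and delegates to the more general spectrum-level statement, Theorem 4.12 of Blumberg--Mandell. So any proof you write is necessarily on a different route from the paper's. Your two suggested routes are both sensible. The direct ``twist by Maurer--Cartan'' extension $h \mapsto \tilde h$ is the right shape for the $Tw$ step, and $\iota^*\tilde h = h$ is indeed immediate; this is exactly the mechanism by which one extends $A_\infty$-structures and Hochschild cochains to $Tw(\scrA)$, and the length/lower-triangularity filtration is the right tool for the converse homotopy. Your second route --- $HH^*(\scrA) \cong \Ext^*_{\scrA^{\mathrm{opp}}\otimes\scrA}(\scrA,\scrA)$ plus Morita invariance of the diagonal bimodule under fully faithful embeddings whose images split-generate --- is cleaner and is the argument most people would actually write down; it handles both enlargements uniformly, since $\scrA \hookrightarrow Tw(\scrA)$ and $\scrA \hookrightarrow \scrA^\pi$ both induce quasi-equivalences of module categories and hence of bimodule categories. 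What the citation buys the paper is not having to write any of this; what your sketch buys is an actual (in-principle) self-contained argument in the field-coefficient setting the paper needs.

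Two places where your sketch is genuinely incomplete and would need real work. First, you flag but do not carry out the verification that $\widetilde{(\cdot)}\circ\iota^*$ is chain homotopic to the identity (or the equivalent spectral-sequence collapse); this is where essentially all the content lives, and leaving it as ``combinatorially heavy but standard'' means you have a plan, not a proof. Second, for idempotent completion the representation of objects of $\scrA^\pi$ as pairs $(X,p)$ with a \emph{strict} chain-level idempotent $p$ does not exist in general for $A_\infty$-categories: a cohomological idempotent lifts only to a homotopy idempotent (an infinite system of higher homotopies), and the standard model for $\scrA^\pi$ goes through $\mathrm{mod}$-$\scrA$ or through an abstract image object, not through $(X,p)$ pairs. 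Your ``sandwich with $p$'s in all positions'' recipe therefore does not literally parse. The bimodule/Morita formulation sidesteps this cleanly, which is another reason to prefer it; but if you invoke Morita invariance you should recognise that it is itself a theorem whose proof (in the field-coefficient $A_\infty$ setting) is precisely the sort of thing the paper says has no written account, so it cannot be treated as a black box without at least a reference.
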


There seems to be no written account of this result in the setting of $A_{\infty}$-categories over a field, but the more general result for spectra is Theorem 4.12 of \cite{BM}.

\section{The Fukaya category\label{Subsec:BackgroundFukaya}}

Let $(M,\omega)$ be a closed symplectic manifold and suppose $2c_1(M)=0$. In ideal situations, the Fukaya category $\sF(M)$ is a triangulated $\bZ$-graded $A_{\infty}$-category, linear over the Novikov field $\Lambda_{\bR}$.  It has an associated (honest) triangulated category $D^{\pi}\sF(M)$, the split-closed  derived Fukaya category; one can also pass directly to cohomology, forgetting the $A_{\infty}$-structure, to obtain the  (quantum or \emph{Donaldson}) category $H(\sF(M))$. 
The objects of the Fukaya category are Lagrangian submanifolds which are decorated with additional data, the existence of which form a collection of strong constraints: 
\begin{itemize}
\item the submanifolds should have vanishing Maslov class, and be equipped with gradings \cite{Seidel:grading};
\item the submanifolds should be spin, or relatively spin relative to a fixed background class $b\in H^2(M;\bZ_2)$ (though not strictly necessary we moreover only consider orientable Lagrangians);
\item the Floer cohomology of the submanifolds should be unobstructed for some choice of bounding chains in the sense of \cite{FO3}.
\end{itemize}

Moreover, the Fukaya category should have the properties that
\begin{itemize}
\item Hamiltonian isotopic Lagrangian submanifolds define isomorphic objects of $\sF(M)$;
\item up to quasi-equivalence $\sF(M)$ is a symplectic invariant of $M$.
\end{itemize}
The final statement is intentionally vague: one expects a canonical map $\Symp^0(M)/\Ham(M) \rightarrow Auteq(D^{\pi}\sF(M))/\langle \bZ\rangle$, where $\Symp^0$ is a natural subgroup of symplectomorphisms which preserve the structure needed to grade the category, e.g. the homotopy class of trivialisation of $K_M^{\otimes 2}$, and on the right hand side we divide out by the shift functor. 
As indicated in Section \ref{Subsec:BackgroundAlgebra}, 
the construction of the derived category $D^{\pi}\sF(M)$ from $\sF(M)$ is a purely algebraic procedure.

\begin{rem} \label{rem:unitarity}
The Novikov field $\Lambda_{\bR}$ comprises formal sums 
\begin{equation} \label{eqn:Novikov}
\Lambda_{\bR} = \left\{ \sum_{i\in \bZ} a_i q^{t_i} \ \big|  \  a_i\in\bC, a_i = 0 \ \textrm{for} \  i\ll 0, t_i\in\bR, t_i \to \infty \right\}
\end{equation}
The Fukaya category has higher-order $A_{\infty}$-operations defined by counts of certain pseudoholomorphic polygons:  these counts assemble into power series which are not known to have positive radius of convergence, so the category is only well-defined over  a field $\Lambda_{\bR}$ of formal power series.  It is also possible to allow as objects of $\scrF(M)$ Lagrangian submanifolds equipped with flat (typically unitary) line bundles; we will not need this extension, but see Remark \ref{rem:moduliofpoints}.
\end{rem}

The first of the conditions imposed on objects of $\scrF(M)$ -- vanishing of the Maslov class -- makes sense whenever $2c_1(M)=0$, and enables the category to be $\bZ$-graded. The second condition -- existence of (relative) spin structures -- enables one to coherently orient the moduli spaces of pseudoholomorphic polygons entering into the definitions of the $\{\mu_{\scrF}^d\}$ and hence define the category with coefficients in a field not of characteristic 2. (In the absence of spin structures, one should take $a_i \in \bZ/2$ in Equation \ref{eqn:Novikov}.)  The third condition is required for the endomorphisms of an object to actually be well-defined.  Recall that Floer cohomology $HF(L,L')$ for a pair of Lagrangian submanifolds $L, L' \subset M$ is defined (roughly) as follows.  One picks a Hamiltonian flow $(\phi_H^t)$ for which $\phi_H^1(L) \pitchfork L'$ is transverse, takes $CF(L,L') = \oplus_{x \in L\cap L'} \bZ\langle x\rangle$, and defines a differential which counts solutions to a perturbed Cauchy-Riemann equation: $dx_+ = \sum_y \# (\mathcal{M}_{x_+, x_-}/\bR) \langle x_-\rangle$, where $\mathcal{M}_{x_+,x_-}$ is the space of solutions $u: \bR\times [0,1] \rightarrow M$ to the perturbed Cauchy-Riemann equation
\begin{equation} \label{eq:dbar_equation}
\partial_s u + J(\partial_t u - X_H(u)) = 0
\end{equation}
with boundary and asymptotic conditions
\[
u(\{0\}\times \bR) \subset L, \ u(\{1\}\times \bR) \subset L'; \ u(s, \pm t) \rightarrow x_{\pm}.
\]
These holomorphic strips come in moduli spaces which, for suitably generic families of compatible almost complex structures, are manifolds, and one counts the zero-dimensional components.  Morphisms in the Fukaya category are by definition the Floer chain groups; the higher order operations of the $A_{\infty}$-structure comprise a collection of maps $\mu_{\sF}^d$ of degree $2-d$, for $d\geq 1$, with $\mu_{\sF}^1$ being the differential:
\[
\mu_{\sF}^d: CF(L_{d-1},L_d) \otimes \cdots \otimes CF(L_0,L_1) \rightarrow CF(L_0,L_d)[2-d]
\]
These have matrix coefficients which are defined by counting holomorphic discs with $(d+1)$-boundary punctures, whose arcs map to the Lagrangian submanifolds $(L_0,\ldots, L_d)$ in cyclic order.  To be slightly more precise, to each intersection point $x$  one actually associates the group $o_{x}$ of coherent orientations -- freely generated by the two orientations of a one-dimensional vector space, subject to the relation that their sum vanishes.  When the Lagrangian submanifolds are relatively spin, the moduli spaces $\scrM(x_0,\ldots,x_d)$ of pseudoholomorphic polygons carry determinant lines which define orientations relative to these coherent orientation spaces, i.e. which yield isomorphisms
\[
\Lambda^{top} T\scrM(x_0,\ldots,x_d) \ \cong \ o_{x_0} \otimes o_{x_1}^{\vee} \otimes \cdots \otimes o_{x_d}^{\vee},
\]
see for instance \cite[Section 12b]{Seidel:FCPLT}. It follows that the isolated points of moduli spaces carry canonical signs relative to the orientation groups associated to intersection points, and the counts $\mu_{\scrF}^d$ are really signed counts if we work in characteristic zero. 

To achieve transversality for the moduli spaces of holomorphic strips, one replaces a fixed  compatible almost complex structure in Equation \eqref{eq:dbar_equation} by a family $J_{t}$ depending on the second factor of a strip. More generally, we replace $J$ by families of almost complex structures indexed by points of the abstract underlying holomorphic disc; such ``universal consistent choices of perturbation data" \cite{Seidel:FCPLT} are constructed inductively over the moduli spaces of holomorphic discs (associahedra).  The resulting maps $\mu_{\sF}^d$ are not chain maps, and hence do \emph{not} naively descend to Floer homology; they are chain-level operations which satisfy the hierarchy of quadratic associativity equations which define an $A_{\infty}$-structure:
\[
\sum_{m,n}  (-1)^{\maltese_n} \mu_{\sF}^{d-m+1} (a_m, \ldots, a_{n+m+1}, \mu_{\sF}^m (a_{n+m},\ldots, a_{n+1}), a_n, \ldots, a_1) = 0.
\]
Crucially, the individual count comprising a given matrix element amongst a particular collection of Lagrangian submanifolds is not well-defined (independent of perturbations or Hamiltonian isotopy), but the entire $A_{\infty}$-structure is well-defined up to quasi-isomorphism.  

\begin{prop} \label{Prop:SelfFloer}
If $L\subset M$ bounds no holomorphic discs for some compatible almost complex structure, then $HF^*(L,L) \cong H^*(L)$. 
\end{prop}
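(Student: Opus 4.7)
The strategy is to compute $HF^{*}(L,L)$ via a $C^{2}$-small Morse perturbation of $L$. First, I pick a Morse function $f \co L \to \bR$ and extend $\epsilon f$ to a Hamiltonian $H_{\epsilon}$ compactly supported in a Weinstein neighbourhood of $L$. For $\epsilon$ sufficiently small, $\phi^{1}_{H_{\epsilon}}(L)$ is transverse to $L$ and the intersection points correspond bijectively to the critical points of $f$. Since the perturbation is $C^{2}$-small, the Maslov grading of each intersection point agrees with the Morse index of the corresponding critical point, so the underlying graded $\Lambda_{\bR}$-module of $CF^{*}(L,L)$ is identified with $CM^{*}(f)\otimes \Lambda_{\bR}$.

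To match the Floer and Morse differentials, I would invoke the classical rescaling argument of Floer, Hofer--Salamon and Oh. After reparametrising the $s$-variable, a sequence of Floer strips of bounded energy connecting two critical points Gromov-converges, as $\epsilon \to 0$, to one of: (i) a broken negative gradient trajectory of $f$ on $L$; (ii) a non-constant $J$-holomorphic disc with boundary on $L$ bubbling off at a point of a Morse trajectory; or (iii) a non-constant $J$-holomorphic sphere bubbling. Alternative (ii) is ruled out by the hypothesis that $L$ bounds no $J$-holomorphic disc, and this persists under the small perturbations of $J$ required for Floer transversality by Gromov compactness in the parameter $J$. Alternative (iii) is excluded in the setting of the paper: with $2c_{1}(M)=0$ and $\dim_{\bR} M = 4$, any non-constant $J$-sphere lies in a class of formal dimension $\leq -2$, so generic $J$ admit none, and one can choose the family of perturbation data so that the $J$-component away from $L$ lies in this generic set.

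A matching gluing theorem recovers every rigid Morse trajectory as the unique limit of a family of Floer strips, and a comparison of coherent orientations through the spin structure on $L$ shows that signs agree. Combining this with the chain-level identification of generators yields
\[
HF^{*}(L,L) \;\cong\; HM^{*}(f;\Lambda_{\bR}) \;\cong\; H^{*}(L;\Lambda_{\bR}),
\]
which is the statement.

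The main obstacle is the Floer--Morse comparison, i.e.\ the compactness-plus-gluing package that identifies the two differentials in the $\epsilon \to 0$ limit; the energy estimates required to rule out the escape of energy in non-standard directions are delicate, though now classical. The role of the hypothesis is precisely to exclude the disc bubbling scenario (ii), which would otherwise force one into the obstruction-theoretic framework of \cite{FO3} and produce a spectral sequence whose higher differentials count discs rather than the clean isomorphism claimed. This is also why the proposition fits so seamlessly into the four-dimensional context exploited elsewhere in the paper, where for Maslov-zero Lagrangians the absence of holomorphic discs for generic $J$ is automatic by index reasons.
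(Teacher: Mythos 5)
Your argument coincides with the paper's own, which likewise proves the proposition via a $C^2$-small Morse perturbation of $L$ by the Hamiltonian $H_f$, identifies generators with critical points, and cites the Floer/Oh rescaling argument to match the Floer and Morse differentials in the absence of disc bubbling. The paper's treatment is merely a brief sketch with a reference to Floer's original paper; you supply the same proof in fuller detail, including the compactness alternatives and the orientation comparison.
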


This goes back to Floer \cite{Floer:lagrangian}.  Fixing a Morse function $f:L \rightarrow \bR$ on $L$ defines a Hamiltonian perturbation by the associated Hamiltonian flow $H_f$ of the vector field $X_f = \iota_{\omega}(df)$.  The generators of the Floer complex $L \pitchfork \phi_{H_f}^1(L)$ correspond bijectively to critical points of $f$, and in the absence of bubbling and by a judicious choice of time-dependent almost complex structure, Floer identified the  complex $CF^*(L,L)$ with the Morse complex of $f$.  However, if there are holomorphic discs with boundary on $L$ or $L'$, one can lose control of the compactness of the moduli spaces of perturbed holomorphic strips in a way which breaks the $A_{\infty}$-equations, and even breaks the first such $\mu_{\sF}^1 \circ \mu_{\sF}^1 = d^2=0$.  Floer homology is said to be ``unobstructed" if one can make choices (in general infinitely many, of a delicate inductive nature) to repair this basic deficiency.  The choices of ``bounding chains" on $L$ ``cancel" the errant disc bubbles. Rather than grapple with the deep material of \cite{FO3}, where their properties are addressed in great generality, we opt for a rather low-brow alternative: Floer cohomology is (trivially) unobstructed if for some compatible almost complex structure $J$ there are no holomorphic spheres passing through $L$, and the moduli spaces of holomorphic discs with boundary on $L$ are actually empty. 

\begin{lem} \label{lem:well-defined}
Let $(M,\omega)$ be a four-dimensional symplectic manifold with $2c_1(M)=0$.  For a generic almost complex structure $J$ compatible with $\omega$ and $L\subset M$ a Lagrangian submanifold of Maslov class zero, there are no $J$-holomorphic discs with boundary on $L$ or spheres passing through $L$.
\end{lem}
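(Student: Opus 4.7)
The plan is to run a standard virtual dimension count, coupled with transversality for generic $J$ and the structure theorem that every $J$-holomorphic disc (respectively sphere) factors through a somewhere injective one. The key observation is that the vanishing Maslov and Chern class hypotheses force the expected dimensions of both moduli spaces to be strictly negative precisely because $\dim_{\bR}M = 4$.

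For the disc side, let $u\co (D^2,\partial D^2)\to (M,L)$ be a non-constant $J$-holomorphic disc representing a class $A\in\pi_2(M,L)$. Since $L$ has vanishing Maslov class, $\mu(A)=0$, and the Fredholm index computation gives the virtual dimension of the unparametrised moduli space of somewhere injective such discs as
\[
n+\mu(A)-3 \;=\; 2+0-3 \;=\; -1.
\]
By the standard McDuff--Salamon transversality theorem, for a comeagre set of $\omega$-compatible almost complex structures this moduli space is cut out transversely and hence empty. Lazzarini's decomposition theorem (or the analogous result of Kwon--Oh) then allows one to factor any $J$-holomorphic disc through a simple one whose image covers that of $u$; since the Maslov class of $L$ vanishes, each such simple disc also has Maslov index zero and is therefore ruled out by the dimension count. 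Thus no non-constant $J$-holomorphic discs with boundary on $L$ exist.

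For spheres the argument is parallel. The hypothesis $2c_1(M)=0$ in integral cohomology forces $c_1(A)=0$ for every $A\in H_2(M;\bZ)$, so the virtual dimension of the unparametrised moduli space of somewhere injective non-constant $J$-holomorphic spheres in class $A$ is
\[
2n+2c_1(A)-6 \;=\; 4-6 \;=\; -2.
\]
For generic $J$ this space is therefore empty, and since every non-constant $J$-holomorphic sphere factors through a simple one (McDuff--Salamon), no non-constant $J$-holomorphic spheres exist at all, let alone ones meeting $L$.

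The only real subtlety is choosing a single $J$ for which all the relevant simultaneous transversality statements hold: one imposes one generic condition per disc class in $\pi_2(M,L)$ and one per sphere class in $H_2(M;\bZ)$, so a countable intersection of comeagre sets in the Baire space of admissible $J$'s yields the required almost complex structure. I do not expect any further technical obstacle; the transparency of the argument in real dimension four is exactly what the paper invokes to avoid the full $A_\infty$-obstruction machinery of \cite{FO3}.
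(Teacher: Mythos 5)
Your proposal is correct and follows essentially the same route as the paper: negative virtual dimension for simple curves (using vanishing Maslov class for discs, torsion $c_1$ for spheres, and $n=2$), transversality for somewhere injective curves at generic $J$, and the Kwon--Oh/Lazzarini decomposition to reduce arbitrary discs to simple ones (and McDuff for spheres). The explicit Baire-category remark about simultaneous genericity over countably many homotopy classes is implicit in the paper's appeal to ``generic $J$'' and is a helpful clarification rather than a divergence.
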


\begin{proof}
A $J$-holomorphic sphere or disc $u: \Sigma \rightarrow M$  is \emph{somewhere injective} if there is a point $z\in \Sigma$ for which $du(z) \neq 0$ and $u^{-1}(u(z)) = \{z\}$.  If $\Sigma$ is compact without boundary, any simple map (one which does not factor through a branched cover of $\Sigma$ over another curve) contains a dense set of somewhere injective points \cite{McDuff:Examples}.   For curves with boundary, a theorem of  Kwon-Oh \cite{Kwon-Oh} and Lazzarini \cite{Lazzarini} implies the weaker statement  that if $L$ bounds some $J$-holomorphic disc, then it bounds a simple disc.  In both cases, transversality of the Cauchy-Riemann equation can be achieved at simple curves by choosing a generic almost complex structure $J$ on $M$.  

If $M$ is a $2n$-dimensional symplectic manifold with $2c_1(M)=0$, the Riemann-Roch theorem for curves with boundary gives the dimension of the space of unparametrised discs with boundary on $L$ to be $n+\mu_L-3$.  If the Maslov class $\mu_L=0$ and $n=2$ this is negative, hence for generic $J$ the moduli spaces of simple discs are actually empty.  The result of Kwon-Oh or Lazzarini then implies that $L$ bounds no holomorphic discs at all, for generic $J$.  The same argument shows that when $2c_1(M)=0$ the symplectic $4$-manifold $M$ contains no holomorphic spheres for generic $J$.
\end{proof}

Although the Floer Equation (\ref{eq:dbar_equation}) really involves families of almost complex structures $\{J_t\}$, the bubbles that obstruct $\mu^1_{\scrF}\circ\mu^1_{\scrF}=0$ are honest holomorphic discs (for $J_0$ respectively $J_1$ if the bubble appears on the lower respectively upper edge of the strip).  Lemma \ref{lem:well-defined} accordingly implies that the Fukaya category of a four-dimensional symplectic Calabi-Yau manifold can be defined, and any Lagrangian surface $L\subset M^4$ with vanishing Maslov class defines a non-zero object of this category (Proposition \ref{Prop:SelfFloer} shows that it has non-trivial endomorphisms).  However, it is not \emph{a priori} obvious that Hamiltonian isotopic Lagrangian surfaces define isomorphic objects, because in a one-parameter family of Lagrangian submanifolds one does in general expect to encounter bubbles.  In some situations, bubbles can be excluded for topological reasons.

\begin{lem} \label{lem:Ham-independent}
Let $(M,\omega)$ be a four-dimensional symplectic manifold and $L\subset M$ a Lagrangian surface.  If the map $\pi_2(M,L) \rightarrow H_2(M,L)$ vanishes  then the isomorphism class of $L$ in $\sF(M)$ depends only on the Hamiltonian isotopy class of $L$.
\end{lem}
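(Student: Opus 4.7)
The strategy is to build a cocycle $c \in CF^{0}(L, L')$ with a cohomological inverse, thereby producing a quasi-isomorphism in $\sF(M)$. The hypothesis on $\pi_{2}(M,L)$ is the crucial input that eliminates holomorphic-disc bubbling uniformly along any Hamiltonian isotopy, so that the standard continuation machinery goes through without modification.

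First I would observe that, since $\omega$ is closed and vanishes on $L$, it defines a class in $H^{2}(M, L; \bR)$, and the symplectic area of any continuous map $u \co (D^{2}, \partial D^{2}) \to (M, L)$ depends only on its homology class $[u] \in H_{2}(M, L)$. The hypothesis forces this class to vanish for every disc, so every disc bounding $L$ has zero $\omega$-area. A non-constant $J$-holomorphic disc has strictly positive area for any $\omega$-compatible $J$, so no such disc exists. Since the vanishing of $\pi_{2}(M,L) \to H_{2}(M,L)$ is a topological property preserved under isotopy, the same conclusion holds uniformly for every $L_{t}$ in any Hamiltonian isotopy $L_{t}$ from $L$ to $L'$.

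Second, given such an isotopy, the standard continuation construction, applied to the family of Hamiltonian-perturbed Floer equations associated to $L_{t}$, produces a closed element $c \in CF^{0}(L, L')$. Reversing the isotopy gives an analogous $c' \in CF^{0}(L', L)$. Because disc bubbling on the moving boundary is excluded at every time by step one, and sphere bubbling is excluded for generic paths of almost complex structures by the dimension count already used in Lemma \ref{lem:well-defined}, the relevant moduli spaces are compact modulo strip-breaking. A standard gluing argument then identifies $[\mu^{2}(c', c)]$ with the Floer unit in $HF^{0}(L, L)$ and $[\mu^{2}(c, c')]$ with the unit in $HF^{0}(L', L')$. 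Via Proposition \ref{Prop:SelfFloer} these units correspond to the fundamental cohomology classes, which are the identity morphisms in $H(\sF(M))$; hence $c$ is an isomorphism in $H(\sF(M))$, and $L \simeq L'$ as objects of $\sF(M)$.

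The main obstacle in such an argument is the possible appearance of a holomorphic disc somewhere in the one-parameter family of Lagrangians $L_{t}$, which is in general a codimension-one phenomenon that can make the continuation chain map jump as the isotopy is varied. The assumption that $\pi_{2}(M, L) \to H_{2}(M, L)$ vanishes is precisely what rules this out: every disc in $(M, L_{t})$ has zero symplectic area, so none can be $J$-holomorphic for any choice of $J$. With disc bubbling ruled out uniformly and sphere bubbling ruled out generically in a four-dimensional Calabi-Yau, everything reduces to the standard bubble-free Floer theory.
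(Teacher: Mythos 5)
Your proof is correct and follows essentially the same route as the paper's: both reduce to showing that no $L_{t}$ in the Hamiltonian family bounds a non-constant holomorphic disc, and both derive this from the observation that the symplectic area of a disc $u\co(D,\partial D)\to(M,L_t)$ equals the pairing $\langle[\omega],[u]\rangle$ with the image of $[u]$ in $H_2(M,L_t)$, which vanishes under the hypothesis. The paper states this in two sentences and leaves the continuation-map mechanics implicit; you spell out the construction of the quasi-inverse cocycles and the gluing argument, which is a reasonable expansion but not a different proof.
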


\begin{proof}
It suffices to show that no Hamiltonian image of $L$ bounds any (non-constant) holomorphic disc.  The symplectic form defines an element $[\omega]\in H^2(M,L;\bR)$ and the area of a holomorphic disc $u:(D,\partial D) \rightarrow (M,L)$ is given by the pairing $\int_D \omega = \langle [\omega],[u(D)]\rangle$ between the symplectic form and the image of $u(D)$ in $H_2(M,L)$. The result follows.
\end{proof}

For instance, for Lagrangian submanifolds whose relative $\pi_2$ vanishes, Floer cohomology is defined unproblematically in any dimension and the Fukaya isomorphism type is unchanged by Hamiltonian isotopy.  It follows that linear tori $T^n \subset T^{2n}$ are always elements of the ``strictly unobstructed" Fukaya category $\sF_{so}(T^{2n})$ whose objects are, by definition, Lagrangian submanifolds which for some compatible $J$ bound no holomorphic discs and are intersected by no $J$-holomorphic spheres.  These are the only objects we require when dealing with Fukaya categories of higher-dimensional tori in the rest of the paper.  

Fukaya categories are cohomologically unital but not strictly unital; on the other hand, the cohomological units have geometrically meaningful chain-level representatives.

\begin{lem} \label{lem:units}
The $A_{\infty}$-category $\sF(M)$ can be equipped with distinguished elements $e\in CF^*(L,L)$ which are cycles whose cohomology classes are the units in $H(\sF(M))$.
\end{lem}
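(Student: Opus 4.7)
The plan is to construct $e_L$ via a Morse--Bott model for the self-Floer complex: equip each Lagrangian with an auxiliary Morse function whose unique index-zero critical point represents the fundamental class, and take $e_L$ to be the corresponding generator of $CF^*(L,L)$.

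Concretely, for each object $L$ of $\sF(M)$ I would fix a Morse function $f_L \co L \to \bR$ with a unique minimum $p_L$, and arrange the Hamiltonian perturbation datum defining $CF^*(L,L)$ so that the Floer generators are in bijection with $\operatorname{Crit}(f_L)$, with $p_L$ placed in degree zero. Declare $e_L \in CF^0(L,L)$ to be the generator corresponding to $p_L$. By Lemma \ref{lem:well-defined}, for generic $J$ the Lagrangian $L$ bounds no holomorphic discs and is hit by no holomorphic spheres, so Floer's identification underlying Proposition \ref{Prop:SelfFloer} realises $(CF^*(L,L),\mu^1_{\sF})$ as the Morse complex of $f_L$ for a judiciously chosen time-dependent $J$. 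Since a minimum has no outgoing negative gradient trajectories, this forces $\mu^1_{\sF}(e_L) = 0$.

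To verify that $[e_L]$ acts as the identity on $H(\sF(M))$, I would run a pair-of-pants degeneration argument. For any $L'$ and any $a \in CF^*(L,L')$, consider the moduli of pseudo-holomorphic triangles with boundary arcs mapping cyclically to $(L,L,L')$ and marked points labelled by $e_L$, $a$, and an output. Letting the $e_L$-marked point slide along the boundary to coincide with the output gives a one-parameter family whose boundary encodes the identity $\mu^2_{\sF}(a,e_L) = a + (\text{exact})$, and symmetrically for $\mu^2_{\sF}(e_L, a)$; in the aspherical four-dimensional setting no bubbling can corrupt this cobordism. This is the standard construction of chain-level units, as in \cite[Section (8a)]{Seidel:FCPLT} in the exact monotone case, with the only modification being the replacement of exactness by the bubble-free hypothesis of Lemma \ref{lem:well-defined}.

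The main obstacle is to arrange these choices coherently as $L$ ranges over objects of $\sF(M)$, so that the same element $e_L$ serves as a cycle-level unit across all higher operations $\mu^d_{\sF}$ into which $L$ enters. This is managed, as usual, by an inductive construction of universal consistent perturbation data over the Stasheff associahedra that respects the Morse--Bott boundary conditions encoded by each $f_L$; the absence of sphere and disc bubbles in four dimensions (Lemma \ref{lem:well-defined}) guarantees that compactness of the relevant moduli spaces, and hence the inductive step, goes through without modification of the standard references.
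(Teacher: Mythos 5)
Your construction is correct, but it takes a genuinely different route from the paper's.  The paper defines the unit $e \in CF^*(L,L)$ as the signed count of rigid solutions to a perturbed Floer equation whose domain is a once-boundary-punctured disc (morally, rigid finite-energy half-planes with boundary on $L$) and then invokes a standard gluing argument to see that $[e]$ is the cohomological unit.  You instead fix a Morse model and declare $e_L$ to be the generator corresponding to the minimum of the auxiliary Morse function, verifying unitality by a pair-of-pants degeneration.  These two descriptions do agree when the Hamiltonian perturbation $H(L)$ comes from such a Morse function (the half-plane count then picks out the minimum), but the paper's presentation is perturbation-data-agnostic, whereas yours commits to a Morse-Bott gauge for all self-Floer complexes from the outset and then has to arrange consistency across the associahedra.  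The trade-off: your construction gives a geometrically explicit chain-level representative for free (which is closer in spirit to what the paper later uses to prove Lemma \ref{lem:weak-unit-statement}), while the paper's is less wedded to particular choices.

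One small imprecision: the reason $\mu^1_{\sF}(e_L)=0$ is not that ``a minimum has no outgoing negative gradient trajectories'' --- that is the statement that $p_L$ is a \emph{cycle} in Morse homology, whereas the Floer differential $\mu^1_{\sF}$ here is modelled on the Morse \emph{co}boundary, which does in principle have terms emanating from index-one critical points flowing down to $p_L$.  The correct argument is either that the two ends of the unstable arc of any index-one critical point flow to the unique minimum with opposite signs, or more simply that $CF^0(L,L)$ is one-dimensional and $H^0(L)\ne 0$ for connected $L$, forcing the unique degree-zero generator to be closed; and for disconnected $L$ one must take the sum of minima over components, as the paper notes in the proof of Lemma \ref{lem:weak-unit-statement}.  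With that clarification your argument goes through.
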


\begin{proof}
Given $L$, we have picked a (time-dependent) Hamiltonian function $H(L): M\rightarrow \bR$ for which $\phi_{H(L)}^1(L) \pitchfork L$.  We can regard the generators of the Floer complex $CF^*(L,L)$ as the time-1 chords from $L$ to itself under the Hamiltonian flow of $H(L)$.  The unit element $e$ is obtained as the count of rigid solutions to the perturbed Floer equation with domain a disc with one boundary puncture and boundary condition the family $\phi_{H(L)}^t(L)$.  It is well-known that this represents the cohomological unit, by a standard application of gluing.
\end{proof}

\begin{rem}\label{rem:units=rigidplanes}
Morally, $e$ counts rigid finite-energy half-planes with boundary on $L$; if, following Joyce \cite{Joyce}, we defined $CF(L,L)$ to be the space of Kuranishi chains on $L$, then this would be precisely true.
\end{rem}

\begin{lem} \label{lem:weak-unit-statement}
The perturbation data for the $A_{\infty}$-structure can be chosen such that
\begin{align*}  \mu_{\sF}^{2}(e, e) & = e  \\
\mu_{\sF}^{d}(e, \ldots, e) & = 0  \textrm{ if }d>2. \end{align*}
\end{lem}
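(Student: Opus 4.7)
The plan is to construct the Hamiltonian perturbation data defining the $A_\infty$-operations $\mu^d_{\sF}$ in a way that is directly compatible, on moduli with all boundary labels equal to $L$, with the geometric definition of $e$. Recall from Lemma \ref{lem:units} and Remark \ref{rem:units=rigidplanes} that $e$ is a chain-level count of rigid solutions to the perturbed Floer equation on a disc with a single output puncture, asymptotic to the Hamiltonian chord representing $e$, with moving Lagrangian boundary condition $\phi^t_{H(L)}(L)$.

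For $\mu^2_{\sF}(e,e) = e$: I would consider the moduli space $\mathcal{R}^3$ of discs with three boundary punctures together with the family of pre-glued surfaces obtained by capping each of the two inputs with the one-puncture ``unit'' disc. Choosing the perturbation data on $\mathcal{R}^3$ so that, near the relevant gluing stratum, it matches the data carried by the pre-glued surfaces, a standard parametrised gluing/cobordism argument identifies the count of rigid solutions contributing to $\mu^2_{\sF}(e,e)$ with the count defining $e$.

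For $\mu^d_{\sF}(e,\ldots,e) = 0$ with $d > 2$: the cleanest route is a forgetful-map argument. The family of pre-glued surfaces, obtained by capping all $d$ inputs of a disc in $\mathcal{R}^{d+1}$, sweeps out a moduli of dimension $d - 2 \geq 1$. One chooses the perturbation data on the all-$L$ stratum of $\mathcal{R}^{d+1}$ to be pulled back, on this pre-glued locus, via the forgetful map $\mathcal{R}^{d+1} \to \mathcal{R}^1 = \{\mathrm{pt}\}$ that stabilises after forgetting the $d$ input punctures. The would-be rigid contributions to $\mu^d_{\sF}(e,\ldots,e)$ are then parametrised by a positive-dimensional family and contribute zero to the signed count. (In the four-dimensional Calabi-Yau case of immediate interest, one could alternatively argue by degree: $\mu^d_{\sF}(e,\ldots,e)$ would lie in degree $2 - d \leq -1$, which is outside the range of $CF^*(L,L) \cong H^*(L;\Lambda_{\bR})$, so vanishing is automatic once $\mu^2_{\sF}(e,e)=e$ has been established.)

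The main obstacle is technical rather than conceptual: one must ensure that these ``unit-compatible'' choices on the diagonal strata extend consistently to the full inductive construction of universal consistent perturbation data (Chapter 9 of \cite{Seidel:FCPLT}), matching the coherence conditions already imposed on the codimension-one boundary strata corresponding to breakings of discs into stable trees. Once this compatibility is arranged, the $A_\infty$-equations continue to hold by the standard boundary analysis of the parametrised moduli spaces, and both identities follow from the dimension count of the capped configurations.
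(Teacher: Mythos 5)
Your proposal is correct in spirit but takes a more elaborate geometric route than the paper, which in this instance buys nothing. The paper simply defines $CF^*(L,L)$ via a Hamiltonian perturbation coming from a Morse function on $L$, so the complex is concentrated in degrees $0 \leq * \leq n$; since $\mu^d$ has degree $2-d$ and $e$ has degree $0$, the output of $\mu^d(e,\ldots,e)$ would sit in negative degree for $d > 2$ and therefore vanishes identically. Taking the Morse function to have a unique maximum and minimum on each component makes the degree-zero part of the chain complex one-dimensional, forcing $\mu^2(e,e)$ to be a scalar multiple of $e$; the scalar is $1$ because $[e]$ is the cohomological unit. That is the whole proof, and it holds in any dimension $n$, not just the four-dimensional Calabi–Yau case to which you restrict the degree argument in your parenthetical. (Also note that the degree argument runs at the chain level, where $CF^*(L,L)$ is the Morse complex; the isomorphism $HF^*(L,L)\cong H^*(L)$ you quote is a statement about cohomology and is not quite what is needed.)

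Your main argument — gluing for $\mu^2$, forgetful map for $\mu^d$ with $d>2$ — is a plausible alternative strategy, closer in spirit to constructions of strict units via cylinders and forgetful maps, but it is substantially heavier and has some imprecision. The forgetful map $\mathcal{R}^{d+1}\to\mathcal{R}^1$ is not literally defined, since a disc with one boundary puncture is an unstable domain (one must rigidify it via the moving boundary condition, and the compatibility of such pullback perturbations with the inductive consistency conditions on the associahedra is exactly the delicate point you flag at the end). For this lemma — where one only constrains $\mu^d$ when \emph{all} inputs are $e$, not a single slot — the Morse-theoretic degree argument makes all of this machinery unnecessary, and you would do better to promote your parenthetical observation to the main argument.
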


\begin{proof}
As in the discussion after Proposition \ref{Prop:SelfFloer}, we can define $CF^*(L,L)$ by choosing a Hamiltonian perturbation of $L$ arising from a Morse function on $L$.  The Floer complex $CF^*(L,L)$ is then concentrated in degrees $0 \leq * \leq n$, from which the second equation follows immediately since the $\mu_{\sF}^d$-operation has degree $[2-d]$.  Taking the perturbing Morse function to have a unique maximum and minimum on each connected component of $L$ implies that the first equation also holds.
\end{proof}

\begin{rem}
Although there is some formal diffeomorphism making $\scrF(M)$ strictly unital, this will in general not be geometric (i.e. the particular matrix entries defining the strictly unital structures $\mu^k_{\Phi_*\scrF}$ will not actually be counts of holomorphic polygons for any choice of perturbation data). 
The condition of Lemma \ref{lem:weak-unit-statement} is much weaker than strict unitality since we are only constraining the behaviour of higher products where \emph{all} the inputs are the identity.
\end{rem}

Once one has set up the Fukaya $A_{\infty}$-category, one can appeal to the general machinery of Section \ref{Subsec:BackgroundAlgebra} to construct categories of twisted complexes, etc.
There are certain situations in which the algebraic operations defining twisted complexes correspond  precisely to geometric operations amongst Lagrangian submanifolds.

\begin{Example} \label{Ex:Dehntwist}
Suppose $V \subset M$  is a Lagrangian sphere.  Then given any Lagrangian submanifold $L\subset M$, one can form either the geometric Dehn twist $\tau_V(L) \subset M$, or the algebraic twist $\scrT_V(L)$ which is (quasi-represents) the cone over the canonical evaluation $HF^*(V,L) \otimes V \rightarrow L$.  These are actually quasi-isomorphic objects of $Tw(\scrF (M))$, by a  theorem of Seidel \cite{Seidel:FCPLT}. 
\end{Example}

Suppose $L_1$ and $L_2$ are oriented Lagrangian submanifolds of $M$ meeting transversely in a single point $p$. The \emph{Lagrange surgery} is a Lagrangian submanifold smoothly isotopic to the connect sum of the $L_i$ (topologically there are two such local surgeries, only one of which is compatible with the fixed local orientations).   The four-dimensional case goes as follows \cite{Seidel:Twosphere}.  Order the $L_i$  and choose a Darboux chart $u: (B^4,0) \rightarrow (M,p)$ near the intersection point which linearises the ordered pair $(L_1,L_2)$ to the (oriented) Lagrangian planes $(\bR^2\times \{0\}, \{0\} \times \bR^2)$.  If $\Gamma\subset \bR^2\backslash \{0\}$ is a smooth embedded curve which lies in the lower right hand quadrant and co-incides with the positive $x$-axis union the negative $y$-axis outside a sufficiently small ball near the origin, the Lagrange handle $H$ is defined by
\[
\left\{ (y_1 \cos(\theta), y_1 \sin(\theta), y_2\cos(\theta), y_2\sin(\theta)) \in \bR^4 \ | \ (y_1,y_2) \in \Gamma, \theta \in S^1 \right\}.
\]
This is diffeomorphic to $S^1\times \bR$, co-incides with $\bR^2\times \{0\} \cup \{0\}\times \bR^2$ outside a compact set,  and is Lagrangian for the standard symplectic form $\sum_j dx_j\wedge dy_j$ on $\bR^4$.  The Lagrange surgery is obtained by replacing $(L_1 \amalg L_2)\cap u(B^4)$ by $u(H)$. Although the operation depends on choices, by viewing different local surgeries as exact Lagrangian graphs in cotangent bundle tubular neighbourhoods of one another, one sees that the resulting Lagrangian submanifold is uniquely defined up to Hamiltonian isotopy.  Gradings and spin structures on the $L_i$ induce a grading and spin structure on the surgery, which is therefore well-defined as an object of the Fukaya category when the original submanifolds are, see \cite[Chapter 10]{FO3}.

\begin{rem} \label{Rem:LagrangeSurgery}
Suppose $L_1$ and $L_2$ meet transversely in a single point $p$, and choose gradings so this point is placed in degree 1 (viewed as a morphism from $L_1$ to $L_2$). The point $p$ necessarily forms a Floer cocycle $[p] \in HF^1(L_1,L_2)$, and in the category of twisted complexes $Tw(\scrF(M))$ one can take the mapping cone of this morphism as in Equation \ref{eqn:cone}.  It is widely expected that  the result is geometrically represented by the Lagrange surgery of $L_1$ and $L_2$ at $p$.  A detailed analysis of holomorphic triangles relevant to this conjectured equivalence is contained in \cite[Chapter 10]{FO3}, but the general statement remains unproven.   If $V\subset M$ is a Lagrangian sphere and $V \pitchfork L = \{p\}$ is a single point, then $\tau_V(L) \simeq L\# V$ is actually the graded Lagrange surgery of $L$ and $V$, so the result is known in that case.
 In Proposition \ref{Prop:SurgeryIsCone}, we will give a direct proof (using Theorem \ref{thm:hms}) that the cone indeed quasi-represents the surgery in the special case of a pair of linear Lagrangian tori in $T^4$ meeting transversely at one point.
\end{rem}

Although there seems to be no compelling example in the literature, one can also motivate the passage from $Tw(\scrF(M))$ to the split-closure $D^{\pi}\scrF(M)$ geometrically.  Suppose one begins with some distinguished finite collection of Lagrangian submanifolds $\{L_j\}$, generating an $A_{\infty}$-subcategory $\scrA \subset \scrF(M)$,  and consider the category of twisted complexes $Tw(\scrA)$.   Heuristically, such a  twisted complex is obtained by resolving collections of transverse intersection points among the $\{L_j\}$, corresponding to taking  cones on a succession of closed degree one morphisms. Such a sequence of Lagrange surgeries could in principle result in a Lagrangian submanifold which is embedded but disconnected.  Taking the idempotent completion of the subcategory $Tw(\scrA)$ allows one to include the individual components of such a disjoint union as objects ``generated" by the $\{L_j\}$.  Except in the case of Riemann surfaces, we emphasise that this dictionary is currently only a heuristic.

The Hochschild cohomology of the Fukaya category can be related to more familiar geometric invariants.  It seems that Seidel \cite{Seidel:deformations} was the first to introduce a map
\begin{equation} \label{eq:seidel-open-closed}
SC^{*}(M) \to CC^*(\sF(M))
\end{equation}
\begin{figure}
   \centering
 \begin{picture}(0,0)%
\includegraphics{seidel-map.pstex}%
\end{picture}%
\setlength{\unitlength}{3947sp}%
\begingroup\makeatletter\ifx\SetFigFont\undefined%
\gdef\SetFigFont#1#2#3#4#5{%
  \reset@font\fontsize{#1}{#2pt}%
  \fontfamily{#3}\fontseries{#4}\fontshape{#5}%
  \selectfont}%
\fi\endgroup%
\begin{picture}(1830,1887)(2386,-2887)
\put(3301,-1111){\makebox(0,0)[lb]{\smash{{\SetFigFont{9}{10.8}{\rmdefault}{\mddefault}{\updefault}{\color[rgb]{0,0,0}$a_0$}%
}}}}
\put(3301,-1936){\makebox(0,0)[lb]{\smash{{\SetFigFont{9}{10.8}{\rmdefault}{\mddefault}{\updefault}{\color[rgb]{0,0,0}$R$}%
}}}}
\put(2401,-1936){\makebox(0,0)[lb]{\smash{{\SetFigFont{9}{10.8}{\rmdefault}{\mddefault}{\updefault}{\color[rgb]{0,0,0}$a_3$}%
}}}}
\put(3301,-2836){\makebox(0,0)[lb]{\smash{{\SetFigFont{9}{10.8}{\rmdefault}{\mddefault}{\updefault}{\color[rgb]{0,0,0}$a_2$}%
}}}}
\put(4201,-1936){\makebox(0,0)[lb]{\smash{{\SetFigFont{9}{10.8}{\rmdefault}{\mddefault}{\updefault}{\color[rgb]{0,0,0}$a_1$}%
}}}}
\end{picture}%
   \caption{}
   \label{fig:seidel-map}
\end{figure}
from the Hamiltonian Floer chain complex $SC^*(M)$ of $M$ to the chain complex which computes Hochschild cohomology.  Given a smooth family $H_{t}$ of Hamiltonian functions on $M$ parametrised by $t \in \bR/\bZ$, and a time-$1$ Hamiltonian orbit $R$ for this time-dependent Hamiltonian, the image of $R$ under the above map is the homomorphism which counts  pseudo-holomorphic discs with an interior puncture converging to $R$, and an arbitrary number of boundary marked points along Lagrangians in $M$.  For example, the $d=3$ component of the natural transformation associated to such an orbit $R$ counts the configurations shown in Figure \ref{fig:seidel-map}; the intersection points $(a_3, a_2 , a_1)$ are the inputs, while $a_0$ is the output.  In the absence of multiply covered discs and spheres, it is not difficult to use the usual count of boundary components of $1$-dimensional moduli spaces to conclude that this map is a chain map.  Using the PSS isomorphism \cite{PSS}, we obtain the desired map
\begin{equation}\label{eq:open-closed-cohomology}
QH^{*}(M) \to HH^*(\sF(M)).
\end{equation}
This map can be defined at the chain level by counting perturbed holomorphic discs with one interior marked point passing through a cycle in $X$, and boundary marked points along Lagrangians in $X$.  This point of view is particularly useful for proving the next result.  Note that Lemma \ref{lem:HHrestriction} gives a canonical map $HH^*(\sF(M)) \rightarrow HF^*(L,L)$ for any object $L\subset M$.

\begin{lem} \label{lem:HHrestrict}
 If $L$ bounds no non-constant holomorphic discs, then the composition
\begin{equation}
 QH^{*}(M) \to HH^*(\sF(M)) \to HF^{*}(L;L)
\end{equation}
agrees with the restriction map on ordinary cohomology.
\end{lem}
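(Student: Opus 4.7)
The plan is to unravel the two chain-level maps, apply Lemma \ref{lem:HHrestriction} to isolate the $h^{0}_{L}$ component, and show that this component reduces to the standard Morse-theoretic model of the restriction map $H^{*}(M) \to H^{*}(L)$ once the disc-bubbling hypothesis is imposed.

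First I would fix a chain model for the open-closed map \eqref{eq:open-closed-cohomology}. Representing a class $\alpha \in QH^{*}(M)$ by a pseudo-cycle $\sigma \subset M$, the image in $CC^{*}(\sF(M))$ is the Hochschild cochain whose $d$-th component, for each tuple $(L_{0}, \ldots, L_{d})$, counts rigid perturbed holomorphic discs with one interior marked point passing through $\sigma$ and $d+1$ boundary marked points lying on the Lagrangians in cyclic order. Lemma \ref{lem:HHrestriction} then picks out the $d=0$ piece: a map $CF^{*}(L,L)$-valued on $L$, given by counting rigid perturbed holomorphic half-planes (discs with one boundary puncture) with boundary on $L$ and interior marked point mapped to $\sigma$. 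The output is a chord of the Hamiltonian perturbation used to define $CF^{*}(L,L)$.

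Next I would use Lemma \ref{lem:well-defined} style reasoning together with the hypothesis to control the relevant moduli space. Choose the Hamiltonian perturbation on $L$ to come from a Morse function $f \co L \to \bR$, as in the proof of Proposition \ref{Prop:SelfFloer}, so that $CF^{*}(L,L)$ is identified with the Morse complex of $f$ and the output chord is a critical point of $f$. Choose the compatible almost complex structure, and all subsequent perturbation data, in a small neighbourhood of a $J$ for which $L$ bounds no non-constant $J$-holomorphic discs. By a standard Gromov compactness and gluing argument, for sufficiently small perturbations every rigid configuration in the relevant moduli space degenerates to a broken trajectory whose only disc component is constant, mapping to a point $p \in L$ which lies on $\sigma$, attached to a finite-length gradient flow line of $f$ from $p$ to a critical point.

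The key step is then identifying this broken configuration with the chain-level restriction map. The rigid configurations are exactly pairs $(p, \gamma)$ where $p \in \sigma \cap L$ and $\gamma$ is a gradient trajectory from $p$ to a critical point of $f$; counting these weighted by sign is by definition the image of the intersection cycle $\sigma \cap L$ under the Morse-theoretic quasi-isomorphism from singular chains on $L$ to the Morse complex of $f$. Under the identification $CF^{*}(L,L) \cong C^{*}_{\mathrm{Morse}}(f) \cong H^{*}(L)$ this is precisely $\alpha \mapsto \alpha|_{L}$, so the composition agrees with restriction on cohomology.

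The main obstacle is the last technical step: ensuring that the universal perturbation data required to define the open-closed map on $CC^{*}(\sF(M))$ can be chosen small enough, and coherent enough, for the Gromov-compactness argument above to exclude non-constant disc components at the boundary of the $1$-dimensional moduli spaces used to verify that the answer is independent of perturbations. Once one commits to a $J$ of the type provided by Lemma \ref{lem:well-defined}, this is a routine but careful application of the same kind of inductive construction of perturbation data on the associahedra that underlies the rest of Section \ref{Subsec:BackgroundFukaya}; no new phenomenon intervenes because any hypothetical obstruction would necessarily involve a non-constant disc with boundary on $L$, which the hypothesis forbids.
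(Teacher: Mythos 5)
Your proposal is correct and follows essentially the same route as the paper: identify the composite with the $h^{0}_{L}$ component of the open--closed map via Lemma \ref{lem:HHrestriction}, deform the Hamiltonian perturbation away, and invoke the hypothesis that $L$ bounds no non-constant discs to reduce the count to a classical intersection/restriction pairing. The only difference is one of presentation: the paper phrases the endgame in terms of pairing a constant disc's interior marked point against a cycle $R$ and its boundary marked point against a cycle $a_0$ on $L$, reading off the intersection number $\langle R|_L, a_0\rangle$ directly, whereas you work in the Morse model of $CF^*(L,L)$ and describe the Gromov limit explicitly as a constant disc at $p\in \sigma\cap L$ with a gradient trajectory attached. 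These are equivalent chain-level realizations of the same argument, and both use the no-disc hypothesis in exactly the same place.
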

\begin{proof}
The composite map counts perturbed holomorphic discs with a single Lagrangian boundary condition, no boundary punctures, and a cycle constraint at an interior marked point.  Since we have imposed a compact boundary condition, the resulting map is chain homotopic to that obtained by deforming the Hamiltonian perturbation in the Floer equation to zero.  At this point the map counts actual holomorphic discs.
Since the only holomorphic discs with boundary on $L$ are constant, the count of discs with one interior marked point labelled by a cycle $R$ in $M$ and one boundary marked point labelled by a cycle $a_0$ in $L$ gives the intersection pairing between the restriction of $R$ to $L$ with $a_0$.
\end{proof}

\section{DG-structures on the derived category of sheaves\label{Subsec:dgBackground}}

The Fukaya category $\sF(M)$ of a symplectic manifold $M$, when well-defined, is an $A_{\infty}$-category, whereas the derived category of coherent sheaves $D^b(X)$ on a projective variety $X$ is only triangulated.  For mirror symmetry, it is important to work with a $dg$-enhancement of $D^b(X)$; there are various essentially equivalent possibilities, using \u{C}ech covers, complexes of injectives, complexes of locally free sheaves or Dolbeault resolutions. The last of these is probably most familiar to differential geometers: if $E\rightarrow X$ and $F \rightarrow X$ are locally free sheaves, then morphisms between these sheaves in the derived category are given by the global Ext-group $\Ext^*(E,F) = H^*(X; Hom(E,F))$.  This is the cohomology group of $X$ with local co-efficients in the bundle of homomorphisms from $E$ to $F$. To obtain a $dg$-category one instead takes morphisms to be the underlying Dolbeault complex $\Omega^{0,*}(X, Hom(E,F);\overline{\partial})$ of homomorphism-valued differential forms.  Working at the level of chain complexes keeps more information in the picture, for instance enabling one to keep track of Massey products.

Dolbeault complexes are not well-suited to working over more general fields $k$, and the mirror of a symplectic Calabi-Yau manifold is typically an algebraic variety over the Novikov field and not over $\bC$.  For definiteness, following \cite{Hovey, LB}, we will instead replace the triangulated category $D^b(X)$ of bounded-below complexes of coherent sheaves by the dg-category whose objects are bounded-below complexes $\{\mathcal{I}^{\bullet}\}$ of \emph{injective} sheaves with bounded coherent cohomology, and whose morphisms of degree $k$ are morphisms of complexes $\mathcal{I}^{\bullet} \rightarrow \mathcal{J}^{\bullet + k}$ (these form a cochain complex in the obvious way).  Denote this enhanced category by $D^b_{\infty}(X)$.  Its underlying cohomological category is equivalent to $D^b(X)$, essentially since coherent sheaves on a projective variety always have injective resolutions; for a proof see \cite{Hovey}.  As with other categories of complexes of sheaves, $D^b_{\infty}(X)$ is already triangulated -- it contains mapping cones -- and split-closed. 

\begin{rem} \label{rem:HPL}
By homological perturbation, Lemma \ref{lem:HPL}, any $A_{\infty}$-category is equivalent to an $A_{\infty}$-category with $\mu^1_{\scrA} = 0$, i.e. to one for which the morphism groups are actually given by the cohomology groups of the original $A_{\infty}$-structure.  We can therefore regard the $dg$-enhancement $D^b_{\infty}(X)$  as equipping the usual derived category $D^b(X)$ with higher-order operations, which will be non-trivial even though the category $D^b_{\infty}(X)$ is a $dg$-category.  Explicitly, one builds such an equivalence by splitting the morphism groups in $D^b_{\infty}(X)$ into summands isomorphic to their cohomology, with zero differential, plus acyclic complements.   Choices of nullhomotopy for these acyclic complements define the higher degree components of the sought-after $A_{\infty}$-equivalence.  Practically, this means that to establish $A_{\infty}$-equivalences, it will typically be sufficient to directly compare $A_{\infty}$-structures on  cohomological (sub)categories, rather than working directly with complexes of injectives.  
\end{rem}

 In contrast to the situation for Fukaya categories, there is a fairly general split-generation criterion due to Orlov \cite{Orlov}.

\begin{thm} \label{thm:orlov}
Let $X$ be an algebraic variety over an algebraically closed field $k=\bar{k}$.  If $L\rightarrow X$ is a very ample line bundle, the successive powers $(L^{\otimes 0}, L^{\otimes 1}, \ldots, L^{\otimes n})$ split-generate the derived category.
\end{thm}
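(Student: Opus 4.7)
The plan is to deduce this from Beilinson's classical resolution of the diagonal on projective space. Since $L$ is very ample, it induces a closed embedding $\iota \co X \hookrightarrow \bP^{N} = \bP(H^{0}(X,L)^{\vee})$ with $\iota^{*}\cO_{\bP^{N}}(1) \cong L$, where $N = h^{0}(X,L)-1$; the $n$ in the statement will come out to be $N$ (Orlov's sharper $n=\dim X$ requires additional bookkeeping that I will not pursue).

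First, recall Beilinson's Koszul-type resolution of the diagonal on $\bP^{N}\times\bP^{N}$,
\begin{equation*}
0 \to \cO(-N) \boxtimes \Omega^{N}(N) \to \cdots \to \cO(-1) \boxtimes \Omega^{1}(1) \to \cO \boxtimes \cO \to \cO_{\Delta} \to 0.
\end{equation*}
Substituting this into the identity-kernel integral transform $\cG \simeq R\pi_{2*}(L\pi_{1}^{*}\cG \otimes^{L} \cO_{\Delta})$ and applying the projection formula term by term expresses any $\cG \in D^{b}(\bP^{N})$ as an iterated cone of pieces $R\Gamma(\bP^{N}, \cG \otimes \Omega^{k}(k)) \otimes \cO(-k)$, $0\le k \le N$.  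Hence $\{\cO(k)\}_{k=0}^{N}$ split-generates $D^{b}(\bP^{N})$ after a shift and a twist by $\cO(N)$.

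The second step is to transfer this generation from $\bP^{N}$ down to $X$.  Given $\cF \in D^{b}(X)$, apply the preceding to $\iota_{*}\cF \in D^{b}(\bP^{N})$ to exhibit $\iota_{*}\cF$ as an iterated cone with terms $V_{k}\otimes\cO_{\bP^{N}}(-k)$ for finite-dimensional graded vector spaces $V_{k}$.  Applying the derived restriction $L\iota^{*}$, noting $L\iota^{*}\cO_{\bP^{N}}(k)=L^{k}$, and using the adjunction counit $L\iota^{*}\iota_{*}\cF \to \cF$, one obtains $\cF$ as an iterated cone of shifts of $L^{-k}$, $0\le k \le N$.  Tensoring with the auto-equivalence $(-)\otimes L^{N}$ delivers the claimed generation by $\{L^{k}\}_{k=0}^{N}$; split-closure is automatic upon passing to the Karoubi envelope.

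The principal obstacle lies in the transfer step: the counit $L\iota^{*}\iota_{*}\cF \to \cF$ is not an isomorphism in general, but instead fits into a Koszul-type twisted complex built from exterior powers of the conormal bundle $N^{\vee}_{X/\bP^{N}}$, reflecting the non-transversality of the intersection $(X\times X)\cap\Delta_{\bP^{N}} = \Delta_{X}$ inside $\bP^{N}\times\bP^{N}$.  Extracting $\cF$ as a summand of the resulting Koszul data -- or equivalently, working directly on $X\times X$ with a corrected resolution of $\cO_{\Delta_{X}}$ obtained by a further Koszul expansion of the structure sheaf of $X$ -- is where the technical weight of the argument resides, and is also the stage at which the sharper bound $n=\dim X$ could be obtained by controlling the length of the resulting twisted complex.
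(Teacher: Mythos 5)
The paper does not prove this statement; it simply cites Orlov's preprint \cite{Orlov}, so there is no in-paper argument to compare against. What you have written, however, has a genuine gap at exactly the place you flag, and I do not think your proposed fixes close it. Knowing that $L\iota^{*}\iota_{*}\cF$ lies in the thick subcategory generated by $\{L^{-k}\}_{k=0}^{N}$, together with the existence of the counit $L\iota^{*}\iota_{*}\cF \to \cF$, says nothing about membership of $\cF$: a target of a morphism from an object of a thick subcategory need not lie in that subcategory, and the counit is far from being split surjective (its cone is $\bigoplus_{i\ge 1}\Tor^{\cO_{\bP^{N}}}_{i}(\cO_{X},\iota_{*}\cF)[i]$, which is nonzero as soon as the codimension is positive). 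The alternative you sketch — replacing $\cO_{\Delta_{X}}$ on $X\times X$ by a Koszul expansion through the conormal data — also fails in the generality of the statement: for singular $X$ the diagonal $\Delta_{X}\subset X\times X$ has infinite Tor-amplitude, so no finite Koszul-type complex of boxtensor products of powers of $L$ resolves $\cO_{\Delta_{X}}$, and the iterated-cone description you would need does not terminate.

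Orlov's actual argument is structurally different and worth internalising. Given a coherent sheaf $\cF$, ampleness of $L$ produces a (typically infinite) left resolution by finite sums of negative powers $L^{-a_{i}}$ with $a_{i}$ chosen to grow; one truncates this resolution at the $(\dim X + 1)$-st step to obtain a triangle $Z[\dim X] \to C \to \cF \xrightarrow{\delta} Z[\dim X + 1]$ with $C$ a finite complex of line bundles, and the crux of the proof is a regularity-type vanishing showing that, for sufficiently separated exponents $a_{i}$, the connecting class $\delta \in \Ext^{\dim X + 1}(\cF, Z)$ vanishes; then $\cF$ is a summand of $C$. Finally, the restricted Euler–Koszul complex of the embedding $\iota\co X\hookrightarrow\bP^{N}$ shows every $L^{\pm j}$ lies in the thick subcategory generated by $\{L^{\otimes k}\}_{k=0}^{n}$, which ties off the argument. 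For smooth $X$ (the only case the present paper needs, since $E\times E$ and $E^{4}$ are smooth) your strategy can be salvaged — every coherent sheaf then has a finite resolution by sums of $L^{-a}$ and one never needs to extract a summand — but as written, and at the level of generality the theorem claims, the transfer step is missing an idea, not merely bookkeeping.
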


\begin{rem} \label{rem:split-generate}
Bearing in mind Remark \ref{rem:HPL}, Orlov's theorem implies that any fully faithful subcategory $\scrA \subset D^b_{\infty}(X)$ which contains objects quasi-representing the successive powers of an ample line bundle (i.e. for which $H(\scrA)$ contains those powers) has the property that its triangulated split-closed envelope is all of $D^b_{\infty}(X)$.  \end{rem}

Another important point is that passing to this $dg$-enrichment of the derived category of sheaves yields the ``correct" Hochschild cohomology, by a theorem of Lowen and van den Bergh, cf. \cite{LB}.  

\begin{thm} \label{thm:vdb}
Let $X$ be a separated quasi-projective scheme over a field $k$ of characteristic zero. There is a natural isomorphism 
\[
HH^*(D^b_{\infty}(X)) \cong \Ext^*_{X\times X}(\cO_{\Delta},\cO_{\Delta}).
\]
\end{thm}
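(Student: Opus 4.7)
The plan is to interpret $HH^{*}$ of the $dg$-category as the derived endomorphism algebra of the identity bimodule, then to transport this to a computation on $X\times X$ via a derived Morita-type correspondence between bimodules and kernels, under which the identity bimodule corresponds to $\cO_{\Delta}$.

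First, I would establish the general abstract identity
\[
HH^{*}(\cA) \;\cong\; \Ext^{*}_{\cA\text{-}\cA}(\cA,\cA)
\]
for any $dg$-category $\cA$, where $\cA$ is regarded as the diagonal bimodule over itself. This is essentially a definitional matter: the Hochschild cochain complex $CC^{*}(\cA)$ recalled in Section \ref{Subsec:BackgroundAlgebra} is precisely (up to a standard shift convention) the complex of bimodule maps from the bar resolution of the diagonal bimodule to itself, and the bar resolution is a cofibrant replacement in the model category of $\cA$-bimodules.

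Second, I would set up a Morita-style equivalence between $D^{b}_{\infty}(X)$-bimodules and an appropriate derived category of quasi-coherent sheaves on $X\times X$. Concretely, to a complex $\cK^{\bullet}$ of injective sheaves on $X\times X$ one associates the bimodule whose value on $(\cI^{\bullet},\cJ^{\bullet})$ is the $\Hom$-complex between $p_{1}^{*}\cI^{\bullet}\otimes \cK^{\bullet}$ and $p_{2}^{!}\cJ^{\bullet}$; conversely, one recovers the kernel by evaluating the bimodule on skyscraper-type test objects. Under the hypothesis that $X$ is separated and quasi-projective over a field of characteristic zero, Orlov-style arguments (existence of ample line bundles, their powers split-generating the category by Theorem \ref{thm:orlov}) ensure that this correspondence is fully faithful on the subcategories one cares about. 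It is under this correspondence that the diagonal bimodule $D^{b}_{\infty}(X)$ — whose value on $(\cI^{\bullet},\cJ^{\bullet})$ is $\Hom^{*}(\cI^{\bullet},\cJ^{\bullet})$ — is identified with the pushforward $\cO_{\Delta}$ of the structure sheaf along $\Delta\co X\hookrightarrow X\times X$, because evaluating $\cO_{\Delta}$ as a kernel performs precisely the identity Fourier–Mukai transform.

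Combining these two identifications gives
\[
HH^{*}(D^{b}_{\infty}(X)) \;\cong\; \Ext^{*}_{\cA\text{-}\cA}(\cA,\cA) \;\cong\; \Ext^{*}_{X\times X}(\cO_{\Delta},\cO_{\Delta}),
\]
as desired. The hard part — which is why one appeals to the Lowen–van den Bergh theorem rather than prove this from scratch — is the second step: one must verify that the kernel-to-bimodule functor is a quasi-equivalence onto a sufficiently large subcategory of bimodules (containing the diagonal), and that $\Ext$-groups computed on either side truly agree. This requires careful handling of injective/$h$-injective resolutions on $X\times X$, a check that the requisite base-change and projection formulas hold at the derived level, and control of the $\otimes$-$\Hom$ adjunction compatibly with the $dg$-enhancement. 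The separatedness hypothesis enters in guaranteeing that $\Delta$ is a closed immersion so that $\cO_{\Delta}$ makes sense as a genuine (quasi-)coherent sheaf, while quasi-projectivity provides the ample generators needed for the Morita statement.
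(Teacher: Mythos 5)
The paper does not prove this statement; it is quoted as a theorem of Lowen and van den Bergh with a citation to \cite{LB}.  There is therefore no in-paper argument to compare against, and the relevant question is whether your outline could stand in for the citation.

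Your sketch correctly identifies the two standard ingredients: (i) the purely categorical identity $HH^{*}(\cA)\cong \Ext^{*}_{\cA\text{-}\cA}(\cA,\cA)$ via the bar resolution, and (ii) a kernel-to-bimodule dictionary under which $\cO_{\Delta}$ corresponds to the diagonal bimodule.  But step (ii) is precisely the content of the theorem, and you do not prove it; more to the point, the route you propose has a genuine gap in generality.  Your step (ii) is a Fourier--Mukai/Morita argument of the kind made precise by To\"en (Theorem \ref{thm:toen}), which the paper invokes only for \emph{smooth projective} schemes, whereas Theorem \ref{thm:vdb} is asserted for arbitrary separated quasi-projective schemes over a field of characteristic zero, with no smoothness hypothesis.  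For singular $X$ the diagonal sheaf $\cO_{\Delta}$ need not be perfect on $X\times X$ and the kernel-to-bimodule functor is not known to be a quasi-equivalence onto the relevant bimodule category; this is exactly where the Orlov-generation argument you gesture at stops short.  Lowen and van den Bergh do not pass through Fourier--Mukai kernels at all: they develop a Hochschild theory for abelian categories, compare it with the sheaf-theoretic Hochschild cohomology $\Ext^{*}_{X\times X}(\cO_{\Delta},\cO_{\Delta})$, and then match both with the $dg$-enhancement.  So what you have written is a plausible outline of a different and strictly less general approach, and as it stands it leaves open precisely the step that the citation is there to supply.
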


\begin{rem}
The definition of Hochschild cohomology of $dg$-categories given in \cite{LB}, as the homology groups of an explicit chain complex,  co-incides with that given above for $A_{\infty}$-categories on setting all the higher $\{ \mu^j \}_{j\geq 3}$ to be zero.
\end{rem}

The local-to-global spectral sequence, and the identification of the local ext-group $\cE xt^q_{\cO_{X\times X}}(\cO_{\Delta}, \cO_{\Delta}) \cong \Lambda^q \cT_X$, shows the $\Ext$-group in Theorem \ref{thm:vdb} is computed by a Hodge-type spectral sequence $E_2^{pq} = H^p(X, \Lambda^q \cT_X) \Rightarrow \Ext^{p+q}_{X\times X} (\cO_{\Delta}, \cO_{\Delta})$.  By theorems of Swan \cite{Swan} and Gerstenhaber-Shack \cite{GS}, the spectral sequence degenerates for smooth varieties in characteristic zero.  A particular consequence that we shall need later is:

\begin{cor} \label{cor:rankHH}
If $A$ is an abelian variety of dimension $d$ over $\Lambda_{\bR}$, then $HH^*(D^b_{\infty}(A))$ is isomorphic as a $\Lambda_{\bR}$-algebra to
\begin{equation}
 \bigoplus_{i=0}^{2d} \bigwedge^{i} (\Lambda_{\bR}^{\oplus 2d}).
\end{equation}
\end{cor}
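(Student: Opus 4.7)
The plan is to chain together Theorem \ref{thm:vdb} with the Hodge-type spectral sequence already discussed in the paragraph preceding the statement. First I would apply Lowen--van den Bergh to identify
\[ HH^{*}(D^{b}_{\infty}(A)) \;\cong\; \Ext^{*}_{A\times A}(\cO_{\Delta}, \cO_{\Delta}), \]
and then use the Swan / Gerstenhaber--Shack degeneration (valid since $A$ is smooth and $\Lambda_{\bR}$ has characteristic zero) to collapse the spectral sequence with $E_2^{p,q} = H^p(A, \Lambda^q \cT_A)$ at the second page. This reduces the problem to computing the bigraded algebra of polyvector fields on $A$.

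Next I would exploit the two special features of an abelian variety. On one hand, the tangent bundle $\cT_A$ is trivial, so $\cT_A \cong \cO_A \otimes_{\Lambda_{\bR}} V$ for the $d$-dimensional vector space $V = H^{0}(A,\cT_A)$, giving $\Lambda^{q} \cT_A \cong \cO_A \otimes \Lambda^{q} V$. On the other hand, the standard computation of the cohomology of the structure sheaf of an abelian variety yields $H^{*}(A, \cO_A) \cong \Lambda^{*} W$, the exterior algebra on the $d$-dimensional space $W = H^{1}(A, \cO_A)$. Combining these produces
\[ \bigoplus_{p,q} E_{2}^{p,q} \;\cong\; \Lambda^{*} W \otimes_{\Lambda_{\bR}} \Lambda^{*} V \;\cong\; \Lambda^{*}(W \oplus V), \]
whose graded pieces have dimensions $\binom{2d}{n}$ by Vandermonde, matching the claim at the level of graded $\Lambda_{\bR}$-vector spaces.

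The main obstacle is upgrading this to an isomorphism of $\Lambda_{\bR}$-algebras rather than merely of graded vector spaces. For this I would invoke the multiplicative form of the Hochschild--Kostant--Rosenberg theorem, which asserts that the collapse of the local-to-global spectral sequence is compatible with the Yoneda product on the left and with the product induced by wedge of polyvector fields and cup product on sheaf cohomology on the right. Under the identifications above, this product is precisely the exterior product on $\Lambda^{*}(W \oplus V)$, and choosing any $\Lambda_{\bR}$-basis of $W \oplus V \cong \Lambda_{\bR}^{\oplus 2d}$ delivers the claimed isomorphism $HH^{*}(D^{b}_{\infty}(A)) \cong \bigoplus_{i=0}^{2d} \Lambda^{i}(\Lambda_{\bR}^{\oplus 2d})$ of $\Lambda_{\bR}$-algebras.
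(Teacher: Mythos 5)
Your proof follows essentially the same route as the paper's: Lowen--van den Bergh (Theorem~\ref{thm:vdb}) to identify $HH^{*}$ with $\Ext^{*}_{A\times A}(\cO_{\Delta},\cO_{\Delta})$, the Swan/Gerstenhaber--Shack degeneration of the local-to-global (HKR) spectral sequence, the group-theoretic trivialisation $\cT_A \cong \cO_A \otimes V$, and Mumford's computation $H^{*}(A,\cO_A) \cong \Lambda^{*}H^{1}(\cO_A)$. The one place where you go beyond the paper's wording is your explicit appeal to a multiplicative HKR statement to promote the degeneration from a graded vector-space isomorphism to an algebra isomorphism. The paper treats this step implicitly (it just asserts the polyvector-field ring ``with its usual cup-product structure'' is generated in degree one, and concludes), so you have correctly isolated the subtle point: degeneration of a multiplicative spectral sequence only identifies the associated graded, not the algebra itself. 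Your resolution is valid, but it is worth adding that the general multiplicative HKR theorem (Kontsevich/C\u{a}ld\u{a}raru/Calaque--Van den Bergh) involves a twist by $\mathrm{Td}(\cT_A)^{1/2}$; since $\cT_A$ is trivial on an abelian variety the Todd class is $1$, which is exactly why the untwisted HKR map is already an algebra isomorphism here. With that one clarification, your argument is complete and matches the paper's.
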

\begin{proof}
 Since $A$ is an abelian variety, the group structure on $A$ trivialises the tangent sheaf $\cT_A$. The graded ring $\oplus_j H^*(A,\Lambda^j\cT_A)$ of polyvector fields, with its usual cup-product structure, is generated in degree one by $H^1(\cO_A) \oplus H^0(\cT_A)$.  The computation of these coherent cohomology groups  in characteristic zero is then well-known \cite{Mumford}, giving respectively the tangent spaces at the origin to the dual and the original variety.  The result follows.

\end{proof}

Besides yielding an object more suitable for comparison with the Fukaya category of a symplectic manifold, enhancing the derived category to a $dg$-category strictly improves its behaviour in various fundamental ways. For us, the most important of these will allow us
to pass between categories of functors and categories of sheaves on a product variety, using the following beautiful result of To\"en \cite[Theorem 8.9 \& 8.15]{Toen}.

\begin{thm} \label{thm:toen}
There is a quasi-equivalence of $dg$-categories
\begin{equation} \label{eqn:toen}
D^b_{\infty}(X\times X) \ \simeq \ \End(D^b_{\infty}(X)).
\end{equation}
\end{thm}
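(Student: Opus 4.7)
The plan is to realise the quasi-equivalence via the familiar Fourier--Mukai construction, assigning to each kernel on $X \times X$ the associated convolution endofunctor of $D^b_\infty(X)$, and then to verify cohomological full faithfulness and essential surjectivity separately. Concretely, I would define
\begin{equation*}
\Phi \co D^b_\infty(X\times X) \longrightarrow \End(D^b_\infty(X)), \qquad \mathcal{K} \longmapsto \bigl(\cF \mapsto (p_2)_*(p_1^*\cF \otimes^{\bL} \mathcal{K}) \bigr),
\end{equation*}
where each derived operation is lifted strictly to the chain level using injective resolutions, so that $\Phi_{\mathcal{K}}$ is an honest dg-endofunctor and $\Phi$ itself is a dg-functor. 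The projection formula identifies $\Phi_{\cO_\Delta}$ with (a chain model for) the identity functor.

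For cohomological full faithfulness, a Yoneda-type reduction --- using that, by Orlov's theorem (Theorem \ref{thm:orlov}), the successive powers of a very ample line bundle $L$ on $X$ split-generate $D^b_\infty(X)$, while their external tensor products split-generate $D^b_\infty(X \times X)$ --- allows one to assume the two kernels are of pure product form $L_- \boxtimes L_+$ and $L'_- \boxtimes L'_+$. In that case the K\"unneth formula on $X \times X$ gives
\begin{equation*}
\Hom_{D^b_\infty(X\times X)}(L_- \boxtimes L_+,\, L'_- \boxtimes L'_+) \;\simeq\; \Hom(L_-, L'_-) \otimes \Hom(L_+, L'_+),
\end{equation*}
which matches the answer on the right-hand side: indeed, $\Phi_{L_- \boxtimes L_+}$ is precisely a projection functor of the kind appearing in Definition \ref{defin:projection-functor}, and the morphisms between two such projection functors have already been computed in Lemma \ref{lem:yoneda_for_projection}. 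The comparison of the $A_\infty$ and dg versions of that argument is routine.

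The substantive step --- and the main obstacle --- is essential surjectivity: every dg-endofunctor of $D^b_\infty(X)$ must arise, up to quasi-isomorphism, from some Fourier--Mukai kernel. The strategy is to fix a classical generator $G \in D^b_\infty(X)$ (for instance $G = \bigoplus_{i=0}^{n} L^{\otimes i}$, again by Theorem \ref{thm:orlov}) and to invoke a Bondal--Van den Bergh-style equivalence between $D^b_\infty(X)$ and the category of perfect dg-modules over the derived endomorphism algebra $A = \mathrm{REnd}(G)$. A dg-endofunctor then becomes the data of a perfect $A$-bimodule, and an external K\"unneth argument identifies such bimodules with perfect modules over $A \otimes A^{op} \simeq \mathrm{REnd}(G \boxtimes G)$, hence with objects of $D^b_\infty(X \times X)$. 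Making each of these identifications dg-enhanced, homotopy coherent, and functorial is the technical heart of To\"en's argument; we would simply refer the reader to \cite{Toen} for the execution.
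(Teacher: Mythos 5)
The paper does not actually prove this theorem: it is imported wholesale as a citation to To\"en (Theorems 8.9 and 8.15), followed by a short discussion of why the injective-complex dg-enhancement $D^b_{\infty}(X)$ coincides with To\"en's $L_{parf}(X)$ for smooth $X$, and why To\"en's category $\bR\underline{\Hom}_c$ of \emph{continuous} dg-functors can be replaced by the plain ($A_{\infty}$-)functor category $\End(D^b_{\infty}(X))$ in the smooth projective case. Your proposal instead sketches the internal machinery of To\"en's own argument --- the Fourier--Mukai assignment, K\"unneth/Yoneda reduction to product generators, and a Morita-theoretic reduction for essential surjectivity --- which is broadly accurate as an outline, but since you also defer to \cite{Toen} for the decisive step, you have not gained any real independence from the citation the paper makes.

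Two points where the paper is more careful than your sketch. First, you slide between ``honest dg-endofunctors'' (when constructing $\Phi$ from kernels using strict injective-resolution models) and perfect bimodules over the derived endomorphism algebra of a generator (when arguing essential surjectivity), as though these were interchangeable; they are not, and the identification of the derived internal $\Hom$ of dg-categories with a bimodule category is exactly the content of To\"en's theorem rather than an available input to it. Second, and relatedly, To\"en's $\bR\underline{\Hom}_c$ carries a continuity condition (commuting with filtered colimits on the ind-completion), and showing that this agrees with the naive $A_{\infty}$-functor category which the rest of the paper uses for $\End(D^b_{\infty}(X))$ is precisely what \cite[Theorem 8.15]{Toen} resolves in the smooth projective case. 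Your sketch never addresses this, and without it the claimed quasi-equivalence does not literally land in the paper's $\End(D^b_{\infty}(X))$.
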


Note that on a smooth scheme all complexes of coherent sheaves are perfect complexes;  To\"en's $dg$-enhancement $L_{parf}(X)$ has as objects the complexes of fibrant-cofibrant objects, which for the injective Quillen model category structure on sheaves exactly co-incides with complexes of injectives, hence $L_{parf}(X) = D^b_{\infty}(X)$.  For arbitrary schemes, To\"en's notion (his $\bR\underline{Hom}_c$) of the appropriate category of $dg$-functors is not  obviously that coming from our description of $A_{\infty}$-functors and the bar resolution (he imposes a continuity requirement on the functors, namely that the induced functors on the homotopy categories of complexes of quasi-coherent sheaves commute with direct sums).  However, for smooth projective schemes this subtlety is removed in \cite[Theorem 8.15]{Toen}.

\begin{rem}
The analogue of Theorem \ref{thm:toen} for the derived category $D^b(X)$ is certainly false.  Although no precise analogue of the theorem is known for Fukaya categories, one can sometimes derive a reasonable substitute for ``generalized" Fukaya categories by using the quilt theory of Mau-Wehrheim-Woodward. We review this theory next.
\end{rem}

\section{Quilted Floer theory\label{Subsec:BackgroundQuilts}}
The proof of the main ``generation" statement Theorem \ref{thm:generation}  relies on the $A_{\infty}$-analogue of Wehrheim-Woodward's Lagrangian correspondences through quilts.  Briefly, Wehrheim and Woodward \cite{WW} introduce a category $\sF^{\#}(M)$
consisting of generalized Lagrangian correspondences, and together with Mau \cite{MWW,Mau} associate to a Lagrangian correspondence $L^{\flat} \subset M^-\times N$ an $A_{\infty}$-functor $\cF_{L^{\flat}}:\sF^{\#}(M) \rightarrow \sF^{\#}(N)$.  Our essential requirement is that this association $L^{\flat} \mapsto \cF_{L^{\flat}}$  can be extended to define an $A_{\infty}$-functor
\begin{equation}
\Phi \co \sF(M^{-} \times M) \to \End(\sF^{\#}(M)) \end{equation}
where the right hand side is the category of $A_{\infty}$-endofunctors of $\sF^{\#}(M)$. The rest of this subsection reviews what we expect and need of their theory at slightly greater length.  This treatment is necessarily somewhat cursory, and we refer to their original papers and preprints for fuller discussion.  

\begin{rem}
The original paper of Wehrheim and Woodward \cite{WW} on the cohomological functor $H(\Phi)$ works only with Lagrangian submanifolds of sufficiently positive Maslov class (whereas we require Lagrangians of Maslov class zero).  However, this assumption enters their work to prevent bubbling of holomorphic discs and of ``figure-eights.''  We avoid the former simply because there are no such discs with boundary on a Lagrangian surface of Maslov class zero as long as the complex structure is chosen generically, while the latter arise only in proving that the functor associated to a geometric composition of Lagrangian correspondences co-incides with the composition of the functors associated to the individual correspondences.  We make no use of such a statement. 
\end{rem}

A generalized Lagrangian submanifold of a symplectic manifold $M$ comprises an integer $k\geq 1$ and a sequence of symplectic manifolds and Lagrangian correspondences
\[
(\{pt\}=M_0, M_1,\ldots, M_k=M); \qquad L_{i,i+1} \subset M_i^-\times M_{i+1}, \ 0\leq i\leq k-1
\]
where $(X,\omega)^-$ is shorthand for $(X,-\omega)$.  These form the objects of a category $H(\sF^{\#}(M))$. The morphisms between two such are Floer homology groups defined as follows:  take objects
\[
\mathbb{L} = pt \stackrel{L_1}{\longrightarrow} M_1  \stackrel{L_{12}}{\longrightarrow} M_2 \cdots  \stackrel{L_{k-1,k}}{\longrightarrow}M_k = M
\]
and (a chain of perhaps different length)
\[
\mathbb{L}' = pt \stackrel{L'_1}{\longrightarrow} M'_1  \stackrel{L'_{12}}{\longrightarrow} M'_2 \cdots  \stackrel{L'_{r-1,r}}{\longrightarrow}M'_r = M.
\]
We obtain two Lagrangian submanifolds $\mathbb{L}_{\pm}$ of 
\[
\{pt\}\times M_1 \times M_2 \times \cdots \times M_k \times M'_r \times M'_{r-1} \times \cdots \times M'_1 \times \{pt\}.
\]
There are various cases depending on the parity of $k$ and $r$, but supposing for instance that $k$ is even and $r$ odd, these are
\[
\mathbb{L}_+ \ = \ L_1 \times L_{23} \times \cdots \times L_{k-2,k-1} \times \Delta \times L'_{r-2,r-1} \times \cdots \times L'_{21}
\]
and 
\[
\mathbb{L}_- \ = \ L_{12} \times L_{34} \times \cdots \times L_{k-1,k} \times L'_{r-1,r} \times \cdots \times L'_{32}\times L'_1.
\]
Here $\Delta \subset M_k\times M'_{r} = M\times M$ denotes the diagonal, and the signs of the symplectic forms alternate along the chain; if the parities are different, the $\Delta$-term may occur in the other factor or not at all. 
\begin{rem} \label{rem:transposeLags}
Note that we have transposed both the second sequence of symplectic manifolds \emph{and} the Lagrangian correspondences $L_{i,i+1}'$. Thus, for instance, the graph of a symplectomorphism $L_{12}' \subset (M_1')^- \times M_2'$ goes to the graph of the inverse inside $(M_2')^- \times M_1'$, and a product $L_{12}' = K_a' \times K_b'$ would be replaced by $K_b' \times K_a'$.  In \cite{WW} this transposition operation is denoted $L \mapsto L^t$, but we suppress the notation to avoid (more) clutter.
\end{rem}
In any case, we can then define:
\begin{equation}\label{eqn:morphism}
\textrm{Mor}_{H\sF^{\#}(M)}(\mathbb{L},\mathbb{L}') = HF(\mathbb{L}_+, \mathbb{L}_-).
\end{equation}
(Relative) spin structures and gradings on the individual $L_{j,j+1}$ and $L'_{i,i+1}$ induce corresponding structures on the $\mathbb{L}_{\pm}$; if all the constituent correspondences are monotone with the same monotonicity constant or are known to bound no holomorphic disc for some almost complex structure, the same is true of $\mathbb{L}_{\pm}$ for suitable product almost complex structures; in short, under the usual assumptions, the Floer group on the right side of Equation \ref{eqn:morphism} is both well-defined and $\bZ$-graded.  Note that typically one will require Hamiltonian perturbations to achieve transversality of the Lagrangians;  split Hamiltonians of the shape $H=\sum_{i,j} H_j + H_i'$, $H_j: M_j \rightarrow \bR$, $H_i':M_i' \rightarrow \bR$, are actually sufficient \cite{WW}, in which case the generators of the underlying Floer chain group $CF(\mathbb{L}_+, \mathbb{L}_-)$ will comprise tuples of Hamiltonian chords $x_j: [0,1]\rightarrow M_j$ with $(x_j(1),x_{j+1}(0)) \in L_{j,j+1}$, and similarly for the $'$-ed manifolds and Lagrangians. 

Although the morphism groups themselves are classical Floer groups, the multiplication which defines composition in the category $H(\sF^{\#})$ -- and the higher order operations of the underlying chain-level $A_{\infty}$-structure -- is formulated in terms of maps of ``quilted" Riemann surfaces.  Informally, a quilted Riemann surface is a Riemann surface with boundary punctures and strip-like ends and with a number of \emph{seams} (embedded real analytic arcs) running through the interior of the surface between boundary punctures. We think of the surface as a collection of distinct domains with the seams as partial boundary identifications between these domains.  The case of most importance here is the following.  Say a \emph{quilted $(d+1)$-marked disc} is a disc $D\subset \bC$ with boundary marked points $\{z_0, z_1, \ldots, z_d\}$ and with a distinguished horocycle -- the seam -- at the point $z_0$ (i.e. an interior circle tangent to $\partial D$ at $z_0$; note this is a real analytic submanifold).

The set of discs with $d+1$ marked point forms a moduli space which may be compactified using stable discs to give a model for Stasheff's associahedron. This is a cell complex whose codimension one boundary facets enumerate the terms of the $A_{\infty}$-equations:  this is why counts of holomorphic discs endow the Fukaya category with the structure of an $A_{\infty}$-category.  Mau and Woodward introduced a notion of nodal quilted discs, and proved:

\begin{prop}\cite{MW}
The moduli space of nodal stable quilted $(d+1)$-marked discs is a convex polytope homeomorphic to the multiplihedron.
\end{prop}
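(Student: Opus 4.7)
The plan is to show that the moduli space $\mathcal{M}_{d+1}^{q}$ of quilted $(d+1)$-marked discs is a manifold-with-corners of dimension $d-1$ whose face poset matches that of the multiplihedron $J_d$, and then to identify it with an explicit convex realization. The dimension count is immediate: using the full $\mathrm{PSL}(2,\mathbb{R})$-action one can place $z_0,z_1,z_d$ at three prescribed boundary points, leaving the positions of $z_2,\dots,z_{d-1}$ together with the hyperbolic size of the horocycle as $d-1$ real moduli, matching $\dim J_d$.

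The heart of the argument is to enumerate the codimension-one boundary strata of $\overline{\mathcal{M}}_{d+1}^{q}$ and check that these correspond exactly to the codimension-one faces of $J_d$. There are two essentially different kinds. \emph{Associahedral degenerations:} a consecutive collection $z_i,\dots,z_j$ of marked points collide while lying strictly outside the horocycle, bubbling off an ordinary (non-quilted) $(j-i+2)$-marked disc at the collision point. \emph{Seam degenerations:} the horocycle grows (in hyperbolic coordinates) until it is forced to pinch off a consecutive set $z_i,\dots,z_j$ of marked points, separating the quilted disc into an outer quilted component and an inner non-quilted component (or vice versa) joined at a boundary node. Together these reproduce precisely the edge-contractions on Stasheff's painted (or two-coloured) trees, which by Boardman--Vogt and Stasheff index the faces of the multiplihedron.

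To turn this combinatorial matching into a homeomorphism with a convex polytope, I would argue by induction on $d$, using for the convex model any of the known realizations of $J_d$ (for instance Forcey's truncation of the permutohedron, or the realization used by Mau--Woodward themselves). The base cases $d=1,2$ are trivially a point and an interval. For the inductive step, the induction hypothesis identifies each boundary stratum with a product of lower-dimensional multiplihedra and associahedra, giving a continuous bijection between $\partial\overline{\mathcal{M}}_{d+1}^{q}$ and $\partial J_d$; since both total spaces are compact, connected manifolds-with-corners of the same dimension with matching face posets, the bijection extends uniquely to a homeomorphism on the interior.

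The main obstacle is smoothness of the compactification: one must exhibit coordinate charts near every stratum showing that the local model is $[0,\varepsilon)^k \times \mathbb{R}^{d-1-k}$, with the $k$ gluing parameters matching the painted-tree combinatorics. Associahedral degenerations use standard Deligne--Mumford-type plumbing, but for seam pinches one must carefully analyze the rate at which the horocycle approaches each marked point it engulfs, and build local coordinates in the hyperbolic metric distinguishing the "inner'' and "outer'' gluing parameters. Once this smooth structure is in place, convexity of the resulting polytope is inherited from the chosen polytopal model of $J_d$.
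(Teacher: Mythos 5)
The paper cites this proposition from [MW] without reproducing the argument, so there is no internal proof to compare against. Your sketch, however, has a genuine gap precisely in the step that carries all the combinatorial weight: the enumeration of codimension-one boundary strata coming from seam degenerations. You describe such a degeneration as the horocycle pinching off a single consecutive block $z_i,\dots,z_j$, producing a two-component nodal disc with one node, hedging about which side is quilted. In fact, when the seam degenerates, the disc splits into a root component containing $z_0$ --- on which the seam has merged into the boundary and which is therefore unquilted --- together with \emph{several} quilted bubble components, one for each block in a partition of the inputs into $k\geq 2$ consecutive blocks. This produces $k$ nodes simultaneously in a single codimension-one degeneration: the $k$ gluing parameters are balanced, all governed by the common rate at which the seam approaches the boundary. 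It is exactly this balanced multi-bubble degeneration that creates the codimension-one faces of the form $K_k\times J_{n_1}\times\cdots\times J_{n_k}$ in the multiplihedron; a single-pinch description gives the wrong face poset, and the remainder of your argument (induction on $d$, matching boundary strata, extending a boundary homeomorphism to the interior) cannot get off the ground.

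The same oversight infects your proposed local model. Near a seam stratum with $k$ nodes the local chart is \emph{not} $[0,\varepsilon)^k\times\mathbb{R}^{d-1-k}$ with $k$ independent gluing parameters; because the $k$ gluings are tied to a common scale, the transverse direction is one-dimensional for any $k$, so that the stratum has corner codimension one as required. Verifying that these constrained charts are smooth and mesh with the ordinary Deligne--Mumford charts at associahedral strata is the essential analytic content of the proposition and is not a formality to be postponed. Once the balanced degenerations and the constrained charts are in place, the outline you propose (dimension count, induction, boundary-to-interior extension) is broadly consistent with the spirit of [MW], although their preferred route is more direct, producing explicit coordinates realising the polytope rather than matching face posets abstractly.
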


The multiplihedron, also introduced by Stasheff, has vertices which index ways of not only bracketing a string of variables (as in the associahedron) but also applying an operation: for $3$ marked points one gets six possibilities: $\cF(x_1(x_2x_3))$, $\cF((x_1x_2)x_3)$, $\cF(x_1)\cF(x_2x_3)$, $\cF(x_1x_2)\cF(x_3)$, $(\cF(x_1)\cF(x_2))\cF(x_3)$ and $\cF(x_1)(\cF(x_2)\cF(x_3))$, which form the vertices of a hexagon.  The codimension one boundary faces of the multiplihedron correspond to the terms of the quadratic $A_{\infty}$-functor equation
\begin{multline*}
\sum_{i_j, k} \mu_{\sB}^k(\cF(a_n,\ldots,a_{i_1}),\cF(a_{i_1-1},\ldots, a_{i_2}), \ldots, \cF(a_{i_k-1},\ldots a_1)) \\
= \sum_d (-1)^{\maltese_{j-d}} \cF(a_n,\ldots, a_{j}, \mu_{\sA}^d (a_{j-d},\ldots, a_{j-d-1}), a_{j-d},\ldots a_1) 
\end{multline*}
for an $A_{\infty}$-functor $\cF: \sA \rightarrow \sB$. This is a consequence of the fact that the boundary strata of a multiplihedron are products of lower-dimensional associahedra and multiplihedra.  

Any quilted Riemann surface whose boundary components are labelled by Lagrangian submanifolds and whose seams are labelled by Lagrangian correspondences, in both cases compatibly with an assignment of intersection points (chords) to strip-like ends, determines an elliptic boundary value problem. This problem studies a collection of holomorphic maps, one defined on each subdomain of the surface (closure of a connected component of the complement of the seams), subject to Lagrangian boundary conditions as prescribed by the labelling data along boundaries and seams. Near any given strip-like end, the collection of maps can \emph{locally} be regarded as a map into a product of symplectic manifolds with some fixed Lagrangian boundary condition; analogously, one can repackage the morphism groups $HF(\bL_+,\bL_-)$ as counts of quilted cylinders into the original manifold $M$ with the $L_{i,i+1},$ and $L'_{j,j+1}$ as seam conditions for a collection of seams running down the cylinder, cf. \cite[Figure 7]{WW}.  Locality of the usual elliptic estimates then gives exponential decay of finite energy solutions; one can achieve transversality for the Cauchy-Riemann equations by generic choices of almost complex structure on the various factors $M_j, M_i'$, \cite[Remark 3.4.2]{WW}.   

Similarly, one can consider moduli spaces of holomorphic quilts, allowing the holomorphic structure on the domain to vary in a finite-dimensional family.  The relevant ``universal" families of surfaces over the multiplihedra are constructed in \cite{Mau}, where she also introduces \emph{consistent choices of perturbation data} inductively over these universal families, following the strategy adopted in \cite{Seidel:FCPLT}. These take the shape of families of one-forms on the surfaces of the universal fibration with values in split Hamiltonian functions on the product $\prod M_j \times \prod M_i'$.  The perturbation data can be chosen independently for different $\{M_j, M_i'\}$ and $\bL$, provided for any fixed collection of such branes it is consistent with respect to the inductive construction of the universal families of surfaces themselves. 

The $A_{\infty}$-structure on $CF(\bL_+,\bL_-)$ is derived from certain moduli spaces of quilted discs which comprise a $(d+1)$-marked disc with a collection of parallel strips attached to the boundary arcs, cf. Figure \ref{fig:quilt_pair_pants} and \cite[Figure 20]{WW}.  These additional strips have fixed width, and the moduli spaces of these configurations are just the same as of the unquilted component, hence form an associahedron.   The resulting $A_{\infty}$-structure is cohomologically unital, with the cohomological units  $e_{\bL}\in CF(\bL, \bL)$ counting rigid quilted discs with a single out-going end, \cite[Figure 22]{WW}. If $\bL = (L_1, L_{1,2}, \ldots, L_{k,k+1})$ such a count therefore comprises a half-plane mapping to $M$, with boundary condition in $L_{k,k+1}$, and a collection of boundary parallel strips attached along parallel seams, with successive boundary conditions $L_{i,i+1}$ for $1\leq i\leq k$; the outermost strip maps to $M_1$ with boundary condition the Lagrangian submanifold $L_1\subset M_1$.  There is a \emph{gluing theorem}, due to Mau \cite{Mau}, which reconstructs smooth quilted discs from nodal ones: roughly, given any  collection $\cC$ of nodal quilted discs, it asserts the existence of a map $\delta: (R,\infty) \rightarrow (\mathcal{M}_{d,1})^{1-dim}$ into a 1-dimensional component of a space of smooth quilted discs, for some $R \gg 0$, for which $\delta(t) \rightarrow \cC$ converges in the Gromov topology to the given nodal configuration as $t\rightarrow \infty$ (this in particular implies $C^0$-convergence).   For spaces of quilted discs indexed by the associahedron, this result allows one-dimensional moduli spaces to be compactified in a way which makes the associated counting operations $\mu^d_{\sF^{\#}}$ satisfy the usual quadratic $A_{\infty}$-relations; compare \cite[Proposition 12.3]{Seidel:FCPLT}, and which makes  $e_{\bL}$ into a cohomological unit.   More generally, one can consider the moduli spaces $\mathcal{M}_{d,1}$ of quilted discs parametrised by the multiplihedron, and the gluing theorem  controls the geometry of the boundaries of one-dimensional components of these moduli spaces, which means that the counts of quilted discs indeed reflect the combinatorial boundary structure of these spaces and hence the $A_{\infty}$-functor equations.    In the presence of spin structures, the moduli spaces of holomorphic quilted discs are again orientable relative to coherent orientations on the tuples of chords which comprise intersection points of generalised Lagrangians.

These results go a long way, but do not yet take into account all of the available structure.
The required output of the theory of Mau-Wehrheim-Woodward \cite{MWW} amounts to the following \emph{Axiom} (the first part of which has been established).

\begin{axi} \label{claim:mww}
Assume all Lagrangians below are spin, of Maslov class zero, and admit a choice of almost complex structure for which they bound no non-constant stable holomorphic map from a Riemann surface of genus zero with one boundary component.
\begin{enumerate}
\item  To every Lagrangian correspondence $L^{\flat} \subset M^-\times N$, there is an $A_{\infty}$-functor $\cF_{L^{\flat}}: \sF^{\#}(M) \rightarrow \sF^{\#}(N)$ defined on objects by
\begin{align*}
\bL & \rightarrow  \cF_{L^{\flat}}(\bL) \\
(L_1, L_{23}, \ldots, L_{k-1,k}) & \mapsto  (L_1,L_{23},\ldots, L_{k-1,k},L^{\flat})
\end{align*}
and on morphisms and higher products by a signed count of quilted discs:
\[
CF(\bL_{d-1},\bL_d) \otimes \cdots \otimes CF(\bL_0,\bL_1) \rightarrow CF(\cF_{L^{\flat}}(\bL_0), \cF_{L^{\flat}}(\bL_d))[1-d]
\]
taking chords $(\underline{p}_d,\ldots, \underline{p}_1) \mapsto \sum_{\underline{q}} (-1)^{\delta}\#\mathcal{M}_{d,1}(\underline{p}_d,\ldots,\underline{p}_1,\underline{q}) \langle \underline{q} \rangle$ with the sign $(-1)^{\delta}$ dictated by the appropriate determinant line bundle. 

\item The gluing maps of quilted discs are compatible with the coherent orientations on moduli spaces of quilted discs developed in \cite{WW:orientquilts}.  The categories $\sF^{\#}(M)$ and functors $\cF_{L^{\flat}}$ are defined over the characteristic zero Novikov field $\Lambda_{\bR}$ and are absolutely $\bZ$-graded.

\item The assignment $L^{\flat} \mapsto \cF_{L^{\flat}}$ defines an $A_{\infty}$-functor $\Phi: \sF(M^-\times M) \rightarrow \End(\sF^{\#}(M))$.  If we assume $M$ is itself \emph{spin}, this takes the diagonal $L^{\flat} = \Delta \subset M^-\times M$ to the identity $\id_{\sF^{\#}(M)}$.
\end{enumerate}
\end{axi}

The restriction to spin $M$ in the last part of the \emph{Axiom} avoids the need to talk about Pin structures on the diagonal in the more general case. 

\begin{rem}
All the parts of the \emph{cohomological} version of this statement have been established, cf. \cite{WW} and references therein.  The crucial final part of the \emph{Axiom} is the subject of ongoing work-in-progress by Mau, Wehrheim and Woodward. Moreover, for the proof of the generative criterion Theorem \ref{thm:generation}, we shall only need generalised Lagrangian correspondences in which every constituent is either a product of Lagrangians coming from the respective factors, or a diagonal.  In these cases the quilt theory should simplify considerably.
\end{rem}

\begin{rem}
One can use perturbations of the families of almost complex structures parametrised by the points of a quilt which are compactly supported in the interior of that quilt to achieve regularity for moduli spaces.  That is why the theory behaves well for a given Lagrangian submanifold as soon as there is a single almost complex structure for which it bounds no holomorphic disc, cf. the preamble to the \emph{Axiom}.  Note that, a priori,  the choice of such an almost complex structure becomes an additional datum on the objects (branes) of the strictly unobstructed Fukaya category $\sF_{so}(M)$.  In general, this is to be remedied by techniques developed in \cite{FO3}.
\end{rem}

\begin{rem}
The technically hardest results of \cite{MWW} concern the quasi-equivalence of composition of functors and the functor for a composition of Lagrangian correspondences, of which we make no use (either at the $A_{\infty}$ or the homological level; in our framework the relevant theorem would assert that two \emph{distinct} functors were quasi-isomorphic).
\end{rem}

\section{Homological mirror symmetry for the 2-torus}

\subsection{Split-generation}
There are few split-generation results for Fukaya categories, but one follows from Example \ref{Ex:Dehntwist}. This is analogous to \cite[Lemma 6.2]{Seidel:HMSgenus2} which considers surfaces of genus $>1$.  

\begin{prop} \label{prop:seidel}
 Let $M \cong T^2$ be a closed two-dimensional symplectic torus. A meridian and a longitude split-generate $\scrF(M)$.
  \end{prop}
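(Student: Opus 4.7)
The plan is to reduce to Seidel's Dehn twist theorem (Example~\ref{Ex:Dehntwist}) via the action of the mapping class group of $T^2$. Any object of $\scrF(T^2)$ is a simple closed curve on $T^2$; if it were nullhomotopic it would bound a holomorphic disc for any complex structure (since $T^2$ has a flat Kähler metric making any small loop into the boundary of a geodesic disc), violating unobstructedness. Hence every object is Hamiltonian isotopic to a $(p,q)$-geodesic for some coprime pair $(p,q)$, and Lemma~\ref{lem:Ham-independent} applies since $\pi_2(T^2,L) = 0$ for any essential curve $L$.

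The mapping class group of $T^2$, identified with $SL(2,\bZ)$, is generated by the two Dehn twists $\tau_\alpha$ and $\tau_\beta$ along a meridian $\alpha$ and a longitude $\beta$; it acts transitively on the set of primitive homology classes. So for any essential simple closed curve $L$, some word $\tau_{i_k} \circ \cdots \circ \tau_{i_1}$ (with each $i_j \in \{\alpha,\beta\}$) carries $\alpha$ to a curve Hamiltonian isotopic to $L$. Since $\alpha$ and $\beta$ are $1$-spheres, Seidel's theorem (Example~\ref{Ex:Dehntwist}) applies: each geometric Dehn twist $\tau_V(\,\cdot\,)$ is quasi-isomorphic in $Tw(\scrF(T^2))$ to the algebraic twist $\scrT_V(\,\cdot\,)$, which is the cone over the canonical evaluation $HF^*(V,\,\cdot\,) \otimes V \to (\,\cdot\,)$. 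Iterating this correspondence along the chosen word exhibits $L$ as an iterated mapping cone built only from $\alpha$ and $\beta$, so $L$ lies in the triangulated envelope $Tw(\langle \alpha, \beta\rangle) \subset Tw(\scrF(T^2))$. Taking split-closures then gives the statement.

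The main obstacle, and the point requiring the most care, is verifying that gradings and (relative) spin structures are compatible with this reduction. The action of $SL(2,\bZ)$ permutes the two spin structures on each simple closed curve, and each Dehn twist shifts gradings in a specific way; one has to check that the words realising a given homology class can be chosen to land on the intended brane structure on $L$, or equivalently that the orbit of $\alpha$ together with $\beta$ (and their shifts, which are freely available inside $Tw$) exhausts all brane structures on all primitive curves. Since there are only two spin structures per curve and the action of $\tau_\alpha$, $\tau_\beta$ on spin structures is easy to make explicit, this is a finite check and causes no real difficulty. Once this bookkeeping is settled, one in fact obtains strong generation of $\scrF(T^2)$ by $\{\alpha,\beta\}$ before taking any split-closure.
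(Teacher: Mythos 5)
Your argument has a genuine gap at the step claiming that ``some word $\tau_{i_k} \circ \cdots \circ \tau_{i_1}$ carries $\alpha$ to a curve Hamiltonian isotopic to $L$.'' The mapping class group $SL_2(\bZ)$ acts transitively on primitive homology classes, so a word in the twists carries $\alpha$ to a curve in the correct \emph{isotopy} class, but not, in general, to the correct \emph{Hamiltonian} isotopy class -- and the Fukaya category of $T^2$ sees the difference. Two parallel geodesics of slope $(p,q)$ bounding an annulus of nonzero area are Lagrangian isotopic with nonzero flux, hence not Hamiltonian isotopic; since they are disjoint, $HF(L,L')=0$ while $HF(L,L)\neq 0$, so they define non-isomorphic objects of $\scrF(T^2)$. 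There is a circle's worth of such Hamiltonian isotopy classes per slope, whereas the orbit of $\alpha$ under the countable group $\langle\tau_\alpha,\tau_\beta\rangle \subset \Symp(T^2)$ is countable. So iterating twists from $\alpha$ cannot reach every object, and your conclusion that $L\in Tw(\langle\alpha,\beta\rangle)$ does not follow. Lemma~\ref{lem:Ham-independent} does not repair this: it says Hamiltonian isotopic curves are isomorphic, not that curves in the same smooth isotopy class are. Your closing claim of \emph{strong} generation is a symptom of the same error and is in fact false: $K_0(Tw(\langle\alpha,\beta\rangle))$ is generated by two classes, while $K_0(D^b(E)) \cong \bZ^2 \oplus \Pic^0(E)$ is not finitely generated (mirror: distinct positions of a fibre correspond to distinct skyscraper sheaves), so the two curves cannot generate before taking split-closure.

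The paper's proof sidesteps all of this by never trying to ``reach'' $L$ via twists. It fixes an arbitrary object $L$, invokes the positive relation $(\tau_{V_1}\tau_{V_2})^{12}=1$, and observes that the cone of the canonical map $L\to (\tau_{V_1}\tau_{V_2})^{12}(L)\cong L$ lies in the triangulated envelope of $\{V_1,V_2\}$ (iterating Seidel's exact triangle for the Dehn twist and the octahedral axiom). The nontrivial geometric input is that this canonical morphism represents a class $\cC\in HF^0(L,L)\cong H^0(L)$ which \emph{vanishes}: the relation is realized by a Lefschetz fibration over $\bP^1$ with $T^2$ fibres whose total space is a $K3$ surface, so $c_1=0$ forces all moduli spaces of sections to have negative virtual dimension and $\cC=0$. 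The cone is therefore $L\oplus L[1]$, exhibiting $L$ as a summand of an object of $Tw(\langle V_1,V_2\rangle)$. Here the passage to the split-closure is not a convenience but the essential mechanism.
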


\begin{proof}
Let $L$ be any homotopically essential closed curve representing an object of $\scrF(M)$.  Let $V_1$ respectively $V_2$ denote the chosen meridian and longitude; there is a positive relation in the mapping class group $(\tau_{V_1}\tau_{V_2})^{6k} = 1 \in SL_2(\bZ)$ for $k\geq 1$.  
The interpretation of the Dehn twists as twist functors, cf. Example \ref{Ex:Dehntwist}, gives a canonical morphism $L \rightarrow (\tau_{V_1} \tau_{V_2})^{6k} (L) \cong L$ which defines an element $\mathcal{C}$ of $HF^0(L,L) \cong H^0(L)$.  Again by Example \ref{Ex:Dehntwist}, the iterated twist $(\tau_{V_1}\tau_{V_2})^{6k}(L)$ and hence the cone of the morphism $L \rightarrow (\tau_{V_1}  \tau_{V_2})^{6k} (L)$ necessarily lie in the triangulated closure of $V_1$ and $V_2$.  We shall show that when $k=2$ the class $\mathcal{C}$ vanishes, hence the cone is isomorphic to a direct sum of two copies of $L$.  This will imply $L$ is split-generated by $V_1$ and $V_2$, as required.

Let $\pi: W\rightarrow \bP^1$  denote  the closed symplectic 4-manifold which is the total space of the  Lefschetz fibration  with fibre $M$ defined by the positive relation $(\tau_{V_1} \tau_{V_2})^{12} = 1$.  Equip $W$ with an almost complex structure $J$ taming the symplectic form and making the projection map $\pi$ pseudo-holomorphic;  it is well-known that transversality for holomorphic curves not containing components lying in fibres of $W$ can be achieved within this class, see for instance \cite{DonaldsonSmith}.  When the spaces of holomorphic sections of $\pi$ (in all possible homology classes $[\beta]$) are compact and of the expected dimension, they define -- by evaluation and Poincar\'e duality -- an even-dimensional cohomology class $\mathcal{C}(\pi) \in H^{ev}(M;\Lambda_{\bR})$.  
From the proof of the long exact sequence in Floer cohomology \cite{Seidel:LES},  one sees that the class $\mathcal{C} \in H^0(L)$ is geometrically realised by the restriction $H^0(M)\rightarrow H^0(L)$ of the zero-degree component of $\mathcal{C}(\pi)$.

Given $[\beta] \in H_2(W)$, the space of holomorphic sections of $\pi$ in the class $[\beta]$ has complex virtual dimension $\langle c_1(T^{vt}W), [\beta]\rangle + 1$, where $T^{vt}$ denotes the vertical tangent bundle. In the case at hand,  $W$ is well-known to be a $K3$ surface, in particular $c_1(W)=0$, which implies that the virtual dimensions of such spaces of sections are all negative, hence the relevant moduli spaces are empty.  This  forces $\mathcal{C}=0$, completing the proof.  
\end{proof}

\begin{rem}
If $g(M) > 1$ and there is a positive relation $\prod_i \tau_{V_i} = \id$ amongst the Dehn twists in circles $V_i \subset M$, one can again consider the associated Lefschetz fibration $\pi: W\rightarrow \bP^1$, but now bubbling is excluded for topological reasons (fibres of $\pi$ contain no spherical components).  The only components of $\mathcal{C}(\pi)$ in $H^0(M)$ arise from sections of square zero, by adjunction, but these can be excluded by positivity of intersections for holomorphic curves in four-manifolds \cite{McDuff:Positivity}; thus Seidel deduces the $\{V_j\}$ split-generate $\scrF(M)$.
\end{rem}

\subsection{The theorem}
Recall the Novikov field $\Lambda_{\bR}$ comprises formal sums 
\begin{equation}
\Lambda_{\bR} = \left\{ \sum_{i\in \bZ} a_i q^{t_i} \ \big|  \  a_i\in\bC, a_i = 0 \ \textrm{for} \  i\ll 0, t_i\in\bR, t_i \to \infty \right\}.
\end{equation}
Define $\sigma: \Lambda_{\bR} \rightarrow \bR\cup\{\infty\}$ by $\sigma (\sum a_i q^{t_i}) = t_{min}$ (the smallest power of $q$ which occurs in the expression with non-zero coefficient) and $\sigma(0)=\infty$. Then $|\cdot| = e^{-\sigma}: \Lambda_{\bR} \rightarrow \bR_{\geq 0}$ defines a valuation on $\Lambda_{\bR}$, with respect to which it becomes a complete non-Archimedean field (note that $|q|<1$).  Background on algebraic geometry over such fields can be found in \cite{FresnelPut}, but in this paper nothing sophisticated is required: we only really use the fact that $\Lambda_{\bR}$ is algebraically closed of characteristic zero.  We recall from the Introduction:

\begin{defin} \label{defin:Tate}
The Tate curve $E=E_{\Lambda_{\bR}}$ is the elliptic curve defined algebraically by its ring of functions 
\[ \Lambda_{\bR}[x,y] \big/ \{y^2 + xy = x^3 + a_4(q) x + a_6(q)\}, \] for series $a_4, a_6 \in \bZ[[q]]$ defined by:
\[
s_k(q) = \sum_{m\geq 1} \frac{m^kq^m}{1-q^m}; \ a_4(q) =  -5s_3(q);  \ a_6(q) = (-5s_3(q)-7s_5(q))/12.
\]
We will identify $E$ with its analytification, which is exactly $\Lambda_{\bR}^*/\langle q\rangle$, where $\Lambda_{\bR}^*$ denotes the subspace of non-zero elements.  
\end{defin}

Since the relevant series converge for $|q|<1$, one can informally regard the Tate curve as a family of elliptic curves over a disc, with nodal central fibre (this family has ``maximal unipotent monodromy" in the usual sense).  The appearance of a family reflects the fact that, on the mirror, any symplectic torus has an area which can be scaled, giving rise to a one-parameter family of Fukaya categories; for technical convergence reasons (which can be circumvented in complex dimension one but not thereafter), we in fact regard a symplectic manifold as defining a single category over $\Lambda_{\bR}$. 
In the rest of this section we will explain the proof of the following:

\begin{thm} \label{thm:mirror_symmetry_elliptic}
The split-closed derived Fukaya category of $T^{2}$ equipped with a symplectic form of area $A$ is equivalent to the bounded derived category of coherent sheaves on the elliptic curve $E_A=\Lambda^{*}_{\bR} /  \langle q^{A} \rangle.$
\end{thm}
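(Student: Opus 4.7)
The plan is to produce the equivalence by matching explicit split-generators on each side, then verifying that their endomorphism $A_\infty$-algebras are quasi-isomorphic. On the symplectic side, Proposition~\ref{prop:seidel} gives a meridian $V_1$ and longitude $V_2$ that split-generate $\sF(T^2)$; applying iterated Dehn twists (Example~\ref{Ex:Dehntwist}) as autoequivalences, I may replace these by two Lagrangian sections $L_0, L_1$ of slopes $0$ and $3$ of the projection $T^2 \to S^1$, which still split-generate and meet transversely in three points. On the algebraic side, I pick the structure sheaf $\cO$ and a degree-$3$ (hence very ample) line bundle $\cL$; Theorem~\ref{thm:orlov} immediately gives split-generation of $D^b(E_A)$ by $\{\cO, \cL\}$.

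Matching the Donaldson-level data is straightforward. Proposition~\ref{Prop:SelfFloer} identifies $HF^\ast(L_i, L_i)$ with $H^\ast(S^1;\Lambda_\bR)$; the three transverse intersections yield $HF^0(L_0, L_1) \cong \Lambda_\bR^3$ and $HF^1 = 0$, matching $\Hom(\cO, \cL) \cong H^0(\cL) = \Lambda_\bR^3$ by Riemann-Roch, with vanishing $\Ext^1$ by Serre duality. Together with the analogous statements for $HF(L_1, L_0)$ and $\Ext(\cL, \cO)$, this fixes a graded-linear isomorphism of cohomological endomorphism algebras.

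The heart of the argument, and the main obstacle, is the matching of higher $A_\infty$-products. On the Fukaya side, $\mu^d$ counts holomorphic $(d+1)$-gons in $T^2$ with sides alternating on $L_0$ and $L_1$; lifting to the universal cover $\bR^2$ and summing over translates produces structure constants that are theta series in the Novikov parameter $q^A$, genuine elements of $\Lambda_\bR$. On the algebraic side, the $A_\infty$-structure transferred to the $\Ext$-algebra via Lemma~\ref{lem:HPL} from a \v{C}ech resolution is computed classically by Polishchuk, and yields the same theta series once the symplectic area $A$ is identified with the parameter defining the Tate curve $E_A$. This identification is the non-trivial content of the classical Polishchuk--Zaslow theorem, reformulated here over $\Lambda_\bR$ rather than $\bC$. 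Once the $A_\infty$-algebras are matched, the Yoneda embedding (Lemma~\ref{lem:Yoneda}) produces a quasi-equivalence of the full subcategories on the chosen generators; passing to twisted complexes, taking split-closures, and invoking both split-generation statements yields the desired equivalence $D^\pi\sF(T^2) \simeq D^b(E_A)$.
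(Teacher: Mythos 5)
Your proposal takes a genuinely different route from the paper, and that route has a gap at the crucial step.

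You propose to match the $A_\infty$-structures \emph{directly}, by computing the higher products $\mu^d$ on the Fukaya side as theta series and citing Polishchuk--Zaslow for the corresponding computation on the algebraic side. The difficulty is that Polishchuk--Zaslow \cite{PZ} only establish an equivalence of \emph{cohomological} categories; their paper provides the isomorphism of Donaldson/Ext-algebras, together with the identification of the symplectic area parameter with the Tate parameter, but not the full $A_\infty$-matching. The later papers \cite{P1,P2} compute certain Massey/triple products, and Gross \cite{Gross} carries out an actual embedding over an integral Novikov ring, but none of the references you invoke hands you the statement ``the transferred $A_\infty$-structures on both sides are quasi-isomorphic.'' Your sentence ``This identification is the non-trivial content of the classical Polishchuk--Zaslow theorem'' therefore overstates what the citation delivers; the direct-matching route is a real project, not a corollary. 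There is also a technical issue you elide: the Fukaya category requires Hamiltonian perturbations to handle coincident or non-transverse Lagrangians (needed, e.g., for the endomorphism algebras $HF^*(L_i,L_i)$ and for higher products with repeated inputs), and PZ work only with transverse collections, so one cannot simply lift their structure constants into the full $A_\infty$-category.

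The paper avoids exactly this difficulty by replacing the direct computation with an abstract rigidity argument. It invokes Polishchuk's result \cite{P3} that the cohomology category $\sC = H(\Gamma\cA^{\vee})$ (powers of $\cO(p)$) admits a \emph{unique non-formal} $A_\infty$-structure up to equivalence, which comes down to $HH^2(\sC)$ having rank one. Given this, it suffices to (i) check equality of cohomological categories via \cite{PZ}, and (ii) verify non-formality of the Fukaya-side subcategory $\Gamma\cA(T^2)$. For (ii) the paper gives a clean argument: if $\Gamma\cA(T^2)$ were formal, the objects in its category of twisted complexes would be distinguished by functors to chain complexes, forcing $K_0$ to have infinite rank, contradicting generation by two objects. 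Compared to your proposal, this buys a large reduction: one needs to match only the cohomological categories, plus a single soft non-formality statement, rather than all higher products. If you want to pursue the direct route, you would need either to reproduce (and adapt to $\Lambda_\bR$) a full computation in the style of Gross, or to appeal to the $HH^2$-rigidity result yourself — at which point you would effectively be following the paper's argument.
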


\begin{rem}
The statement of the theorem asserts an equivalence of triangulated categories.  The proof essentially constructs a quasi-isomorphic embedding $\cA(T^2) \hookrightarrow D^b_{\infty}(E_A)$ of a certain subcategory of the Fukaya category into the $dg$-enhancement of $D^b(E_A)$ introduced in Section \ref{Subsec:dgBackground}.  This induces an equivalence of the split-closed category of twisted complexes $Tw^{\pi}(\cA(T^2)) \simeq D^b_{\infty}(E_A)$.  The triangulated equivalence of the theorem is then obtained by passing to cohomology.  In fact, as indicated in Remark \ref{rem:HPL}, by suitable appeal to homological perturbation we will avoid dealing directly with complexes of injectives.
\end{rem}

\begin{rems}
There are several statements of homological mirror symmetry for elliptic curves in the literature.  Polishchuk and Zaslow \cite{PZ} identify the underlying cohomological categories of the elliptic curve and its mirror.  Polishchuk later refined this by checking equivalence of Massey products \cite{P1}, and verified that the $A_{\infty}$-structure is in a sense  determined by a triple product \cite{P2}.  The latter papers work with an elliptic curve over an arbitrary field $k$ on the algebraic side and specialise to $k=\bC$ on the symplectic side (it follows from direct computation that the relevant power series defining the $A_{\infty}$-operations are all convergent, so the specialisation to $k=\bC$ is legitimate).  The closest result in the literature to Theorem \ref{thm:mirror_symmetry_elliptic} is due to Gross \cite{Gross}, who works over an integral version of our Novikov ring and constructs an embedding of a subcategory of the derived category of sheaves on (a formal scheme underlying) the Tate curve into the Fukaya category of the curve of area 1. 
\end{rems}


\subsection{The proof}

Fix $A\in\bR_{>0}$ and consider the torus of area $A$.  Fix a (Lagrangian!) fibration $\pi:T^2\rightarrow \bR/A \bZ$.  Let $\cA(T^2)$ denote the subcategory of $\sF(T^2)$ generated by the two objects $L_s$ and $L_f$ given by any fixed choice of section and  fibre  of $\pi$, each equipped with the trivial choice of flat local system.  Note that $[L_s]$ and $[L_f]$ form a basis for $H_1(T^2;\bZ)$ or $H_1(T^2;\Lambda_{\bR})$. The subcategory $\cA(T^2)$ split-generates the Fukaya category by Proposition \ref{prop:seidel}.

Let $E_A$ denote the elliptic curve over $\Lambda_{\bR}$ with analytification $E_A = \Lambda_{\bR}^*/\langle q^A\rangle$.  Let $\cA^{\vee}(E_A)$ denote the subcategory of  the  derived category of sheaves on $E_A$ generated by the structure sheaf $\cO$ and the skyscraper sheaf of a closed point $\cO_p$.    As usual, we write $\cO(p)$ for the line bundle which sits in an exact sequence
\begin{equation} \label{eqn:basic-cone} 0 \to \cO \to \cO(p) \to \cO_p \to 0. \end{equation}
Again, on the algebraic side,  the structure sheaf and skyscraper sheaf split-generate the derived category of the elliptic curve. Explicitly, iterating Equation \ref{eqn:basic-cone}, i.e. iteratively tensoring it with $\cO(\pm p)$, shows that $\cO(kp)$ is in the triangulated envelope of $\langle \cO, \cO_p \rangle$ for all $k\in \bZ$; the result then follows from Orlov's theorem \ref{thm:orlov}.  (In this dimension one could also give a direct argument: every complex is quasi-isomorphic to its cohomology sheaves, torsion-free sheaves are locally free, and locally free sheaves have been classified by Atiyah.)


Informally, we will compare $\cA(T^2)$ with $\cA^{\vee}(E_A)$, the relevant correspondence at the level of objects coming from $\cO \leftrightarrow L_s$ and $\cO_p \leftrightarrow L_f$.  Note that all four objects are spherical, and $\Ext^*(\cO,\cO_p) \cong HF^*(L_s,L_f) \cong \Lambda_{\bR}$ (concentrated in degree 1). Rather than writing down an explicit functor between the two categories, we will appeal to an algebraic classification result of Polishchuk. He proved in \cite{P3} that  the $A_{\infty}$-structure on the subcategory $\Gamma \cA^{\vee}$ of $D^{b}_{\infty}(E_A)$ whose objects are exactly the powers of $\cO(p)$ is uniquely determined by the cohomological category $\sC=H(\Gamma \cA^{\vee})$ and by its lack of formality: the category $\sC$ admits a unique non-formal $A_{\infty}$-structure up to $A_{\infty}$-equivalence.  This is proved by a Hochschild cohomology computation:  $HH^2(\sC)$ has rank 1, so the trivial structure has a unique deformation, cf. the discussion after Lemma \ref{lem:HHrestriction}.  On the symplectic side, we therefore introduce the subcategory $\Gamma \cA(T^2)$ of $Tw(\cA(T^2))$ generated by the iterated Dehn twists of $L_s$ by $L_f$.  It is an immediate consequence of \cite{PZ}, by direct computation, that there is an equivalence of cohomological categories
\[
H(\Gamma \cA^{\vee}) \simeq H(\Gamma \cA(T^2)).
\]
Strictly, we are eliding an important issue.  Polishchuk and Zaslow \cite{PZ} work only with transversal collections of Lagrangian submanifolds which is one of the reasons why they only work on the cohomological level.  However, one can use Hamiltonian perturbations to deal with non-transversal, in particular co-incident, Lagrangian submanifolds in the manner of Seidel \cite{Seidel:FCPLT}, thereby obtaining a genuine (cohomologically unital) $A_{\infty}$-category $\sF$.   In real dimension 2, implementing the Hamiltonian perturbations is fairly straightforward.

It suffices therefore to prove the lack of formality of the mirror category $\Gamma \cA(T^2)$.  One proof proceeds by imitating Polishchuk's proof of the lack of formality of the derived category, using an explicit Massey product counting holomorphic quadrilaterals  bound by four circles Hamiltonian isotopic to $L_s$, $\tau^{2}_{L_f} L_s$,  $\tau^{4}_{L_f}L_s$, and $L_s$.  These quadrilaterals are manifest in the universal cover of $T^2$;  the idea is to reproduce the Koszul exact sequence appearing on page 418 of \cite{P2} with a Lagrangian mirror to a line bundle of degree $2$.

Alternatively, and more succinctly, we give an argument based on the rank of certain $K_0$-groups (these were defined in Equation \ref{eqn:K0}).  By construction $H(\Gamma \cA(T^2))$ has objects indexed by an integer corresponding to the power of the Dehn twist about $L_f$, and any two different objects are not isomorphic.  Moreover, for any two objects $L_0$ and $L_1$ which are not isomorphic, the image of the composition
\begin{equation*} HF^*(L_1,L_0) \otimes HF^*(L_0,L_1) \to HF^*(L_0,L_0) \end{equation*} 
factors through the degree $1$ part of $HF^*(L_0,L_0)$.  In particular, to each object $L$ in $H(\Gamma \cA(T^2))$, we may assign a unital $dg$ functor
\begin{equation} \label{eqn:functors} H(\Gamma \cA(T^2))  \to Ch  \end{equation}
taking $L$ to the ground field $k$, and all other objects to $0$ (prescribing this action on objects, together with the requirement of being a unital $dg$ functor, determines the map on morphism spaces in this special case).

If $ \Gamma \cA(T^2) $ were formal, so quasi-isomorphic to the trivial $A_{\infty}$-structure on $H\Gamma \cA(T^2)$, we would conclude that the $K_0$-group of its category of twisted complexes has infinite rank. Indeed,  the functors (\ref{eqn:functors}) induce maps on $K_0$ distinguishing the $K$-theory classes of any two Lagrangians in $\Gamma\scrA(T^2)$.  This contradicts the fact that the triangulated closure of $ \Gamma \cA(T^2) $ agrees with the triangulated closure of the category with objects $L_f$ and $L_s$, and hence must have a $K_0$ group of rank at most $2$.

The upshot is that the subcategories 
\[
\Gamma \cA^{\vee} \subset D^b_{\infty}(E_A) \qquad \textrm{and} \qquad \Gamma \cA(T^2) \subset Tw(\scrF(T^2))
\]
carry equivalent $A_{\infty}$-structures; in fact they are quasi-isomorphic by an $A_{\infty}$-functor acting trivially on cohomology and acting on objects according to our original informal prescription. Since the subcategories $\cA$ and $\cA^{\vee}$ split-generate on their respective sides, and passage to a split-closed triangulated envelope is purely algebraic, this shows that the corresponding split-closed derived categories are equivalent, and passing finally to cohomology yields the statement of Theorem \ref{thm:mirror_symmetry_elliptic}.


\begin{rem} \label{rem:moduliofpoints}
The space $E_A$ can be identified with the moduli space of skyscraper sheaves on $E_A$, all of which are obtained from $\cO_p$ by applying a translation automorphism of the elliptic curve (these come from an algebraic group structure on the underlying algebraic variety, for instance by the ``rigid GAGA" principle \cite{FresnelPut}).  To see a corresponding moduli space of objects on the mirror symplectic torus $T^2$ without appealing to idempotent closure, one must enlarge the Fukaya category and allow objects comprising a Lagrangian submanifold together with a flat line bundle.  One then has a moduli space parametrised by pairs $(\theta,\theta') \mapsto (\phi_{\theta}(L_f), \cL_{\theta'})$, where $\phi_{\theta}$ denotes the rotation by $\theta \in \bR / A\bZ$ and $\cL_{\theta'}$ denotes the local system with holonomy $\theta' \in \bC^{*}q^0 \subset \Lambda_{\bR}$.  One can formally allow line bundles with monodromy in $\Lambda_{\bR}^{*}$, and identify the  objects
\[
(\theta, q^{r} (a_0 + \sum a_i q^{t_i}))\  \equiv \ (\theta+r, a_0 + \sum a_i q^{t_i})
\]
so the valuation of the holonomy corresponds formally to translating the underlying Lagrangian by the corresponding flux, compare \cite[Section 4.1]{Au09} for the convergent case.  Translating all the way around the torus, this gives a moduli space  $\Lambda_{\bR}^*/\langle q^A\rangle$ of objects in the Fukaya category which are mirror to the points of the algebraic elliptic curve.
\end{rem}

\subsection{From quantum cohomology to Hochschild cohomology\label{subsec:naturalmap}}  The Hochschild complex of an $A_{\infty}$-category is defined in Section \ref{Subsec:BackgroundAlgebra}.  As explained in Section \ref{Subsec:BackgroundFukaya}, for a symplectic manifold $M$ there is a natural map from the quantum cohomology of $M$ to the Hochschild cohomology of its Fukaya category.

\begin{cor} \label{cor:natural_map_iso_2torus}
The natural map 
 \begin{equation} QH^*(T^2;\Lambda_{\bR}) \to HH^*(\sF(T^2)) \end{equation} is an isomorphism.
\end{cor}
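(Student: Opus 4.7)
The plan is to show both sides are four-dimensional over $\Lambda_{\bR}$ and then verify injectivity of the open-closed map in each degree.  For the dimension count, Proposition \ref{rem:hh-invariant} combined with Theorem \ref{thm:mirror_symmetry_elliptic} yields
\[
HH^{*}(\sF(T^{2})) \cong HH^{*}(D^{b}_{\infty}(E_{A})),
\]
and then Theorem \ref{thm:vdb} together with Corollary \ref{cor:rankHH} (taking $d=1$) identifies the right-hand side with the exterior algebra $\bigwedge^{*}(\Lambda_{\bR}^{\oplus 2})$, of total dimension four and generated in degree one.  On the symplectic side, the absence of non-constant $J$-holomorphic spheres in $T^{2}$ for generic $J$ (Lemma \ref{lem:well-defined}) makes $QH^{*}(T^{2};\Lambda_{\bR})$ coincide with $H^{*}(T^{2};\Lambda_{\bR})$, which is again four-dimensional and an exterior algebra on two degree-one generators.

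Injectivity in degrees $\leq 1$ would then follow from Lemma \ref{lem:HHrestrict} applied to the split-generators $L_{s}$ and $L_{f}$ from Proposition \ref{prop:seidel}.  These linear circles bound no holomorphic disc for the flat structure, so the composition of the open-closed map with restriction to each $L_{i}$ reduces to the ordinary pullback in cohomology.  Since $[L_{s}]$ and $[L_{f}]$ form a basis of $H_{1}(T^{2};\Lambda_{\bR})$, the combined restriction
\[
H^{*}(T^{2};\Lambda_{\bR}) \longrightarrow H^{*}(L_{s};\Lambda_{\bR}) \oplus H^{*}(L_{f};\Lambda_{\bR})
\]
is injective on $H^{0} \oplus H^{1}$, and hence so is the open-closed map on these degrees.

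For the top degree I would appeal to the ring structure: the open-closed map is a ring homomorphism, a standard consequence of its definition via counts of discs with interior insertions (one considers the one-parameter family in which two interior marked points collide, producing the quantum product on $QH^{*}$ and the Yoneda product on Hochschild cochains).  Since $QH^{*}(T^{2})$ is generated as a ring by $H^{1}$, the image of the fundamental class is the product of the images of any basis of $H^{1}$, which are linearly independent in $HH^{1}$ by the previous step.  Because $HH^{*}(\sF(T^{2}))$ is exterior on two degree-one generators, this product is non-zero in $HH^{2}$; injectivity in all degrees, combined with matching total dimensions, then forces the open-closed map to be an isomorphism.

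The principal obstacle is the ring-map property itself, which is not explicitly recorded in Section \ref{Subsec:BackgroundFukaya} even though it is the standard TQFT-type output of a degeneration argument on moduli of discs with two interior punctures.  Should one wish to bypass it, an alternative is to exploit the $2$-Calabi-Yau structure on $\sF(T^{2})$ and dualise the top-degree question to one about $HH_{0}$ paired against the unit, which is directly controlled by the computations carried out in the earlier steps.
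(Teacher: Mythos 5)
Your proof follows the same approach as the paper's: compute the abstract isomorphism type of $HH^*(\sF(T^2))$ via Theorem \ref{thm:mirror_symmetry_elliptic}, Proposition \ref{rem:hh-invariant}, and Corollary \ref{cor:rankHH}, then detect degree-one injectivity via Lemma \ref{lem:HHrestrict} and restriction to the circles $L_s$, $L_f$. You correctly make explicit, where the paper leaves it implicit, that passing from degree-one injectivity to a full isomorphism uses the ring-homomorphism (and unital) property of the open-closed map; the paper simply asserts that injectivity in degree one suffices, which depends on exactly this multiplicativity together with both sides being exterior algebras generated in degree one.
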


\begin{proof}
The quantum cohomology $QH^*(T^2;\Lambda_{\bR}) \cong H^*(T^2;\Lambda_{\bR})$ is isomorphic to usual cohomology, hence has rank $4$ over $\Lambda_{\bR}$. By  Theorem \ref{thm:mirror_symmetry_elliptic}, Proposition \ref{rem:hh-invariant} and Corollary \ref{cor:rankHH}, $HH^*(\sF(T^2))$ is abstractly isomorphic to $H^*(T^2;\Lambda_{\bR})$ as an algebra over $\Lambda_{\bR}$, so it suffices to prove that the natural map is an injection on elements of degree $1$, i.e. that the images of the elements $PD[L_f], PD[L_s] \in QH^1(T^2)$ are distinct in  $HH^*(\sF(T^2))$.  In Lemma \ref{lem:HHrestrict}, we showed that upon identifying $QH^*(T^2;\Lambda_{\bR})$  with $H^*(T^2;\Lambda_{\bR})$ and $HF^*(L,L)$ with $H^*(L)$, the restriction map on ordinary cohomology agrees with the composition
\begin{equation} \label{eqn:subcategory}
QH^*(T^2) \rightarrow HH^*(\sF(T^2)) \rightarrow HF^*(L,L)
\end{equation}
where the second map is the projection to the $0$-th part of the filtration on the Hochschild complex, cf. Lemma \ref{lem:HHrestriction}.  We conclude that $PD[L_f]$ and $ PD[L_s]$ necessarily map to distinct elements, proving the result.
\end{proof}


\section{Quilts and generation\label{Sec:split}}


\subsection{Statement of the (split-)generation criterion}

Let $M$ be a symplectic manifold whose Fukaya category is well defined over some field $k$.  Typically this will be the Novikov field $\Lambda_{\bR}$ if $M$ is spin, or its characteristic 2 cousin otherwise, but since the choice plays no role in this section, we will keep to more general notation.  Let $\cA \subset \sF(M)$ be a full subcategory of the Fukaya category and write $\End(Tw(\cA))$ for the category of $A_{\infty}$-endofunctors of the category of twisted complexes over $\cA$. In this section, we use pseudo-holomorphic quilts to provide a criterion which is sufficient for $\cA$ to split-generate the entire Fukaya category.  In the next section, this criterion will be used to prove homological mirror symmetry for the four torus.

As explained in Section \ref{Subsec:BackgroundFukaya},  $\sF(M)$ is a cohomologically unital category which can be equipped with classes $e \in CF^{*}(K,K)$ representing the identity, essentially given by counts of rigid finite energy half-planes with boundary on $K$, cf. Lemma \ref{lem:units} and Remark \ref{rem:units=rigidplanes}.   Recall from Section \ref{Subsec:BackgroundAlgebra} the Yoneda $A_{\infty}$-embedding $\cY$ from $\sF(M)$ into the category of right $\sF(M)$ modules
\begin{align} \label{eq:yoneda} \cY \co \sF(M) & \longrightarrow \textrm{mod-}(\sF(M)) \\
K & \mapsto (L \mapsto CF^{*}(K,L)),
\end{align}
and the dual Yoneda embedding into the same category of modules
\begin{align} \cY^{\vee} \co \sF(M) & \longrightarrow \textrm{mod-}(\sF(M)) \\
K & \mapsto (L \mapsto \Hom(CF^{*}(L,K),k)). \end{align}

\begin{lem}\label{lem:poincareduality}
For each object $K$ there is an equivalence of $A_{\infty}$-modules

\begin{equation}   \cY(K) \to \cY^{\vee}(K)[n]. \end{equation}
\end{lem}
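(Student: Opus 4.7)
The strategy is to construct an explicit closed degree-zero morphism $t \co \cY(K) \to \cY^{\vee}(K)[n]$ of modules realising chain-level Floer-theoretic Poincar\'e duality, then verify it is a quasi-isomorphism by checking on cohomology at every object.

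By Proposition \ref{Prop:SelfFloer}, $HF^{*}(K,K) \cong H^{*}(K)$, so there is a canonical class in degree $n$.  Fix a cochain-level cocycle $\epsilon_K \co CF^{*}(K,K) \to k[-n]$ representing the functional dual to this top class: model $CF^{*}(K,K)$ on the Morse complex of a Morse function with a unique maximum on each component of $K$ (as in the proof of Lemma \ref{lem:weak-unit-statement}) and take $\epsilon_K$ to be the linear functional dual to the maximum.  This functional is automatically closed, there being no generators in degree $n+1$.

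Define $t$ so that its $d$-th component is obtained by applying $\epsilon_K$ to a suitably ordered $(d+2)$-fold product $\mu^{d+2}_{\sF}$ involving $x \in CF^{*}(K, L_d)$, morphisms $a_d, \ldots, a_1$ with $a_j \in CF^{*}(L_{j-1}, L_j)$, and an evaluation argument $y \in CF^{*}(L_0, K)$, the ordering being dictated by Seidel's conventions so that the composition takes values in $CF^{*}(K,K)$.  Geometrically, $t^{d}$ counts rigid pseudoholomorphic polygons with $d+3$ boundary punctures in which one distinguished puncture is constrained to pass through a cycle representative of the fundamental class $[K]$.  That $t$ is closed in the morphism complex of modules is a formal consequence of the $A_{\infty}$-relations for $\mu_{\sF}$ applied to the tuple $(x, a_d, \ldots, a_1, y)$, together with the cocycle property of $\epsilon_K$: the terms with $\mu^{1}_{\sF}$-outputs landing in $CF^{*}(K,K)$ are annihilated by $\epsilon_K$, while the remaining terms reassemble into the $A_{\infty}$-module homomorphism equations for $t$.

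To conclude, a closed morphism of right $A_{\infty}$-modules is a quasi-isomorphism if and only if its zeroth-order component $t^{0}_L$ induces an isomorphism on cohomology at every object $L$.  In our case $H^{*}(t^0_L) \co HF^{*}(K,L) \to HF^{*}(L,K)^{\vee}[n]$ is the classical Poincar\'e duality pairing $[x] \otimes [y] \mapsto \epsilon_K([\mu^{2}_{\sF}(x,y)])$, i.e.\ Donaldson composition followed by evaluation against $[K] \in HF^{n}(K,K)$.  Its non-degeneracy is standard: after perturbing $L$ to be transverse to $K$, the generators of $CF^{*}(K,L)$ and of $CF^{*}(L,K)$ are canonically in bijection with $K \pitchfork L$ (with gradings differing by $n$), and the contribution of the short constant Floer strips at each intersection point makes the pairing diagonal in these generators.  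The main technical burden lies in tracking Koszul signs in the verification that $t$ is closed; conceptually the proof reduces entirely to $A_{\infty}$-associativity together with the existence of the chain-level trace $\epsilon_K$.
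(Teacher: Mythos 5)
Your construction is essentially the paper's argument: the paper appeals to Seidel's Section 12j, and what Seidel does there is exactly what you sketch — pick a chain-level cocycle $\epsilon_K \in CF^*(K,K)^\vee$ representing integration against the fundamental class (which the Morse model makes available for free), apply the Yoneda map $\lambda$ of Lemma \ref{lem:Yoneda} to obtain a closed module morphism $\lambda(\epsilon_K) \in \hom_{\scrQ}(\cY(K),\cY^\vee(K)[n])$, and then check pointwise non-degeneracy on cohomology using the energy filtration on the Floer triangle product. The only substantive simplification you are missing is that closedness of $t$ need not be checked by hand against the $A_\infty$-relations: it is immediate from the fact (already recorded in Lemma \ref{lem:Yoneda}) that $\lambda$ is a chain map, so $\lambda(\epsilon_K)$ is a cocycle because $\epsilon_K$ is.

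One small caution on bookkeeping: with the paper's (Section 2) right-module convention $\cY(K)(L)=\hom_{\scrA}(L,K)$ and $\cY^\vee(K)(L)=\hom_{\scrA}(K,L)^\vee$, the arguments should be $x\in CF^*(L_{d-1},K)$ and the evaluation slot $y\in CF^*(K,L_0)$, so that $\mu^{d+1}(x,a_{d-1},\ldots,a_1,y)$ actually lands in $CF^*(K,K)$; the labelling you wrote follows Section 7's display and does not compose.
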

\begin{proof}
This is an $A_{\infty}$-extension of the familiar Poincar\'e duality statement (at the level of cohomology) $HF^i(L,K) \cong HF^{n-i}(K,L)^{\vee}$, and is a version of the assertion that the Fukaya category is a weak Calabi-Yau category (see \cite{Fukaya-cyclic}).  While we do not require these equivalences to be related to each other as $K$ varies, the existence of a natural transformation of functors $\cY \rightarrow \cY^{\vee}[n]$, as explained in \cite[Section 12j]{Seidel:FCPLT} proves the Lemma.
\end{proof}

Next, we observe that the construction of a twisted complex in \eqref{eq:tensor_product_chain_cmplex} by tensoring an object with a cochain complex requires only a unit satisfying the properties proved in Lemma \ref{lem:weak-unit-statement}.  Following Definition \ref{defin:projection-functor} we can therefore assign, to each pair $K_\pm \in \cA$, a ``projection" endofunctor 
\begin{equation} \cI (K_\pm): Tw(\cA) \rightarrow Tw(\cA) \end{equation}
which acts on objects by
\[ L \longrightarrow CF^{*}(K_-, L) \otimes K_{+}
\] 
and on morphisms by
\begin{align} 
\notag CF^{*}(L_{d-1},L_{d}) \otimes  \cdots \otimes CF^{*}(L_{0}, L_{1}) &  \longrightarrow   \\   & \!\!\!\!\!\!\!\!\!\!\!\!\!\!\!\!\!\!\!\!\!\!\!\!\!\!\!\!\!\!\!\!\!\!\!\!\!\!\!\!\!\!\!\!\!\!\!\!\!\!\!\!\!\!\!\!\!\!\!\!\!\!\!\!\!\!\!\!\!\!\!\!\!\!
\Hom(CF^{*}(K_-, L_0), CF^{*}(K_-, L_d)) \otimes CF^{*}(K_+, K_+)[1-d] \\ \label{eq:formula_yoneda_functor}
a_{d} \otimes \cdots \otimes a_{1} &  \mapsto \mu^{d+1}(a_{d}, \ldots, a_{1}, \cdot) \otimes e
\end{align}

The following theorem is the principal technical innovation of the paper.  The homological algebra applies in some generality -- only the Poincar\'e duality statement Lemma \ref{lem:poincareduality} is really essential -- but the geometric input from quilts currently constrains us to applying the result only in rather particular circumstances. 

\begin{thm} \label{thm:generation}
Suppose $\sF(M)$ is well-defined over the field $k$ and that the \emph{Axiom of Quilted Floer Theory} of Section \ref{Subsec:BackgroundQuilts} holds.  Suppose in addition:  
\begin{align*}
&  \parbox{30em}{$\bullet$ The natural map $QH^{*}(M;k) \to HH^*(\cA)$ is an isomorphism} \\
&  \parbox{30em}{$\bullet$ The identify $\id|_{\cA}$ lies in the category generated by the functors $\cI (K_{\pm}).$}
\end{align*}
Then the natural embedding $Tw(\cA) \subset Tw(\sF(M))$
is an equivalence.  In particular,  $\cA$ generates the Fukaya category.  Similarly,  if the second condition is replaced by 
\begin{align*}
&  \parbox{30em}{$\bullet$ The identify $\id|_{\cA}$ lies in the category split-generated by the functors $\cI (K_{\pm})$}
\end{align*}
then $Tw^{\pi}(\cA) \subset Tw^{\pi}(\sF(M))$ is an equivalence  and $\cA$ split-generates the Fukaya category.
\end{thm}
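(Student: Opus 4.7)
The strategy is to lift the decomposition of $\id|_{\cA}$ as a split summand of a twisted complex of projection functors from $\End(Tw(\cA))$ up to $\End(Tw(\sF(M)))$, and then to evaluate the resulting expression on an arbitrary Lagrangian $L \in \sF(M)$ so as to exhibit $L$ as a summand of a twisted complex of objects of $\cA$.

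First, the hypothesis furnishes a twisted complex $C$ of projection functors $\cI(K_\pm)|_{\cA}$ together with morphisms $\phi \co \id|_{\cA} \to C$ and $\psi \co C \to \id|_{\cA}$ in $\End(Tw(\cA))$ whose composition $\psi \phi$ is cohomologous to $\id|_{\cA}$. By Lemma~\ref{lem:yoneda_for_projection}, morphism spaces between pairs of projection functors $\cI(K_-,K_+)$ and $\cI(K_-',K_+')$ are intrinsic to $\cA$, and coincide whether these functors are regarded as acting on $Tw(\cA)$ or on $Tw(\sF(M))$. The twisted complex $C$ therefore canonically lifts to an object $\tilde C \in \End(Tw(\sF(M)))$. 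Running the same length-filtration spectral sequence argument as in Lemma~\ref{lem:yoneda_for_projection}, but using Poincar\'e duality (Lemma~\ref{lem:poincareduality}) to identify morphism complexes between the identity and a projection functor as a shift of $HF^*(K_-,K_+)$, one finds that morphisms from $\id$ to $\tilde C$ and from $\tilde C$ to $\id$ in $\End(Tw(\sF(M)))$ are similarly canonical lifts of the analogous spaces in $\End(Tw(\cA))$. This produces lifts $\tilde\phi \co \id_{\sF(M)} \to \tilde C$ and $\tilde\psi \co \tilde C \to \id_{\sF(M)}$.

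The composition $\tilde\psi \tilde\phi$ gives a class $\kappa \in HH^0(\sF(M))$ whose restriction to $HH^0(\cA)$ equals $[\psi\phi] = [\id|_{\cA}]$. Under the hypothesis, the composite $QH^0(M) \to HH^0(\sF(M)) \to HH^0(\cA)$ is an isomorphism sending the unit $1$ to $[\id|_{\cA}]$. Exploiting the naturality of Seidel's open-closed map together with the geometric origin of $\tilde\psi\tilde\phi$ -- which, via the quilt axiom, is ultimately read off from products $K_- \times K_+$ and the diagonal $\Delta$ in $\sF(M^- \times M)$ -- one identifies $\kappa$ with the image of $1$, i.e.\ with the class of $\id_{\sF(M)}$. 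For any $L \in \sF(M)$, Lemmas~\ref{lem:HHrestriction} and~\ref{lem:HHrestrict} then ensure that the image of $\kappa$ under evaluation in $HF^0(L,L)$ equals $[e_L]$, which forces $[\tilde\psi(L) \tilde\phi(L)] = [e_L]$ and exhibits $L$ as a summand of $\tilde C(L)$. Because $\tilde C(L)$ is by construction a twisted complex built from various $K_+ \in \cA$, the object $L$ lies in $Tw^{\pi}(\cA)$, proving split-generation.

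The principal obstacle is the identification of $\kappa$ with the image of $1 \in QH^0(M)$. The hypothesis only directly gives that $\kappa$ restricts to the identity in $HH^0(\cA)$, and the restriction map $HH^*(\sF(M)) \to HH^*(\cA)$ is not a priori injective on arbitrary classes. Bridging this gap requires exploiting the quilt-theoretic interpretation of the projection functors $\cI(K_-,K_+)$ as coming from products $K_- \times K_+ \subset M^- \times M$ via the functor $\Phi$ of the \emph{Axiom}, so that the specific cocycle $\tilde\psi\tilde\phi$ is pinned down to a geometric origin compatible with Seidel's map. A secondary subtlety is the extension of Lemma~\ref{lem:yoneda_for_projection} to morphism spaces involving the identity endofunctor, which forces the appeal to Poincar\'e duality.
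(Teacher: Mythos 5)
Your approach -- lift the decomposition of $\id|_{\cA}$ as a split summand of a twisted complex of projection functors from $\End(Tw(\cA))$ directly to $\End(Tw(\sF(M)))$, and then evaluate -- runs into a genuine gap at exactly the point you flag as the ``principal obstacle,'' and you do not close it.

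Concretely: after producing a class $\kappa \in HH^0(\sF(M))$ whose restriction to $HH^0(\cA)$ is $[\id|_{\cA}]$, you need $\kappa$ to evaluate to $[e_L]$ on every object $L$ of $\sF(M)$, which would follow if $\kappa$ were in the image of $1 \in QH^0(M)$ under the open-closed map. But the restriction $HH^0(\sF(M)) \to HH^0(\cA)$ is not a priori injective (the whole point is that $\cA$ is a small subcategory; there could be Hochschild classes of $\sF(M)$ that die on restriction), so ``same restriction as the identity'' does not imply ``is the identity,'' and the first hypothesis (that $QH^* \to HH^*(\cA)$ is an isomorphism) does not by itself give you control over $HH^*(\sF(M))$. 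Your suggestion that ``exploiting the quilt-theoretic interpretation\ldots pins $\tilde\psi\tilde\phi$ down to a geometric origin'' is the statement of what needs to be proved, not a proof; nothing in your write-up actually establishes that $\kappa$ lies in the image of $QH^0(M)$.

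The paper's proof avoids this by performing the lift in a different place, and this is the essential idea you are missing. Rather than lifting morphisms inside $\End(Tw(\sF(M)))$, one works with the geometric source category $\cA^\Delta(M^-\times M) \subset \sF(M^-\times M)$, whose objects are the products $K_-^- \times K_+$ together with the diagonal $\Delta$. The first hypothesis is used, via Lemma~\ref{lem:isowithdiagonal}, to show that the quilt functor $\Phi \co \cA^\Delta(M^-\times M) \to \End(\cA^\oplus(M))$ is fully faithful. Because of that full faithfulness, the algebraic fact that $\id|_{\cA^\oplus}$ is a summand of a twisted complex of projection functors translates back into a geometric fact: $\Delta$ is a summand of a twisted complex of the products $K_-^-\times K_+$ inside the Fukaya category of $M^-\times M$. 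One then applies the \emph{other} instance of the MWW functor $\Phi \co \cA^\Delta(M^-\times M) \to \End(\sF^\#(M))$ -- same source, larger target -- under which $\Delta \mapsto \id_{\sF^\#}$, and since that functor is exact it sends the geometric decomposition of $\Delta$ to the desired decomposition of $\id_{\sF^\#}$. This two-sided use of the source category $\cA^\Delta(M^-\times M)$ is what guarantees the resulting Hochschild class on the large category is the identity, and it is precisely the ingredient your direct lifting approach lacks.
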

\begin{rem}  
Recall from Proposition \ref{rem:hh-invariant} that Hochschild cohomology is invariant under idempotent completion, hence the first hypothesis is unchanged in the two versions of the Theorem.
The reader may easily check that the proof we give implies that $\cA(M)$ generates the category $\cF^{\#}(M)$ in the appropriate sense.  In particular, one can interpret this result as giving a sufficient criterion for the difference between the Fukaya category and the category of correspondences  $\cF^{\#}(M)$ to disappear after passing to twisted complexes (or the idempotent completion thereof).
\end{rem}


\subsection{Functor isomorphisms from quilts}
Let us introduce the subcategory
\begin{equation} \cA^{\oplus} (M) \subset \sF^{\#}(M) \end{equation}
whose objects are those generalised Lagrangian correspondences
\begin{equation} \label{eqn:ourfavourites} \xymatrix{ \pt \ar[r]^{L} & M  \ar[r]^{L_{1,2}} & M  \ar[r]^{L_{2,3}} & \cdots  \ar[r]^{L_{k-2, k-1}}&  M \ar[r]^{L_{k-1,k}} & M} \end{equation}
where every Lagrangian $L_{i,i+1}$ is of the form 
\begin{equation} L^{-}_{i} \times L_{i+1} \subset M^{-} \times M\end{equation}
with both $L_{i}$ and $L_{i+1}$ being objects of $\cA(M)$.  The first result is:

\begin{lem}  \label{lem:equal}
The inclusion $\cA \subset \cA^{\oplus}$ induces an equivalence
\begin{equation} \Tw(\cA) \cong \Tw(\cA^{\oplus}). \end{equation} 
In particular, given  a length $2$ sequence
\begin{equation} \xymatrix{ \pt \ar[r]^{L} & M  \ar[r]^{\bfK} & M } \end{equation}
where $\bfK = K^{-}_{-} \times K_{+}$, there is an equivalence
\begin{equation} \label{eq:isomorphism_sequence} \left( \xymatrix{ \pt \ar[r]^{L} & M  \ar[r]^{\bfK} & M} \right) \cong CF^{*}(K_-, L)  \otimes K_+.\end{equation}
\end{lem}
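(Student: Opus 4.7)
The proof reduces to establishing the ``in particular'' assertion \eqref{eq:isomorphism_sequence} (the length-$2$ case). Granting it, the general equivalence $\Tw(\cA) \cong \Tw(\cA^\oplus)$ follows by induction on the length $k$ of a generalised correspondence $\mathbb{L} = (L_1, L_{1,2}, \ldots, L_{k-1,k}) \in \cA^\oplus$. Indeed, by part (1) of the Axiom of Quilted Floer Theory, $\mathbb{L}$ is tautologically the image of the truncated length-$(k-1)$ object $\mathbb{L}' = (L_1, \ldots, L_{k-2,k-1})$ under the $A_\infty$-endofunctor $\cF_{L_{k-1,k}}$. The inductive hypothesis furnishes an equivalence of $\mathbb{L}'$ with a twisted complex $X \in \Tw(\cA)$, and since $\cF_{L_{k-1,k}}$ extends to twisted complexes and---by the length-$2$ case applied component-wise---carries each object of $\cA$ into $\Tw(\cA)$, the output $\cF_{L_{k-1,k}}(X)$ lies in $\Tw(\Tw(\cA)) = \Tw(\cA)$. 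Full faithfulness of $\cA \hookrightarrow \cA^\oplus$ on length-$1$ sequences is a closely related (but simpler) quilt computation, essentially removing trivial diagonal seams via part (3) of the Axiom.

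To prove the length-$2$ case, I would compare morphism spaces out of $(L, \bfK)$ with morphism spaces out of $CF^*(K_-, L) \otimes K_+$, and then apply a Yoneda argument. For any test object $L' \in \cA$, quilted Floer theory computes
\begin{equation*}  \Hom_{\sF^\#(M)}\bigl((L, \bfK), L'\bigr) = HF(\mathbb{L}_+, \mathbb{L}_-), \end{equation*}
where $\mathbb{L}_\pm$ are the alternating product Lagrangians of Section \ref{Subsec:BackgroundQuilts}. The product structure $\bfK = K_-^- \times K_+$ makes the relevant elliptic problem decouple: for split Hamiltonian and split almost-complex perturbations, intersections $\mathbb{L}_+ \cap \mathbb{L}_-$ correspond bijectively to pairs $(p,q) \in (L \cap K_-) \times (K_+ \cap L')$, and holomorphic quilts with the single product seam $\bfK$ split as products of pseudo-holomorphic strips with boundary conditions $(L, K_-)$ on one side and $(K_+, L')$ on the other. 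This should yield a chain-level identification
\begin{equation*} \Hom_{\sF^\#(M)}\bigl((L, \bfK), L'\bigr) \simeq CF^*(L, K_-) \otimes CF^*(K_+, L'). \end{equation*}
Applying the weak Calabi--Yau duality of Lemma \ref{lem:poincareduality} to rewrite $CF^*(L, K_-)$ as an appropriate shift of $CF^*(K_-, L)^\vee$ transforms the right-hand side into $\Hom_{\Tw(\cA)}(CF^*(K_-, L) \otimes K_+, L')$. Promoting this family of quasi-isomorphisms parameterised by $L' \in \cA$ to an $A_\infty$-natural equivalence of Yoneda modules, and invoking the Yoneda Lemma \ref{lem:Yoneda}, then yields the desired equivalence of objects in $\Tw(\cA^\oplus)$.

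The main obstacle is the geometric step: the chain-level splitting of the quilt morphism complex as a tensor product of ordinary Floer complexes, coherent with the higher $A_\infty$-operations. The product form of the seam $\bfK$ makes the geometric picture transparent---the Cauchy--Riemann equation for a quilt with a single product seam decouples into two independent equations, one on each factor---but $A_\infty$-coherence of the resulting identification, tracking of orientations and signs via part (2) of the Axiom, and alignment of the tensor product structure with the ``$\otimes e$'' convention of \eqref{eq:formula_yoneda_functor} (i.e.\ with the geometric unit of Lemma \ref{lem:units}) together require some care. Once these are in hand, the inductive extension to longer sequences is essentially formal, and follows from repeated application of the length-$2$ reduction together with the $A_\infty$-functoriality of $\cF_{L_{k-1,k}}$.
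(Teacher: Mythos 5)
Your overall skeleton---reduce to the length-$2$ case and then induct on the length of the generalised correspondence---matches the paper's (the paper declares the induction but only spells out the length-$2$ ``next-to-base'' case). Where you diverge is in the proof of the length-$2$ case itself. The paper computes the specific morphism complex $CF^*\bigl((L,\bfK),\, CF^*(K_-,L)\otimes K_+\bigr)$ directly: it applies the K\"unneth splitting to identify this with $CF^*(K_-,L)\otimes CF^*(L\times K_+^-, K_-\times K_+^-)$, passes to cohomology and uses the duality of Lemma \ref{lem:poincareduality} to recognise $\End\bigl(HF^*(K_-,L)\bigr)\otimes HF^*(K_+,K_+)$, and then exhibits a chain-level representative of $\id\otimes e$ as the desired quasi-isomorphism. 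You instead propose a Yoneda argument, comparing $\Hom$-spaces out of $(L,\bfK)$ and out of $CF^*(K_-,L)\otimes K_+$ against test objects $L'\in\cA$.

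As stated this route has a gap. The isomorphism you want lives in $\Tw(\cA^\oplus)$, so the Yoneda Lemma \ref{lem:Yoneda} must be invoked for modules over $\cA^\oplus$, not over $\cA$: you would need to identify the Yoneda modules of $(L,\bfK)$ and of $CF^*(K_-,L)\otimes K_+$ as $\cA^\oplus$-modules, i.e.\ test against generalised correspondences $\mathbb{L}'\in\cA^\oplus$ of arbitrary length, not just against $L'\in\cA$. The tempting shortcut---construct the morphism, observe its cone has vanishing $\Hom$ to all of $\cA$, and conclude it is zero---is circular, since it presupposes that $\cA$ (co)generates $\Tw(\cA^\oplus)$, which is precisely what the lemma asserts. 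The fix is available because the K\"unneth splitting of quilted Floer complexes you are using holds for an arbitrary test object in $\cA^\oplus$ (every seam and boundary condition in sight is a product of Lagrangians pulled back from $M$), so you should state and prove the module identification at that level of generality. With that repair, your strategy is a legitimate alternative to the paper's direct approach; and the ``main obstacle'' you correctly flag---coherence of the tensor-product identification with the higher $A_\infty$-operations, signs included---is exactly what the paper addresses in the subsequent lemma computing $CF^*\bigl((L,\bfK),(L,\bfK)\bigr)$ as an $A_\infty$-algebra, so your diagnosis of where the real work lies is accurate.
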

\begin{proof}
It suffices to prove that every generalised Lagrangian correspondence of the shape (\ref{eqn:ourfavourites}) is equivalent to a twisted complex of objects of $\cA$, since certainly $\cA^{\oplus}$ generates Tw\,$\cA^{\oplus}$.  The category $\cA^{\oplus}$ has a nice filtration by the length of the generalized correspondence, and the proof will be by induction. For simplicity of notation, we just do the ``next-to-base" case of the length $2$ sequence  in Equation \eqref{eq:isomorphism_sequence}.

That the right hand side is a twisted complex is clear since $CF^{*}(K_-, L)$ is a finite dimensional $\bZ$-graded vector space.  To prove the existence of the equivalence, we first compute
 that
 \begin{equation}
 \begin{aligned} CF^{*} & \left( \xymatrix{ \pt \ar[r]^{L} & M  \ar[r]^{\bfK} & M } ,  CF^{*}(K_-, L)  \otimes K_+ \right)
  \\ &  \qquad = CF^{*}(K_-, L) \otimes CF^{*}( L \times   K_+^-, K_- \times K_+^-). 
 \end{aligned}
 \end{equation}
To check this, recall from Remark \ref{rem:transposeLags} that to compute Floer chains between generalized Lagrangians, we transpose the second sequence (to give the ordered triple $(L, \bfK, K_+^-)$ up to the algebraic factor $CF(K_-, L)$ which is just a graded vector space), then take the constituent Lagrangians in alternating pairs (giving $L \times K_+^-$ and $\bfK$ both viewed in $M^- \times M$),  and then take Floer cochains; here we can split the resulting Floer complex using the K\"unneth theorem, since we are working over a field.
At the level of homology, and keeping track of the degrees, the above group becomes 

\begin{equation}
\begin{aligned}  HF^{*}(K_-, L) & \otimes HF^{*}( L, K_-) \otimes  HF^{*}( K_+, K_+)  \cong \\ & \End( HF^{*}( K_-, L)) \otimes HF^{*}( K_+, K_+), \end{aligned} \end{equation}
using the duality isomorphism $HF^{*}( L, K_-) \cong HF^{n-*}(K_-, L)^{\vee} $.  A chain level representative of the tensor product of the identity on $HF^{*}( K_-, L)$ with the identity of $HF^{*}(K_+, K_+)$ is the desired quasi-isomorphism.
\end{proof}

The $A_{\infty}$-structure on the group of endomorphisms of 
\begin{equation} CF^{*}(K_-, L)  \otimes K_+  \end{equation} 
is explicitly given by the tensor product of the $dg$-algebra $ \End( CF^{*}( K_-, L))$ with the $A_{\infty}$-algebra $CF^{*}( K_+, K_+)$, with higher products given by
\begin{equation} \mu^{d}(A_{d} \otimes x_{d}, \cdots , A_{1} \otimes x_{1}) = (-1)^{\triangle} \left( \prod A_{i} \right) \otimes \mu^{d}( x_{d}, \cdots, x_{1}). \end{equation}
whenever each matrix $A_{i}$ is of pure degree, with the sign equal to the expression
\begin{equation*} \triangle = \sum_{i < j} |A_i| ( |x_j| +1 ). \end{equation*}

\begin{lem}
Let $K_-, K_+ \subset M$ be Lagrangian submanifolds and let $\bfK$ denote the product $K_-^{-}\times K_+$. For appropriate choices of almost complex structures and perturbation data, there is a canonical isomorphism of $A_{\infty}$-algebras \label{Small change in statement}
\begin{multline} CF^{*} \left( \xymatrix{ \pt \ar[r]^{L} & M  \ar[r]^{\bfK} & M } ,\xymatrix{ \pt \ar[r]^{L} & M  \ar[r]^{\bfK} & M }  \right) \\  \cong  \End(CF^{*}(K_-, L) )  \otimes  CF^{*}( K_+, K_+).\end{multline}
\end{lem}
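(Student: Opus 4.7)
The plan is to combine the Wehrheim--Woodward folding prescription for morphisms in $\sF^{\#}(M)$ with the product structure of $\bfK = K_-^- \times K_+$ and Poincar\'e duality in Floer cohomology. After transposing the second copy of $\bL = (\pt, L, \bfK)$ as in Remark~\ref{rem:transposeLags} and folding the ambient space, one obtains Lagrangians $\bL_{\pm}$ in a four-fold product of copies of $M$ with alternating symplectic signs. A direct computation of intersections, after Hamiltonian perturbation making $K_+$ transverse to itself, yields a bijection
\[ \bL_{+} \cap \bL_{-} \;\cong\; (L \cap K_-) \times (K_{+} \cap \phi(K_+)) \times (K_- \cap L). \]
K\"unneth (valid since we work over a field) together with the Poincar\'e duality isomorphism of Lemma~\ref{lem:poincareduality}, which identifies $CF^{*}(L, K_{-}) \otimes CF^{*}(K_{-}, L)$ with $\End(CF^{*}(K_{-}, L))$, then produces the claimed chain-level splitting, with the identity sitting in degree zero.

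The next step is to upgrade this to an isomorphism of $A_{\infty}$-algebras. Choose split almost complex structures and Hamiltonian perturbation data on $M^{-} \times M$ compatible with the product form of $\bfK$. Under these choices, the Cauchy--Riemann equation governing any quilted polygon contributing to $\mu^{d}$ on $CF^{*}(\bL, \bL)$ decouples along the $\bfK$-seam into two independent problems: a map into the $K_{+}$-copy of $M$ with boundary on $K_{+}$, and a map into the $K_{-}$-copy of $M$ with boundary cycling alternately between $L$ and $K_{-}$. The moduli spaces of $K_{+}$-side solutions are precisely those defining the Fukaya $A_{\infty}$-operations on $CF^{*}(K_{+}, K_{+})$, and they supply the right-hand factor of the tensor product. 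Gromov compactness and standard gluing, applied in each copy of $M$ separately, promote this moduli-level splitting to the tensor product $A_{\infty}$-structure of the displayed formula.

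The main obstacle is to show that the $K_{-}$-side moduli spaces contribute \emph{only} to $\mu^{2}$ on $\End(CF^{*}(K_-, L))$, namely ordinary matrix multiplication, with all higher products vanishing. Such a moduli space counts alternating $2(d+1)$-gons in $M$ with boundary cycling between $L$ and $K_{-}$. The proposed argument is by a TQFT-style cutting along the Poincar\'e-dual pairings at every second corner: an alternating $(2d+2)$-gon decomposes into $d$ triangles glued via Floer duality, so the signed count reduces to an iterated matrix product, which on $\End(CF^{*}(K_-, L))$ is exactly the associative $\mu^{2}$. This is the most delicate part of the proof, since one must verify that the Poincar\'e duality of Lemma~\ref{lem:poincareduality} --- which is not a literal chain-level identity but a quasi-isomorphism encoding a natural transformation of Yoneda modules --- transports the alternating polygon counts to iterated $\mu^{2}$'s coherently, signs included. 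The orientation compatibilities built into the Axiom of Quilted Floer Theory are precisely what let one carry this step out.
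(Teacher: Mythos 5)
Your chain-level computation of the underlying complex is correct and matches the paper: after transposing and folding one lands on a pair of Lagrangians $L\times K_+^-\times K_-$ and $K_-\times K_+^-\times L$ in $M\times M^-\times M$, and K\"unneth plus Poincar\'e duality produces $\End(CF^*(K_-,L))\otimes CF^*(K_+,K_+)$.

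However, the key geometric step in your argument rests on a misidentification of the decoupled moduli problem. In the quilted disc computing $\mu^d_{\sF^{\#}}$ for $\bL=(\pt,L,\bfK)$, the domain consists of a central $(d+1)$-marked disc together with $d+1$ parallel strips, one attached along each boundary arc; the seam condition $\bfK=K_-^-\times K_+$ then separates this into a central disc mapping to $M$ with boundary on $K_+$, plus a \emph{disjoint union} of $d+1$ finite-width strips each mapping to $M$ with one boundary on $L$ and one on $K_-$. The $K_-$-side is not a single alternating $2(d+1)$-gon; it is $d+1$ separate strips, each with exactly two boundary components. This changes everything downstream: there is no polygon to cut, and no place where a TQFT-style Poincar\'e-duality surgery would apply. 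The correct argument, once the picture is fixed, is in fact much simpler than what you propose: since the Cauchy--Riemann equation on each of these strips is translation-invariant, any nonconstant solution sweeps out a one-dimensional family under $\bR$-translation, so a \emph{rigid} contributing configuration forces every strip to be constant (equivalently, to carry the same chord at input and output). The constant strips literally implement the Kronecker deltas in matrix multiplication on $\End(CF^*(K_-,L))\cong CF^*(K_-,L)^\vee\otimes CF^*(K_-,L)$, while the single central piece realises the Fukaya $\mu^d$ on $CF^*(K_+,K_+)$; signs are handled by the split form of the Wehrheim--Woodward determinant lines. Your proposal does not arrive at, or substitute for, this rigidity observation.

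Even setting aside the incorrect geometry, the ``cut an alternating polygon into $d$ triangles via Floer duality'' step is not a legitimate chain-level move. Poincar\'e duality in Floer theory is a quasi-isomorphism of modules (Lemma~\ref{lem:poincareduality}), not a literal identification of chords that lets you replace a single polygon count by a product of triangle counts; the degeneration of a polygon to a union of smaller polygons is governed by the boundary of a one-dimensional moduli space and produces an $A_\infty$-relation, not an equality of numbers. So this part of the argument would need to be replaced even in a setting where the moduli problem did present a single polygon.
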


\begin{proof}
As recalled in Section \ref{Subsec:BackgroundQuilts}, Floer complexes in quilt theory are defined using auxiliary choices of small Hamiltonians to perturb the Lagrangians $ L \subset M$ and $K^-_- \times K_+ \subset M^{-} \times M$, along with generic families of almost complex structures on $M$ parametrised by points of the abstract quilt.  In our situation, however, one can work with a much smaller set of perturbations, namely those used to define the differential on $CF^{*}(K_-, L)$ and the $A_{\infty}$-structure on $CF^*(K_+, K_+)$, as we now explain.

Concretely, the first step in defining  $CF^{*}(K_-, L)$ is to pick a generic Hamiltonian $H(K_-, L)$ and set the generators to be chords of the Hamiltonian flow starting at $K_-$ and ending at $L$. Similarly, one makes a choice of $H(K_+)$ to define $CF^{*}(K_+, K_+)$.  We consider the function (note the signs!)
\begin{equation} H(L, \bfK) \equiv -H(K_-, L) \oplus -H(K_+) \oplus H(K_-,L) \co M \times M^- \times M \to \bR .\end{equation}
The genericity assumption on each factor implies that the image of $L \times K_{+}^- \times K_{-} $ under the time-$1$ Hamiltonian flow of  $H(L, \bfK)$ is transverse to $K_{-} \times K_{+}^- \times L$.  Unwinding the definition, 
\begin{equation} \label{eqn:quiltgroup} CF^{*} \left( \xymatrix{ \pt \ar[r]^{L} & M  \ar[r]^{\bfK} & M } ,\xymatrix{ \pt \ar[r]^{L} & M  \ar[r]^{\bfK} & M }  \right) \end{equation}
is just the Floer chain group for these Lagrangian submanifolds $L \times K_+^- \times K_-$ and $K_- \times K_+^- \times L$ of $M\times M^- \times M$, cf. Remark \ref{rem:transposeLags} and the preceding discussion.  In practice, it is therefore the group generated by triples of chords: 
\begin{enumerate}
\item from $L$ to $K_-$ along the Hamiltonian flow of $-H(K_-, L)$, 
\item  from $K_+$ back to $K_+$ along the flow of $H(K_+)$ and 
\item from $K_-$ to $L$ along the flow of $H(K_-, L)$. 
\end{enumerate} 
Note that the sign on the middle Hamiltonian has changed since we are now considering the Lagrangians in $M$ rather than in $M^-$.  We can represent this set-up graphically as in Figure \ref{fig:quilt_chords_conventions}, where we also reverse the sign on the first Hamiltonian by thinking of the flow as going from $K^-$ to $L$.  We do not label the Hamiltonians chords in Figure \ref{fig:quilt_chords_conventions} to reiterate the fact that, keeping in mind the direction of the arrows, they are exactly the same Hamiltonians that appeared in the construction of the category $\scrF(M)$.
\begin{figure}[h]
   \centering
    \begin{picture}(0,0)%
\epsfig{file=quilt_chords_conventions.pstex}%
\end{picture}%
\setlength{\unitlength}{3947sp}%
\begingroup\makeatletter\ifx\SetFigFont\undefined%
\gdef\SetFigFont#1#2#3#4#5{%
  \reset@font\fontsize{#1}{#2pt}%
  \fontfamily{#3}\fontseries{#4}\fontshape{#5}%
  \selectfont}%
\fi\endgroup%
\begin{picture}(3892,445)(-74,-1319)
\put(3526,-1261){\makebox(0,0)[lb]{\smash{{\SetFigFont{12}{14.4}{\familydefault}{\mddefault}{\updefault}{\color[rgb]{0,0,0}$L$}%
}}}}
\put(901,-1261){\makebox(0,0)[lb]{\smash{{\SetFigFont{12}{14.4}{\familydefault}{\mddefault}{\updefault}{\color[rgb]{0,0,0}$(K_-,K_+)$}%
}}}}
\put(-74,-1261){\makebox(0,0)[lb]{\smash{{\SetFigFont{12}{14.4}{\familydefault}{\mddefault}{\updefault}{\color[rgb]{0,0,0}$L$}%
}}}}
\put(2101,-1261){\makebox(0,0)[lb]{\smash{{\SetFigFont{12}{14.4}{\familydefault}{\mddefault}{\updefault}{\color[rgb]{0,0,0}$(K_+, K_-)$}%
}}}}
\end{picture}%
\caption{}
   \label{fig:quilt_chords_conventions}
\end{figure}

Having set up the chain complex, we should now consider the necessary choices of almost complex structures and perturbations of the Cauchy-Riemann equations.  For definiteness, consider the case of the product $\mu_{\sF^{\#}}^{2}$; the argument for the higher products is only notationally different.  The quilt controlling this product is shown in Figure \ref{fig:quilt_pair_pants}, where the correspondence at every seam is given by $K_-^{-} \times K_+$.
\begin{figure}[h]
   \centering
   \begin{picture}(0,0)%
\epsfig{file=quilt_pair_pants.pstex}%
\end{picture}%
\setlength{\unitlength}{3947sp}%
\begingroup\makeatletter\ifx\SetFigFont\undefined%
\gdef\SetFigFont#1#2#3#4#5{%
  \reset@font\fontsize{#1}{#2pt}%
  \fontfamily{#3}\fontseries{#4}\fontshape{#5}%
  \selectfont}%
\fi\endgroup%
\begin{picture}(2222,2046)(2326,-3382)
\put(3959,-2282){\makebox(0,0)[lb]{\smash{{\SetFigFont{12}{14.4}{\familydefault}{\mddefault}{\updefault}{\color[rgb]{0,0,0}$L$}%
}}}}
\put(2736,-2271){\makebox(0,0)[lb]{\smash{{\SetFigFont{12}{14.4}{\familydefault}{\mddefault}{\updefault}{\color[rgb]{0,0,0}$L$}%
}}}}
\end{picture}%
 \caption{}
   \label{fig:quilt_pair_pants}
\end{figure}
Since the correspondence decomposes as a product, we simply have four holomorphic maps to $M$, three of which are strips with boundaries on $L$ and $K_-$, and the remaining one of which is a pair-of-pants with boundary on $K_+$. Moreover, the count of rigid objects is given by requiring that each individual component be rigid.

For the pair-of-pants, we choose exactly the perturbations and almost complex structures used to define $\mu_{\sF}^2$ on the self-Floer homology of $K_+$, while for the strips we choose the family of almost complex structures used to define the differential on $CF^{*}(K^-, L)$, and no additional perturbation.   A little thought shows that the requirements that the strips be rigid forces their inputs and outputs to be the same chord; otherwise the relevant strip would move in (at least) a one-dimensional moduli space given by $\bR$-translations.  The counts of strips therefore essentially compute the continuation maps for a constant family of Hamiltonians.

Using the identification \begin{equation} \label{eq:identifynow} \End(CF^{*}(K_-, L)) \cong CF^{*}(K_-, L)^{\vee} \otimes CF^{*}(K_-, L) \end{equation}
we see that our count of strips is exactly realising the fact that, with respect to a choice of basis $\{a_j\}$, the product of elements of $CF^{*}(K_-, L)^{\vee} \otimes CF^{*}(K_-, L)$ in its $dg$-algebra structure is given by 

\begin{equation} \mu_{2} (a_{k}^{\vee} \otimes a_{l}, a_{i}^{\vee} \otimes a_{j}) = \delta_{il} a_{k} \otimes a_{j}^{\vee}. \end{equation}

Although we have not discussed coherent orientations in any detail, the signs associated to quilted Riemann surfaces by Wehrheim and Woodward in \cite[Section 4]{WW:orientquilts} arise from determinant line bundles associated to families of Cauchy-Riemann operators \emph{which can be deformed into split operators} when viewing the quilt as a single map into a product space.  The identification of generators of the complex of Equation \ref{eqn:quiltgroup} with triples of chords in $M$, and of the quilted map of Figure  \ref{fig:quilt_pair_pants} with a tuple of maps into $M$ itself, therefore gives a natural identification of the determinant line on this particular moduli space of quilts with a tensor product of determinant lines associated to the constituent domains of the quilt.   From this it follows directly that the identification of Equation \ref{eq:identifynow} is compatible with signs, which completes the construction of the $A_{\infty}$-isomorphism. 
\end{proof}

This argument readily generalizes to multiple Lagrangians, and proves that the images of $\cA(M)$ under the functors $\Phi(\bfK)$ and  $\cI (K_\pm)$ are isomorphic subcategories  of $\Tw \cA^{\oplus}(M)$.   In particular, for all pairs of Lagrangians $L_0$ and $L_1$, we have an isomorphism
\begin{multline} \label{eq:endomorphism_algebra_correspondence_iso_twisted_complex} CF^{*} \left( \xymatrix{ \pt \ar[r]^{L_0} & M  \ar[r]^{\bfK} & M } ,\xymatrix{ \pt \ar[r]^{L_1} & M  \ar[r]^{\bfK} & M }  \right) \\  \cong  \Hom(CF^{*}(K_-, L_0),CF^{*}(K_-, L_1) )  \otimes  CF^{*}( K_+, K_+).\end{multline}  
We now prove that this isomorphism is strong enough to imply that 

\begin{lem} \label{lem:isomorphic_functors}
Given a pair of Lagrangians $K_-$ and $K_+$ in $\cA(M)$, the functors $\Phi(\bfK)$ and $\cI (K_\pm) \in nu$-$fun(\Tw\cA^{\oplus}, \Tw\cA^{\oplus})$ are isomorphic.
\end{lem}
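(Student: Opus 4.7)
The plan is to show that, after applying the canonical object-level equivalences of Lemma \ref{lem:equal}, the two $A_\infty$-functors $\Phi(\bfK)$ and $\cI(K_\pm)$ agree on the nose on morphism spaces. The hypothesis that the correspondence $\bfK = K_-^- \times K_+$ splits forces the quilted moduli spaces controlling $\Phi(\bfK)$ to decompose as products of classical pseudoholomorphic polygon moduli spaces in $M$, and matching these pieces against the explicit formula \eqref{eq:formula_yoneda_functor} defining $\cI(K_\pm)$ yields the required $A_\infty$-natural transformation.

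First, I would take split Hamiltonian and almost complex perturbation data on the universal family of quilted discs over the multiplihedra $\mathcal{M}_{d,1}$ governing the higher terms $\Phi(\bfK)^d$, in direct parallel with the $d=1$ choice made in the proof of Lemma \ref{Small change in statement}. Under such a split choice, a rigid quilted $(d+1)$-marked disc with seam condition $K_-^- \times K_+$ decomposes into: an honest pseudoholomorphic polygon in $M$ with boundary conditions $(L_0, \ldots, L_d, K_-)$ in cyclic order, contributing the operation $\mu^{d+1}_{\sF}(a_d, \ldots, a_1, \cdot)$; a boundary-parallel pseudoholomorphic half-plane on $K_+$ of the kind counted by the cohomological unit $e \in CF^*(K_+, K_+)$ (cf.\ Lemma \ref{lem:units}); and constant strips gluing these pieces along the identity chord of $CF^*(K_-, \cdot)$. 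Rigidity pins the strip components to be constant, while the polygon and half-plane components vary independently in their own moduli spaces.

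Summing the signed contributions, the higher operation $\Phi(\bfK)^d$ acts on $a_d \otimes \cdots \otimes a_1$ precisely by $\mu^{d+1}_{\sF}(a_d, \ldots, a_1, \cdot) \otimes e$, which is exactly the formula \eqref{eq:formula_yoneda_functor} defining $\cI(K_\pm)^d$. Taking $T^0$ to be the equivalence of Lemma \ref{lem:equal} and $T^{\geq 1} = 0$ therefore solves the $A_\infty$-naturality equation, giving the desired isomorphism in $nu$-$fun(\Tw\cA^\oplus, \Tw\cA^\oplus)$. Conceptually, this shows that $\Phi(\bfK)$ is nothing other than the tensor of the Yoneda functor for $K_-$ with the unit functor for $K_+$, in the sense of the discussion following Definition \ref{defin:projection-functor}.

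The main obstacle is sign-matching: one needs to verify that the product decomposition of the quilted moduli spaces is orientation-preserving with respect to the coherent orientations of \cite{WW:orientquilts} and the tensor-product orientations implicit in \eqref{eq:formula_yoneda_functor}. Following the argument concluding the proof of Lemma \ref{Small change in statement}, this reduces to observing that the linearised Cauchy-Riemann operator for a quilted disc with split seam deforms through Fredholm operators to a split operator whose determinant line factorises canonically; once granted, the natural transformation constructed above is strict and is a quasi-isomorphism on every object of $\cA^\oplus$, completing the proof.
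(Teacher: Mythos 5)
Your proposal is correct and takes essentially the same approach as the paper: choose split perturbation data so that the quilted disc controlling $\Phi(\bfK)^d$ decomposes (via the product structure of the seam condition $K_-^- \times K_+$) into a holomorphic $(d+2)$-gon in $M$ realizing $\mu^{d+1}_{\sF}(a_d,\ldots,a_1,\cdot)$ and a half-plane on $K_+$ realizing the chain-level unit $e$, then read off agreement with Equation \eqref{eq:formula_yoneda_functor} under the identification of targets from Lemma \ref{lem:equal}. One small imprecision: there are no separate ``constant strips'' gluing the two regions -- the quilt consists of exactly two subdomains (the light $(d+2)$-gon and the dark half-plane, as in Figure \ref{fig:quilt_third_functor}), whose strip-like ends at the output puncture already encode the triple of chords; the constant-strip analysis belongs to the proof of the preceding Lemma on the $A_\infty$-endomorphism algebra, not here.
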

\begin{proof}
Given a sequence of Lagrangians $\{ L_{d}, \cdots, L_{0} \}$, Mau, Wehrheim, and Woodward use holomorphic quilts to  define a map 
\begin{multline} \label{eq:multilinear_term_MWW_functor}
CF^{*}(L_{d-1},L_{d}) \otimes  \cdots \otimes CF^{*}(L_{0}, L_{1})  \to \\ 
\ CF^{*} \left( \xymatrix{ \pt \ar[r]^{L_0} & M  \ar[r]^{\bfK} & M } ,\xymatrix{ \pt \ar[r]^{L_{d}} & M  \ar[r]^{\bfK} & M }  \right)[1-d]. \end{multline}
On the other hand, in Equation \eqref{eq:formula_yoneda_functor}  we defined a map with the same source, but with target 
\begin{equation*} \Hom(CF^{*}(K_-, L_0), CF^{*}(K_-, L_d)) \otimes CF^{*}(K_+, K_+)[1-d]. \end{equation*}
An isomorphism between these two targets is given  by Equation \eqref{eq:endomorphism_algebra_correspondence_iso_twisted_complex}, and our goal is to show that the maps \eqref{eq:formula_yoneda_functor} and \eqref{eq:multilinear_term_MWW_functor} are intertwined by this isomorphism.

This is again an immediate consequence of the fact that our choices of perturbation on $M^- \times M$ are induced from perturbations on $M$, and the fact that the unit in $CF^{*}(K_+, K_+)$ is given by a count of perturbed pseudo-holomorphic planes with boundary on $K_+$, cf. Lemma \ref{lem:units}.  For example, the case $d=3$ is controlled on the quilt side by Figure \ref{fig:quilt_third_functor}, where the dark region corresponds to the unit of $CF^{*}(K_+, K_+)$, and the light region is a holomorphic disc with $5$ punctures, realising a $\mu_{\sF}^{4}$ product.
\begin{figure}    \centering
\begin{picture}(0,0)%
\epsfig{file=quilt_third_functor.pstex}%
\end{picture}%
\setlength{\unitlength}{3947sp}%
\begingroup\makeatletter\ifx\SetFigFont\undefined%
\gdef\SetFigFont#1#2#3#4#5{%
  \reset@font\fontsize{#1}{#2pt}%
  \fontfamily{#3}\fontseries{#4}\fontshape{#5}%
  \selectfont}%
\fi\endgroup%
\begin{picture}(1725,2325)(2776,-3136)
\put(4046,-1603){\makebox(0,0)[lb]{\smash{{\SetFigFont{12}{14.4}{\familydefault}{\mddefault}{\updefault}{\color[rgb]{0,0,0}$L_3$}%
}}}}
\put(3001,-1561){\makebox(0,0)[lb]{\smash{{\SetFigFont{12}{14.4}{\familydefault}{\mddefault}{\updefault}{\color[rgb]{0,0,0}$L_0$}%
}}}}
\put(3226,-2311){\makebox(0,0)[lb]{\smash{{\SetFigFont{12}{14.4}{\familydefault}{\mddefault}{\updefault}{\color[rgb]{0,0,0}$L_1$}%
}}}}
\put(3901,-2311){\makebox(0,0)[lb]{\smash{{\SetFigFont{12}{14.4}{\familydefault}{\mddefault}{\updefault}{\color[rgb]{0,0,0}$L_2$}%
}}}}
\end{picture}%
 \caption{}
   \label{fig:quilt_third_functor}
\end{figure}
\end{proof}

Next, we consider the category $\cA(M^{-} \times M)$ consisting of Lagrangians in $M^{-} \times M$ of the form $K^{-}_{-} \times K_+$ where $K_{\pm}$ lie in $\cA(M)$.

\begin{lem} \label{lem:mwwisomorphism}
The Mau-Wehrheim-Woodward functor of Section \ref{Subsec:BackgroundQuilts}
\begin{equation}
\sF(M^{-} \times M) \to \End(\sF^{\#}(M)) \end{equation}
restricts to a fully faithful $A_{\infty}$-functor
\begin{equation}\Phi \co \cA(M^{-} \times M) \to \End(\cA^{\oplus}(M)) \end{equation}
\end{lem}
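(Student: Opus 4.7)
The plan is to combine the preceding lemmas with a quilt-degeneration argument of the same flavour as the one carried out in Lemma \ref{lem:isomorphic_functors}.

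First, the restriction $\Phi \co \cA(M^{-}\times M) \to \End(\cA^{\oplus}(M))$ is well-defined on the nose: extending any generalised correspondence built from product Lagrangians in $\cA(M)$ by a product correspondence $\bfK = K_{-}^{-} \times K_{+}$ with $K_{\pm} \in \cA(M)$ yields another correspondence of the same form, so $\Phi(\bfK)$ preserves $\cA^{\oplus}(M)$. The entire content of the lemma is therefore full faithfulness, i.e.\ that for each pair $\bfK, \bfK' \in \cA(M^{-}\times M)$ the first-order term
\begin{equation*} \Phi^{1} \co CF^{*}_{M^{-}\times M}(\bfK, \bfK') \to hom(\Phi(\bfK), \Phi(\bfK')) \end{equation*}
is a quasi-isomorphism.

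I would first match the two sides by dimension. The K\"unneth formula applied to the two product Lagrangians gives
\begin{equation*} HF^{*}_{M^{-}\times M}(\bfK, \bfK') \cong HF^{*}_{M^{-}}(K_{-}, K'_{-}) \otimes HF^{*}_{M}(K_{+}, K'_{+}), \end{equation*}
and the usual orientation-reversal identification $HF^{*}_{M^{-}}(K_{-}, K'_{-}) \cong HF^{*}_{M}(K'_{-}, K_{-})$ (together with the Poincar\'e duality equivalence of Lemma \ref{lem:poincareduality}) puts the source in the form of a tensor product indexed by a pair of ordinary Floer cohomologies on $M$. On the target side, Lemma \ref{lem:isomorphic_functors} identifies $\Phi(\bfK)$ with the projection functor $\cI(K_{-}, K_{+})$, and Lemma \ref{lem:yoneda_for_projection} computes the cohomology of the space of natural transformations as a tensor product of the same shape. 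In particular both sides have cohomology of the same total $\Lambda_{\bR}$-dimension, and it will suffice to verify injectivity (equivalently, surjectivity) of $\Phi^{1}$ on cohomology.

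For this I would repeat the product-perturbation argument of Lemma \ref{lem:isomorphic_functors}. The quilted disc defining $\Phi^{1}$ has two interior seams labelled by $\bfK$ and $\bfK'$ with one marked input and one marked output; since the seams factor as products, one may use split perturbation data pulled back from $M$. The Cauchy-Riemann problem then decouples into a holomorphic strip problem in $M^{-}$ with boundary on $K_{-}$ and $K'_{-}$ and one in $M$ with boundary on $K_{+}$ and $K'_{+}$. The $M^{-}$-component recovers the natural transformation between the Yoneda modules of $K_{-}$ and $K'_{-}$ induced by a class in $HF^{*}_{M^{-}}(K_{-}, K'_{-})$, while the $M$-component transports the tensor factor $CF^{*}(K_{+}, K'_{+})$ via the weak unit $e \in CF^{*}(K_{+}, K_{+})$ as in Lemma \ref{lem:weak-unit-statement}. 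The tensor product of two quasi-isomorphisms is a quasi-isomorphism, giving full faithfulness.

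The main obstacle will be checking that the split perturbation data extends consistently over the universal family of quilted discs defining $\Phi^{1}$, and that the chain-level identification is compatible with the determinant-line sign conventions of \cite{WW:orientquilts}. These verifications are however of the same character as those already performed in Lemma \ref{lem:isomorphic_functors}, and rely essentially on part (1) of the \emph{Axiom of Quilted Floer Theory}.
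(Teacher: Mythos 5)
Your strategy matches the paper's: both reduce full faithfulness to the cohomological (Donaldson) level via Lemmas \ref{lem:isomorphic_functors} and \ref{lem:yoneda_for_projection}, and then verify the remaining isomorphism by exploiting split perturbation data that decouples the quilted Cauchy--Riemann problem into pieces governed by $K_-$ and $K_+$ separately. The only organisational difference is that the paper dispenses with the dimension count and instead observes directly that, under the identifications supplied by Lemma \ref{lem:changesign} and Lemma \ref{lem:yoneda_for_projection}, both $HF^{*}(\bfK,\bfL)$ and $\hom_{H\scrQ}(H\Phi(\bfK), H\Phi(\bfL))$ become $HF^{*}(K_-,L_-)\otimes HF^{*}(K_+,L_+)$ with the Wehrheim--Woodward map acting as the identity, which is the precise content of your split-perturbation step already carried out in Lemma \ref{lem:isomorphic_functors}.
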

\begin{proof}
That $ \cA(M^{-} \times M)$ preserves the subcategory $\cA^{\oplus}(M) \subset \sF^{\#}(M)$ is obvious, so we'll prove that the resulting functor is fully faithful.    If $\bfL = L^{-}_- \times L_{+}$ is another object of $ \cA(M^{-} \times M)$, we must prove that the Mau-Wehrheim-Woodward map
\begin{equation} CF^{*}(\bfK, \bfL) \to \hom_{\scrQ}(\Phi(\bfK), \Phi(\bfL) )
\end{equation}
is a quasi-isomorphism, where $ \hom_{\scrQ}$ denotes the chain complex of $A_{\infty}$-natural transformations.  To check this, we first prove that the natural map
\begin{equation} H\hom_{\scrQ} (\Phi(\bfK), \Phi(\bfL) ) \to \hom_{H(\scrQ)} (H\Phi(\bfK), H\Phi(\bfL) )
\end{equation}
taking a homology class of $A_{\infty}$-natural transformations to the corresponding natural transformation of the Wehrheim-Woodward functors on the Donaldson category, is an isomorphism.  By Lemma \ref{lem:isomorphic_functors},  it suffices to prove the analogous result for $\cI(K_\pm)$ and $\cI(L_{\pm})$.  Recall from Lemma
\ref{lem:yoneda_for_projection} that we have an isomorphism \begin{equation} H^* \hom_{\scrQ}(\cI(K_\pm), \cI(L_\pm)) \cong \hom_{H\scrQ}( H^* \cI(K_\pm),  H^* \cI(L_\pm)). \end{equation}
In particular, every homology class of  $A_{\infty}$-natural transformations from $\Phi(\bfK)$ to $\Phi(\bfL)$ can be detected at the level of  Donaldson categories.  Passing to cohomology, it finally suffices to prove that the Wehrheim-Woodward functor 
\begin{equation} HF^{*}(\bfK, \bfL) \to  \hom_{H\scrQ}(H\Phi(\bfK), H\Phi(\bfL) ) \end{equation}
is an isomorphism.  Both groups can be naturally identified with 

\begin{equation}  HF^{*}(K_-,L_-) \otimes HF^{*}(K_+, L_+) \end{equation}
and with respect to these natural identifications the map between them is the identity.
\end{proof}

\subsection{Incorporating the diagonal}
Before proceeding, we recall some elementary facts about Floer complexes and changing sign of the symplectic form.  Let $(M, \omega)$ be any symplectic manifold and pick a Hamiltonian function $H: M \rightarrow \bR$.  For Lagrangian submanifolds $K_-,K_+$ in $M$ we have defined the Floer complex
\begin{equation} CF^{*}(K_-, K_+) \end{equation}
to be generated by time-$1$ Hamiltonian chords of $H$.    If we pass to $(M^{-}, -\omega)$, then $H^{-} = H$ still defines a Hamiltonian function.  For simplicity, we assume below that there is a fixed almost complex structure $J$ and time-independent Hamiltonian function on $M$ controlling all holomorphic curves (the case of varying $J$ and $H$ is conceptually the same but notationally more involved). We equip  the product $M^{-} \times M$  with the almost complex structure $-J \oplus J$ and the Hamiltonian $(H^{-} \oplus H)/2$.  

\begin{lem} \label{lem:changesign}
There are natural isomorphisms of chain complexes
\begin{enumerate} 
\item $CF^{*}(K_-^{-}, K_+^{-}) \cong CF^{*}(K_+, K_-)$
\item  $CF^{*}(\Delta, K_-^{-} \times K_+) \cong CF^{*}(K_-, K_+).$
\end{enumerate}
\end{lem}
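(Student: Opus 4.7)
Both isomorphisms come from explicit bijections on generators and on Floer trajectories, obtained by the standard operations of \emph{reflection} and \emph{folding} applied respectively in parts (1) and (2). I would verify them in parallel: first set up the bijection on Hamiltonian chords, then show that the same domain-level transformation converts one Floer equation into the other, and conclude that the moduli spaces used to define the two differentials coincide.

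For part (1), I would pass from $(M, \omega)$ to $(M^-, -\omega)$. With the convention $\iota_{X_H}\omega = dH$, the Hamiltonian vector field on $M^-$ associated to the same function $H$ is $-X_H$. Consequently, if $\gamma \co [0,1] \to M$ is a time-$1$ chord of $X_H$ from $K_+$ to $K_-$, then $\tilde\gamma(t) := \gamma(1-t)$ is a time-$1$ chord of $-X_H = X_{H^-}$ from $K_-$ to $K_+$, i.e.\ a generator of $CF^*(K_-^-, K_+^-)$. This reflection establishes a bijection on generators. At the level of strips, applying $(s,t) \mapsto (-s, 1-t)$ to a $(J,H)$-holomorphic strip $u \co \bR \times [0,1] \to M$ with boundaries on $(K_+, K_-)$ produces a $(-J, H)$-holomorphic strip on $M^-$ with boundaries on $(K_-^-, K_+^-)$: each of the two sign flips in the domain is compensated either by the sign change on $J$ or by the sign change on $X_H$, so the Floer PDE is preserved. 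Since this is a diffeomorphism of domains, it identifies the two moduli spaces and hence the two differentials.

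For part (2), I would use folding. A time-$1$ chord of $(H^- \oplus H)/2$ from $\Delta$ to $K_-^- \times K_+$ is a pair $(\gamma_-, \gamma_+)$ of paths in $M$ with $\dot\gamma_- = -\tfrac{1}{2}X_H(\gamma_-)$ and $\dot\gamma_+ = \tfrac{1}{2}X_H(\gamma_+)$, subject to the matching condition $\gamma_-(0) = \gamma_+(0)$ coming from the diagonal, with $\gamma_-(1) \in K_-$ and $\gamma_+(1) \in K_+$. Concatenate them by
\[
\tilde\gamma(t) = \begin{cases} \gamma_-(1-2t), & t \in [0, 1/2], \\ \gamma_+(2t-1), & t \in [1/2, 1]. \end{cases}
\]
The time rescaling by a factor of $2$ cancels the $\tfrac{1}{2}$ in the Hamiltonian, so $\dot{\tilde\gamma} = X_H(\tilde\gamma)$ on each half; smoothness at $t = 1/2$ follows from the matching condition, and $\tilde\gamma$ is a time-$1$ chord of $X_H$ from $K_-$ to $K_+$. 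The same prescription folds a $(-J \oplus J)$-holomorphic strip $u = (u_-, u_+)$ on $M^- \times M$ with the stated boundary conditions into a $J$-holomorphic Floer strip in $M$ from $K_-$ to $K_+$, and any Floer strip in $M$ unfolds uniquely to such a pair.

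I expect the one real technical point to be verifying smoothness of the folded strip across the seam $t = 1/2$ and showing that folding is compatible with coherent orientations, so that the signed counts of rigid trajectories genuinely agree. Both points, however, are standard consequences of the folding and strip-shrinking arguments in the Wehrheim--Woodward framework already used implicitly throughout Section \ref{Subsec:BackgroundQuilts}: smoothness is automatic by elliptic regularity once the diagonal matching is imposed, and the product structure of the almost complex structure $-J \oplus J$ and of the Hamiltonian $(H^- \oplus H)/2$ means that the orientation torsors on the two sides are naturally identified. Everything else is bookkeeping with signs and parametrizations, which is why the resulting identifications are ``natural'' as claimed.
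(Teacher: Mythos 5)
Your overall approach — reflection for part (1), folding for part (2) — matches the paper's. But the explicit reparametrizations, which you dismiss as ``bookkeeping,'' are exactly where your argument goes wrong; both formulas fail to preserve the Floer equation under the paper's conventions.

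In part (1) your domain map $(s,t)\mapsto(-s,1-t)$ does not work. With the paper's Floer equation $\partial_s u + J(\partial_t u - X_H(u)) = 0$ and the target equation on $(M^-,-J)$ with $X_{H^-} = -X_H$, namely $\partial_s v - J(\partial_t v + X_H(v)) = 0$, substituting $v(s,t) = u(-s,1-t)$ gives $-\partial_s u + J(\partial_t u - X_H(u)) = -2\partial_s u$, which is not zero. Only the $t$-reflection should appear: $v(s,t)=u(s,1-t)$ yields $\partial_s u + J(\partial_t u - X_H(u)) = 0$ as required. (The paper writes $u(s,-t+\tfrac12)$, a harmless normalization quirk, but the $s$-variable is untouched.) Besides failing the PDE, flipping $s$ would exchange the two asymptotics and hence identify one differential with the transpose of the other, not with itself.

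In part (2) the bijection on generators is fine, but your fold $\tilde u(s,t) = u_\mp(s, 1\mp 2t)$ rescales only $t$. With the paper's Hamiltonian $(H^- \oplus H)/2$, the half-strip $u_-$ satisfies $\partial_s u_- - J\partial_t u_- - \tfrac12 J X_H(u_-) = 0$; under your fold one finds $\partial_s\tilde u + J(\partial_t\tilde u - X_H(\tilde u)) = -\partial_s u_- \neq 0$. You must rescale $s$ by the same factor of $2$, i.e.\ set $\tilde u(s,t) = u_\mp(2s, 1\mp 2t)$; then the left-hand side becomes $2\bigl(\partial_s u_- - J\partial_t u_- - \tfrac12 J X_H(u_-)\bigr) = 0$ on each half, and likewise for $u_+$. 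The $1/2$ in the Hamiltonian compensates for the $t$-rescaling but not for the mismatch in $\partial_s$; the $s$-rescaling, being equivariant with respect to $\bR$-translation, then induces the desired identification of unparametrized moduli spaces. So the places you flagged as ``standard consequences'' (smoothness at the seam, orientations) are indeed fine, but the reparametrization you took for granted is where both statements actually live or die.
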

\begin{proof}
It is obvious that chords of $H$ in $M^{-}$ from $K_-^{-}$ to $K_+^{-}$ correspond exactly to chords in $M$ from $K_+$ to $K_-$, so $CF^{*}(K_-^{-}, K_+^{-}) $ and $CF^{*}(K_+, K_-)$ have canonically identified bases.  We must now stare at the differential.   Consider the almost complex structure $-J$ on $M$ which obviously tames $-\omega$ if $J$ tames $\omega$.

Given chords $x$ and $y$ from $K_+$ to $K_-$ and a $J$-holomorphic map $u \co Z = \bR \times [0,1] \to M$ with boundary on $K_-$ and $K_+$, and asymptotic boundary conditions $x$ at $+\infty$ and $y$ at $-\infty$, the map $u^{-} \co Z \to M^{-}$ defined as the composition 
\begin{equation} (s,t) \mapsto (s,-t+1/2) \circ u \end{equation}
is $(-J)$-holomorphic, has boundary on $K_+^{-}$ and $K_-^{-}$ and asymptotic boundary conditions $x^{-}$ at $+\infty$ and $y^{-}$ at $-\infty$. This proves the first statement.

For the second claim, an $H^{-} \oplus H$ chord from the diagonal to $K_-^{-} \times K_+$ gives an $H/2$ chord from $K_-$ to a point $q \in M$ together with an $H/2$ chord from $q$ to $K_+$.  By concatenating the two, we obtain an $H$ chord from $K_-$ to $K_+$ as desired. Matching the differentials is a standard argument which involves ``splitting" a holomorphic strip in $M$ at its middle horizontal line. 
\end{proof}

Let us now consider the subcategory
\begin{equation} \cA^{\Delta}(M^{-} \times M) \subset \sF(M^{-} \times M)\end{equation}
whose objects are the diagonal, together with all objects of  $\cA(M^{-} \times M)$.   Again, the Mau-Wehrheim-Woodward construction gives a functor
\begin{equation} \label{eq:add_diagonal} \Phi \co \cA^{\Delta}(M^{-} \times M) \to \End(\cA^{\oplus}(M)). \end{equation}
Recall from Section \ref{Subsec:BackgroundAlgebra} that the Hochschild cohomology $HH^*(\sC)$ of an $A_{\infty}$-category $\sC$ is the group of $A_{\infty}$-natural transformations from the identity functor of $\sC$ to itself. Recall also the natural map $QH^*(M) \rightarrow HH^*(\scrF(M))$ from Equation \ref{eq:open-closed-cohomology}, which was obtained by counting holomorphic discs with an interior marked point constrained to pass through a given cycle in $M$ and boundary marked points lying on cycles on particular Lagrangians.   In order to relate that map to one coming from holomorphic quilts, we make a particular choice of  Hamiltonian perturbation to compute the Lagrangian Floer homology of the diagonal.  Consider the function
\begin{equation}
 \frac{1}{2} \left( H^{-}_{1-t}\oplus H_{t} \right) \co M^{-} \times M \to \bR
\end{equation}
for which time-$1$ Hamiltonian chords with endpoints on $\Delta$ are readily seen to agree with time-$1$ orbits of $H_t$, yielding a chain level isomorphism
\begin{equation}
SC^{*}(M)   \cong CF^{*}(\Delta, \Delta).
\end{equation}

\begin{figure}[h]
   \centering
\begin{picture}(0,0)%
\includegraphics{seidel-map-quilt.pstex}%
\end{picture}%
\setlength{\unitlength}{3947sp}%
\begingroup\makeatletter\ifx\SetFigFont\undefined%
\gdef\SetFigFont#1#2#3#4#5{%
  \reset@font\fontsize{#1}{#2pt}%
  \fontfamily{#3}\fontseries{#4}\fontshape{#5}%
  \selectfont}%
\fi\endgroup%
\begin{picture}(1633,2712)(1501,-5887)
\put(2251,-4786){\makebox(0,0)[lb]{\smash{{\SetFigFont{9}{10.8}{\familydefault}{\mddefault}{\updefault}{\color[rgb]{0,0,0}$R$}%
}}}}
\put(2176,-3286){\makebox(0,0)[lb]{\smash{{\SetFigFont{9}{10.8}{\familydefault}{\mddefault}{\updefault}{\color[rgb]{0,0,0}$a_0$}%
}}}}
\put(2926,-5836){\makebox(0,0)[lb]{\smash{{\SetFigFont{9}{10.8}{\familydefault}{\mddefault}{\updefault}{\color[rgb]{0,0,0}$a_1$}%
}}}}
\put(1576,-5836){\makebox(0,0)[lb]{\smash{{\SetFigFont{9}{10.8}{\familydefault}{\mddefault}{\updefault}{\color[rgb]{0,0,0}$a_3$}%
}}}}
\put(2251,-5836){\makebox(0,0)[lb]{\smash{{\SetFigFont{9}{10.8}{\familydefault}{\mddefault}{\updefault}{\color[rgb]{0,0,0}$a_2$}%
}}}}
\end{picture}%
   \caption{}
   \label{fig:seidel-map-quilt}
\end{figure}

\begin{lem} 
Under the above isomorphism, the ``open-closed" string map \eqref{eq:seidel-open-closed} agrees with the map induced by pseudo-holomorphic quilts; in particular, the following diagram commutes
\begin{equation}
\xymatrix{ HF^{*}(\Delta,\Delta) \ar[dr]^{MWW} \ar[rr]^{\sim} & & QH^{*}(M) \ar[dl] \\
& HH^{*}(\cF(M)) .& }
\end{equation}
\end{lem}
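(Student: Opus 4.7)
The plan is to unwind both maps at chain level and then apply the standard ``unfolding'' principle which identifies quilts whose seams are labelled by the diagonal with ordinary pseudo-holomorphic curves in $M$.

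First I would spell out the MWW side explicitly.  By the \emph{Axiom of Quilted Floer Theory}, a time-$1$ orbit $R$ of $H_{t}$, viewed as a generator of $CF^{*}(\Delta,\Delta)$, is sent to a pre-natural transformation of $\Phi(\Delta) \simeq \id$ whose $d$-linear component counts rigid quilted discs of the following form: a disc with $d+1$ boundary punctures carrying chords $a_{0},\ldots,a_{d}$ and boundary arcs $(L_{0},\ldots,L_{d})$, together with an interior seam labelled by $\Delta \subset M^{-} \times M$ converging at an interior puncture to the chord associated to $R$.  This is precisely the picture of Figure \ref{fig:seidel-map-quilt}.  On the other side, the composition of \eqref{eq:seidel-open-closed} with the PSS isomorphism sends $R$ to the Hochschild cochain whose $d$-linear component counts perturbed holomorphic discs in $M$ with boundary on $(L_{0},\ldots,L_{d})$, boundary inputs/output $a_{0},\ldots,a_{d}$, and an interior puncture converging to the Hamiltonian orbit $R$ of $H_{t}$; this is the configuration in Figure \ref{fig:seidel-map}.

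The key step is to ``unfold'' the quilt along the diagonal seam.  A quilted map on such a disc comprises two holomorphic maps into $M$ on the two regions cut out by the seam which are forced to agree along the seam because the seam condition is $\Delta$; they therefore fit together into a single continuous map $u\colon D \to M$ from the unquilted disc with one interior puncture, along which $u$ is asymptotic to $R$.  Using the chain level identification $CF^{*}(\Delta,\Delta) \cong SC^{*}(M)$ obtained from the perturbation $\tfrac{1}{2}(H^{-}_{1-t}\oplus H_{t})$ (the formal analogue of Lemma \ref{lem:changesign}(2) for cylinders, applied to the interior puncture), and the split almost complex structure $-J \oplus J$ on $M^{-} \times M$, the perturbed Cauchy--Riemann equation for the quilt unfolds to the perturbed Cauchy--Riemann equation on $M$ that defines the open-closed map.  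Thus, with compatible choices of auxiliary data (families of almost complex structures and perturbation one-forms on the abstract quilted domain that are symmetric across the seam), unfolding yields a bijection between the zero-dimensional moduli spaces counted by the MWW map and those counted by the open-closed map.

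Matching signs is the last ingredient.  As in Lemma \ref{lem:mwwisomorphism} and the discussion of orientations after Figure \ref{fig:quilt_pair_pants}, the coherent orientations of \cite{WW:orientquilts} are constructed via determinant lines of Cauchy--Riemann operators that may be deformed to split form whenever a seam is a product Lagrangian; the diagonal is the simplest such case, and the splitting corresponds exactly to unfolding.  Hence orientations match as well, and the triangle commutes on the nose at chain level, hence in cohomology.

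The main obstacle I anticipate is technical rather than conceptual: choosing perturbation data for the quilted moduli spaces that both achieve transversality and respect the symmetry needed to unfold to a family of regular almost complex structures and Hamiltonians on $M$ computing the Seidel open-closed map.  This is parallel to the careful perturbation bookkeeping already done in Lemma \ref{lem:mwwisomorphism}, and should follow by the same interpolation argument between the split perturbations on $M^{-}\times M$ induced from $M$ and any generic perturbations needed for the open-closed moduli spaces, relying on the fact that in our setting (Lagrangians of Maslov class zero in a symplectic four-manifold) bubbling is excluded by Lemma \ref{lem:well-defined}.
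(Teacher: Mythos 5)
Your argument is correct and follows the same route as the paper's own sketch: identify the MWW contribution as counts of quilts with a diagonal-labelled seam, observe that the two subdomains map into $M$ and agree along the seam, and ``erase'' the seam to recover the punctured discs counted by the open-closed map. You supply more detail than the paper's two-sentence sketch (notably on perturbation data and orientations), but the underlying idea and structure are the same.
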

\begin{proof}[Sketch of Proof]
The functor defined by Mau-Wehrheim-Woodward  is controlled by the quilt displayed in Figure \ref{fig:seidel-map-quilt}.  Ignoring the choices of Hamiltonians (interpolating between them yields chain homotopies, and hence does not affect the computation at the level of homology), we note that the matching condition along the seam is the diagonal and that both regions depicted in Figure \ref{fig:seidel-map-quilt} are therefore mapping to $M$.  Erasing the seam from that figure, we obtain Figure \ref{fig:seidel-map}, so the count of such quilts agrees with the count of punctured pseudo-holomorphic discs in $M$.  That erasing the seam in this way is legitimate is a basic aspect of quilted Floer theory.
\end{proof}

Recall that either $M$ is spin or we work in characteristic 2, and the \emph{Axiom} implies that the diagonal $\Delta$ corresponds, under the Mau-Wehrheim-Woodward functor $\Phi$, to the identity functor of $\cA$.  We may therefore define the map $QH^*(M) \rightarrow HH^*(\cA)$ by composing the PSS isomorphism from $QH^{*}(M)$ to $HF^{*}(\Delta, \Delta)$ with the MWW functor $\Phi$  landing in the group of natural transformations of the identity functor.  Alternatively, the preceding Lemma shows that this map agrees on homology with the ``open-closed string map".

\begin{lem} \label{lem:isowithdiagonal}
If the map $QH^{*}(M) \to HH^*(\cA)$ is an isomorphism then \eqref{eq:add_diagonal} is a fully faithful embedding.\end{lem}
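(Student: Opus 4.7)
The plan is to verify that $\Phi$ induces isomorphisms on morphism spaces involving $\Delta$, since Lemma \ref{lem:mwwisomorphism} already handles morphisms within $\cA(M^{-} \times M)$. There are three remaining cases: $\Hom(\Delta,\Delta)$, $\Hom(\Delta,\bfK)$, and $\Hom(\bfK,\Delta)$ for $\bfK = K_{-}^{-} \times K_{+} \in \cA(M^{-} \times M)$.

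The endomorphism case is immediate. By the Axiom, $\Phi(\Delta) = \id$, so the MWW map reads $CF^{*}(\Delta,\Delta) \to CC^{*}(\cA)$. The lemma just proved identifies this, via the PSS isomorphism $HF^{*}(\Delta,\Delta) \cong QH^{*}(M)$, with the open-closed map $QH^{*}(M) \to HH^{*}(\cA)$; the hypothesis delivers the required isomorphism.

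For the mixed cases, I would first apply Lemma \ref{lem:changesign} (together with a symmetric variant of it, or Poincar\'e duality Lemma \ref{lem:poincareduality}) to identify
\begin{equation*}
HF^{*}(\Delta,\bfK) \cong HF^{*}(K_{-},K_{+}), \qquad HF^{*}(\bfK,\Delta) \cong HF^{*}(K_{+},K_{-})[n],
\end{equation*}
and replace $\Phi(\bfK)$ by the projection functor $\cI(K_{-},K_{+})$ using Lemma \ref{lem:isomorphic_functors}. The task reduces to proving
\begin{equation*}
\Hom_{H\scrQ}(\id,\cI(K_{-},K_{+})) \cong HF^{*}(K_{-},K_{+}),
\end{equation*}
together with its dual, with the MWW map realising these identifications. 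I would establish this by running the same argument as Lemma \ref{lem:yoneda_for_projection}: the length filtration on pre-natural transformations yields an $E_{1}$ page whose $r$-th column, after writing $\Hom(X_{0},CF^{*}(K_{-},X_{r}) \otimes K_{+}) \cong CF^{*}(K_{-},X_{r}) \otimes CF^{*}(X_{0},K_{+})$ and invoking Poincar\'e duality \eqref{lem:poincareduality} to trade $CF^{*}(K_{-},X_{r})$ for $CF^{*}(X_{r},K_{-})^{\vee}[-n]$, becomes the $r$-th term of a bar complex computing $HF^{*}(K_{-},K_{+})$. Acyclicity of the bar resolution forces collapse to the column $r=0$, yielding the claimed isomorphism, and the surviving column consists precisely of those pre-natural transformations whose class is detected on the homological category.

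The remaining, and really the only delicate, point is to check that the MWW chain map agrees under these identifications with the natural map $a \mapsto \mu^{d+2}(a,\cdots)$ (up to shifts and signs). This is where I expect the principal obstacle. The argument parallels Lemma \ref{lem:isomorphic_functors}: having chosen perturbation data on $M^{-} \times M$ which is induced from perturbations on $M$, a quilt with one seam labelled by $\Delta$ and another by $K_{-}^{-} \times K_{+}$ can be analysed by ``erasing'' the diagonal seam (exactly as in the lemma preceding this one), so that the quilt becomes an ordinary holomorphic polygon in $M$ whose boundary conditions successively include $K_{-}$ and $K_{+}$. The resulting count is the $\mu^{d+2}$ operation underlying the natural Yoneda-type map, up to signs controlled by the Wehrheim-Woodward coherent orientation conventions invoked in the Axiom. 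A symmetric argument handles $\Hom(\bfK,\Delta)$ using the dual Yoneda embedding.
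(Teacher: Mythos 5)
Your proposal follows the same architecture as the paper's proof and is essentially equivalent in substance.  Both reduce to the morphism groups involving $\Delta$ (Lemma~\ref{lem:mwwisomorphism} already covering pairs inside $\cA(M^-\times M)$), both invoke the hypothesis to dispose of $\Hom(\Delta,\Delta)$, and both treat the mixed groups via Lemma~\ref{lem:changesign}, Poincar\'e duality (Lemma~\ref{lem:poincareduality}), and a Yoneda-type collapse.  Where you diverge is cosmetic: the paper first notes (by the same length-filtration comparison used in Lemma~\ref{lem:mwwisomorphism}) that one may pass to the cohomological functor category, and then reinterprets natural transformations as $A_\infty$-module morphisms --- $\hom_{\scrQ}(\id,\cI(K_\pm)) \cong \hom_{\mathrm{mod}\text{-}\cA}(\cY(K_-),\cY(K_+))$ and $\hom_{\scrQ}(\cI(K_\pm),\id) \cong \hom_{\mathrm{mod}\text{-}\cA}(\cY^\vee(K_+),\cY(K_-))$ --- finishing with the Yoneda Lemma~\ref{lem:Yoneda}.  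You instead propose to run the length-filtration spectral sequence of Lemma~\ref{lem:yoneda_for_projection} directly on $\hom_{\scrQ}(\id,\cI(K_\pm))$.  Since the proof of the Yoneda Lemma \emph{is} exactly that spectral-sequence collapse, the two computations are the same one, dressed differently; the module-morphism packaging has the small advantage of quoting an existing lemma rather than re-running the acyclicity-of-the-bar-resolution argument.  Your handling of the $\Delta$-$\Delta$ case is, if anything, more honest than the paper's, whose proof text ``it suffices to prove [the mixed cases]'' silently delegates that case to the preceding discussion identifying the MWW map on $HF^*(\Delta,\Delta)$ with the open-closed map into $HH^*(\cA)$.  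The delicate point you flag --- that the quilt count realising the MWW map matches the Yoneda-type formula $a\mapsto\mu^{d+2}(a,\cdots)$ after erasing the diagonal seam --- is indeed the residual geometric content; the paper leaves this implicit too (it is part of what ``an analogous argument to that used in Lemma~\ref{lem:mwwisomorphism}'' is meant to cover), and your recognition that it parallels the erasing-the-seam step in the preceding Lemma is exactly the right justification.
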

\begin{proof}
 By Lemma \ref{lem:mwwisomorphism} it suffices to prove that for every product Lagrangian $\bfK$, $\Phi$ induces quasi-isomorphisms
\begin{align*} CF^*(\Delta, \bfK) & \to \hom_{\scrQ}( \id , \Phi( \bfK)) \\
CF^*( \bfK, \Delta) & \to \hom_{\scrQ}( \Phi( \bfK),  \id).
 \end{align*} 
By an analogous argument to that used in the proof of Lemma \ref{lem:mwwisomorphism}, one proves that one can pass to the  cohomological category without losing any information, and that there are natural isomorphisms
\begin{align} 
HF^*(\Delta, \bfK) & \cong HF^{*}(K_-, K_+) \cong H^{*}  \hom_{\scrQ}( \id , \cI(K_\pm) )\\ 
HF^{*}( \bfK , \Delta) & \cong HF^{*}(K_+, K_-) \cong H^{*} \hom_{\scrQ}(  \cI(K_\pm) , \id ).
\end{align}
Here the first isomorphisms in each line are provided by Lemma \ref{lem:changesign}.  For the subsequent isomorphisms, we  explicitly compute that
\begin{equation} \hom_{\scrQ}( \id , \cI(K_\pm) ) \cong \hom_{\textrm{mod}-\cA}(\cY(K_{-}), \cY(K_{+})) \end{equation}
whilst, appealing moreover to Lemma \ref{lem:poincareduality},
\begin{equation}  
\begin{aligned} 
\hom_{\scrQ}(  \cI(K_\pm) , \id ) & \cong  \hom_{\textrm{mod}-\cA}(\cY^{\vee}(K_{+}), \cY(K_{-})) \\ & \cong \hom_{\textrm{mod}-\cA}(\cY(K_{+})[n], \cY(K_{-})) .
\end{aligned}
\end{equation}
In both cases the Yoneda Lemma \ref{lem:Yoneda} now implies the desired result.
\end{proof}

\begin{proof}[Proof of Theorem \ref{thm:generation}]
This is now completed as follows.  We have a diagram
\[
\begin{array}{ccc}
\cA(M^- \times M) & \stackrel{\Phi}{\longrightarrow} & \End(\cA^{\oplus}(M)) \\
\downarrow & & \parallel \\
\cA^{\Delta}(M^- \times M) & \stackrel{\Phi}{\longrightarrow} & \End(\cA^{\oplus}(M))
\end{array}
\]
with the left vertical map being a fully faithful embedding. The hypotheses of Theorem \ref{thm:generation} imply, via Lemma \ref{lem:isowithdiagonal}, that the two horizontal arrows are fully faithful embeddings which have the same image.  Since on the lower line $\Delta \mapsto \id_{\cA^{\oplus}}$, we can express the identity functor of $\cA^{\oplus}$ as (a summand in) some iterated cone amongst objects of $\cA(M^-\times M)$.  On the other hand, the Mau-Wehrheim-Woodward construction, restricted to a subcategory of $\cF(M^- \times M)$ gives a functor
\[
\cA^{\Delta}(M^- \times M) \longrightarrow \End(\sF^{\#}(M))
\]
under which $\Delta \mapsto \id_{\sF^{\#}}$.  Since this latter functor is exact, it takes exact triangles to exact triangles, and idempotents to idempotents.  It follows that $\id_{\sF^{\#}}$ can be expressed as (a summand in) an iterated cone amongst objects lying in the image of $\cA(M^- \times M)$.  This implies that objects of $\cA$ resolve the diagonal in $\sF(M)$, which immediately implies that they (split-)generate that category.
\end{proof}

\begin{rem}
Note that the second hypothesis in the statement of Theorem \ref{thm:generation} implies only that $\id_{\cA^{\oplus}}$ is generated by objects of $\cA$; to generate the identity functor of $\sF$ or $\sF^{\#}$ we pass back and forth into geometry and appeal to the special role played by the diagonal. \end{rem}

\section{Homological mirror symmetry for the 4-torus\label{sec:HMS}}

We fix a \emph{symplectic} splitting $T^4 = (T^2)^-\times T^2$, noting that $T^2$ has an orientation reversing involution.  Recall the Lagrangian submanifolds $L_f, L_s \subset T^2$.  By taking all possible products of pairs of these, we obtain a subcategory $\cA(T^2\times T^2)$ which comes equipped, via quilts and Lemmata \ref{lem:equal} and \ref{lem:mwwisomorphism}, with a fully faithful functor
\[
\cA((T^2)^-\times T^2) \ \longrightarrow \ \End(\Tw^{\pi}(\cA(T^2)).
\]
Using mirror symmetry for the 2-torus factors, Theorem \ref{thm:mirror_symmetry_elliptic}, we know that $Tw^{\pi}(\cA(T^2)) \simeq D^b_{\infty}(E)$ are quasi-isomorphic $A_{\infty}$-categories.  By the result of To\"en, Theorem \ref{thm:toen}, we view the functor above as a fully faithful functor
\begin{equation} \label{eqn:functor_on_product}
\cA((T^2)^-\times T^2) \ \longrightarrow \ \End(D^b_{\infty}(E)) \simeq D^b_{\infty}(E\times E).
\end{equation}

\begin{lem} \label{lem:image_generates_on_product}
The image of the functor (\ref{eqn:functor_on_product}) split-generates the derived category. Hence, there is an equivalence $\Tw^{\pi}(\cA(T^2\times T^2)) \simeq D^b_{\infty}(E\times E)$.
\end{lem}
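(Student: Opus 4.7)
The plan is to identify the images of the four generators of $\cA((T^2)^{-} \times T^2)$ explicitly as Fourier--Mukai kernels on $E \times E$, and then invoke Orlov's split-generation criterion.  The generators are the four products $K_-^- \times K_+$ with $K_{\pm} \in \{L_s, L_f\}$; by Lemma \ref{lem:isomorphic_functors} together with the identification of Lemma \ref{lem:equal}, each such product Lagrangian maps under the composite (\ref{eqn:functor_on_product}) to the projection endofunctor $\cI(K_-, K_+)\co L \mapsto CF^{*}(K_-, L) \otimes K_+$.  Under the mirror equivalence $\Tw^{\pi}\cA(T^2) \simeq D^b_{\infty}(E)$ of Theorem \ref{thm:mirror_symmetry_elliptic}---with dictionary $L_s \leftrightarrow \cO_E$ and $L_f \leftrightarrow \cO_p$ for a chosen $p \in E$---this endofunctor is realised as $\cG \mapsto \Hom(\cF_{K_-}, \cG) \otimes \cF_{K_+}$, whose Fourier--Mukai kernel on $E \times E$ is $\cF_{K_-}^{\vee} \boxtimes \cF_{K_+}$ via the To\"en equivalence of Theorem \ref{thm:toen}.

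Since $E$ is Calabi--Yau of dimension one, $\cO_E^{\vee} \simeq \cO_E$ and $\cO_p^{\vee} \simeq \cO_p[-1]$, so the four images are, up to an overall cohomological shift, the external tensor products $\{\cO_E, \cO_p\} \boxtimes \{\cO_E, \cO_p\}$ in $D^b_{\infty}(E \times E)$.  To establish split-generation I would first triangulate the short exact sequence $0 \to \cO_E \to \cO_E(p) \to \cO_p \to 0$ (and its inverse twist) on each factor, which shows that the four external tensor products generate every $\cO_E(k_1 p) \boxtimes \cO_E(k_2 p)$ with $k_1, k_2 \in \bZ$ inside the triangulated envelope.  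In particular $\cO_{E \times E}$, $\cO_E(3p) \boxtimes \cO_E(3p)$, and $\cO_E(6p) \boxtimes \cO_E(6p)$ all lie there; as $\cO_E(3p) \boxtimes \cO_E(3p)$ is very ample on $E \times E$ (since $\cO_E(3p)$ is the Weierstrass embedding line bundle on $E$, and external tensor products of very ample line bundles remain very ample on the product via Segre), Orlov's Theorem \ref{thm:orlov} forces the split-closed triangulated envelope of the four images to be all of $D^b_{\infty}(E \times E)$.

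Combining this split-generation with the fully faithful embedding furnished by Lemma \ref{lem:mwwisomorphism} applied to $M = T^2$, together with the $A_{\infty}$-equivalences of Theorems \ref{thm:mirror_symmetry_elliptic} and \ref{thm:toen}, the functor (\ref{eqn:functor_on_product}) is fully faithful with split-generating image.  Passing to split-closures yields the stated quasi-equivalence $\Tw^{\pi}\cA(T^2 \times T^2) \simeq D^b_{\infty}(E \times E)$.  The step I expect to require the most care is pinning down the precise identification of the MWW endofunctor $\cI(K_-, K_+)$ with its Fourier--Mukai counterpart $\cF_{K_-}^{\vee} \boxtimes \cF_{K_+}$, including any overall shifts introduced by Calabi--Yau duality on $E$ or by the orientation reversal on the first factor of $T^4 = (T^2)^- \times T^2$; such ambiguities are however only cohomological shifts and so leave the Orlov-based generation argument untouched.
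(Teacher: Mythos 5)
Your proof is correct and follows essentially the same route as the paper's own argument: identify the four images as (shifts of) the external tensor products of $\{\cO_E, \cO_p\}$ on the two factors, triangulate via the basic sequence $\cO_E \to \cO_E(p) \to \cO_p$ to reach powers of a very ample line bundle, and apply Orlov's split-generation criterion. The only difference is one of explicitness: you identify the Fourier--Mukai kernel $\cF_{K_-}^{\vee}\boxtimes\cF_{K_+}$ and track the $\cO_p^{\vee}\simeq\cO_p[-1]$ shift (correctly dismissing it as irrelevant to generation), whereas the paper asserts the images directly and first produces all $G\boxtimes G'$ before specializing to line bundles.
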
 

\begin{proof}
We again appeal to the theorem of Orlov \cite{Orlov} asserting that on an algebraic variety $Z$ of dimension $n$ over an algebraically closed field $k=\bar{k}$, the consecutive powers $(\cE^{\otimes 0}, \cE^{\otimes 1}, \ldots, \cE^{\otimes n})$ of a very ample line bundle $\cE$ suffice to split-generate the derived category.  

By construction  $L_s \times L_s$ goes over to (a quasi-representative of) the structure sheaf $\mathcal{O}_{E\times E}$, and $L_{f} \times L_{f}$ to the structure sheaf of a point.  The Lagrangians  $L_f \times L_s$ and $L_s \times L_f$ map, respectively to the structure sheaves of divisors $(\{pt\}\times E)$ and  $(E \times \{pt\})$.  Since $\cO$ and $\cO_p$ generate the derived category of the elliptic curve, cf. the proof of Theorem \ref{thm:mirror_symmetry_elliptic}, from $\cO \boxtimes F$ and $\cO_p\boxtimes F$ one obtains $G \boxtimes F$ for any sheaf $G$ on $E$;  repeating for the other factor, the image of the functor contains all sheaves $G\boxtimes G'$ on $E \times E$.  In particular, we have $\cO(D) \boxtimes \cO(D')$ for all divisors $D$ and $D'$. Choosing $D$ and $D'$ suitably, such exterior tensor products realise arbitrary powers of a very ample line bundle. 

The Lemma now follows from Remark \ref{rem:split-generate}.
\end{proof}

\begin{cor} \label{cor:hyp1}
The identity functor of $\cA(T^2\times T^2)$ is split-generated by the projection functors $\cI(K_{\pm})$ associated to pairs $K_{\pm} \in \cA(T^2\times T^2)$.
\end{cor}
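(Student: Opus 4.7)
The plan is to transport the statement to the mirror algebraic side and reduce it to a standard Orlov-type split-generation argument together with a Fourier--Mukai interpretation of the projection functors.

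First, I would combine the equivalence $\Tw^{\pi}(\cA(T^2\times T^2))\simeq D^b_{\infty}(E\times E)$ established in Lemma \ref{lem:image_generates_on_product} with a second application of To\"en's Theorem \ref{thm:toen}, now to the abelian surface $E\times E$, to produce an equivalence
\[ \End(\Tw^{\pi}(\cA(T^2\times T^2)))\simeq D^b_{\infty}((E\times E)\times(E\times E)). \]
Under this identification the identity endofunctor corresponds to the structure sheaf $\cO_{\Delta}$ of the diagonal, by the standard Fourier--Mukai formalism.

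Second, I would identify the algebraic avatars of the projection functors. By definition $\cI(K_-,K_+)(L)=CF^{*}(K_-,L)\otimes K_+$; if $K_\pm$ correspond to sheaves $\mathcal{K}_\pm\in D^b_{\infty}(E\times E)$ under the mirror equivalence, then the projection formula yields
\[ Rp_{2*}\bigl(p_1^{*}\mathcal{F}\otimes p_1^{*}\mathcal{K}_-^{\vee}\otimes p_2^{*}\mathcal{K}_+\bigr)\cong \Hom(\mathcal{K}_-,\mathcal{F})\otimes \mathcal{K}_+, \]
so that $\cI(K_-,K_+)$ is represented by the Fourier--Mukai kernel $\mathcal{K}_-^{\vee}\boxtimes \mathcal{K}_+\in D^b_{\infty}((E\times E)^2)$.

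Third, I would show that such ``box product'' kernels split-generate $D^b_{\infty}((E\times E)^2)$. By Lemma \ref{lem:image_generates_on_product} the sheaves $\{\mathcal{K}\mid K\in\cA(T^2\times T^2)\}$ split-generate $D^b_{\infty}(E\times E)$, and since dualization is an auto-equivalence of the derived category, their duals split-generate as well. Because the functors $-\boxtimes\mathcal{B}$ and $\mathcal{A}\boxtimes-$ are triangulated for each fixed argument, it then follows that the exterior products $\mathcal{K}_-^{\vee}\boxtimes\mathcal{K}_+$ with $K_\pm\in\cA(T^2\times T^2)$ split-generate every object of the form $\mathcal{A}\boxtimes\mathcal{B}\in D^b_{\infty}((E\times E)^{2})$. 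Finally, choosing a very ample line bundle of the form $\mathcal{L}\boxtimes\mathcal{L}$ on $(E\times E)^{2}$, whose tensor powers $\mathcal{L}^{\otimes k}\boxtimes\mathcal{L}^{\otimes k}$ retain the boxed form, Orlov's Theorem \ref{thm:orlov} allows us to conclude that these exterior products split-generate all of $D^b_{\infty}((E\times E)^{2})$. In particular $\cO_{\Delta}$ lies in the split-closure, which under the equivalences above yields the desired statement for $\id|_{\cA(T^2\times T^2)}$.

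The principal technical subtlety I expect is the second step: verifying that $\cI(K_-,K_+)$ is indeed represented by the kernel $\mathcal{K}_-^{\vee}\boxtimes \mathcal{K}_+$ under the composition of the mirror equivalence with the second To\"en equivalence. This requires tracing through the Yoneda-type identification of Lemma \ref{lem:yoneda_for_projection} alongside To\"en's explicit description of endofunctors as sheaves on the square, to confirm that the algebraically defined projection functor corresponds precisely to the expected Fourier--Mukai integral transform; the remaining steps are then essentially a K\"unneth-style repackaging of Orlov's split-generation theorem.
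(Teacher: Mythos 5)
Your proof is correct and takes essentially the same approach as the paper: both transport the statement to the mirror side via To\"en's theorem (identifying $\End(D^b_\infty(E\times E))$ with $D^b_\infty(E^4)$), identify the projection functors with exterior products of sheaves on $E^4$, and deduce split-generation of the diagonal $\cO_{\Delta_{E\times E}}$ from Orlov's Theorem \ref{thm:orlov} applied to a box-product ample line bundle. The paper phrases the identification of projection functors with box products as ``iterating the argument of Lemma \ref{lem:image_generates_on_product}'' for the quilt functor $\cA(T^4\times T^4)\to\End(D^b_\infty(E\times E))$, whereas you make the Fourier--Mukai kernel $\mathcal{K}_-^{\vee}\boxtimes\mathcal{K}_+$ explicit via the projection formula; these are the same observation packaged in two equivalent ways.
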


\begin{proof}
Iterating the above argument, we have a fully faithful functor
\[
\cA(T^4\times T^4) \rightarrow \End(D^b_{\infty}(E\times E)) \simeq D^b_{\infty}(E^4)
\]
whose image contains the powers $\cE^{\otimes n}$ of a very ample line bundle on $E^4$.  In particular, the identity functor of $D^b_{\infty}(E\times E)$ can be split-generated by the projection functors associated to iterated products, i.e. pairs $K_{\pm} \in \cA(T^2\times T^2)$, since the diagonal $\Delta_{E\times E} \in D^b_{\infty}(E^4)$ is split-generated by the $\cE^{\otimes n}$.
\end{proof}  
   
\begin{rem} \label{rem:go_to_4th_product}
Note that to deduce mirror symmetry for $T^4$ from mirror symmetry for $T^2$, we must resolve the diagonal $T^4 \subset T^4\times T^4$, which is why we eventually encounter an abelian 4-fold $E^4$.
\end{rem}
  
\begin{cor} \label{cor:natural_map_iso_4torus}
The natural map 
 \begin{equation} \label{eq:map_quatum_HH_T4} QH^*(T^4;\Lambda_{\bR}) \to HH^*(\cA(T^2\times T^2)) \end{equation} is an isomorphism.
\end{cor}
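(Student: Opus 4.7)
The plan closely parallels the proof of Corollary~\ref{cor:natural_map_iso_2torus}. First I would identify the two sides abstractly. By Lemma~\ref{lem:image_generates_on_product}, $\Tw^{\pi}(\cA(T^2\times T^2)) \simeq D^b_{\infty}(E\times E)$, and Proposition~\ref{rem:hh-invariant} says that Hochschild cohomology is invariant under passage to twisted complexes and idempotent completion, giving $HH^*(\cA(T^2\times T^2)) \cong HH^*(D^b_{\infty}(E\times E))$. Corollary~\ref{cor:rankHH} then identifies this with the exterior algebra $\bigoplus_{i=0}^{4}\bigwedge^{i}(\Lambda_{\bR}^{4})$ of total rank $16$. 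On the symplectic side, $T^4$ is aspherical, so $QH^*(T^4;\Lambda_{\bR}) \cong H^*(T^4;\Lambda_{\bR}) \cong \bigwedge^{*}(\Lambda_{\bR}^{4})$, also of total rank $16$ and also an exterior algebra on a four-dimensional degree-one space.

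Next I would exploit multiplicativity: the natural map \eqref{eq:open-closed-cohomology} is a ring homomorphism, and both source and target are graded-commutative $\Lambda_{\bR}$-algebras generated in degree one. Hence it is enough to show that the natural map is injective on $QH^{1}(T^4)$; then comparing total dimensions forces an isomorphism in every degree. For this I would apply Lemma~\ref{lem:HHrestrict} to each of the four Lagrangian tori $L_i\times L_j$ with $i,j\in\{s,f\}$. Each such product lies in $\cA(T^2\times T^2)$ and bounds no non-constant holomorphic disc for an appropriate product almost complex structure, so the composition
\[
QH^*(T^4;\Lambda_{\bR}) \to HH^*(\cA(T^2\times T^2)) \to HF^*(L_i\times L_j, L_i\times L_j)
\]
agrees with the usual restriction map in ordinary cohomology. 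The 1-cycles of the four product tori together span $H_1(T^4;\bZ)$, so the combined restriction map $H^1(T^4) \to \bigoplus_{i,j}H^1(L_i\times L_j)$ is injective. Any degree-one class in $QH^*(T^4)$ killed by the natural map therefore restricts to zero on every $L_i\times L_j$, and must vanish.

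I do not anticipate a serious obstacle. The only point worth double-checking is that the open-closed string map is a ring homomorphism, which is standard and is tacitly used in Corollary~\ref{cor:natural_map_iso_2torus}; note in particular that without multiplicativity the naive restriction strategy fails, since classes such as the ``diagonal'' $dx_1\wedge dy_1 \in H^2(T^4)$ restrict to zero on every $L_i\times L_j$. If one prefers to avoid the ring-theoretic argument, an alternative is to use the quilt-theoretic computations of Section~\ref{Sec:split} to identify $HH^*(\cA(T^2\times T^2))$ with a tensor product $HH^*(\sF(T^2))\otimes HH^*(\sF(T^2))$ compatibly with the open-closed map, thereby reducing the statement factorwise to Corollary~\ref{cor:natural_map_iso_2torus}.
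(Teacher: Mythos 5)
Your proposal is correct and follows essentially the same route as the paper: identify the two sides abstractly via Lemma~\ref{lem:image_generates_on_product}, Proposition~\ref{rem:hh-invariant} and Corollary~\ref{cor:rankHH}, then reduce to injectivity in degree one by multiplicativity and detect degree-one classes via the restrictions of Lemma~\ref{lem:HHrestrict} to the four product tori, exactly mirroring the argument of Corollary~\ref{cor:natural_map_iso_2torus}. Your explicit remark that the open-closed map must be a ring homomorphism (and that the naive restriction argument would fail without it, e.g.\ for $dx_1\wedge dy_1$) is a worthwhile clarification of a point the paper leaves tacit, but it is the same underlying argument, not a different one.
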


 \begin{proof}
 According to Lemma \ref{lem:image_generates_on_product}, and appealing to Corollary \ref{cor:rankHH}, the rings $QH^*(T^4)$ and $HH^*(\cA(T^2\times T^2))$ are abstractly isomorphic. Composing \eqref{eq:map_quatum_HH_T4} with the restriction from $  HH^*(\cA(T^2\times T^2)) $  to the cohomology of the linear Lagrangian tori which form the objects of $\cA(T^2\times T^2) $, we conclude the desired result, by exactly the same proof used for the corresponding result for the 2-torus in Corollary \ref{cor:natural_map_iso_2torus}.
   \end{proof}

\begin{thm} \label{thm:mirror_symmetry_4torus}
Let $T^4$ denote the standard symplectic 4-torus and $E$ the Tate elliptic curve of Definition \ref{defin:Tate}.  There is an equivalence of triangulated categories, linear over the Novikov field $\Lambda_{\bR}$,
\[
D^{\pi}\sF(T^4) \simeq D^b(E\times E).
\]
\end{thm}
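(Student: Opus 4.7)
The plan is to apply the split-generation criterion, Theorem \ref{thm:generation}, to the symplectic manifold $M = T^4$ (which we view as $(T^2)^-\times T^2$) with subcategory $\cA = \cA(T^2 \times T^2) \subset \sF(T^4)$ generated by the four product Lagrangians $L_{\alpha} \times L_{\beta}$ for $\alpha,\beta \in \{f,s\}$. The two hypotheses of that theorem have both been verified in the preceding lemmata: the natural map $QH^*(T^4;\Lambda_{\bR}) \to HH^*(\cA(T^2 \times T^2))$ is an isomorphism by Corollary \ref{cor:natural_map_iso_4torus}, and the identity functor $\id|_{\cA}$ lies in the category split-generated by the projection functors $\cI(K_{\pm})$ by Corollary \ref{cor:hyp1}. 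Note that all four generating Lagrangians are linear tori, hence spin, Maslov-zero, and have vanishing relative $\pi_2$, so by Lemma \ref{lem:Ham-independent} they define bona fide objects of $\sF(T^4)$ with Hamiltonian-invariant isomorphism class.

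Applying Theorem \ref{thm:generation} then yields that $\cA(T^2 \times T^2)$ split-generates $\sF(T^4)$, and in particular the natural inclusion induces an equivalence of split-closed $A_{\infty}$-categories
\[
Tw^{\pi}(\cA(T^2 \times T^2)) \ \stackrel{\sim}{\longrightarrow} \ Tw^{\pi}(\sF(T^4)) \ = \ D^{\pi}\sF(T^4).
\]
On the other hand, Lemma \ref{lem:image_generates_on_product} has already established an $A_{\infty}$-equivalence $Tw^{\pi}(\cA(T^2 \times T^2)) \simeq D^b_{\infty}(E \times E)$, obtained by combining mirror symmetry for the $T^2$-factors (Theorem \ref{thm:mirror_symmetry_elliptic}), the quilted embedding of Lemma \ref{lem:mwwisomorphism}, and To\"en's Theorem \ref{thm:toen}.

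Composing these two equivalences of split-closed $A_{\infty}$-categories and passing to the underlying cohomological (triangulated) categories yields the desired equivalence $D^{\pi}\sF(T^4) \simeq D^b(E \times E)$ of triangulated categories over $\Lambda_{\bR}$, since the cohomological category underlying $D^b_{\infty}(E\times E)$ is by construction $D^b(E\times E)$.

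At this stage of the paper, essentially all the hard work has been done, and the theorem is a formal consequence of the ingredients assembled in the previous sections. The conceptual main obstacle already overcome is the quilt-theoretic argument behind Theorem \ref{thm:generation}, which reduces split-generation on $T^4$ to a Hochschild-cohomological computation and a resolution of the diagonal on $T^2 \times T^2$ -- and the latter resolution, following Remark \ref{rem:go_to_4th_product}, is what forces us into the $4$-fold product $E^4$ in the proof of Corollary \ref{cor:hyp1}, using Orlov's generation theorem \ref{thm:orlov} for an ample line bundle on $E^4$. The only point requiring mild care is to confirm that the various equivalences compose correctly on split-closed derived categories, which is immediate because each intermediate step is an equivalence of $A_{\infty}$-categories and both passage to twisted complexes and to idempotent completion are functorial.
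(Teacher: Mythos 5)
Your proof is correct and follows essentially the same route as the paper: verify the two hypotheses of Theorem \ref{thm:generation} via Corollary \ref{cor:hyp1} and Corollary \ref{cor:natural_map_iso_4torus}, then invoke the fully faithful embedding of Lemma \ref{lem:image_generates_on_product} to identify $Tw^{\pi}(\cA(T^2 \times T^2))$ with $D^b_{\infty}(E\times E)$, and pass to cohomology. The extra remarks about the generators being spin of Maslov class zero with vanishing relative $\pi_2$ are tacitly assumed in the paper and do no harm.
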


\begin{proof}
Corollary \ref{cor:hyp1} and Corollary \ref{cor:natural_map_iso_4torus}  verify both the criteria of Theorem \ref{thm:generation}. It follows that the fully faithful embedding of Equation \ref{eqn:functor_on_product} induces the claimed equivalence on descending to cohomology.
\end{proof}


\section{Classification of genus 2 objects}

\subsection{Digression} Before addressing Theorem \ref{thm:classify}, we give a brief digression to point out that one can directly geometrically classify Maslov zero Lagrangian subtori in $(T^{2n}, \omega_{st})$ up to isomorphism in the split-closed derived Fukaya category: they are all isomorphic to linear Lagrangian tori\footnote{Polterovich has pointed out to us that part of the argument was known to Arnol'd \cite{Arnold:firststeps}.}.

\begin{prop} A Lagrangian torus $L \subset T^{2n}$ with Maslov class zero is Floer cohomologically indistinguishable from a linear Lagrangian torus.
\end{prop}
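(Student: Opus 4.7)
The plan is to reduce the statement to the classification of objects on the mirror abelian variety $E^{n}$ via Corollary \ref{cor:highdimension}, after first producing a linear model for $L$. First, the inclusion $\iota \co L \hookrightarrow T^{2n}$ induces $\iota_{*} \co H_{1}(L;\bZ) \to H_{1}(T^{2n};\bZ) = \bZ^{2n}$, whose image $\Lambda$ is automatically isotropic for the intersection form on $H_{1}(T^{2n};\bZ)$ because $\omega|_{L} = 0$. I would begin by establishing that $\Lambda$ has maximal rank $n$: any non-trivial kernel element would yield a loop in $L$ bounding a disc of positive symplectic area in $T^{2n}$, and a minimising-area argument via Gromov compactness would then force $L$ to bound a non-constant $J$-holomorphic disc for every compatible $J$, contradicting the definition of $\sF_{so}(T^{2n})$. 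This lets us define $L_{0} := (\Lambda \otimes_{\bZ} \bR)/\Lambda$, a linear Lagrangian torus satisfying $[L_{0}] = \pm[L]$ in $H_{n}(T^{2n};\bZ)$.

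The next step transports the problem to the mirror side. Corollary \ref{cor:highdimension} gives $D^{\pi}\sF_{so}(T^{2n}) \simeq D^{b}(E^{n})$, and under this equivalence the linear Lagrangian tori carrying flat local systems correspond to the \emph{semi-homogeneous} sheaves of Mukai, namely twisted structure sheaves of translated abelian subvarieties. These form a split-generating collection. The object $L$ corresponds to some $\cE \in D^{b}(E^{n})$ whose self-Ext algebra matches $HF^{*}(L,L) \cong H^{*}(T^{n})$ by Proposition \ref{Prop:SelfFloer}, so $\End(\cE)$ is the exterior algebra on $n$ generators of degree one — the same as the self-Ext algebra of a semi-homogeneous sheaf.

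At this point I would invoke Mukai's classification of simple semi-rigid sheaves on abelian varieties to identify $\cE$ with a semi-homogeneous sheaf, and then match it with $\cE_{0}$ (the mirror of $L_{0}$) by showing agreement of the Mukai vector. The Mukai vector of $\cE$ can be read off from the class $[L] \in H_{*}(T^{2n};\bZ)$ using the open-closed string map of Lemma \ref{lem:HHrestrict} together with naturality of the mirror under the K\"unneth pairing. Since Ext pairings between semi-homogeneous sheaves depend only on Mukai vectors, we obtain $\Ext^{*}(\cE, \cF) \cong \Ext^{*}(\cE_{0}, \cF)$ as graded spaces for every semi-homogeneous test object $\cF$, and hence by split-generation for every object of $D^{b}(E^{n})$. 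Translating back yields $HF^{*}(L, L') \cong HF^{*}(L_{0}', L')$ for every linear Lagrangian torus $L'$ with local system, where $L_{0}'$ is a suitable translate of $L_{0}$ equipped with an appropriate local system; this is the Floer-cohomological indistinguishability asserted.

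The main obstacle is the second step: although Mukai's classification is standard in the derived category of abelian varieties, reading off the Mukai vector of $\cE$ purely from the Floer-theoretic data of $L$ requires a careful identification of topological invariants across the mirror, and rests on the naturality of the open-closed map. The first step is comparatively routine, contingent only on the hypothesis built into $\sF_{so}$ that $L$ bounds no holomorphic disc for some compatible $J$.
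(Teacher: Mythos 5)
Your approach diverges fundamentally from the paper's, and both of your two main steps have genuine gaps.

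The paper's own proof is a direct geometric argument that deliberately avoids mirror symmetry: it observes that $\iota_*\co\pi_1(L)\to\pi_1(T^{2n})$ is injective iff $[L]\neq 0$, lifts $L$ to the covering space $T^*R\cong T^*T^n$ attached to the rank-$n$ sublattice $\iota_*\pi_1(L)$, invokes Arnol'd's classical theorem to see that $L$ is homologous to the zero section, and then quotes the Fukaya--Seidel--Smith module classification to conclude that $L$ is actually quasi-isomorphic to the zero section in $D^{\pi}\sF(T^*R)$. The residual case $[L]=0$ is excluded by Fukaya's string-topology argument: such an $L$ would lift to a subcritical Stein cover, there be Hamiltonian displaceable, hence bound holomorphic discs, and the properties of the string bracket then contradict $\mu_L=0$. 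Note that this proposition is presented as a \emph{digression} precisely because it bypasses Theorem \ref{thm:hms}, Corollary \ref{cor:highdimension}, and the quilt Axiom entirely; your route reintroduces all of that machinery as a prerequisite, which undercuts the point of the statement.

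On the substance, your Step 1 contains a false claim. A non-trivial element of $\ker(\iota_*)$ bounds a disc in $T^{2n}$, but the symplectic area of that disc is a well-defined Hamiltonian-isotopy invariant (since $\pi_2(T^{2n})=0$) which can perfectly well be \emph{zero}; nothing forces it to be positive. Moreover, the Gromov--Chekanov ``minimising-area'' mechanism you invoke requires displaceability, which $L$ certainly does not enjoy inside the closed manifold $T^{2n}$; one must pass to a cover first. The paper's use of Fukaya's argument in the subcritical Stein cover is exactly the robust version of what you are gesturing at, and it does not rely on any positivity of loop areas — it derives disc bubbling from displaceability alone and then uses the Maslov index constraint from string topology. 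Without something like this, the rank-$n$ claim for $\iota_*$ is unsupported.

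Your Step 2 also has unbridged gaps even if Step 1 were repaired. First, the claim that linear Lagrangian tori with local systems are mirror to semi-homogeneous sheaves on $E^n$ is plausible but is nowhere established in the paper, which only treats $E\times E$ in detail and quotes Mukai's theorem (Theorem \ref{thm:Mukai}) for abelian \emph{surfaces}. Second, ``reading off the Mukai vector of $\cE$ from $[L]$ via the open-closed map'' is not something Lemma \ref{lem:HHrestrict} accomplishes — that lemma identifies a restriction map on ordinary cohomology, not a Chern-character comparison across the mirror. Third, matching Mukai vectors does not by itself identify two semi-homogeneous sheaves; it only constrains them. Finally, and most importantly for the conclusion: exhibiting abstract isomorphisms $\Ext^{*}(\cE,\cF)\cong\Ext^{*}(\cE_{0},\cF)$ for each generator $\cF$ is strictly weaker than exhibiting an isomorphism $\cE\cong\cE_{0}$ in the category — by Yoneda you would need these isomorphisms to assemble into a natural transformation of functors. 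The paper's use of \cite{FSS2} directly produces the object-level quasi-isomorphism (and, as the paper remarks, does so even without the Maslov-zero hypothesis), which is stronger than the Ext-level agreement your argument would deliver.
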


\begin{proof}
If $L\subset T^{2n}$ is a Lagrangian torus, the inclusion is $\pi_1$-injective if and only if $[L]\neq 0\in H_n(T^{n};\bZ)$.  Supposing this holds, the underlying primitive homology class $[L]_{prim}$ defines a linear Lagrangian torus $R \subset T^{2n}$.  $L$ lifts as a closed Maslov zero Lagrangian submanifold to the covering space  $T^*R$ of $T^{2n}$ defined by $\pi_1(R)$, which is easily seen to be naturally symplectomorphic to $T^*T^n$ (this follows from the transitivity of $Sp_{2n}(\bZ)$ on integral linear Lagrangian subspaces in $\bR^{2n}$).  It is now classical \cite{Arnold:firststeps} that $L$ must be homologous to the zero-section, so $[L]=[L]_{prim}$ was in fact primitive, and indeed from \cite{FSS2} we know that $L$ is isomorphic in $D^{\pi}\sF(T^*R)$ to the zero-section.   In fact, the latter result does not rely on the Maslov class, so we see that \emph{any homologically essential Lagrangian torus is quasi-isomorphic to a linear Lagrangian torus} and \emph{a posteriori} has vanishing Maslov class. We remind the reader that the argument of \cite{FSS2} involves embedding the category $\sF(T^*R)$ into a category of modules over the algebra of chains on the based loop space $C_*(\Omega R) \simeq \bZ[\pi_1(R)]$, by considering the association $K \mapsto WF^*(K,T_x^*R)$ to a Lagrangian submanifold $K$ of its ``wrapped" Floer cochain complex with the cotangent fibre.  Since $R$ is an Eilenberg-MacLane space, the $A_{\infty}$-structure on $C_*(\Omega R)$ is necessarily formal just for degree reasons, which enables one to classify compact objects of this module category (ones whose endomorphism rings satisfy Poincar\'e duality) by direct algebraic means.

On the other hand, if $L \subset T^{2n}$ had vanishing homology class, it would lift as a closed aspherical Lagrangian submanifold of Maslov class zero to a covering space $X\rightarrow T^{2n}$ which is a \emph{subcritical} Stein manifold, namely that associated to $\pi_1(L)$.  However, no such $L$ can exist by results of Fukaya \cite{Fukaya}.  Briefly, Fukaya argues as follows.  Inside $X$ any compact set is Hamiltonian displaceable, in particular any Lagrangian bounds holomorphic discs; a suitable moduli space of such discs $\mathcal{M}$ with boundary in $L$ defines a Kuranishi chain for the free loop space $\mathcal{L}L$ of $L$, in degree determined by the Maslov class of $L$.  Crucially, the chain $\mathcal{M}$ determines a twisted homology $H_*(\mathcal{L}L;\mathcal{M})$ in which the cycle of constant loops $[L]$ becomes exact.  Properties of the string bracket for an aspherical manifold, together with this exactness, now imply that the Maslov class of $L$ could not in fact have vanished.
\end{proof}

\begin{cor} \label{cor:primitive}
If $L\subset (T^{2n},\omega_{st})$ is a Lagrangian torus of Maslov class zero, then $[L] \in H_n(T^{2n};\bZ)$ is a primitive class.
\end{cor}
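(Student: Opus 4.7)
My plan is to observe that primitivity is essentially proved in the course of the preceding Proposition, and simply reorganise that argument as a short case analysis on whether $[L] \in H_n(T^{2n};\bZ)$ vanishes.

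I first rule out $[L] = 0$. In that case the image of $\pi_{1}(L) \cong \bZ^{n}$ in $\pi_{1}(T^{2n}) \cong \bZ^{2n}$ has rank strictly less than $n$, so $L$ lifts as a closed aspherical Maslov zero Lagrangian submanifold to the corresponding cover of $T^{2n}$; the free part of this cover has rank strictly greater than $n$, and so (after choosing an isotropic complement to the image of $\pi_{1}(L)\otimes\bR$ and using that the symplectic orthogonal splits off a factor of $\bC^{k}$ with $k>0$) it carries a subcritical Stein structure. Fukaya's obstruction for closed aspherical Maslov zero Lagrangians in subcritical Stein manifolds, already invoked in the Proposition's proof, gives the desired contradiction.

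In the remaining case $[L] \neq 0$, the map $\pi_{1}(L) \hookrightarrow \bZ^{2n}$ is injective with rank $n$ image $H$, whose saturation $H' = (H \otimes \bR) \cap \bZ^{2n}$ is the fundamental group of a linear Lagrangian torus $R \subset T^{2n}$ representing $[L]_{prim}$. Since $\pi_{1}(L) \subset H'$, the submanifold $L$ lifts to a closed Lagrangian $\tilde L$ in the covering space $p \co \bR^{2n}/H' \to T^{2n}$. A choice of Lagrangian complement to $H' \otimes \bR$ in $\bR^{2n}$ identifies this cover symplectically with $T^{*}R \cong T^{*}T^{n}$, and $p$ restricts to a homeomorphism from the zero section onto $R$, as well as from $\tilde L$ onto $L$.

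The classical rigidity result of Arnol'd \cite{Arnold:firststeps} for closed Maslov zero Lagrangians in $T^{*}T^{n}$---equivalently, the derived-category statement of \cite{FSS2} applied to $\tilde L$, whose homological essentiality follows from $p_{*}[\tilde L] = [L] \neq 0$---now forces $[\tilde L] = [\text{zero section}]$ in $H_n(T^{*}T^{n}) \cong \bZ$. Pushing this identity forward under the covering map yields $[L] = p_{*}[\tilde L] = p_{*}[\text{zero section}] = [R] = [L]_{prim}$, which is primitive by construction of $R$. The only non-formal step is the homological rigidity statement for closed Lagrangians in $T^{*}T^{n}$, where the main geometric content lies; the rest is routine manipulation of covering spaces.
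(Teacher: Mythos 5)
Your argument reproduces the paper's own proof, which extracts the corollary from the proof of the preceding Proposition: for $[L]=0$ one lifts $L$ to the subcritical Stein cover associated to the image of $\pi_1(L)$ and invokes Fukaya's non-existence result, and for $[L]\neq 0$ one lifts $L$ to $T^*R\cong T^*T^n$ over the linear Lagrangian torus $R$ representing $[L]_{prim}$ and applies the Arnol'd / \cite{FSS2} rigidity to force $[\tilde L]$ to equal the class of the zero section, hence $[L]=[R]$. You have merely made the covering-space bookkeeping (the saturated lattice $H'$ and the pushforward of fundamental classes) slightly more explicit than the paper does.
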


For circles in $T^2$ the analogous result is a well-known consequence of the existence of geodesic representatives, and it would be interesting (but seems hard) to re-prove Corollary \ref{cor:primitive} using techniques of mean curvature flow.

\subsection{Basics} We now turn to the proof of Theorem \ref{thm:classify}.  Let $\Sigma$ be an embedded genus $2$ Lagrangian surface in $T^{4}$ of Maslov class zero, where the four-torus is equipped with its standard symplectic structure.

\begin{lem}
$\Sigma$ defines a non-zero object of $\sF(T^4)$. Moreover, it has endomorphism ring $HF(\Sigma,\Sigma) \cong H^*(\Sigma;\Lambda_{\bR})$.
\end{lem}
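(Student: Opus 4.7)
The plan is to observe that this lemma is essentially immediate from the foundational results already assembled in the paper, with no new geometric input required. First I would note that $T^4$ satisfies $2c_1(T^4) = 0$ (indeed $c_1 = 0$), so Lemma \ref{lem:well-defined} applies: for a generic $\omega_{std}$-compatible almost complex structure $J$ on $T^4$, the Lagrangian $\Sigma$ bounds no $J$-holomorphic discs and is met by no $J$-holomorphic spheres. This places $\Sigma$ in the strictly unobstructed regime and bypasses the delicate bounding-chain machinery of \cite{FO3} entirely, so Floer cohomology $HF^*(\Sigma,\Sigma)$ is unproblematically defined.

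Next I would verify that $\Sigma$ carries all the brane data required to be an object of $\sF(T^4)$: the Maslov class vanishes by assumption (giving a grading up to shift), and an orientable closed genus two surface admits a spin structure. Thus $\Sigma$, equipped with these choices, defines a bona-fide object of $\sF(T^4)$.

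The endomorphism computation is then a direct application of Proposition \ref{Prop:SelfFloer}, whose hypothesis (the existence of one $J$ for which $\Sigma$ bounds no holomorphic disc) is exactly what Lemma \ref{lem:well-defined} provides. Floer's theorem, via a Morse-function Hamiltonian perturbation, identifies $CF^*(\Sigma,\Sigma)$ with the Morse complex of that function and yields
\[
HF^*(\Sigma,\Sigma) \;\cong\; H^*(\Sigma;\Lambda_{\bR}).
\]
Finally, since $H^0(\Sigma;\Lambda_{\bR}) \cong \Lambda_{\bR}$ is non-zero, the class of the cohomological unit $[e_\Sigma] \in HF^0(\Sigma,\Sigma)$ from Lemma \ref{lem:units} is non-zero, so $\Sigma$ is a non-zero object of $\sF(T^4)$.

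There is no real obstacle here, since the content of the lemma was built into the way the Fukaya category was set up in Section \ref{Subsec:BackgroundFukaya}; the only point worth flagging is that Hamiltonian invariance of the isomorphism class, were it needed later, would follow separately from Lemma \ref{lem:Ham-independent} whenever the relative $\pi_2$ vanishes, but that refinement is not required for the present statement.
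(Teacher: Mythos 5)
Your proposal is correct and follows exactly the paper's one-line argument: invoke Lemma \ref{lem:well-defined} to place $\Sigma$ in the strictly unobstructed setting, then apply Proposition \ref{Prop:SelfFloer}. The additional remarks about spin structures, gradings, and the non-vanishing of the unit are accurate elaborations that the paper leaves implicit.
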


\begin{proof}
This follows immediately from Proposition \ref{Prop:SelfFloer} and  Lemma \ref{lem:well-defined}.
\end{proof}

\begin{lem} \label{lem:homologyoftori}
There are transverse linear Lagrangian tori $L_1$ and $L_2$ in $T^4$ with $[\Sigma]=[L_1]+[L_2] \in H_2(T^4;\bZ)$.
\end{lem}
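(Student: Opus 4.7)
The strategy is to compute the numerical invariants of $[\Sigma]$ in $H_2(T^4;\mathbb{Z}) \cong \Lambda^2\mathbb{Z}^4$ and then decompose $[\Sigma]$ as a sum of two primitive Lagrangian decomposable bivectors.

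First I would compute $[\Sigma]\cdot[\Sigma] = -\chi(\Sigma_2) = 2$ using the standard formula for the self-intersection of a Lagrangian surface $L$ in a symplectic $4$-manifold: the symplectic form identifies the normal bundle with the cotangent bundle, and an orientation check gives $[L]^2 = \int_L e(\nu_L) = -\chi(L)$. Reformulating in $\Lambda^2 H_1(T^4;\mathbb{Z}) = \Lambda^2\mathbb{Z}^4$, the intersection pairing is the wedge product valued in $\Lambda^4\mathbb{Z}^4 \cong \mathbb{Z}$; the Lagrangian condition becomes $[\Sigma]\cdot\omega = 0$, and $[\Sigma]^2 = 2$ becomes $\mathrm{Pf}([\Sigma]) = 1$, so in particular $[\Sigma]$ is primitive (a common factor $k$ would force $k^2 \mid 2$). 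Classes represented by linear Lagrangian tori are exactly the primitive decomposable bivectors (equivalently, primitive elements with vanishing self-intersection, by the Pl\"ucker relation) contained in $\omega^\perp \subset \Lambda^2\mathbb{Z}^4$.

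The key reduction is that it suffices to find a single linear Lagrangian torus $L_1$ with $[L_1]\cdot[\Sigma] = 1$. For then $[L_2] := [\Sigma] - [L_1]$ is automatically $\omega$-orthogonal, satisfies $[L_2]\cdot[L_2] = 2 - 2 + 0 = 0$ (hence is decomposable by Pl\"ucker and represented by some linear Lagrangian torus $L_2$), and is primitive because any common divisor of its coefficients would have to divide $[L_1]\cdot[L_2] = [L_1]\cdot[\Sigma] - [L_1]^2 = 1$. Transversality is automatic: $[L_1]\cdot[L_2] = 1 \ne 0$ forces the tangent $2$-planes of $L_1, L_2$ to sum to all of $\mathbb{R}^4$.

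Finally, I would produce $L_1$ by invoking the transitivity of $\mathrm{Sp}(4,\mathbb{Z}) = \mathrm{Sp}(\omega;\mathbb{Z})$ on primitive elements of fixed self-intersection in $\omega^\perp$. The sublattice $\omega^\perp$ has rank $5$ and signature $(2,3)$, and the homomorphism $\mathrm{Sp}(4,\mathbb{Z}) \to \mathrm{SO}(2,3;\mathbb{Z})$ arising from the exceptional isomorphism $\mathrm{Sp}(4)/\{\pm 1\} \cong \mathrm{SO}(2,3)$ allows a Witt-extension argument to bring $[\Sigma]$ to the standard representative $e_1\wedge e_3 - e_2\wedge e_4$ in a Darboux basis for $\omega$; then the torus tangent to $e_1, e_3$ is manifestly $\omega$-Lagrangian and has $[L_1]\cdot[\Sigma] = 1$, after which one transports back by the inverse change of basis. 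The main obstacle is exactly this transitivity claim on primitive norm-$2$ vectors in an indefinite integer lattice of signature $(2,3)$: it is classical but requires care, and a more elementary alternative would be to argue directly, via primitivity of $[\Sigma]$ and a Bezout-style divisibility computation, that the gcd of the integers $\{[L]\cdot[\Sigma] : L \text{ a primitive linear Lagrangian torus}\}$ equals $1$, producing $L_1$ by Bezout.
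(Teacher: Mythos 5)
Your approach is genuinely different from the paper's, which deserves noting before the critique. The paper first shows that $\iota_*\colon H_1(\Sigma;\mathbb{Z})\to H_1(T^4;\mathbb{Z})$ is an isomorphism (via the covering-space argument), so that $[\Sigma]=a_1\wedge b_1 + a_2\wedge b_2$ for an integral basis $a_i,b_i$ of $\mathbb{Z}^4$ which is symplectic for the intersection pairing of $\Sigma$, and then argues from $\omega|_\Sigma\equiv 0$ that the two summands are $\omega$-Lagrangian. You work instead entirely inside the sublattice $(\omega^\perp,\cdot)\subset H_2(T^4;\mathbb{Z})$ and appeal to arithmetic transitivity, which is a reasonable strategy; the reduction to finding a single $L_1$ with $[L_1]\cdot[\Sigma]=1$, the verification that $[L_2]=[\Sigma]-[L_1]$ is isotropic, $\omega$-orthogonal, primitive and transverse, and the numerical inputs $[\Sigma]^2=-\chi(\Sigma)=2$ and $\mathrm{Pf}([\Sigma])=1$ are all correct.

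The gap is exactly at the step you flag, and it is more than ``requires care.'' The lattice $\omega^\perp$ is \emph{not} unimodular: with the symplectic orientation one computes $\omega^\perp\cong U\oplus U\oplus\langle-2\rangle$, of discriminant $2$. Eichler's criterion does apply here (a $U\oplus U$ splits off), but what it gives is transitivity of $O(\omega^\perp;\mathbb{Z})$ on primitive vectors of fixed \emph{norm and divisor}, where $\operatorname{div}(v)=\gcd\{v\cdot w: w\in\omega^\perp\}$. Primitivity alone, which is all your Pfaffian argument establishes, does not determine the orbit: a primitive generator of the $\langle-2\rangle$ summand has divisor $2$. You therefore need, in addition, that $\operatorname{div}([\Sigma])=1$. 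This is in fact true: writing a primitive $\sigma=(u_1,u_2,t)\in U\oplus U\oplus\langle -2\rangle$ with $\sigma^2=2$ and supposing $\operatorname{div}(\sigma)=2$ forces all $U$-coordinates even and $t$ odd, and reducing $u_1^2+u_2^2-2t^2=2$ modulo $8$ gives a contradiction. But this check is substantive, and it is sensitive to the sign of the last summand: in $U\oplus U\oplus\langle 2\rangle$ the generator $(0,0,1)$ is primitive of norm $2$ and divisor $2$, and for it \emph{no} decomposition as a sum of two isotropic vectors pairing to $1$ exists -- so were the discriminant form of the opposite sign, your scheme and the lemma itself would fail. Separately, quoting transitivity for $Sp(4,\mathbb{Z})$ via the isogeny with $SO(2,3)$ adds an unnecessary difficulty: over $\mathbb{Z}$ the image of $Sp(4,\mathbb{Z})$ in $O(\omega^\perp;\mathbb{Z})$ is a proper finite-index subgroup, so $Sp$-transitivity on an $O$-orbit is a genuinely harder statement. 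It is cleaner and entirely sufficient to work with $O(\omega^\perp;\mathbb{Z})$: the properties you need of $\alpha$ -- primitive, isotropic, $\alpha\cdot[\Sigma]=1$ -- are intrinsic to $\omega^\perp$, and a primitive isotropic element of the saturated sublattice $\omega^\perp\subset\Lambda^2\mathbb{Z}^4$ is automatically a decomposable integral bivector by the Pl\"ucker relation, hence is the class of a linear Lagrangian torus. Your Bezout fallback suffers from the same omission (one still must know $\operatorname{div}([\Sigma])=1$, and moreover must produce a single Lagrangian decomposable $\alpha$ with $\alpha\cdot[\Sigma]=1$, not merely a gcd-one combination), so it does not sidestep the issue.
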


\begin{proof}
The inclusion $\Sigma \hookrightarrow T^4$ induces an isomorphism on integral $H_1$, or $\Sigma$ would lift as a closed Lagrangian to a covering space $T^2 \times \bC^*$ of $T^4$ which contains no surfaces of non-zero square.  Therefore, the pullback map $H^1(T^4;\bZ) \rightarrow H^1(\Sigma;\bZ)$  is necessarily an isomorphism.  Pick a symplectic basis $\langle a_1,b_1,a_2,b_2\rangle$ for $H^1(\Sigma;\bZ)$ and denote by the same letters classes in $H^1(T^4)$ which project to these.  Note that $\omega|_{\Sigma} \equiv 0$ implies that $\langle a_1,b_1\rangle$ and $\langle a_2,b_2\rangle$  both span Lagrangian planes in the universal cover $\bR^4$ of $T^4$. The result follows.
\end{proof}

\begin{lem} \label{lem:pi2diesinH2}
The map $\pi_2(T^4,\Sigma_2) \rightarrow H_2(T^4,\Sigma_2)$ vanishes.  In particular, the isomorphism class of $\Sigma_2$ in $\sF(T^4)$ depends only on its Hamiltonian isotopy class.
\end{lem}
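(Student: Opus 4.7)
The ``in particular'' clause follows immediately from Lemma~\ref{lem:Ham-independent}, so the whole task is to show the vanishing of the Hurewicz map $h\colon \pi_2(T^4,\Sigma_2)\to H_2(T^4,\Sigma_2)$. My plan is to combine the asphericity of both $T^4$ and $\Sigma_2$ with the $H_1$-iso proved inside Lemma~\ref{lem:homologyoftori}, and then kill the residual contribution by passing to the universal cover $p\colon \bR^4\to T^4$.

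First, because $T^4$ is a $K(\bZ^4,1)$ and $\Sigma_2$ is a $K(\pi_1(\Sigma_2),1)$, the long exact sequence of the pair collapses to $\pi_2(T^4,\Sigma_2)\cong \ker\bigl(\pi_1(\Sigma_2)\to\pi_1(T^4)\bigr)$. Since $\pi_1(T^4)$ is abelian, this map factors through the abelianization $\pi_1(\Sigma_2)\to H_1(\Sigma_2)$; the proof of Lemma~\ref{lem:homologyoftori} shows that $H_1(\Sigma_2)\to H_1(T^4)$ is an isomorphism, so the kernel is exactly $[\pi_1(\Sigma_2),\pi_1(\Sigma_2)]$. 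Naturality of Hurewicz then gives the square
\begin{equation*}
\xymatrix{ \pi_2(T^4,\Sigma_2) \ar[r]^-{\partial}\ar[d]_{h} & \pi_1(\Sigma_2) \ar[d]^{\mathrm{ab}}\\
H_2(T^4,\Sigma_2) \ar[r]^-{\partial} & H_1(\Sigma_2). }
\end{equation*}
The top-right composite vanishes since $\partial[u]$ is a commutator; hence $h[u]$ lies in $\ker\partial = \operatorname{im}\bigl(H_2(T^4)\to H_2(T^4,\Sigma_2)\bigr)$, i.e.\ $h[u]$ is represented by an \emph{absolute} $2$-cycle in $T^4$.

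To finish, pass to the cover. The iso on $H_1$ implies that $\widetilde{\Sigma}:=p^{-1}(\Sigma_2)$ is connected — it is the maximal abelian cover of $\Sigma_2$ — and that $\pi_2(T^4,\Sigma_2)\cong \pi_2(\bR^4,\widetilde{\Sigma})\cong \pi_1(\widetilde{\Sigma})$. Given $[u]\in\pi_2(T^4,\Sigma_2)$, lift to a disc $\tilde u$ in $\bR^4$ with $\partial \tilde u\subset \widetilde{\Sigma}$, fill $\partial u$ by a $2$-chain $C$ in $\Sigma_2$ (possible as $[\partial u]=0\in H_1(\Sigma_2)$), and choose a consistent lift $\widetilde{C}$ in $\widetilde{\Sigma}$ with $\partial\widetilde{C}=\partial \tilde u$. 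The plan is then to observe that $\tilde u-\widetilde{C}$ is a $2$-cycle in the contractible space $\bR^4$, hence the boundary of some $3$-chain $W$; pushing $W$ forward realises $u-C$ as a boundary in $T^4$, so $[u]$ coincides with $[C]\in H_2(\Sigma_2)\to H_2(T^4)$, which is zero modulo the relation $[\Sigma_2]$ in $H_2(T^4,\Sigma_2)$.

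The step I expect to be the main obstacle is the consistent lifting of $C$ to $\widetilde{C}$ with matching boundary: a priori one only knows $[\partial u]$ vanishes in $H_1(\Sigma_2)$, not in $H_1(\widetilde{\Sigma})$, so the naive lift of a filling $2$-chain need not close up correctly. The plan is to handle this simplex by simplex, using that each simplex is simply connected and that the deck group action on lifts is free, so the ambiguity in the choice of lifts only affects $\widetilde{C}$ by a $\bZ^4$-translate — which projects to the same class in $H_2(T^4,\Sigma_2)$. Chaining this with the push-down of $W$ as above then yields $h[u]=0$, completing the argument.
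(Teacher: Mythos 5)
The obstacle you flag in the final paragraph is the whole problem, and the simplex-by-simplex fix does not close it. Lifting the singular $2$-chain $C$ one simplex at a time does give a chain $\widetilde{C}$ in $\widetilde{\Sigma}$, but when two simplices of $C$ share a face the independently chosen lifts of that face need not coincide, so cancellations in $\partial C$ fail in $\partial\widetilde{C}$. The freedom is one deck transformation \emph{per simplex}, not a single global $\bZ^4$-translate of $\widetilde{C}$, and the obstruction to a coherent choice with $\partial\widetilde{C}=\partial\tilde{u}$ is exactly $[\partial\tilde{u}]\in H_1(\widetilde{\Sigma})$, which need not vanish: $\gamma$ being null-homologous in $\Sigma_2$ says nothing about the class of a chosen lift in $H_1(\widetilde{\Sigma})$ (that class would vanish precisely when $\gamma$ bounded a map of a simply connected surface into $\Sigma_2$, a far stronger condition than null-homology).

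Worse, the relative Hurewicz theorem shows the lemma cannot be correct as stated. Since $\pi_1(\Sigma_2)\to\pi_1(T^4)$ is onto, the pair $(T^4,\Sigma_2)$ is $1$-connected, so $\pi_2(T^4,\Sigma_2)\to H_2(T^4,\Sigma_2)$ is \emph{surjective}; and $H_2(T^4,\Sigma_2)\cong H_2(T^4)/\bZ[\Sigma_2]\cong\bZ^5$ is nonzero. (The paper's own one-line deduction conflates two different things: Hurewicz kills commutators \emph{of} $\pi_2(T^4,\Sigma_2)$ and the $\pi_1(\Sigma_2)$-action, not elements whose \emph{boundary} merely happens to be a commutator in $\pi_1(\Sigma_2)$.) For a concrete witness, take $\Sigma_2$ a Lagrange surgery of $L_f=\{y_1=y_2=0\}$ and $L_s=\{x_1=x_2=0\}$ and $\gamma=[a_1,a_2]$ the commutator of the loops running once around $L_f$ in the $x_1$-direction and once around $L_s$ in the $y_1$-direction; the lift $\tilde\gamma$ traces out (a small perturbation of) the boundary of the unit square in the $(x_1,y_1)$-plane of $\bR^4$, so a filling disc $u$ has $\int_D u^*\omega\approx 1$ and its Hurewicz image pairs nontrivially with $[\omega]\in H^2(T^4,\Sigma_2;\bR)$. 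So your instinct about where the difficulty lies is exactly right, but it is a genuine obstruction rather than a technicality: the lemma, and hence the route to Hamiltonian-isotopy invariance via Lemma~\ref{lem:Ham-independent}, needs a different (or weaker) argument, not a more careful lifting scheme.
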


\begin{proof}
The long exact sequence 
\[
\pi_2(T^4) \rightarrow \pi_2(T^4,\Sigma_2) \rightarrow \pi_1(\Sigma_2) \rightarrow \pi_1(T^4)
\]
shows that $\pi_2(T^4,\Sigma_2)$ is the commutator subgroup of $\pi_1(\Sigma_2)$ which implies the first statement. The second statement is then an application of Lemma \ref{lem:Ham-independent}.
\end{proof}

\begin{lem} \label{lem:hits_fibres}
The Floer homology groups $HF(L_i,\Sigma)$ both have Euler characteristic $\pm 1$.
\end{lem}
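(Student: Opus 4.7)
The plan is to compute the Euler characteristic as a topological intersection number, and then evaluate the intersection using the homology class identity of Lemma \ref{lem:homologyoftori}.

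First, by Lemma \ref{lem:well-defined} (applied both to $\Sigma$ and to any small perturbation of $L_i$, which remains linear up to Hamiltonian isotopy), neither $\Sigma$ nor $L_i$ bounds a non-constant $J$-holomorphic disc for generic $J$, and there are no spheres passing through them; hence $HF(L_i,\Sigma)$ is well-defined and $\bZ$-graded (both Lagrangians have Maslov class zero). After a small Hamiltonian perturbation of $L_i$ to make the intersection with $\Sigma$ transverse, $CF(L_i,\Sigma)$ is freely generated by the finitely many points of $L_i \cap \Sigma$, graded by their Maslov indices and signed by the coherent orientation scheme. Standard index-parity arguments then identify
\[
\chi\bigl(HF(L_i,\Sigma)\bigr) \;=\; \sum_{p \in L_i \cap \Sigma} (-1)^{|p|} \;=\; \pm\,[L_i]\cdot [\Sigma],
\]
where the last pairing is the topological intersection number in $H_*(T^4;\bZ)$.

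Next I will evaluate this topological pairing. By Lemma \ref{lem:homologyoftori}, $[\Sigma] = [L_1]+[L_2]$, so
\[
[L_i]\cdot [\Sigma] \;=\; [L_i]\cdot [L_1] + [L_i]\cdot [L_2].
\]
Linear Lagrangian tori in $T^4$ can be translated off themselves, so $[L_i]\cdot [L_i]=0$. On the other hand, the adjunction (or normal bundle) formula for an embedded Lagrangian surface in a symplectic four-manifold gives $[\Sigma]\cdot[\Sigma] = \pm\chi(\Sigma) = \mp 2$, since the normal bundle of a Lagrangian surface is isomorphic to its cotangent bundle. Expanding
\[
[\Sigma]\cdot[\Sigma] \;=\; [L_1]\cdot[L_1] + 2\,[L_1]\cdot[L_2] + [L_2]\cdot[L_2] \;=\; 2\,[L_1]\cdot[L_2]
\]
and comparing with $[\Sigma]^2 = \pm 2$ forces $[L_1]\cdot[L_2] = \pm 1$. (This is consistent with the hypothesis that $L_1$ and $L_2$ are chosen to meet transversely at a single point, as will be arranged in the sequel.) Substituting back,
\[
[L_i]\cdot[\Sigma] \;=\; 0 + [L_1]\cdot[L_2] \;=\; \pm 1,
\]
which together with the first displayed equation yields $\chi(HF(L_i,\Sigma)) = \pm 1$.

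The only mild subtlety is the identification of the Floer Euler characteristic with the topological intersection pairing, which relies on the coherent orientation scheme of \cite[Section 12b]{Seidel:FCPLT} as reviewed in Section \ref{Subsec:BackgroundFukaya}; everything else is a direct computation in $H_2(T^4;\bZ)$ combined with the Lagrangian adjunction identity.
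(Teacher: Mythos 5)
Your proof is correct and follows essentially the same route as the paper: both compute $\chi(HF(L_i,\Sigma))$ as a signed topological intersection number, then evaluate $[L_i]\cdot[\Sigma]$ using $[\Sigma]=[L_1]+[L_2]$, the Lagrangian adjunction identity $[\Sigma]^2 = -\chi(\Sigma)=2$, and $[L_i]^2=0$. The paper is considerably terser (a single sentence), taking the well-definedness of $HF$ and the identification of Floer Euler characteristic with the intersection pairing as understood from the surrounding discussion, but the key computation is identical to yours.
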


\begin{proof}
Since $[\Sigma]^2=2=([L_1]+[L_2])^2$ and $[L_i]^2=0$, we see $[L_i]\cdot[\Sigma] = \pm 1$ (where the ambiguity of sign comes from not having fixed orientations on the surfaces).
\end{proof}

We now fix once and for all a Lagrangian fibration $\pi: T^4 \rightarrow T^2$ with Lagrangian fibre $L_f = L_1$ and section $L_s=L_2$, respectively.  That this is possible follows from the transitivity of $Sp_{4}(\bZ)$ on pairs of transverse linear Lagrangian 2-planes in $\bR^4$ defined over $\bZ$.

\subsection{The spectral sequence}
We now pass to the mirror, via Theorem \ref{thm:hms}, to deduce that there is a bounded complex $\cE_{\Sigma} = \cE_{\Sigma}^{\bullet} \in D^b(E\times E)$ of coherent sheaves with $\Ext^*(\cE_{\Sigma},\cE_{\Sigma}) \cong H^*(\Sigma_2;\Lambda_{\bR})$ as graded algebras.  In other words, $\cE_{\Sigma}$ is ``mirror" to the given Lagrangian genus 2 surface $\Sigma \subset T^4$. Moreover, we know that the rank of $\cE_{\Sigma}$ is $1$, as that rank is mirror to the Euler characteristic of $HF(\Sigma,L_f)$.  We recall:

\begin{lem} \label{lem:rkatleast2}
Let $A$ be an abelian surface over an algebraically closed field $k=\bar{k}$. Any coherent sheaf $\cP\rightarrow A$ satisfies $\rk_k\Ext^1(\cP,\cP) \geq 2$. 
\end{lem}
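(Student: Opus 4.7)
My plan is to adapt Mukai's argument from his work on semi-homogeneous bundles, using the natural action of $A \times \hat A$ on isomorphism classes of coherent sheaves given by
\begin{equation*}
(a, L) \cdot \cQ \ = \ t_a^* \cQ \otimes L,
\end{equation*}
where $t_a$ denotes translation by $a \in A$ and $\hat A = \Pic^0(A)$.  Differentiating this action at the identity defines a natural $k$-linear map
\begin{equation*}
\phi_{\cP} \co H^0(A, \cT_A) \oplus H^1(A, \cO_A) \ \longrightarrow \ \Ext^1(\cP, \cP).
\end{equation*}
The first summand assigns to a global vector field $v$ the Kodaira-Spencer class $\iota_v \alpha(\cP)$ obtained by contracting the Atiyah class $\alpha(\cP) \in \Ext^1(\cP, \cP \otimes \Omega^1_A)$ with $v$; this is the infinitesimal deformation coming from translating $\cP$ along $v$. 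The second summand assigns to $\xi \in H^1(\cO_A) = \Ext^1(\cO_A, \cO_A)$ the Yoneda product $\xi \otimes \id_{\cP} \in \Ext^1(\cP, \cP)$, which is the Kodaira-Spencer class for tensoring $\cP$ with an infinitesimal element of $\Pic^0(A)$.

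Since $\cT_A$ is trivial and $\dim A = 2$, the source of $\phi_{\cP}$ has dimension $2 \dim A = 4$.  By standard deformation theory, the kernel of $\phi_{\cP}$ coincides with the Lie algebra of the group scheme stabilizer
\begin{equation*}
\Sigma(\cP) \ = \ \{(a, L) \in A \times \hat A \ : \ t_a^* \cP \otimes L \cong \cP\} \ \subset \ A \times \hat A.
\end{equation*}
Thus the desired bound $\dim \Ext^1(\cP, \cP) \geq 2$ reduces to showing that the stabilizer satisfies $\dim \Sigma(\cP) \leq 2$ for any non-zero coherent sheaf $\cP$.

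The hard part is this last stabilizer estimate, which is due to Mukai.  The key idea is that any positive-dimensional connected subgroup $H \subset \Sigma(\cP)$ forces a functional relationship between its $A$- and $\hat A$-components.  Concretely, fixing an ample line bundle $M$ on $A$, the polarisation homomorphism $\phi_M \co A \to \hat A$ sending $a \mapsto t_a^* M \otimes M^{-1}$ has finite kernel; by taking determinants of the isomorphism $t_a^* \cP \cong \cP \otimes L^{-1}$ when $\cP$ has positive rank (or analysing the class of the support cycle when $\cP$ is torsion), one finds that any component of $\Sigma(\cP)$ whose $A$-projection is positive-dimensional is isogenous to the graph of a homomorphism $A \to \hat A$, hence of dimension at most $\dim A = 2$; the complementary case, in which the $A$-projection has zero-dimensional image, is handled symmetrically using that $\hat{\hat A} = A$.

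Combining these steps yields
\begin{equation*}
\dim \Ext^1(\cP, \cP) \ \geq \ \dim \bigl(H^0(\cT_A) \oplus H^1(\cO_A)\bigr) \ - \ \dim \Sigma(\cP) \ \geq \ 4 - 2 \ = \ 2,
\end{equation*}
which is the required bound.  Note that the inequality is tight already for the structure sheaf $\cO_A$, for a skyscraper $\cO_p$, and for the structure sheaf of a translation-invariant elliptic curve in $A$, in which the stabilizer has dimension exactly $2$ and each contributes to $\ker \phi_{\cP}$ in a different way.
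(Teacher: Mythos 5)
Your argument is correct and takes a genuinely different route from the paper's, though both belong to Mukai's circle of ideas. The paper's proof is much terser: for a \emph{locally free} sheaf $V$ on an abelian surface $A'$, the unit inclusion $\cO_{A'}\hookrightarrow \mathcal{E}nd(V)$ (split by the normalised trace in characteristic zero) gives an injection $k^2\cong H^1(\cO_{A'})\hookrightarrow H^1(\mathcal{E}nd(V))=\Ext^1(V,V)$ --- this is precisely the $\hat A$-half of your $\phi_{\cP}$ --- and an arbitrary coherent $\cP$ is reduced to this case by tensoring with a sufficiently ample line bundle so that the Fourier--Mukai transform becomes locally free on the dual abelian surface with the same Ext-algebra. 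You instead use the full four-dimensional source $H^0(\cT_A)\oplus H^1(\cO_A)$, identify $\ker\phi_{\cP}$ with the Lie algebra of the stabiliser $\Sigma(\cP)\subset A\times\hat A$, and reduce to the bound $\dim\Sigma(\cP)\leq 2$; this is valid in characteristic zero (where $\Sigma(\cP)$ is reduced, so $\dim_k\mathrm{Lie}\,\Sigma(\cP)=\dim\Sigma(\cP)$, and $\Lambda_\bR$ has characteristic zero in the application), and it buys a sharper conclusion, $\dim\Ext^1(\cP,\cP)\geq 4-\dim\Sigma(\cP)$. Your stabiliser estimate is the weakest part of the sketch: for positive rank the determinant argument is clean, but for torsion $\cP$ the precise statement you need is that the translation stabiliser of $\mathrm{supp}\,\cP$ has dimension $\leq 1$ in $A$, and that the $\hat A$-fibre over each such $a$ is bounded by $\dim\ker\big(\Pic^0(A)\to\Pic^0(\mathrm{supp}\,\cP)\big)$ plus the stabiliser inside $\Pic^0(\mathrm{supp}\,\cP)$, which altogether gives $\leq 2$ after a short case analysis; the appeal to biduality for the ``complementary case'' is unnecessary since a subgroup with finite $A$-projection trivially has dimension $\leq\dim\hat A=2$. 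The paper's Fourier--Mukai reduction sidesteps all these case distinctions, at the cost of invoking more machinery.
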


\begin{proof}
Heuristically, this holds since translations of $A$ define two infinitesimal deformations of $\cP$.    Formally, if $A'$ is an abelian surface and $V \rightarrow A'$ is locally free, 
\[ k^2 \cong H^1(A',\cO_{A'}) \subset H^1(A'; V^{\vee}\otimes V) = \Ext^1(V,V).
\]
If $\cP$ is an arbitrary sheaf on $A$, it has the same Ext-algebra as some vector bundle $V\rightarrow A'$  (tensor $\cP$ with a sufficiently ample line bundle and take the Fourier-Mukai dual), implying the result.  \end{proof}

We are heavily indebted to Dima Orlov for the proof of the next result.
\begin{lem}
Either $\cE_{\Sigma}$ is quasi-isomorphic to a sheaf, or $\cE_{\Sigma}$ is quasi-isomorphic to a length two complex $P^0 \rightarrow P^1$ of sheaves, with the cohomology sheaves $H^i$, $i=0,1$, of  $P^0\rightarrow P^1$ satisfying $\Ext^1(H^i,H^i) \cong H^1(T^2;\Lambda_{\bR})$.
\end{lem}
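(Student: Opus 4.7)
The plan is to analyse $\cE_\Sigma$ through its cohomology-sheaf filtration, using as numerical input the dimensions $\dim_{\Lambda_\bR} \Ext^{n}(\cE_\Sigma, \cE_\Sigma) = \dim_{\Lambda_\bR} H^n(\Sigma_2; \Lambda_\bR)$, namely $1, 4, 1$ for $n = 0, 1, 2$, together with Lemma \ref{lem:rkatleast2}. Assume that $\cE_\Sigma$ is not quasi-isomorphic to a single sheaf; write $H^i = H^i(\cE_\Sigma)$ for its cohomology sheaves, which are then nonzero for at least two values of $i$. The main tool will be the standard spectral sequence
\begin{equation*}
E_2^{p,q} \;=\; \bigoplus_{j-i = q} \Ext^p(H^i, H^j) \;\Longrightarrow\; \Ext^{p+q}(\cE_\Sigma, \cE_\Sigma),
\end{equation*}
whose $E_2$ page is concentrated in the columns $p \in \{0,1,2\}$ because $E \times E$ is smooth of dimension two.

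The crucial observation is that the diagonal column $E_2^{1,0} = \bigoplus_i \Ext^1(H^i, H^i)$ admits no incoming differential $d_r$ (the source would sit in $p = 1 - r \leq -1$) and no outgoing one either (the target would sit in $p = 1 + r \geq 3$). Thus $E_\infty^{1,0} = E_2^{1,0}$, which yields
\begin{equation*}
\sum_i \dim \Ext^1(H^i, H^i) \;\leq\; \dim \Ext^1(\cE_\Sigma, \cE_\Sigma) \;=\; 4.
\end{equation*}
Invoking Lemma \ref{lem:rkatleast2}, each nonzero $H^i$ contributes at least $2$ to this sum, so there are at most two nonzero cohomology sheaves; if there are exactly two, the inequalities become equalities, forcing $\dim \Ext^1(H^i, H^i) = 2$ in each case.

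It remains to check that the two cohomology sheaves must sit in consecutive degrees and to realise $\cE_\Sigma$ as a length-two complex. If $H^a, H^b$ are nonzero with $b - a \geq 2$, then the truncation triangle $H^a[-a] \to \cE_\Sigma \to H^b[-b] \to$ has connecting morphism in $\Ext^{b-a+1}(H^b, H^a)$, which vanishes on the surface $E \times E$ as soon as $b - a + 1 \geq 3$; the triangle then splits and $\dim \Ext^0(\cE_\Sigma, \cE_\Sigma) \geq 2$, contradicting the value $1$ dictated by $H^0(\Sigma_2;\Lambda_\bR)$. After an overall shift I may therefore assume the only nonzero cohomology sheaves are $H^0$ and $H^1$, and the extension class in $\Ext^2(H^1, H^0)$ classifying $\cE_\Sigma$ is, by the Yoneda interpretation of higher $\Ext$, a two-step exact sequence $0 \to H^0 \to P^0 \to P^1 \to H^1 \to 0$; hence $\cE_\Sigma$ is quasi-isomorphic to the length-two complex $[P^0 \to P^1]$ with $H^0 = \ker$ and $H^1 = \coker$. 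I do not expect serious obstacles beyond the careful bookkeeping of the spectral sequence, and the asserted isomorphism $\Ext^1(H^i, H^i) \cong H^1(T^2;\Lambda_\bR)$ holds only as $\Lambda_\bR$-vector spaces of dimension two.
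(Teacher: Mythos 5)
Your argument is essentially identical to the paper's: the same hypercohomology spectral sequence, the observation that the column $\bigoplus_i\Ext^1(H^i,H^i)$ survives to $E_\infty$, and Lemma~\ref{lem:rkatleast2} to bound the number of nonzero cohomology sheaves by two and pin down $\dim\Ext^1(H^i,H^i)=2$. The one genuine addition is your treatment of the gap between ``at most two cohomology sheaves'' and ``a length-two complex'': you argue via the truncation triangle that if the two nonzero $H^i$ sat in degrees differing by at least two, the connecting class would live in $\Ext^{\geq 3}$ and hence vanish, splitting $\cE_\Sigma$ and forcing $\dim\Ext^0\geq 2$, a contradiction. The paper simply asserts ``in particular, $\cE_\Sigma$ is quasi-isomorphic to a sheaf or to a length 2 complex'' without making this step explicit, so your version is a slight but welcome tightening; the remark that the final isomorphism $\Ext^1(H^i,H^i)\cong H^1(T^2;\Lambda_\bR)$ is only one of graded vector spaces is also correct, and is made precise in the paper's subsequent Lemma~\ref{lem:force-simple}.
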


\begin{proof}
We use the spectral sequence
\begin{equation} \label{eqn:specseq}
E^2_{pq} = \bigoplus_{k-j=q} \Ext^p (H^j(\cE^{\bullet}),H^k(\cE^{\bullet})) \ \Longrightarrow \ \Ext^{p+q}(\cE^{\bullet}, \cE^{\bullet})
\end{equation}
where the $H^i(\cE^{\bullet}) = H^i$ denote the coherent cohomology sheaves of the complex $\cE^{\bullet}$.  Since we are working on an algebraic surface $E\times E$, the Ext-groups of any sheaf vanish in degrees $i\not \in \{0,1,2\}$, so the spectral sequence has 3 columns.  From the shape of the $E^2$-differential it follows that the ``middle box" $\oplus_j \Ext^1(H^j,H^j)$ survives to $E^{\infty}$.  Applying this to $\cE_{\Sigma}$, where $E^{\infty}$ is the associated graded group to a filtration on $H^*(\Sigma_2)$, the previous Lemma implies there are at most two cohomology sheaves.  In particular, $\cE_{\Sigma}^{\bullet}$ is quasi-isomorphic either to a sheaf or to a length 2 complex of sheaves.
Suppose we are in the latter situation.  Replace $\cE_{\Sigma} \simeq (P^0 \rightarrow P^1)$ by a length two complex.  The same argument (considering the ``box" surviving to $E^{\infty}$) shows that $\Ext^1(H^j, H^j)$ has rank 2 for $j\in \{0,1\}$.  
\end{proof}

\begin{lem} \label{lem:force-simple}
Suppose $\cE_{\Sigma} \simeq (P^0\rightarrow P^1)$ is quasi-isomorphic to a length two complex of coherent sheaves.  Let $H^i$ denote the cohomology sheaves of this complex. Then $\Ext^*(H^i, H^i) \cong H^*(T^2;\Lambda_{\bR})$ for $i\in \{0,1\}$.
\end{lem}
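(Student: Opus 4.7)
The plan reduces the claim to showing $\dim \End(H^i) = 1$ for $i \in \{0, 1\}$.  Indeed, the abelian surface $E \times E$ has trivial canonical bundle, so Serre duality gives $\dim \Ext^2(H^i, H^i) = \dim \End(H^i)$; combined with the previous lemma's output $\dim \Ext^1(H^i, H^i) = 2$, simplicity of $H^i$ forces the total dimensions of $\Ext^*(H^i, H^i)$ to be $(1, 2, 1)$, matching $H^*(T^2; \Lambda_{\bR})$.  The graded algebra structure then agrees automatically with the exterior algebra on a two-dimensional space, via the central embedding $H^*(\cO_{\hat A}) \hookrightarrow \Ext^*(H^i, H^i)$ already used in the proof of Lemma \ref{lem:rkatleast2}.

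To establish simplicity I argue by contradiction: suppose $\dim \End(H^i) \geq 2$.  First, $H^i$ must be indecomposable, since a splitting $H^i \cong F_1 \oplus F_2$ with both $F_j \neq 0$ would give $\dim \Ext^1(H^i, H^i) \geq \dim \Ext^1(F_1, F_1) + \dim \Ext^1(F_2, F_2) \geq 4$ by Lemma \ref{lem:rkatleast2} applied to each summand, contradicting $\dim \Ext^1(H^i, H^i) = 2$.  Hence $\End(H^i)$ is a local Artinian algebra of dimension at least two; pick a non-scalar $\phi \in \End(H^i)$, and, after replacing $\phi$ by a sufficiently high power, arrange $\phi \neq 0$ with $\phi^2 = 0$.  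Writing $G = \mathrm{im}(\phi) \subseteq \ker(\phi) \subsetneq H^i$, the sheaf $H^i$ inherits a non-trivial two-step filtration with $H^i / \ker(\phi) \cong G$.

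The Euler characteristic identity (valid on the abelian surface by Serre duality) $\chi(F, F) = 2\dim\End(F) - \dim\Ext^1(F, F)$ applied to $F = H^i$ forces $\chi(H^i, H^i) \geq 2$ under our assumption.  On the other hand, Hirzebruch--Riemann--Roch on $E \times E$ gives $\chi(F, F) = 2 r(F) \chi(F) - c_1(F)^2$ for any coherent sheaf $F$; applying this to $H^i$ via the filtration above, together with Lemma \ref{lem:rkatleast2} applied to $G$ and to $\ker(\phi)/G$, should drive $\chi(H^i, H^i)$ down to at most zero, producing the desired contradiction.

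The hard part will be precisely this last step, namely extracting the upper bound $\chi(H^i, H^i) \leq 0$ from the existence of a non-trivial nilpotent endomorphism.  This amounts to a Bogomolov-type inequality for sheaves admitting such filtrations on the abelian surface, and is the essential technical content of the lemma; a cleaner alternative would be to invoke the classification of coherent sheaves on abelian surfaces realising the minimal value $\dim \Ext^1 = 2$ directly, but I expect the contradiction via $\chi$ to be the most self-contained route.
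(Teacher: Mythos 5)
Your overall strategy — reduce to simplicity, use Serre duality to pair $\Ext^0$ with $\Ext^2$, and derive a contradiction from $\chi(H^i,H^i) > 0$ — matches the paper's. Your indecomposability observation (a splitting would force $\dim\Ext^1 \geq 4$ by Lemma \ref{lem:rkatleast2} applied to each summand) is also correct and useful. But the route you take to the contradiction is genuinely different, and the crucial last step, which you flag yourself, does not close.

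The paper does not filter $G = H^i$ by images of a nilpotent endomorphism. It instead uses the \emph{canonical} filtrations of coherent sheaves: first the torsion filtration $T_0(G) \subset T_1(G) \subset G$ with pure subquotients, then the Harder--Narasimhan filtration of each pure piece, then the Jordan--H\"older filtration of each semistable piece. The point of this choice is that for these filtrations $\Ext^0$ between distinct graded pieces \emph{vanishes a priori} (torsion-free sheaves receive no maps from torsion sheaves; Hilbert polynomial ordering kills $\Hom$ between HN factors; nonzero maps between stable sheaves of the same slope are isomorphisms). That vanishing is what makes the spectral sequence argument work: each graded piece contributes at least a rank-$2$ block to $\Ext^1(G,G)$ (Lemma \ref{lem:rkatleast2}), so the rank-$2$ constraint forces $G$ to be pure and semistable. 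Only then does the Jordan--H\"older decomposition give $\chi(G,G) = \chi(\mathrm{gr}\,G, \mathrm{gr}\,G) = \sum_{s,t}\chi(G_s,G_t) \leq 0$ term by term, because stable sheaves $G_s$ of the same slope have $\Hom(G_s,G_t)$ of rank $0$ or $1$ and $\Ext^1(G_s,G_s)$ of rank $\geq 2$.

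With your nilpotent filtration $\mathrm{im}(\phi) \subseteq \ker(\phi) \subset H^i$ you have none of these tools: the graded pieces are not known to be pure, semistable, or simple, so $\chi(G_j,G_k)$ for $j \neq k$ is uncontrolled and $\chi(G_j,G_j)$ can be positive (for a non-simple $G_j$, $\chi(G_j,G_j) = 2\dim\End(G_j) - \dim\Ext^1(G_j,G_j)$ need not be $\leq 0$). Bounding them would require establishing exactly the kind of simplicity or stability statement the lemma is trying to prove, so the argument is circular as it stands. There is no general Bogomolov-type inequality that bounds $\chi(F,F)$ in terms of the existence of a nilpotent endomorphism; the inequality you need is special to semistable sheaves, and proving $H^i$ is semistable is the real content that your proposal is missing. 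The fix is to replace your ad hoc filtration with the torsion/HN/JH hierarchy as in the paper.
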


\begin{proof}
Since Serre duality identifies $\Ext^0(H^j,H^j)$ and $\Ext^2(H^j,H^j)$,   to complete the proof it is sufficient to show that the sheaves $H^j$ are simple.  Fix either one, and denote this by $G$. The coherent sheaf $G$ has a torsion filtration ($T_i(G)$ is the maximal subsheaf with $i$-dimensional support)
\begin{equation}
0 \subset  T_0(G) \subset T_1(G) \subset T_2(G) = G
\end{equation}
with $Q_i=T_i(G)/T_{i-1}(G)$ pure of dimension $i$.  Moreover, each of these pure subquotients $Q_i$ has a unique Harder-Narasimhan filtration
\begin{equation}
0 = HN_0(Q_i) \subset HN_1(Q_i) \subset \cdots \subset HN_l(Q_i) = Q_i
\end{equation}
such that the successive subquotients $R_j=HN_j(Q_i)/HN_{j-1}(Q_i)$ are semi-stable sheaves of pure dimension $i$.  The reduced or normalised Hilbert polynomials\footnote{i.e. the quotient of the usual Hilbert polynomials by their leading coefficients, cf. \cite[Chapter 1]{Huybrechts-Lehn}} satisfy $p(R_j) > p(R_k)$ if $j<k$, which implies $\Ext^0 (R_j,R_k)=0$ if $j<k$.  An analogous spectral sequence argument to that employed above then shows that the $\Ext^1$-groups of all semistable factors in these decompositions are subquotients of $\Ext^1(G,G)$ which has rank 2 by hypothesis, from which it follows that $G$ is of pure dimension and semistable.

Suppose for contradiction that $G$ is not simple.   Then $\Ext^0(G,G)$ has rank at least 2; by Serre duality this implies $\chi(G,G) > 0$. Any non-simple sheaf is not stable, hence has a Jordan-H\"older filtration
\begin{equation}
JH_0(G) \subset JH_1(G) \subset \cdots \subset JH_l(G) = G
\end{equation}
with the subquotients $JH_j/JH_{j-1}$ all stable sheaves of the same slope.  The associated graded object $gr(G)$ of this filtration is a sum of stable sheaves of the same slope $gr(G) = \oplus_{i=1}^l G_i$. Any non-zero morphism between stable sheaves of the same slope is an isomorphism, cf. p 55-56 of \cite{Simpson}, so the Euler characteristic $\chi(gr(G),gr(G))$ is a sum of terms $\chi(G_s, G_s) \leq 0$ (since a stable sheaf has $\Ext^0$ of rank 1 and $\Ext^1$ of rank at least 2) and $\chi(G_s, G_t) \leq 0$ (since $G_s \cong G_t$ or $\Ext^0(G_s,G_t)=0$). This gives a contradiction to $\chi(G,G)>0$. 
\end{proof}

\begin{cor}
In the case $\cE_{\Sigma}$ is a length two complex, with cohomology sheaves $H^0$ and $H^1$, then $\Ext^1(H^1,H^0)$ has rank one and $\cE_{\Sigma}$ is quasi-isomorphic to the associated extension. 
\end{cor}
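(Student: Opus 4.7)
The plan is to compute the relevant Ext dimensions via the spectral sequence \eqref{eqn:specseq}, and then invoke the classification of length two complexes up to quasi-isomorphism.

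Set $a_p = \dim \Ext^p(H^0, H^1)$ and $b_p = \dim \Ext^p(H^1, H^0)$. By Lemma \ref{lem:force-simple}, $\dim \Ext^\bullet(H^i, H^i) = (1,2,1)$ for each $i \in \{0,1\}$, and Serre duality on the abelian surface $E \times E$ (whose canonical bundle is trivial) gives $b_p = a_{2-p}$. Only two $d_2$ differentials can be nontrivial, namely
\begin{equation*} d_2 \co E_2^{0,1} \to E_2^{2,0} \quad\text{and}\quad d_2 \co E_2^{0,0} \to E_2^{2,-1}, \end{equation*}
and both are given by Yoneda product with the obstruction class $\epsilon \in \Ext^2(H^1, H^0)$ classifying the truncation triangle $H^0 \to \cE_\Sigma \to H^1[-1] \to H^0[1]$. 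No higher differentials act on the three columns $0 \leq p \leq 2$ and three rows $-1 \leq q \leq 1$ of the $E_2$ page, so $E_3 = E_\infty$.

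Next I would match the graded dimensions of $E_\infty$ along the diagonals $p + q = k$ against the Betti numbers $(1, 4, 1)$ of $\Sigma_2$. This produces a short system of linear relations forcing $a_2 = 0$, $a_0 = a_1 + 1$, and $\mathrm{rk}\,d_2 = a_1 + 1$ for both differentials; combined with the bounds $\mathrm{rk}\,d_2 \leq \min(\text{source},\text{target})$, the only possibilities left are $a_1 \in \{0, 1\}$. Pinning down $a_1 = 1$ (whence $b_1 = a_1 = 1$) is the main obstacle: one needs additional input, either a Yoneda product argument exploiting the block-diagonal structure of $E_2^{2,0} = \Ext^2(H^0, H^0) \oplus \Ext^2(H^1, H^1)$ to rule out the $a_1 = 0$ pattern, or a K-theoretic calculation comparing the Mukai vector of $\cE_\Sigma$ (determined via mirror symmetry by $[\Sigma] = [L_1] + [L_2]$) against the numerical invariants of the two candidate configurations.

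For the second assertion, once $\Ext^1(H^1, H^0)$ is known to be one-dimensional, its distinguished generator corresponds via the cone construction to an extension of sheaves $0 \to H^0 \to F \to H^1 \to 0$. The classifying datum of $\cE_\Sigma$ in $D^b(E \times E)$ is then, up to scalar, this unique generator, and we conclude that $\cE_\Sigma$ is quasi-isomorphic to the associated extension.
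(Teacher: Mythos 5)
Your spectral-sequence and Serre-duality bookkeeping is correct and fills a real gap that the paper's two-sentence proof elides: one does get $a_2=0$, $a_0=a_1+1$, and $\rk d_2=a_1+1$ on both differentials, and the numerics alone leave $a_1\in\{0,1\}$ open. However, you resolve the ambiguity in the wrong direction. The input you propose---the Yoneda-product form of $d_2$---forces $a_1=0$, not $a_1=1$: since $H^0$ and $H^1$ are simple (Lemma \ref{lem:force-simple}), $E_2^{0,0}=\Hom(H^0,H^0)\oplus\Hom(H^1,H^1)\cong\Lambda_{\bR}^{2}$ is spanned by the two identities, and $d_2\co E_2^{0,0}\to E_2^{2,-1}=\Ext^2(H^1,H^0)$ takes $(c_0\,\mathrm{id},c_1\,\mathrm{id})\mapsto\pm(c_0-c_1)\,\epsilon$, with $\epsilon\in\Ext^2(H^1,H^0)$ the class of the truncation triangle $H^0\to\cE_\Sigma\to H^1[-1]\to H^0[1]$. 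This has rank at most one, so $a_1+1=\rk d_2\leq 1$, giving $a_1=0$ and $a_0=b_2=1$. Your K-theoretic alternative cannot break the tie: $[\cE_\Sigma]=[H^0]-[H^1]$ only determines $\chi(H^0,H^1)=a_0-a_1+a_2$, which equals $1$ in both candidate patterns.

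So the sheaf-$\Ext$ group $\Ext^1(H^1,H^0)=b_1$ vanishes, and the group of rank one is $\Ext^2(H^1,H^0)$. That is what the Corollary is really asserting: the classifying datum of $\cE_\Sigma$ is the connecting map of the truncation triangle, which lives in $\Hom(H^1[-1],H^0[1])=\Ext^2(H^1,H^0)$---the Yoneda $\Ext^1$ between the shifted objects in the ``short exact sequence $H^0\to(P^0\to P^1)\to H^1$'' invoked in the paper's proof. With that degree convention everything is consistent, including the later deduction that Mukai's shift is $m=0$, which requires $\Ext^2(H^1,H^0)\neq 0$ since $\Ext^*(\cO_p,\cO)$ is concentrated in degree $2$. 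Note also that your final step, producing a sheaf extension $0\to H^0\to F\to H^1\to 0$ from a class in $\Ext^1(H^1,H^0)$, would make $\cE_\Sigma$ quasi-isomorphic to a \emph{sheaf}, contradicting the standing assumption that it is a genuine length-two complex; the correct conclusion is that $\cE_\Sigma$ is the cone on the unique (up to scale) nonzero morphism $H^1[-2]\to H^0$.
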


\begin{proof}
The first statement follows from the spectral sequence \ref{eqn:specseq} and the fact that each of the $H^i$ has Ext-algebra the cohomology of a 2-torus.  The second statement is then automatic: the inclusion of the kernel $H^0$ and projection to the cokernel $H^1$ define a short exact sequence $H^0 \rightarrow (P^0 \rightarrow P^1) \rightarrow H^1$ which realises the desired quasi-isomorphism.
\end{proof}

The proof is now divided into cases, depending on the output of the previous discussion.

\subsection{$\cE_{\Sigma}$ is quasi-isomorphic to a length two complex} 

We use the following striking theorem of Mukai \cite[Proposition 4.11]{Mukai2}, proved using his theory of semi-homogeneous sheaves. Let $A$ be an abelian variety over an algebraically closed field $k=\bar{k}$ of characteristic zero.  Recall that a necessary and sufficient condition for a sheaf $P$ on $A$ to be semi-homogeneous is $\rk_{k}\Ext^1(P,P)=2$.

\begin{thm}[Mukai] \label{thm:Mukai}
Suppose $P^1$ and $P^2$ are simple semi-homogeneous sheaves and $\chi(P^1,P^2)=\pm 1$.  There is an abelian variety $A'$ defined over $k$, with a distinguished point $p\in A'$ (the origin in the group structure), an integer $m$, and a derived equivalence $D^b(A') \rightarrow D^b(A)$ under which $\cO_p \mapsto P^1$ and $\cO[m] \mapsto P^2$: on $A'$ the two sheaves arise as the skyscraper sheaf of the point $p$ and the structure sheaf (up to shift by $m$).
\end{thm}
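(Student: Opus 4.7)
The plan is to construct $A'$ as a moduli space of simple semi-homogeneous sheaves on $A$ with the same numerical class as $P^1$, to use the resulting universal family as the kernel of a Fourier--Mukai transform, and then to normalise by an elementary auto-equivalence.

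Fix the Mukai vector $v$ of $P^1$. Mukai's theory of semi-homogeneous sheaves asserts that isomorphism classes of simple semi-homogeneous sheaves of type $v$ are naturally parametrised by an abelian variety $A'$, and that (perhaps after passing to a finite \'etale cover, which we suppress) $A \times A'$ carries a universal such sheaf $\cU$. The associated Fourier--Mukai transform $\Phi := \Phi_{\cU} \co D^b(A') \to D^b(A)$ satisfies $\Phi(\cO_{[P]}) \cong P$ for every closed point $[P] \in A'$; in particular the point $p \in A'$ representing $P^1$ satisfies $\Phi(\cO_p) \cong P^1$. The key technical input, also due to Mukai, is that $\Phi$ is actually a derived equivalence; this uses that the Mukai vector $v$ is primitive isotropic in an appropriate sense.

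Now consider $\Phi^{-1}(P^2) \in D^b(A')$. Fourier--Mukai equivalences of this type preserve simplicity and semi-homogeneity up to shift, so $\Phi^{-1}(P^2) \cong F[m]$ for a simple semi-homogeneous sheaf $F$ on $A'$ and some integer $m$. The Euler pairing is preserved by the equivalence, giving
\[ \pm 1 \ = \ \chi(P^1, P^2) \ = \ \chi\bigl(\cO_p, F[m]\bigr) \ = \ (-1)^m \chi(\cO_p, F). \]
On the abelian surface $A'$ one has $\chi(\cO_p, F) = \rk(F)$ by Serre duality (since $\chi(F, \cO_p) = \rk(F)$ for $F$ locally free, and the twist $(-1)^g$ equals $+1$ when $g = 2$), so $\rk(F) = \pm 1$ and $F$ is forced to be a line bundle $L$ on $A'$.

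To conclude, replace $\Phi$ by $\Phi' := \Phi \circ T_L$, where $T_L$ denotes the auto-equivalence of $D^b(A')$ given by tensoring with $L$. Then $\Phi'(\cO_p) = \Phi(\cO_p \otimes L) = \Phi(\cO_p) = P^1$, since any skyscraper sheaf is unchanged by tensoring with a line bundle; and $\Phi'(\cO_{A'}[m]) = \Phi(L[m]) = P^2$, which gives the desired pair of identifications. The principal obstacle is the very first step: constructing the moduli space $A'$ and verifying that $\Phi_{\cU}$ is a derived equivalence. This is the technical heart of Mukai's analysis and is proved via a careful study of the deformation theory and Fourier--Mukai duality for semi-homogeneous sheaves on abelian varieties; the remaining steps are essentially formal manipulations that exploit Serre duality on $A'$ and the standard auto-equivalences generated by line-bundle twists.
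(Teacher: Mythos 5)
The paper does not actually prove this theorem: it is quoted verbatim as Mukai's Proposition 4.11 from \cite{Mukai2} (``proved using his theory of semi-homogeneous sheaves''), so there is no in-paper argument to compare against. Your sketch is a reasonable reconstruction of Mukai's strategy, and the formal manipulations you carry out are correct. The moduli interpretation of $A'$, the Fourier--Mukai transform built from the universal sheaf, the normalisation $\chi(\cO_p,F)=\pm 1 \Rightarrow \rk F = 1$ forcing $F$ to be a line bundle (any rank-one torsion-free sheaf $\cI_Z\otimes L$ with $Z\neq\emptyset$ fails to be semi-homogeneous, since its $\operatorname{Ext}^1$ jumps), and the final twist by $L^{-1}$ to turn $L$ into $\cO_{A'}$ while fixing $\cO_p$, are all sound. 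One small imprecision: your Serre duality step pins the sign to $(-1)^{\dim A'}$ with $\dim A' = 2$, but the theorem is stated for an arbitrary abelian variety $A$; this is harmless since $|\rk F| = 1$ together with non-negativity of rank already gives $\rk F = 1$ in any dimension. You correctly flag the genuine technical content — that the universal sheaf $\cU$ on $A\times A'$ is a Fourier--Mukai kernel inducing an equivalence, and that the transform carries simple semi-homogeneous sheaves to shifts of simple semi-homogeneous sheaves (the WIT-type argument) — as the part that really requires Mukai's detailed analysis, which the paper, reasonably, treats as a black box.
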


Next we recall Orlov's theorem \cite{Orlov:abelian} that if $A$ and $A'$ are derived equivalent abelian varieties over $k$ then $A\times \widehat{A} \cong A' \times \widehat{A'}$, where $\widehat{\cdot}$ denotes the dual Abelian variety.  For us, $A=E\times E \cong \widehat{A}$ and we deduce that $A' \hookrightarrow E^4$ embeds in a self-product of the Tate curve.

\begin{lem}
The endomorphism ring $\End(E)$ is $\bZ$, i.e. the Tate curve does not have complex multiplication; its only self homomorphisms come from the group structure $(a \mapsto na)_{n\in\bZ}$.
\end{lem}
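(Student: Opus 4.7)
Plan: The cleanest approach is to invoke the classical fact that a complex-multiplication elliptic curve in characteristic zero has potential good reduction at every non-archimedean place (Serre--Tate), and to rule this out for the Tate curve by computing its $j$-invariant.

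First I would compute $j(E)$ from the given Weierstrass equation.  A direct computation (the classical Tate expansion) yields
\[ j(E) = q^{-1} + 744 + 196884\,q + \cdots \]
so $\sigma(j(E)) = -1$.  This valuation is invariant under any finite extension of the base field, so $E$ has multiplicative reduction with no potential good reduction.

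Second I would invoke the structure theorem: the endomorphism algebra $\End(E)\otimes\bQ$ of an elliptic curve over a field of characteristic zero is either $\bQ$ or an imaginary quadratic field.  In the latter case Serre--Tate provides potential good reduction of $E$, contradicting the previous paragraph.  Therefore $\End(E)\otimes\bQ = \bQ$, and since $\End(E)$ is torsion-free and contains $\bZ$ via the multiplication-by-integer endomorphisms, we conclude $\End(E)=\bZ$.

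The main---modest---obstacle is to verify Serre--Tate in the non-standard setting of $\Lambda_{\bR}$, whose value group is $\bR$ rather than $\bZ$; this is readily handled by descending to the Tate curve over the complete discretely valued subfield $\bC((q))$ and noting that the $j$-invariant computation is insensitive to the subsequent base extension.  A fully self-contained alternative uses the analytic uniformisation $\pi \colon \Lambda_{\bR}^* \to E$: after composing with a translation, an endomorphism fixes the origin and (by rigidity of morphisms of abelian varieties) is an analytic group homomorphism, which lifts to an analytic group endomorphism $\tilde\phi \colon \Lambda_{\bR}^* \to \Lambda_{\bR}^*$ of the multiplicative group.  Expanding $\tilde\phi$ as a Laurent series convergent on any annulus and imposing the functional equation $\tilde\phi(zw)=\tilde\phi(z)\tilde\phi(w)$ forces all but one coefficient to vanish, so $\tilde\phi(z) = z^n$ for some $n\in\bZ$, and descending to $E$ gives the multiplication-by-$n$ map.
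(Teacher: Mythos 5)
The paper offers no proof at all, merely citing \cite[Chapter~5]{FresnelPut}, so any complete argument you give is already more than the text provides. Both of your routes are correct. Your second argument — lifting an origin-preserving endomorphism through the analytic uniformisation $\Lambda_{\bR}^* \to \Lambda_{\bR}^*/\langle q^A\rangle$, noting it must be a group homomorphism by rigidity, and then observing that a convergent Laurent series $f$ satisfying $f(zw)=f(z)f(w)$ has at most one nonzero coefficient and so equals $z\mapsto z^n$ — is essentially the rigid-analytic proof in the cited reference, so this part lines up with the paper's intent.

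Your first route (via $j$-invariant and potential good reduction) is a genuinely different argument and also works, but can be streamlined in a way that avoids the worry you raise about the value group of $\Lambda_{\bR}$ being $\bR$ rather than $\bZ$. The valuation on $\Lambda_{\bR}$ restricts to the trivial valuation on $\bC \subset \Lambda_{\bR}$, hence on $\bar\bQ$. If $E$ had CM, its $j$-invariant would be an algebraic number (indeed an algebraic integer: the set of $j$-invariants of curves with CM by a fixed order is a finite, Galois-stable set over $\bar\bQ$), and therefore would have valuation $0$. Since $j(E) = q^{-1} + 744 + \cdots$ has valuation $-A < 0$, this is a contradiction, without invoking Serre--Tate or worrying about descent to $\bC((q))$. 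Either way, you then conclude $\End(E)\otimes\bQ=\bQ$ and, using torsion-freeness of $\End(E)$ together with the fact that it contains $\bZ$, that $\End(E)=\bZ$. Both of your arguments are sound; the uniformisation one is the self-contained proof the paper is implicitly relying on, and the $j$-invariant one is a nice alternative that exploits characteristic zero more directly.
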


This is standard; for a proof see \cite[Chapter 5]{FresnelPut}.   Before continuing, recall that if $W$ is an abelian variety then a choice of polarisation on $W$ defines a canonical involution $\iota: \End_{\bQ}(W) \rightarrow \End_{\bQ}(W)$, called the Rosati involution.  Using the polarisation $H$ to identify $\phi_H: W \mapsto \widehat{W}$, the Rosati involution takes an endomorphism $f$ to $(\phi_H)^{-1}\widehat{f}\circ \phi_H$, where we use the fact that $\phi_H$ is invertible over $\bQ$.  We will fix a principal polarisation on $E$ coming from its description as a projective algebraic curve given in Definition \ref{defin:Tate}.

\begin{lem}
The only elliptic curve $E'$ which embeds in $E\times E$ is isomorphic to $E$ itself.
\end{lem}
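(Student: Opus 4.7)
The plan is to use $\End(E) = \bZ$ to reduce any embedding $E' \hookrightarrow E \times E$ to the form $x \mapsto (nx, mx)$ for a pair of coprime integers, and to observe that the image of such a map is manifestly isomorphic to $E$.

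First I would reduce to the case where the embedding $\iota \co E' \hookrightarrow E \times E$ is a homomorphism of abelian varieties. This is automatic after post-composing with translation by $-\iota(0_{E'})$, since by the rigidity lemma any morphism of abelian varieties sending the identity to the identity is a group homomorphism, and translations of $E \times E$ preserve the isomorphism class of the image subvariety.

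Next I would consider the projections $\alpha = \pi_1 \circ \iota$ and $\beta = \pi_2 \circ \iota$, both homomorphisms $E' \to E$. If one of these, say $\beta$, vanishes, then $\iota(E') \subset E \times \{0\}$, and comparing dimensions of irreducible varieties immediately gives $E' \cong E$. Otherwise both $\alpha$ and $\beta$ are nonzero homomorphisms of elliptic curves, hence surjective isogenies. Taking the dual isogeny $\hat{\alpha} \co E \to E'$ (so that $\alpha \circ \hat{\alpha} = [n]_E$ for some positive integer $n$), one obtains a composition
\[
E \xrightarrow{\hat{\alpha}} E' \xrightarrow{\iota} E \times E, \qquad x \mapsto \bigl(nx,\, (\beta \circ \hat{\alpha})(x)\bigr),
\]
whose image coincides with $\iota(E')$, since $\hat{\alpha}$ is surjective. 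The crucial input appears here: because $\End(E) = \bZ$, the endomorphism $\beta \circ \hat{\alpha}$ of $E$ must be $[m]_E$ for some integer $m$, so the composite takes the standard form $x \mapsto (nx, mx)$.

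Finally, writing $d = \gcd(n,m)$, the map $x \mapsto (nx, mx)$ factors through multiplication by $d$ on $E$, which is a surjective isogeny, so its image equals that of $x \mapsto ((n/d)x, (m/d)x)$; this latter map is injective because its kernel is $E[n/d] \cap E[m/d] = E[1] = 0$. Hence $\iota(E') \cong E$, as required. The main obstacle is conceptual rather than computational: without the hypothesis $\End(E) = \bZ$, the coordinate $\beta \circ \hat{\alpha}$ could be a nontrivial endomorphism of $E$, and the image could be a genuinely non-isomorphic isogenous elliptic curve. The absence of complex multiplication on the Tate curve, established in the preceding lemma, is precisely what rules this out.
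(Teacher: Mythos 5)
Your proof is correct but takes a genuinely different route from the paper's. The paper invokes a theorem of Birkenhake and Lange which parametrises abelian subvarieties of a polarised abelian variety by Rosati-symmetric idempotents in the rational endomorphism algebra; since $\End(E)=\bZ$, one identifies $\End_{\bQ}(E\times E)$ with $M_2(\bQ)$ and the Rosati involution with transposition, and then observes that all symmetric rank-one idempotents are orthogonally conjugate, so the associated subvarieties are isomorphic. Your argument avoids this machinery: after translating $\iota$ to a homomorphism, you precompose with the dual isogeny $\hat\alpha$ so the first coordinate becomes $[n]$, invoke $\End(E)=\bZ$ (the crucial hypothesis, used at essentially the parallel moment in both proofs) to identify the second coordinate with $[m]$, and then clear out $d=\gcd(n,m)$ to obtain an injective map $x \mapsto ((n/d)x,(m/d)x)$ onto $\iota(E')$, using $E[a]\cap E[b]=E[\gcd(a,b)]$. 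Your version is more elementary and self-contained, and in effect reconstructs directly the explicit parametrisation of one-dimensional abelian subvarieties that the paper only records afterwards in a Remark. What the paper's idempotent framework buys is uniformity: it is reused verbatim in the subsequent Lemma to classify abelian surfaces inside $E^4$, where a hands-on projection-and-dual-isogeny argument like yours would be substantially more awkward to run.
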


\begin{proof}
According to a theorem of Birkenhake and Lange \cite{BL}, there is a bijective correspondence between abelian subvarieties of a polarised abelian variety $W$ and idempotents in the rational endomorphism algebra $\End_{\bQ}(W)$ symmetric with respect to the Rosati involution defined by the polarisation. Since $E$ is simple, if $W=E^2$ the rational endomorphism algebra can be canonically identified with the matrix algebra $M_2(\bQ)$ such that the Rosati involution takes a matrix to its transpose.  Idempotent elements have eigenvalues $0,1$ and the symmetry condition forces the eigenspaces to be orthogonal with respect to the standard diagonal inner product on $\bQ^2$.  For one-dimensional abelian subvarieties we exclude the idempotents $0$ and $1$.  We are therefore looking for elements of $M_2(\bQ)$ orthogonally conjugate to the matrices 
\[
\left( \begin{array}{cc} 1& 0 \\ 0 & 0\end{array} \right) \qquad  \left( \begin{array}{cc} 0& 0 \\ 0 & 1 \end{array} \right)
\]
 The  subvariety $A_{\epsilon}$ associated to an idempotent $\epsilon$ is given by the image of $n\epsilon$ for any $n\in\bZ$ sufficiently large that $n\epsilon$ is a genuine rather than rational endomorphism.  It follows that subvarieties associated to conjugate matrices are isomorphic, from which the result follows.
 \end{proof}
 
 \begin{rem} In this dimension, the symmetric idempotents are explicitly classified, giving the matrices:
\[
\frac{1}{1+r^2}\left( \begin{array}{cc} 1 & r \\ r & r^2 \end{array} \right) \qquad \frac{1}{1+r^2} \left( \begin{array}{cc} r^2 & -r \\ -r & 1 \end{array}\right)
\]
for $r=p/q \in \bQ$.   Clearing denominators, the subvarieties associated to these idempotents are just the images of the matrices $\left( \begin{array}{cc} q^2 & pq \\ pq & p^2 \end{array} \right)$ respectively $\left( \begin{array}{cc} p^2 & -pq \\ -pq & q^2 \end{array} \right)$.  Hence, these images are defined algebraically by the equations $\{(a,b) \in E\times E \, | \, pa= qb\}$ and $\{(a,b) \in E\times E \, | \, qa+pb = 0\}$ for $p/q \in \bQ$. Taking $p$ and $q$ coprime confirms directly that any such is the isomorphic image of $E$ under a homomorphism $x \mapsto (qx,  px)$ respectively $x \mapsto (px,-qx)$.
\end{rem}

\begin{lem}
The only abelian surface which embeds in $E^4$ is $E\times E$.
\end{lem}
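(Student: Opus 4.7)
The plan is to mirror the structure of the preceding lemma.  Since $\End(E) = \bZ$, the rational endomorphism algebra of $E^4$ is $M_4(\bQ)$, and the Rosati involution associated to the product of principal polarizations is the transpose.  The Birkenhake--Lange theorem therefore puts two-dimensional abelian subvarieties of $E^4$ in bijection with rank-two symmetric idempotents $P \in M_4(\bQ)$, i.e.\ orthogonal projections onto two-dimensional rational subspaces $V \subset \bQ^4$.  The corresponding subvariety $A$ is the image of $nP \co E^4 \to E^4$ for any positive integer $n$ clearing the denominators of $P$.

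The next step is to produce an explicit isomorphism $\phi \co E \times E \to A$.  I would choose a $\bZ$-basis $(v_1, v_2)$ of the saturated rank-two sublattice $V \cap \bZ^4 \subset \bZ^4$ and define
\begin{equation*}
\phi(a,b) = a\,v_1 + b\,v_2.
\end{equation*}
Because the columns of $nP$ all lie in $V \cap \bZ^4 = \bZ v_1 \oplus \bZ v_2$, the image of $nP$ is contained in $\phi(E \times E)$; since both are connected subgroups of $E^4$ of dimension two, they must coincide with $A$.

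The main technical point, and the only piece of the argument that requires genuine work, is injectivity of $\phi$.  Writing $v_1 = (\alpha_i)_{i=1}^{4}$ and $v_2 = (\beta_i)_{i=1}^{4}$, any $(a,b) \in \ker \phi$ satisfies $\alpha_i a + \beta_i b = 0$ in $E$ for each $i$.  Forming the combinations $\beta_j (\alpha_i a + \beta_i b) - \beta_i (\alpha_j a + \beta_j b)$, together with the analogous ones with the roles of $\alpha$ and $\beta$ exchanged, one deduces that every $2 \times 2$ Pl\"ucker minor $m_{ij} = \alpha_i \beta_j - \alpha_j \beta_i$ annihilates both $a$ and $b$.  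The key classical input, which comes from Smith normal form applied to the $4 \times 2$ integer matrix $(v_1 \mid v_2)$, is that $(v_1, v_2)$ generating a saturated sublattice of $\bZ^4$ is equivalent to the minors $\{m_{ij}\}$ having greatest common divisor $1$; some $\bZ$-linear combination of them therefore equals $1$, which forces $a = b = 0$.

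Finally, any embedding of an abelian surface $S$ into $E^4$ becomes, after the translation sending the origin of $S$ to the origin of $E^4$, a homomorphism onto a two-dimensional abelian subvariety of $E^4$.  The preceding paragraphs then identify $S$ with $E \times E$.
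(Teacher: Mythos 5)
Your proof is correct, and although it starts from the same Birkenhake--Lange reduction (rational endomorphism algebra $M_4(\bQ)$, Rosati involution $=$ transpose, two-dimensional abelian subvarieties $\leftrightarrow$ rank-two symmetric idempotents), it finishes the argument by a genuinely different route. The paper observes that every such idempotent is orthogonally conjugate to $\mathrm{diag}(1,1,0,0)$, whose subvariety is visibly $E\times E$, and then asserts that conjugate idempotents give isomorphic subvarieties. That last step is terser than it looks: a rational orthogonal conjugacy only gives, after clearing denominators, an isogeny between the associated subvarieties, so the isomorphism requires the extra observation that the subvariety attached to a projection onto a rational subspace $V$ is canonically $E\otimes_\bZ(V\cap\bZ^4)$, which is abstractly $E^2$ because $V\cap\bZ^4$ is free of rank two. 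Your argument makes exactly this explicit: you pick a $\bZ$-basis of the saturated lattice $V\cap\bZ^4$, write down the map $\phi\co E\times E\to E^4$, check surjectivity onto $A$ by a dimension count, and check injectivity via the gcd of the $2\times2$ minors (equivalent, through Smith normal form, to saturation of the lattice). This is more work but also more self-contained, and in particular it dispenses with the need to justify the ``conjugate implies isomorphic'' step. One small stylistic remark: the Plücker-minor kernel computation is a clean trick, but you could shortcut it by invoking directly that a saturated rank-two sublattice of $\bZ^4$ is a direct summand, so the inclusion $\bZ^2\cong V\cap\bZ^4\hookrightarrow\bZ^4$ splits and hence $\phi$ is a split injection of abelian varieties.
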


\begin{proof}
Arguing as above, the result reduces to classifying the matrices with eigenvalues $0,1$ and eigenspaces orthogonal with respect to the standard form on $\bQ^4$.  These are all orthogonal conjugates in $M_4(\bQ)$ of the matrices whose only non-zero entries lie on the diagonal and all of which equal one.  The 2-dimensional subvarieties come from matrices conjugate to those with two non-zero diagonal entries.  The subvarieties associated to these diagonal matrices are obviously isomorphic to $E\times E$, hence so are those associated to the conjugates.
\end{proof}

In the case in which $\cE_{\Sigma}$ is quasi-isomorphic to a length two complex $(P^0 \rightarrow P^1)$, the proof is now completed as follows.  The Euler characteristics $\chi(H^i, H^i)$ for the two cohomology sheaves of the complex both vanish, so $\chi(\cE_{\Sigma}, \cE_{\Sigma}) = -2$ implies that $\{H^0, H^1\}$ satisfy the conditions of Mukai's theorem.  Moreover, since $\Ext^1(H^1,H^0)\neq 0$, the shift indeterminacy in that theorem is pinned down: $m=0$.  We quote from the literature:

\begin{enumerate}
\item Orlov's theorem \cite{Orlov:abelian} asserting that $Auteq(D^b(A))$ is a semi-direct product of the Hodge isometry group $U(A\times \widehat{A})$ with the subgroup $\bZ\times A\times \widehat{A}$ generated by shift, translations and tensoring by elements of $\Pic^0(A)=\widehat{A}$;
\item Polishchuk's identification $U(E^k\times\widehat{E^k}) \cong Sp_{2k}(\bZ)$ of the Hodge isometry group for self-products of an elliptic curve without complex multiplication \cite{Pol:weil}.
\end{enumerate}

We therefore see that some autoequivalence of the abelian surface $E\times E$  takes $\cE_{\Sigma}$ to the extension of  the structure sheaf by the skyscraper sheaf of a point.  Polishchuk's result implies that this autoequivalence can be realised geometrically by a symplectomorphism of the mirror torus $(T^4,\omega_{std})$.  It follows that, after applying a linear symplectomorphism and translation, the Lagrangian surface $\Sigma_2$ is isomorphic in the Fukaya category $\sF(T^4)$ to the cone defined by the intersection point of $L_f$ and $L_s$.  


\subsection{$\cE_{\Sigma}$ is quasi-isomorphic to a sheaf}

Having constructed our mirror map following Lemma \ref{lem:hits_fibres} so that $\cE_{\Sigma}$ has rank $1$, we consider two cases. Suppose first that $\cE_{\Sigma}$ is torsion-free, i.e. $T_0(\cE_{\Sigma}) = T_1(\cE_{\Sigma}) = 0$, so $\cE_{\Sigma}$ is pure of dimension 2.

\begin{lem}
If it is pure of dimension 2, $\cE_{\Sigma}$ is the kernel of a map from a line bundle of degree zero to  the skyscraper sheaf of a point.
\end{lem}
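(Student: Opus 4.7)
The plan is to combine the structure theory of pure rank-one sheaves on smooth surfaces with Chern-character bookkeeping through the mirror dictionary.

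Since $E\times E$ is a smooth projective surface and $\cE_{\Sigma}$ is pure of dimension two and of rank one, its reflexive hull $L := \cE_{\Sigma}^{**}$ is automatically a line bundle, and the natural inclusion $\cE_{\Sigma}\hookrightarrow L$ has cokernel $Q$ of finite length supported in dimension zero, yielding
\[ 0 \to \cE_{\Sigma} \to L \to Q \to 0. \]
It suffices therefore to show that $c_1(L) = 0$ and that $\mathrm{length}(Q) = 1$: for then $L \in \Pic^{0}(E\times E)$, and any length-one sheaf of finite support on a smooth variety is the skyscraper sheaf $\cO_p$ of a single reduced point $p$, so the sequence exhibits $\cE_{\Sigma}$ as the kernel of a map $L \to \cO_p$.

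The length of $Q$ is forced by a Mukai-pairing calculation that is insensitive to $c_1(L)$. Any torsion-free rank-one sheaf $F = L'\otimes I_Z$ on an abelian surface has Mukai vector $v(F) = (1,\, c_1(L'),\, c_1(L')^2/2 - \ell)$ with $\ell := \mathrm{length}(Z)$, and a direct computation of the self-pairing gives $\chi(F,F) = -2\ell$. Applied to $\cE_{\Sigma}$, whose self-Ext algebra has Euler characteristic $\chi(H^*(\Sigma;\Lambda_{\bR})) = -2$, this forces $\ell = 1$, so $Q$ is concentrated at a single reduced point. For the vanishing of $c_1(L)$, the derived equivalence of Theorem \ref{thm:hms} induces (via the Chern character on the algebraic side) an isomorphism $H_{\mathrm{ev}}(T^4;\Lambda_{\bR})\cong H^{\mathrm{ev}}(E\times E;\Lambda_{\bR})$ under which $[L_s]\mapsto \mathrm{ch}(\cO) = (1,0,0)$ and $[L_f]\mapsto \mathrm{ch}(\cO_p) = (0,0,1)$. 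Both images have vanishing $H^2$-component, so by linearity the image of $[\Sigma] = [L_f]+[L_s]$ (Lemma \ref{lem:homologyoftori}) also has vanishing $H^2$-component; this image is $\mathrm{ch}(\cE_{\Sigma})$, so $c_1(\cE_{\Sigma}) = c_1(L) = 0$, and the proof is complete.

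The main obstacle is precisely tracking the first Chern class through the equivalence: the rank and the length of $Q$ are pinned down easily by Mukai-pairing arithmetic, but to conclude $c_1(L) = 0$ one needs the compatibility of the derived equivalence with K-theory and the Chern character in the expected way. Should one wish to avoid appealing to this general compatibility, $c_1(L)$ can be pinned down geometrically by pairing $\cE_{\Sigma}$ against mirrors of additional linear Lagrangian tori whose homology classes, together with $[L_f]$ and $[L_s]$, span $H_2(T^4;\bQ)$; the relevant Euler characteristics read off $c_1(\cE_{\Sigma})$ via Riemann--Roch, and are just linear intersection numbers with $\Sigma$ that are determined by the decomposition $[\Sigma] = [L_f]+[L_s]$.
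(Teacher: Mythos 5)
Your argument uses the same skeleton as the paper's -- pass to the reflexive hull $L = \cE_{\Sigma}^{**}$, which is a line bundle since $E\times E$ is a smooth surface, and analyse the finite-length quotient $Q$ -- but the numerology that pins down $\mathrm{length}(Q)=1$ is packaged differently. The paper observes that a sheaf of the form $L'\otimes I_Z$ moves in a $(2+2\ell)$-dimensional family, and then equates this with $\rk\,\Ext^1(\cE_{\Sigma},\cE_{\Sigma})=4$; you instead compute the Mukai self-pairing $\chi(F,F)=-2\ell$ for any torsion-free rank-one sheaf and match it against $\chi(H^*(\Sigma_2))=-2$. These are the same computation in thin disguise (since $\Ext^0 = \Ext^2 = \Lambda_{\bR}$ for a simple sheaf, $\dim\Ext^1 = 2 - \chi$), but your version is a little cleaner: it sidesteps any appeal to deformation theory being unobstructed or the moduli space having the "expected" dimension, and it makes transparent that the answer does not depend on $c_1(L)$. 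For the vanishing of $c_1$, the paper simply asserts "we know $c_1(\cE_{\Sigma})=0$" with no elaboration; you correctly identify this as the point requiring compatibility of the mirror equivalence with the Chern character, and your fallback -- recovering $\operatorname{ch}(\cE_{\Sigma})$ from Euler pairings against a spanning collection of mirrors of linear Lagrangian tori, which are just signed intersection numbers and hence read off from $[\Sigma]=[L_f]+[L_s]$ -- is the right way to make this precise, since the Mukai pairing on $H^{\mathrm{ev}}(E\times E)$ is non-degenerate. So: correct, essentially the same route, with a tidier treatment of both the length bound and the first Chern class.
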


\begin{proof}
The double dual $(\cE_{\Sigma})^{\vee\vee}$ is reflexive, hence (on a surface) a line bundle, and there is a short exact sequence of sheaves

\begin{equation}
0 \rightarrow \cE_{\Sigma} \rightarrow (\cE_{\Sigma})^{\vee\vee} \rightarrow \cS \rightarrow 0
\end{equation}
where $\cS$ is of dimension zero, so is supported on a finite set.  For any point $p\in E\times E$ not in the support of $\cS$, since $(\cE_{\Sigma})^{\vee\vee}$ is locally free, the exact sequence of Ext's associated to this short exact sequence implies that $\Ext^*(\cE_{\Sigma}, \cO_p)$ is concentrated in degree 0.   Any such sheaf on a surface is $\cI_Z \otimes F$ for $\cI_Z$ the ideal sheaf of a zero-dimensional subscheme and $F$ a line bundle \cite{Huybrechts-Lehn}. Such a sheaf moves in a $2+2$length$(Z)$-dimensional family, where length$(Z) = \chi(\cO_Z, \cO_Z)$, so $\Ext^1(\cE_{\Sigma}, \cE_{\Sigma})$ being of rank 4 implies that $Z$ consists of a single reduced point.  Finally, the line bundle must have degree zero since we know $c_1(\cE_{\Sigma}) = 0$.
\end{proof}

Applying an autoequivalence of $D^b(E\times E)$ coming from tensoring by the inverse line bundle, we can assume without loss of generality that the line bundle in the conclusion of the previous Lemma is actually trivial.  It follows that $\cE_{\Sigma}$ is quasi-isomorphic to the length two complex $\cO \rightarrow \cO_p$;  translating back to the genus two surface, these sheaves are the mirrors of $L_s$ and $L_f$ respectively.

The final case to consider is where $\cE_{\Sigma}$ is a sheaf which is not pure of dimension 2, hence has a non-trivial torsion subsheaf $T_1(\cE_{\Sigma}) \subsetneq \cE_{\Sigma}$ yielding a short exact sequence
\begin{equation}\label{eqn:ses-use-mukai}
0 \rightarrow T_1(\cE_{\Sigma}) \rightarrow \cE_{\Sigma} \rightarrow \cS \rightarrow 0
\end{equation}
with $\cS$ torsion-free.  Rank considerations imply that  $\cS\neq 0$, so $\cE_{\Sigma}$ is defined by an extension class in $\Ext^1(\cS,T_1(\cE_{\Sigma}))$. 
The following result is due to Mukai \cite[Corollary 2.8]{Mukai:tata}, cf. also \cite[Lemma 2.7]{Huybrechts-Macri-Stellari}.

\begin{lem}
Let $0 \rightarrow A \rightarrow E \rightarrow B \rightarrow 0$ be an exact sequence of sheaves on a Calabi-Yau surface. If  $\Ext^0(A,B) = 0$  then 
\[
\rk_k \Ext^1(E,E) \geq \rk_k \Ext^1(A,A) + \rk_k \Ext^1 (B,B).
\]
\end{lem}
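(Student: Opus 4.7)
The plan is to extract the desired bound from the long exact sequence obtained by applying $\Hom(E,-)$ to the given short exact sequence:
\[ \Hom(E,B) \xrightarrow{\delta_0} \Ext^1(E,A) \xrightarrow{\sigma} \Ext^1(E,E) \xrightarrow{\rho} \Ext^1(E,B) \xrightarrow{\delta_1} \Ext^2(E,A). \]
Exactness yields the identity
\[ \dim \Ext^1(E,E) \;=\; \bigl(\dim \Ext^1(E,A) - \dim \operatorname{im}\delta_0\bigr) \;+\; \bigl(\dim \Ext^1(E,B) - \dim \operatorname{im}\delta_1\bigr), \]
and the aim is to bound the two parentheses below by $\dim \Ext^1(A,A)$ and $\dim \Ext^1(B,B)$ respectively.

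The auxiliary inputs come from the two ``vertical'' Ext sequences. Since the surface is Calabi--Yau, Serre duality converts the hypothesis $\Hom(A,B)=0$ into $\Ext^2(B,A)=0$. Applying $\Hom(-,A)$ and using this latter vanishing gives a surjection $\iota^{*} \co \Ext^1(E,A) \twoheadrightarrow \Ext^1(A,A)$ induced by $\iota \co A\hookrightarrow E$; applying $\Hom(-,B)$ and using $\Hom(A,B)=0$ gives an injection $\pi^{*} \co \Ext^1(B,B) \hookrightarrow \Ext^1(E,B)$ induced by $\pi \co E\twoheadrightarrow B$.

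The core step is the pair of Yoneda computations which exploit the fact that both connecting maps $\delta_0, \delta_1$ are given by composition with the extension class $e \in \Ext^1(B,A)$: namely (i) $\operatorname{im}\delta_0 \subseteq \ker \iota^{*}$, because $\iota^{*}(\delta_0 \phi) = (e\circ\phi)\circ\iota = e\circ(\phi\circ\iota)$ and $\phi\circ\iota \in \Hom(A,B)=0$; and (ii) $\operatorname{im} \pi^{*} \subseteq \ker \delta_1$, because $\delta_1(\beta\circ\pi) = (e[1]\circ\beta)\circ\pi$ with $e[1]\circ\beta \in \Ext^2(B,A)=0$. Combined with the surjectivity of $\iota^{*}$ and the injectivity of $\pi^{*}$, these two statements give
\[ \dim \operatorname{im}\delta_0 \leq \dim \Ext^1(E,A) - \dim \Ext^1(A,A), \qquad \dim \operatorname{im}\delta_1 \leq \dim \Ext^1(E,B) - \dim \Ext^1(B,B), \]
and substituting into the displayed identity yields the claimed inequality.

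The only real obstacle is recognising the two connecting homomorphisms $\delta_0, \delta_1$ as Yoneda multiplication by $e$, after which the proof reduces to tracking a pair of composites that collapse because of the twin vanishings $\Hom(A,B) = \Ext^2(B,A) = 0$. No further geometric input is required.
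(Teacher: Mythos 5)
The paper does not supply a proof of this lemma; it only cites Mukai and Huybrechts--Macri--Stellari. Your argument is a correct, self-contained proof. The horizontal long exact sequence from $\Hom(E,-)$ gives the dimension identity you state, Serre duality on a Calabi--Yau surface converts $\Hom(A,B)=0$ into $\Ext^2(B,A)=0$, and these two vanishings feed into the vertical sequences from $\Hom(-,A)$ and $\Hom(-,B)$ to give the surjectivity of $\iota^*\co \Ext^1(E,A)\to\Ext^1(A,A)$ and the injectivity of $\pi^*\co\Ext^1(B,B)\to\Ext^1(E,B)$. The two Yoneda calculations are both sound: for $\phi\in\Hom(E,B)$, $\iota^*(e\cdot\phi)=e\cdot(\phi\circ\iota)$ dies because $\phi\circ\iota\in\Hom(A,B)=0$, and for $\beta\in\Ext^1(B,B)$, $\delta_1(\beta\circ\pi)=(e\cdot\beta)\circ\pi$ dies because $e\cdot\beta\in\Ext^2(B,A)=0$. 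Combined with the rank count, this gives exactly the claimed inequality. This is in the same spirit as the cited arguments of Mukai and of Huybrechts--Macri--Stellari, which likewise trade the vanishing of $\Hom(A,B)$ and its Serre dual $\Ext^2(B,A)$ against the connecting maps in the Ext long exact sequences.
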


We apply this to Equation \ref{eqn:ses-use-mukai}.
We know $\Ext^0(T_1(\cE_{\Sigma}), \cS) =0$ since there are no morphisms from a torsion sheaf to a torsion-free sheaf. We deduce that 
\[
4=\rk_{\Lambda_{\bR}} \Ext^1(\cE_{\Sigma}, \cE_{\Sigma}) \geq \rk_{\Lambda_{\bR}} \Ext^1(\cS,\cS) + \rk_{\Lambda_{\bR}} \Ext^1(T_1(\cE_{\Sigma}),T_1(\cE_{\Sigma}))
\]
By Lemma \ref{lem:rkatleast2} we see that the sheaves $\cS$ and $T_1(\cE_{\Sigma})$ both have $\Ext^1$ of rank 2, hence are semi-homogeneous in the sense of Mukai.   One can argue as in Lemma \ref{lem:force-simple} to see that $\cS$ and $T_1(\cE_{\Sigma})$ are also both simple sheaves.  We have an Euler characteristic identity, recalling $\chi(\Sigma) = -2$ on the mirror side:
\[
-2 = \chi(\cE_{\Sigma},\cE_{\Sigma}) = \chi(\cS,\cS) + \chi(T_1(\cE_{\Sigma}), T_1(\cE_{\Sigma})) + 2\chi(\cS,T_1(\cE_{\Sigma}))
\]

From here, it is straightforward to see that $(\cS,T_1(\cE_{\Sigma}))$ satisfy the conditions of Mukai's Theorem \ref{thm:Mukai}, and one can apply the same steps used in the case in which $\cE_{\Sigma}$ was a length two complex (apply an autoequivalence to bring the relevant extension back to the case of a suitable line bundle and a skyscraper sheaf).  

Up until now we have derived algebraic information on $\cE_{\Sigma}$, whereas Theorem \ref{thm:classify} is formulated in terms of a Lagrange surgery.  Whether $\cE_{\Sigma}$ is quasi-isomorphic to a sheaf or a length two complex, the discussion so far reduces the proof of Theorem \ref{thm:classify} to the following Proposition, cf. Remark \ref{Rem:LagrangeSurgery}.

\begin{prop} \label{Prop:SurgeryIsCone}
The cone on the Floer cocycle defined by the unique intersection point of $L_f$ and $L_s$, graded in degree 1,  is quasi-represented by the graded Lagrange surgery of $L_f$ and $L_s$.
\end{prop}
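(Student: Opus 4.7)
The plan is to invoke the mirror equivalence $D^{\pi}\scrF(T^{4}) \simeq D^{b}(E \times E)$ of Theorem \ref{thm:hms} and compare both objects inside the derived category of coherent sheaves.  Choose gradings so that the unique intersection point $p$ defines a degree-$1$ Floer cocycle, and fix the mirror identifications $L_{s} \leftrightarrow \cO_{E\times E}$ and $L_{f} \leftrightarrow \cO_{q}$ for some point $q \in E\times E$.  Under this normalisation, $[p]$ corresponds, after absorbing the appropriate grading shift, to the generator of $\Ext^{2}(\cO_{q}, \cO)$ provided by the Koszul resolution of the skyscraper (equivalently, by Serre duality on the Calabi--Yau surface $E \times E$).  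Consequently $\mathrm{Cone}([p])$ is identified with an explicit length-two complex $C_{\mathrm{alg}} \in D^{b}(E\times E)$.

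On the Fukaya side, the Lagrange surgery $\Sigma = L_{f} \# L_{s}$ is an embedded Maslov-zero genus-$2$ Lagrangian with $[\Sigma] = [L_{f}] + [L_{s}]$ in $H_{2}(T^{4};\bZ)$, defining a nonzero object of $\scrF(T^{4})$ whose self-Floer ring is $H^{*}(\Sigma;\Lambda_{\bR})$ by Proposition \ref{Prop:SelfFloer} and Lemma \ref{lem:well-defined}.  Running the classification analysis from the proof of Theorem \ref{thm:classify} on this specific $\Sigma$---the spectral sequence analysis of $\cE_{\Sigma}$ together with the appeal to Mukai's Theorem \ref{thm:Mukai} and the ensuing use of Orlov's autoequivalence theorem do not themselves invoke the present proposition---identifies the mirror $\cE_{\Sigma}$ as lying in the same orbit as $C_{\mathrm{alg}}$ under the autoequivalence group of $D^{b}(E\times E)$.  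The numerical invariants align: the K-theory class of $\cE_{\Sigma}$ matches that of $C_{\mathrm{alg}}$ (from $[\Sigma] = [L_{f}] + [L_{s}]$), the self-Ext algebras are both $H^{*}(\Sigma_2;\Lambda_{\bR})$, and the Euler pairings $\chi(\cE_{\Sigma}, \cO) = \chi(\cE_{\Sigma}, \cO_{q}) = \pm 1$ furnished by Lemma \ref{lem:hits_fibres} agree with those of $C_{\mathrm{alg}}$.

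The residual freedom inside this orbit consists of translations of $E \times E$ and twists by degree-zero line bundles; under HMS both correspond to Hamiltonian translations of $T^{4}$ or the adjustment of a flat local system on the underlying Lagrangian, and so act trivially up to the Floer-cohomological indistinguishability tolerated by the statement.  The main obstacle is controlling this residual ambiguity cleanly: the cleanest route is by directly computing $HF^{*}(\Sigma, L_{f})$ and $HF^{*}(\Sigma, L_{s})$ via the local handle model for the Lagrange surgery, matching them against the predictions $\Ext^{*}(C_{\mathrm{alg}}, \cO_{q})$ and $\Ext^{*}(C_{\mathrm{alg}}, \cO)$ determined by the algebraic cone, and invoking the rigidity of the Koszul complex to conclude that $\Sigma$ quasi-represents $\mathrm{Cone}([p])$.
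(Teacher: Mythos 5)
Your opening moves line up with the paper's: you correctly observe that $\Sigma_{\#} = L_f \# L_s$ is itself a Maslov-zero genus-two Lagrangian, so the classification machinery of Theorem \ref{thm:classify} (which, as you note, does not invoke the present proposition) shows that its mirror $\cE_{\Sigma_{\#}}$ lies in the autoequivalence orbit of the cone of $\cO_q \to \cO$. Where the argument breaks down is in pinning down the residual ambiguity; the mechanism you propose does not close the gap.

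First, the claim that translations of $E\times E$ and $\Pic^0$-twists ``correspond to Hamiltonian translations of $T^4$'' is incorrect: translations of $T^4$ have non-zero flux and are not Hamiltonian, so the corresponding autoequivalences genuinely move objects around in $D^{\pi}\sF(T^4)$. The proposition asserts an actual quasi-isomorphism $\mathrm{Cone}([p]) \simeq \Sigma_{\#}$, not one up to translation, so there is no ``indistinguishability tolerance'' to appeal to. Second, your fallback -- ``directly computing $HF^*(\Sigma,L_f)$ and $HF^*(\Sigma,L_s)$ via the local handle model'' -- is precisely the computation the paper is structured to avoid: $L_f$ and $L_s$ coincide with $\Sigma_{\#}$ outside the surgery region, so the intersections are nowhere near transverse, a Hamiltonian perturbation is needed, and a genuine (and delicate) disc count would be required. ``Rigidity of the Koszul complex'' is not a lemma one can simply invoke here.

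The paper instead resolves the ambiguity by a clean contradiction that you are missing. Once the classification gives $\Sigma_{\#} \simeq \{R_1 \to R_2\}$ with $R_1,R_2$ transverse linear tori in the homology classes $[L_f],[L_s]$ (pinned down by Lemma \ref{lem:homologyoftori}), suppose $R_1 \neq L_f$. Then $R_1$ is a linear torus parallel to, hence disjoint from, $L_f$, and it meets $L_s$ in a single point away from $p$; shrinking the support of the Lagrange surgery, $\Sigma_{\#}$ meets $R_1$ transversely in exactly one point, so $\rk\,HF^*(\Sigma_{\#},R_1) \leq 1$. But the exact triangle
\[
HF^*(R_1,R_1) \to HF^*(R_2,R_1) \to HF^*(\{R_1 \to R_2\},R_1)
\]
forces $\rk\,HF^*(\Sigma_{\#},R_1) \geq 4 - 1 = 3$, a contradiction. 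Hence $R_1 = L_f$, and symmetrically $R_2 = L_s$. This exchanges the hard direct Floer computation for a pair of rank bounds, one geometric and one homological, that are both essentially free. Your write-up should be reorganised around this contradiction rather than around a handle-model computation.
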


\begin{proof}
The Lagrange surgery $\Sigma_{\#}$ of $L_f$ and $L_s$ \emph{is} a Lagrangian genus 2 surface of Maslov class zero, and so the preceding analysis of its own mirror complex $\cE_{\Sigma_{\#}}^{\bullet}$ shows that $\Sigma_{\#}$ is quasi-isomorphic to the cone on the morphism between a pair of transverse linear Lagrangian tori.  Moreover, these Lagrangian tori $\{R_1, R_2\}$ lie in the same homology classes as $\{L_f, L_s\}$, as determined by the homology class of the original surface $\Sigma_{\#}$ according to Lemma \ref{lem:homologyoftori}. Suppose for contradiction that 
\begin{equation} \label{eq:quasi}
\Sigma_{\#} \simeq \{ R_1 \rightarrow R_2\}
\end{equation} with $R_1\neq L_f$.  Then, choosing the support region for the Lagrange surgery to be sufficiently small, the geometric intersection of $\Sigma_{\#}$ with $R_1$ is a single transverse point, hence the Floer cohomology $HF(\Sigma_{\#}, R_1)$ has rank at most 1.  This contradicts the exact triangle
\[
HF^*(R_1,R_1) \rightarrow HF^*(R_2,R_1) \rightarrow HF^*(\{R_1 \rightarrow R_2\}, R_1)
\]
arising from the quasi-isomorphism of Equation \ref{eq:quasi}, since the first term is isomorphic to $H^*(T^2;\Lambda_{\bR})$ and the central term has rank 1.  The same argument shows that $R_2=L_s$, which proves the result.
\end{proof}

This completes the proof of Theorem \ref{thm:classify}.

\subsection{Application}  Theorem \ref{thm:classify} has the following more explicit implication for intersection properties of Lagrangian submanifolds.

\begin{cor} \label{cor:rankFloer}
If $\Sigma \subset T^4$ is a Lagrangian genus two surface, there are at least two linear Lagrangian tori $L_0, L_1\subset T^4$ with $\rk_{\Lambda_{\bR}}HF(L_i,\Sigma) \geq 3$. 
\end{cor}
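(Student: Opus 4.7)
The strategy is to bootstrap from Theorem \ref{thm:classify}. That theorem supplies a pair of transverse linear Lagrangian tori $L_f, L_s \subset T^4$ meeting in a single point $p$, such that $\Sigma$ is Floer cohomologically indistinguishable from the Lagrange surgery $L_f \# L_s$; Proposition \ref{Prop:SurgeryIsCone} then identifies this surgery, as an object of $\Tw(\sF(T^4))$, with the cone on $[p] \in HF^1(L_f, L_s)$, yielding an exact triangle
\[ L_f \overset{[p]}{\longrightarrow} L_s \longrightarrow \Sigma \longrightarrow L_f[1]. \]
I will take $L_0 = L_f$ and $L_1 = L_s$ (these are distinct, as $L_f \pitchfork L_s$) and apply $HF(L_i, -)$ to this triangle. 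The resulting long exact sequence yields the total-rank identity
\[ \rk HF(L_i, \Sigma) = \rk HF(L_i, L_f) + \rk HF(L_i, L_s) - 2\,\rk \psi_i, \]
where $\psi_i$ denotes the map induced on Floer cohomology by the class $[p]$.

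For $L_0 = L_f$, Proposition \ref{Prop:SelfFloer} gives $HF^*(L_f, L_f) \cong H^*(T^2;\Lambda_{\bR})$ of total rank $4$, whilst $HF^*(L_f, L_s)$ has rank $1$, concentrated in degree $1$. The map $\psi_0 = \mu^2_{\sF}([p], -) \co HF^*(L_f, L_f) \to HF^{*+1}(L_f, L_s)$ sends the cohomological unit of $L_f$ (Lemma \ref{lem:weak-unit-statement}) to $[p]$, so its degree-$0$ component is an isomorphism $\Lambda_{\bR} \to \Lambda_{\bR}$; in all other degrees the target vanishes. Hence $\rk \psi_0 = 1$ and $\rk HF(L_f, \Sigma) = 4 + 1 - 2 = 3$.

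For $L_1 = L_s$, the same triangle produces an analogous sequence, now with $\psi_1 = \mu^2_{\sF}([p], -) \co HF^*(L_s, L_f) \to HF^{*+1}(L_s, L_s)$. The Poincar\'e duality quasi-isomorphism of Lemma \ref{lem:poincareduality} identifies $HF^*(L_s, L_f)$ with $HF^{2-*}(L_f, L_s)^\vee$, so this group also has rank $1$ concentrated in degree $1$; moreover the Yoneda pairing $HF^1(L_s, L_f) \otimes HF^1(L_f, L_s) \to HF^2(L_s, L_s) \cong \Lambda_{\bR}$ is perfect. Multiplication by $[p]$ is therefore an isomorphism $HF^1(L_s, L_f) \to HF^2(L_s, L_s)$, giving $\rk \psi_1 = 1$ and $\rk HF(L_s, \Sigma) = 1 + 4 - 2 = 3$.

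The one delicate point is the identification of the two triangle-induced maps $\psi_i$ with the claimed $\mu^2_{\sF}([p], -)$ operations; this is intrinsic to the cone representation of $\Sigma$ supplied by Proposition \ref{Prop:SurgeryIsCone}, but is the step where I would expect to have to track gradings and sign conventions carefully.
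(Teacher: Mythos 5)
Your proof is correct and follows the same route as the paper: reduce via Theorem \ref{thm:classify} and Proposition \ref{Prop:SurgeryIsCone} to the exact triangle $L_f\to L_s\to\Sigma$, apply $HF(L_i,-)$, and read off the rank bound. The only difference is that you compute $\rk\psi_i$ exactly to pin down $\rk HF(L_i,\Sigma)=3$; the paper stops at the weaker (and sufficient) exactness estimate $\rk HF(L_i,\Sigma)\geq\lvert\rk HF(L_i,L_f)-\rk HF(L_i,L_s)\rvert=3$, which avoids having to identify $\psi_i$ with $\mu^2_{\sF}([p],-)$ or worry about the grading/sign conventions you flag at the end.
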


\begin{proof}
The argument is precisely the same as for Proposition \ref{Prop:SurgeryIsCone}.
After applying a linear symplectomorphism, Theorem \ref{thm:classify} implies that we can take $\Sigma$ to be the Lagrange surgery of $L_f$ and $L_s$, in other words the cone of the unique morphism in the Fukaya category $L_f \rightarrow L_s$.  This gives an exact triangle of Floer cohomology groups
\[
HF^*(L_f, L_f) \rightarrow HF^*(L_f,L_s) \rightarrow HF^*(L_f,\Sigma) \rightarrow HF^{*+1}(L_f,L_f)
\]
The graded group $HF^*(L_f,L_s)$ has total rank one, since the two submanifolds meet transversely in a single point, whilst $HF(L_f,L_f) \cong H^*(T^2;\Lambda_{\bR})$ has rank 4.  It follows by exactness that $\rk HF(L_f,\Sigma) \geq 3$.  A similar argument shows that $\rk HF(L_s,\Sigma)$ satisfies the same bound.
\end{proof}

Recall that the Fukaya-isomorphism type of the Lagrangian genus two surface depends only on its Hamiltonian isotopy class, by Lemmas \ref{lem:Ham-independent} and \ref{lem:pi2diesinH2}.  Given this, Corollary \ref{cor:rankFloer} immediately implies Corollary \ref{cor:intersect} from the Introduction.


\appendix \label{appendix:speculation}
\section{Speculation}
Once the relevant foundational issues are clarified, one could apply the  generative criterion in this paper to a variety of higher-dimensional situations, some of which were mentioned in the Introduction, cf. Corollary \ref{cor:highdimension}.  In another direction, one could combine Theorem \ref{thm:generation} with (i) Seidel's proof of mirror symmetry for the quartic  $K3$ surface $Q\subset \bC\bP^3$ \cite{Seidel:HMSquartic} and (ii) Seidel's deformations-of-categories machinery for $\sF(Q)$ \cite{Seidel:talk}, to obtain the following evidence for a  ``refined Arnol'd conjecture":

\begin{prop}
Let $Z=Q\times T^2$ with the product symplectic form $\omega_{Q} \oplus \omega_{T^2}$.  Assume that $\sF(Z)$ is well-defined as a triangulated $A_{\infty}$-category over $\Lambda_{\bR}$.  Let $\phi \in \Symp(Z)$ act trivially on cohomology and have vanishing flux. Then Fix$(\phi) \neq \emptyset$, and if $\phi$ has non-degenerate fixed points it has at least $\sum_j b_j(Z)$ fixed points.
\end{prop}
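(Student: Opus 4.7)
The plan is to reduce the fixed-point count for $\phi$ to the rank of a certain Floer cohomology group, which can in turn be identified with $H^*(Z;\Lambda_\bR)$ by a Künneth-style mirror symmetry argument paralleling Section \ref{sec:HMS}.  First, I would identify $D^{\pi}\sF(Z)$. Applying Theorem \ref{thm:generation} to $Z = Q \times T^2$, using products of mirror-generating Lagrangians from each factor, and combining with (i) Seidel's mirror symmetry theorem for the quartic $K3$ and (ii) Theorem \ref{thm:mirror_symmetry_elliptic}, one obtains an equivalence $D^{\pi}\sF(Z) \simeq D^{b}(Q^{\vee} \times E)$ to the derived category of a Calabi-Yau threefold over $\Lambda_\bR$.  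The proof of Corollary \ref{cor:natural_map_iso_4torus} would then apply essentially verbatim to show that the natural map $QH^*(Z;\Lambda_\bR) \to HH^*(\sF(Z))$ is an isomorphism, so both sides have total rank $\sum_j b_j(Z)$.

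Next, I would argue that the autoequivalence $\phi_*$ of $D^{\pi}\sF(Z)$ induced by $\phi$ is isomorphic to the identity, via the (conjectural) homomorphism $\Symp^0(Z)/\Ham(Z) \to \operatorname{Auteq}(D^{\pi}\sF(Z))/\langle \bZ \rangle$ together with control of its image.  For the $T^2$ factor this control comes from Polishchuk's computation \cite{Pol:weil} of the Hodge-isometry group of self-products of the Tate curve; for the quartic factor, Seidel's deformation-of-categories machinery \cite{Seidel:talk} is the essential input, rigidifying $\sF(Q)$ enough that flux zero together with trivial action on $H^*$ rules out any nontrivial algebraic autoequivalence of the mirror.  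A Künneth-style combination then gives $\phi_* \simeq \id$ on the product.  With this in hand, the \emph{Axiom of Quilted Floer Theory} sends both $\Gamma_\phi$ and $\Delta$ in $Z^- \times Z$ to the identity endofunctor of $\sF^{\#}(Z)$, so the argument of Lemma \ref{lem:isowithdiagonal} yields
\begin{equation*}
HF^{*}(\Gamma_\phi, \Delta) \cong HH^{*}(\sF(Z)) \cong H^{*}(Z;\Lambda_\bR).
\end{equation*}

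Finally, $HF^{*}(\Gamma_\phi, \Delta)$ is canonically Floer's fixed-point cohomology of $\phi$, whose chain-level generators (after a small Hamiltonian perturbation, or none at all if the fixed points are non-degenerate) are precisely the fixed points of $\phi$; non-vanishing of $HF^{0}$ gives $\operatorname{Fix}(\phi) \neq \emptyset$, and the rank inequality in the non-degenerate case yields $\#\operatorname{Fix}(\phi) \geq \sum_j b_j(Z)$.  The main obstacle lies in the second step: beyond the existence of the representation $\Symp^0/\Ham \to \operatorname{Auteq}/\langle \bZ\rangle$, which is itself conjectural, one needs a sufficiently complete understanding of its kernel, and combining the Seidel rigidification for the quartic factor with the algebraic picture on the elliptic factor to conclude $\phi_* \simeq \id$ is where essentially all the real work of the proposition resides.
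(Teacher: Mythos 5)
The paper's own sketch (Appendix) and your proposal share the major ingredients -- mirror symmetry for the product via Theorem \ref{thm:generation}, the identification $QH^*(Z) \cong HH^*(\sF(Z))$, the quilt-theoretic identification of $HF^*(\Gamma_\phi, \Delta)$ with morphisms of endofunctors, and Seidel's deformation-of-categories machinery -- but they deploy the last of these in an importantly different way, and your version has a gap that the paper's version is designed to avoid.

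The paper does not try to prove $\phi_* \simeq \id$ by controlling $\operatorname{Auteq}(D^b(Q^\vee \times E))$ directly. Instead it first deforms the symplectic form on $Q$ so that $\sF(Q)$ becomes ``essentially empty'' (mirror to a noncommutative deformation of the quartic with no coherent modules); the product category $\sF(Z)$ likewise deforms to something so sparse that \emph{any} autoequivalence compatible with the assumptions on $\phi$ is forced to be the identity, with no need for a case-by-case understanding of the autoequivalence group. Then the Floer group $HF^*(\Gamma_\phi, \Delta)$ is computed in the deformed category and transported back by deformation-invariance. Your step 3 instead argues $\phi_* \simeq \id$ on the undeformed $D^b(Q^\vee \times E)$ by controlling autoequivalences of each mirror factor separately (Polishchuk on the elliptic side, Seidel's rigidification on the quartic side) and then ``K\"unneth-combining.'' The difficulty is that $\phi \in \Symp(Z)$ is not assumed to be of product form, so $\phi_*$ need not decompose as an exterior product of autoequivalences of the factors, and there is no a priori K\"unneth theorem for $\operatorname{Auteq}$ of a product category that would give you this factorization. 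You flag this as ``where essentially all the real work resides,'' which is fair, but the paper's deform-first strategy circumvents exactly this issue: after deforming, the category has so few objects that one does not need to factorize anything. If you want to fill the gap, you should adopt that ordering -- deform $\sF(Q)$, hence $\sF(Z)$, to trivialize the autoequivalence problem, and only then identify the Floer group.
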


The point here is that Seidel shows that the category $\scrF(Q)$ can be deformed to be essentially empty (perturb the symplectic form generically, mirror to a deformation of the quartic surface to a non-commutative $K3$ with no module which would be the analogue of a coherent sheaf).  The assumptions on $\phi$ imply that it deforms to an autoequivalence of the deformed category which is then necessarily the identity, and which has well-understood Floer cohomology.

If $\phi$ was assumed to be Hamiltonian isotopic to the identity this would be the content of the Arnol'd conjecture, but there may be ``exotic"  symplectomorphisms of $Q\times T^2$ which act trivially on cohomology and have vanishing flux but which are not isotopic to the identity (take an even power of a Dehn twist on the $K3$ factor stabilized by the identity).  Note that for $K3$ itself the analogous result holds by the Lefschetz fixed point theorem, whilst for tori $T^{2k}$ one can prove the result by the same  covering trick that proved Corollary \ref{cor:primitive} (consider lifting the graph $\Gamma(\phi)$ as a Lagrangian submanifold to the cotangent bundle $T^*\Delta$ of the diagonal $\Delta \subset T^{2k}\times T^{2k}$ and apply results on Lagrangian submanifolds of cotangent bundles obtained in \cite{Arnold:firststeps, FSS2}).  However, for $Q\times T^2$ there seems to be no elementary proof.


\end{document}